\def\word{\omega}
\newtheorem{theo}{Theorem}[section]
\newtheorem{pro}[theo]{Proposition}
\newtheorem{lem}[theo]{Lemma}
\newtheorem{cor}[theo]{Corollary}
\newcommand{\ra}{\rightarrow}
\theoremstyle{definition}
\newtheorem{defin}[theo]{Definition}
\newtheorem{exa}[theo]{Example}
\newtheorem{exer}[theo]{Exercise}
\theoremstyle{remark}
\newtheorem{rem}[theo]{Remark}
 \def\bee{\begin{equation}}
\def\eee{\end{equation}}
\def\lam{{\lambda}}
\def\1{{\mathbbm 1}}
\def\sE {{\cal E}}
\def\sF {{\cal F}}
\def\sL {{\cal L}}
\def\sB {{\cal B}}
\def\sX {{\cal X}}
\def\R{{\mathbb R}}
\def\D{{\mathbb D}}
\def\E{{{\mathbb E}\,}}
\def\P{{\mathbb P}}
\def\Z{{\mathbb Z}}
\def\F{{\cal F}}
\def\G{{\cal G}}
\def\sC {{\cal C}}
\def\N{{\cal N}}
\def\bD{\mathbb{D}}
\def\bR{\mathbb{R}}
\def\bE{\mathbb{E}}
\def\bN{\mathbb{N}}
\def\bP{\mathbb{P}}
\def\FF{{\cal F}}
\def\angel#1{{\langle#1\rangle}}
\def\norm#1{\Vert #1 \Vert}
\def\ol{\overline}
 \def\qq {\qquad}
\def\eps{\varepsilon}
\def\wh{\widehat}
\def\Lip{{\rm Lip}}
\newcommand{\HH}{\mathbb H}
\def\wt{\widetilde}
\def\group{N}
\numberwithin{equation}{section}
\title{Limit theorems for some long range random walks on torsion free nilpotent groups}
\author{Zhen-Qing Chen, Takashi Kumagai, Laurent Saloff-Coste,\\ Jian Wang and Tianyi Zheng}
\begin{document}

\maketitle

\begin{abstract} We consider a natural class of long range random walks on torsion free nilpotent groups and develop limit theorems
for these walks. Given the original discrete group $\Gamma$  and a random walk $(S_n)_ {n\ge1}$
driven by a certain type of symmetric probability measure $\mu$,
we construct a homogeneous nilpotent Lie group $G_\bullet(\Gamma,\mu)$ which carries an adapted dilation structure and a stable-like process
$(X_t)_{ t\ge0}$ which appears in a Donsker-type functional limit theorem as the limit of a rescaled version of the random walk.  Both the limit group and the limit process on that group depend on the measure $\mu$. In addition, the functional limit theorem is complemented by a local limit theorem. 

\bigskip

\noindent{\bf Keywords:} 
Long range random walk; Nilpotent group; Group dilation; Weak convergence;  
  L\'evy process; Dirichlet form;  
Local limit theorem. 

\bigskip

\noindent{\bf MSC 2020:} 
  Primary  60G50,  60G51, 20F65, 60B15;   Secondary  60J46, 60J45.

\end{abstract}

\tableofcontents

\section{Setting the stage}\label{S:1}

 The aim of this work is to present limit theorems (of both functional and local types) for certain long jump random walks on nilpotent groups.  
Recall  that    a nilpotent group is a group $G$ with identity element $e$ that  has a central series of finite length, that is, there is a finite sequence of normal subgroups
 so that  
 $$
 \{e\} =K_0 \lhd K_1 \lhd  \cdots \lhd K_n=G
 $$
 with $K_{i+1}/K_i$ contained in the center of $G/K_i$ for $0\leq i \leq n -1$.
  See the Appendix for a very brief introduction to nilpotent groups. 
  
  \medskip
  
 Before we explain our particular setup and the tools  and techniques that we will use, we attempt to put this research in perspective by discussing a small selection of related results concerning random walks and limit theorems in finite dimensional vector spaces (i.e., the  torsion free  abelian case), and applications of these classical results to the simplest
example of non-abelian nilpotent groups, the (discrete and continuous) Heisenberg groups $\mathbb H_3(\mathbb Z)\subseteq \mathbb H_3(\mathbb R)$.

The ``limit theorems'' that concern us always have three key ingredients: The first ingredient is
 a discrete random walk $S=\{S_n; n=1, 2, \ldots\}$ on a group $G$ with   independent identically distributed (i.i.d.  in short)  increments of probability distribution $\mu$.
  The second ingredient is a method of renormalization via some sort of ``dilations'' acting on the underlying space $G$. We remain vague here on purpose. The third ingredient is a continuous time process that
  appears  in the limit, call it $Z$. Hopefully, $Z$ has properties that
     make  it relatively easy to study although this entire story can also be viewed as a way to understand $Z$ in terms of the more elementary process $S$. The following fundamental questions arise:
\begin{itemize}
\item  What is the nature of those limiting processes $Z$ that may appear through such a scheme?

\item Given a  limit process $Z$, what are all
the one-step increment probability distributions $\mu$  whose
 associated random walk converges to $Z$ after renormalization?

\item Given
 a one-step increment probability distribution,
how to find a renormalization procedure that leads to an interesting
 non-trivial limit process $Z$,
if any exists?
\end{itemize}

\subsection{Review of some abelian results} \label{S:1.1}

In this section, we discuss some aspects of these vaguely stated questions in the context of finite dimensional vector spaces where detailed answers to the first two questions are well-known and understood. The answer to the first question involves the notions of infinitely divisible probability distribution and L\'evy   process, and the additional notion of operator stability which relates directly to the ``normalization procedure'' that allows us to pass from $S$ to $Z$. See, e.g., \cite[Chapter 6]{Feller}, \cite[Section 1.6]{Hazod2001}  and \cite[Chapter 8]{MS1}.
The second question concerns the ``domain of operator-attraction'' of the limit $Z$ and falls outside the scope of our interest. The third question is not easily answered in general (see \cite{Griffin1986}) but it plays an important role in the results we develop in this work for nilpotent groups. Indeed, for the particular class of examples we treat on nilpotent groups,  a key step consists in identifying appropriate renormalization procedures.

Recall that an $\R^d$-valued random variable $Y$ (or its probability distribution) is said to be
{\em infinitely divisible} if, for each integer $n\geq 1$,
there are i.i.d $\R^d$-valued random variables $\{X_1, \ldots, X_n\}$ such that $\sum_{k=1}^n X_k$ has
the same distribution as $Y$.  It is well known (see, e.g., \cite{Bertoin96, JuMa, MS1, Sato})
that the distribution of $Y$ is infinitely divisible if and only if it is the distribution at time $1$ of a L\'evy process $Z=\{Z_t; t\geq 0\}$ with $Z_0=0$. An infinitely divisible probability
is uniquely characterized by the L\'evy exponent $\phi$
of its characteristic function
$$\phi (\lambda) :=- \log \E \left[ e^{i \lambda \cdot Y} \right],
$$
which takes the following form.
There are a symmetric non-negative definite constant matrix $A=(a_{ij})_{1\leq i, j\leq d}$,
a constant  vector $b=(b_1, \ldots, b_d)$, and a  non-negative  Borel measure $\nu$ on $\R^d \setminus \{0\}$
satisfying $\int_{\R^d} (1\wedge | z|^2) \nu (dz)<\infty$  so that
\begin{equation} \label{e:1.1}
\phi (\lambda) = \frac12 \sum_{i,j=1}^d a_{ij} \lambda_i \lambda_j + \sum_{i=1}^d b_i \lambda_i
+ \int_{\R^d} \left( 1 -e^{i\lambda \cdot z} + i \lambda \cdot z \1_{\{|z|\leq 1\}}\right)
\nu (dz)
\end{equation}
for any $\lambda=(\lambda_1, \ldots, \lambda_d) \in \R^d$.
The triplet $(A,b,\nu)$ and the measure $\nu$ are called the L\'evy triplet and the  L\'evy measure of the infinitely divisible distribution of $Y$, respectively.  They are
uniquely determined by $Y$, and vice-versa. See, e.g., \cite[1.3.2]{Hazod2001}.
 The expression \eqref{e:1.1} is called the L\'evy-Khintchine formula for the infinitely divisible
distribution of $Y$.
We say the random variable $Y$ has no Gaussian part if $A=0$.
Clearly, if the distribution of $Y$ is symmetric, that is, $-Y$ has the same distribution as $Y$,
then $b=0$ and the L\'evy measure $\nu$ is symmetric.
We say that an $\R^d$-valued random variable $X$ is full if there
   is  no non-zero $\lambda \in \R^d$
so that $\lambda \cdot X$ is a constant,   that is,  if  the distribution of $X$ is not supported
on any $(d-1)$-dimensional affine subspace of $\R^d$.
An infinite divisible probability distribution having no Gaussian part
is full if and only if its L\'evy measure $\nu$ is not supported
on any $d-1$ dimensional linear subspace of $\R^d$ ; see \cite[Proposition 3.1.20]{MS1}.  In this work, we are interested
in results involving limits that are symmetric with no Gaussian part  (symmetric random walks with  jumps having heavy tails).

We start with the following elegant result. Let  $S=\{S_n; n\geq 0\}$ be a  random walk in $ \Z^d$ with i.i.d.\ steps  $\xi_k$
having  distribution $\mu$.  That is,
\begin{equation}\label{e:2.1}
\P(\xi_k = (j_1, \dots, j_d))= \mu ((j_1, \dots, j_d))  \quad \hbox{for } (j_1, \dots, j_d)\in \Z^d,
\end{equation}
and $$S_n= \xi_1+\dots+\xi_n.$$

\begin{pro}\label{P:1.1} {\rm (\cite[Corollary 8.2.12]{MS1})}    
Let $\eta$ be a full infinitely divisible probability distribution on $\R^d$ with no Gaussian part and  L\'evy measure $\nu$.
Let $\{S_n; n\geq 0\}$ be a  random walk in $ \R^d$ driven by a probability measure $\mu$ as above.

There are linear operators $A_n: \R^d \to \R^d$ and vectors $b_n\in \R^d$ such that
$A_n S_n+b_n  $ converges in distribution to  $\eta $
 if and only if
\begin{equation}\label{e:2.2a}
n  \, \mu \circ A_n^{-1}  \hbox{ converges   vaguely to } \nu \hbox{ on } \R^d \setminus \{0\}.
\end{equation}
In this case,  $\lim_{n\to \infty} \| A_n\| = 0$.
\end{pro}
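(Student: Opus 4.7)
The natural approach is to regard $A_n S_n + b_n$ as the row sum of the triangular array of i.i.d.\ variables $\{A_n \xi_k + n^{-1} b_n\}_{k=1}^n$ and apply the classical triangular-array central limit theorem for infinitely divisible limits. By L\'evy's continuity theorem, the convergence $A_n S_n + b_n \Rightarrow \eta$ is equivalent to the pointwise convergence of characteristic functions,
$$ n\bigl(\E[e^{i\lambda\cdot A_n \xi_1}] - 1\bigr) + i\lambda\cdot b_n \;\longrightarrow\; -\phi(\lambda), \qquad \lambda\in\R^d, $$
where $\phi$ is the L\'evy exponent in \eqref{e:1.1}. Rewriting the first term as
$$ \int_{\R^d}\bigl(e^{i\lambda\cdot z}-1\bigr)\,n(\mu\circ A_n^{-1})(dz) + i\lambda\cdot b_n $$
brings the sequence of finite measures $n(\mu\circ A_n^{-1})$ into direct interaction with the L\'evy measure $\nu$ of $\eta$.

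The key step is to split the integral at a radius $\eps>0$ with $\nu(\{|z|=\eps\})=0$. On the exterior $\{|z|>\eps\}$, vague convergence $n(\mu\circ A_n^{-1})\to\nu$ on $\R^d\setminus\{0\}$ (equivalently, weak convergence on sets bounded away from zero whose boundary is $\nu$-null) together with the continuity and boundedness of $e^{i\lambda\cdot z}-1$ there allows passage to the corresponding $\nu$-integral in \eqref{e:1.1}. On the interior $\{|z|\le\eps\}$, a second-order Taylor expansion of $e^{i\lambda\cdot z}-1-i\lambda\cdot z\,\1_{\{|z|\le 1\}}$ reduces the analysis to controlling
$$ \int_{|z|\le\eps}|z|^2\,n(\mu\circ A_n^{-1})(dz) $$
together with a first-order drift that is absorbed into the canonical choice of $b_n$. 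Because $\eta$ has \emph{no Gaussian part}, the truncated second-order contribution must be dominated by $\int_{|z|\le\eps}|z|^2\,\nu(dz)$, which vanishes as $\eps\downarrow 0$ by the L\'evy integrability $\int(1\wedge|z|^2)\,\nu(dz)<\infty$. Sending $\eps\downarrow 0$ after $n\to\infty$ yields both implications.

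The assertion $\|A_n\|\to 0$ follows from infinitesimality plus fullness. Since $\nu(\{|z|>\eps\})<\infty$, the relation $n\,\P(|A_n\xi_1|>\eps)\to\nu(\{|z|>\eps\})$ forces $\P(|A_n\xi_1|>\eps)\to 0$, i.e.\ $A_n\xi_1\to 0$ in probability. If $\|A_n\|$ failed to tend to zero, a subsequential argument (extracting a limit of $A_n$ or of $A_n/\|A_n\|$) would force $\mu$ to be essentially supported on a proper linear subspace in a manner that makes $n(\mu\circ A_n^{-1})$ concentrate on a proper subspace in the limit, contradicting the fullness of $\nu$.

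The technical heart of the argument is the interior analysis on $\{|z|\le\eps\}$: one must verify that the truncated quadratic fluctuations of the array cannot conspire to produce a spurious Gaussian term in the limit. This is precisely what the infinitesimality hypothesis in the general triangular-array CLT is designed to handle, and is the content developed in \cite[Chapter~8]{MS1}; once this obstruction is removed, the identification of $b_n$ as absorbing the first-order Taylor remainder, and the fullness argument that produces $\|A_n\|\to 0$, are comparatively routine.
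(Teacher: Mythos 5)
First, a point of comparison: the paper does not prove Proposition \ref{P:1.1} at all; it is quoted verbatim from \cite[Corollary 8.2.12]{MS1}, so there is no internal argument to measure your proposal against. Judged on its own terms, your treatment of the necessity direction (characteristic functions, splitting at a radius $\eps$ with $\nu(\{|z|=\eps\})=0$) and your sketch of $\|A_n\|\to 0$ (negligibility of $A_n\xi_1$ plus a spanning-support argument using fullness) follow the standard route and are essentially fine, though the last step is only sketched.

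The genuine gap is in the sufficiency direction, exactly at the place you call the technical heart. Vague convergence of $n\,\mu\circ A_n^{-1}$ on $\R^d\setminus\{0\}$ gives no upper bound whatsoever on the small-jump quadratic term $n\int_{\{|z|\le\eps\}}|z|^2\,d(\mu\circ A_n^{-1})$, and your assertion that this term ``must be dominated by $\int_{\{|z|\le\eps\}}|z|^2\,\nu(dz)$ because $\eta$ has no Gaussian part'' inverts the logic: the absence of a Gaussian component in the limit of the array is precisely what has to be \emph{proved}; it cannot be imported from the target law $\eta$. Nor is infinitesimality the mechanism that rules out a spurious Gaussian part: for mean-zero, finite-variance $\mu$ and $A_n=n^{-1/2}I$ the array $\{A_n\xi_k\}$ is infinitesimal and $n\,\mu\circ A_n^{-1}\to 0$ vaguely, yet $A_nS_n$ converges to a Gaussian law rather than to the (degenerate) law with L\'evy measure $0$. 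This is not a counterexample to the proposition (there $\nu=0$ and $\eta$ is not full), but it shows that the reason you offer cannot close the argument. What actually closes it in \cite{MS1} is the combination of the i.i.d.\ structure and fullness: since the same $\mu$ appears in every row, vague convergence for all $n$ forces, via convergence-of-types theorems, regular variation of the norming operators $A_n$, and a Karamata-type estimate along the scales $A_n$ then shows $\lim_{\eps\to0}\limsup_n n\int_{\{|z|\le\eps\}}|z|^2\,d(\mu\circ A_n^{-1})=0$. Without some version of that argument (or an explicit appeal to the truncated-covariance condition and a proof that it holds), the ``if'' direction of your proof is incomplete.
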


Here,  $\mu \circ A_n^{-1} $ is the probability measure on $\R^d$ defined by
$$
 \mu \circ A_n^{-1} (B) = \mu (\{ x\in \R^d: A_n x \in B\})  \quad
\hbox{for every } B\in {\mathcal B} (\R^d ).
$$
 Denote by $C_c(\R^d \setminus \{0\})$ the space of continuous functions  on $\R^d\setminus \{0\}$  with compact support. 
Then  \eqref{e:2.2a} means that 
\begin{equation}\label{e:2.2b}  
\lim_{n\to \infty} n  \int_{\R^d} f(A_n x)  \mu (dx) =  \int_{\R^d} f(x)   \nu (dx)
\quad \hbox{for any }  f \in C_c(\R^d \setminus \{0\}),
\end{equation}
  Note that from the L\'evy-Khintchine formula \eqref{e:1.1}, two infinitely divisible random variables without Gaussian components
and having the same L\'evy measure $\nu$ can only differ by a constant vector.

If \eqref{e:2.2a} holds,  we say the L\'evy measure $\nu$ is  operator-stable (see below) and
the measure $\mu$ (or equivalently $\xi_1$)   belongs to the generalized domain of
attraction of $\eta$ (or  $\nu $, by abuse of language). The matrix
$A_n$ is automatically invertible for all large  $n$. See \cite[Lemma 3.3.25]{MS1}.

\medskip

\begin{rem} \label{R:1.2} \rm Suppose \eqref{e:2.2a} holds with the L\'evy measure $\nu$ not supported in a   $(d-1)$-dimensional  vector subspace and  $\mu$ being  symmetric (that is, $\mu (A)=\mu (-A)$).
\begin{enumerate}
\item[(i)] The vector   $b_n$ in Proposition \ref{P:1.1} can be taken
to be the zero vector in $\R^d$ and the limiting distribution $\eta $ is symmetric.
This is because in this case, $\{S_n; n\in \bN\}$ has the same distribution as $\{-S_n; n\in \bN\}$,
and so, $\{A_nS_n - b_n; n\in \bN\}$ has the same distribution as $\{-(A_n S_n +b_n); n\in \bN\}$.
Consequently, $\{A_n S_n -b_n; n\in \bN\}$ also converges weakly.
It then follows from the characterization of weak convergence
that  $\{A_nS_n  ; n\in \bN\}$ converges weakly to a symmetric random variable $\eta$.

\item[(ii)]   By \cite[Theorem  8.1.5]{MS1} and its proof,
there  are a sequence of invertible $d\times d$ matrices $ (M_n)_{n\geq 1}$ that keeps the distribution of
$\eta $ invariant
(that is, $   M_n \eta$ has the same distribution as $\eta$   for each $n\geq 1$)
and a $d\times d$-matrix $E$ with real entries such that $\wt A_n:= M_n A_n$   satisfies 
\begin{equation}\label{e:1.4}
\lim_{n\to \infty} \wt A_{[\lambda n]} {\wt A_n}^{-1} =\lambda^E
\quad \hbox{for all } \lambda >0,
\end{equation}
and $  {\wt A_n}S_n  $   converges in distribution to $\eta$    as $n\to \infty$.
 Here, $[a]$ stands for the largest integer not exceeding the real $a$.

 Using the independent
stationary increments property of random walks, we can easily deduce from Proposition \ref{P:1.1}that
both $\{  A_n S_{[nt]}; t\geq 0\}$ and
$\{ \wt A_n S_{[nt]}; t\geq 0\}$ converge  in finite dimensional distributions to the symmetric
L\'evy process $Z=\{Z_t; t\geq 0\}$ with $Z_1$ having the same distribution as $\eta $;
 see the proof of Proposition \ref{P:1.3}.  
  Further, $Z$ has the following scaling property by \eqref{e:1.4}:
 for any $\lambda >0$,
$$
\{Z_{\lambda t}; t\geq 0\} \hbox{ has the same distribution as }
\lambda^E Z=\{ \lambda^E Z_t; t\geq 0\} .
$$
See \cite[Example 11.2.18]{MS1} and \cite[p.625]{MS2}.
For this reason, the L\'evy process $Z$ is called an operator-stable process (or operator-L\'evy motion)
  and its L\'evy measure, $\nu$, is also said to be operator-stable 
in the literature. If $E= \alpha^{-1} I_{d\times d}$, where $I_{d\times d}$ denotes the $d\times d$ identity matrix,
$\lambda^{E}= \lambda^{1/\alpha} I_{d\times d}$. In this case, $Z$ is an $\alpha$-stable L\'evy process on $\R^d$.

\item[(iii)] The matrices $\{A_n; n\in \bN \}$ and the limiting L\'evy measure $\nu$ in \eqref{e:2.2a}
are not unique. Suppose \eqref{e:2.2a} holds. Then for any non-degenerate matrix $M$,
we clearly have that $n  \, \mu \circ (MA_n)^{-1} $ converges   vaguely to $\nu \circ M^{-1}$ on
$ \R^d \setminus \{0\}$. Thus $\nu$ depends not only on $\mu$ but also on the ``dilation structure" $A_n$.
  \qed
\end{enumerate}
\end{rem}

Denote by $\D([0, \infty); \R^d)$ the space of right continuous $\R^d$-valued functions on $[0, \infty)$
having left limits. We refer the reader to \cite{GS80}
for the definition of ${\cal J}_1$-topology on the Skorohod space $\D([0, \infty); \R^d)$.

\begin{pro}\label{P:1.3}
 Suppose that the one step distribution $\mu$ of the random walk $\{S_n; n=0, 1, 2, \ldots\}$
is symmetric and satisfies condition \eqref{e:2.2a}. Let $\eta$ be an infinitely divisible symmetric probability distribution with no Gaussian component and L\'evy measure $\nu$.
Let $Z=\{Z_t; t\geq 0\}$ be the symmetric
  L\'evy process on $\R^d$ so that $Z_1$ has distribution $\eta$.
 Then $\{ A_n S_{[nt]}; t\geq 0\}$ converges   weakly in the Skorohod space $\D([0, \infty); \R^d)$
equipped with ${\cal J}_1$-topology to the L\'evy process $Z$ as $n\to \infty$.
\end{pro}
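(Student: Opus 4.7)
The plan is the classical two-step Donsker framework for L\'evy processes: first establish convergence of finite-dimensional distributions of $\{A_n S_{[nt]}\}_{t\geq 0}$ to those of $Z$, then upgrade this to weak convergence in $\D([0,\infty);\R^d)$ equipped with the $\mathcal{J}_1$-topology using the fact that both the prelimit and the limit have independent increments.

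For the f.d.d.\ step, I would work at the level of characteristic functions. By symmetry of $\mu$ and independence of the steps $\xi_k$,
\begin{equation*}
\E\bigl[e^{i\lambda \cdot A_n S_{[nt]}}\bigr]
= \bigl(\hat\mu(A_n^{\top}\lambda)\bigr)^{[nt]},
\end{equation*}
so after a standard Taylor expansion of $\log(1-x)$ it suffices to prove the key analytic identity
\begin{equation*}
\lim_{n\to\infty} n\bigl(1-\hat\mu(A_n^{\top}\lambda)\bigr)
= \phi(\lambda) = \int_{\R^d}(1-\cos(\lambda\cdot z))\,\nu(dz)
\end{equation*}
for each $\lambda\in\R^d$, where $\phi$ is the L\'evy exponent of $\eta$ (which, since $\eta$ is symmetric with no Gaussian part, reduces to the above cosine integral by \eqref{e:1.1}). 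Rewriting the left side by symmetry as $\int(1-\cos(\lambda\cdot y))\, n(\mu\circ A_n^{-1})(dy)$, I would split the integrand over $\{|y|\leq\varepsilon\}$ and $\{|y|>\varepsilon\}$. The large-$y$ part is handled by the vague convergence \eqref{e:2.2b} applied to continuous compactly supported approximations of $(1-\cos(\lambda\cdot y))\1_{\{|y|>\varepsilon\}}$, together with a truncation at infinity at a continuity point of $\nu$ using the bound $1-\cos\leq 2$. The small-$y$ part is controlled via $|1-\cos(\lambda\cdot y)|\leq \tfrac12|\lambda|^2|y|^2$ combined with the uniform bound $\limsup_n n\int_{\{|y|\leq\varepsilon\}}|y|^2(\mu\circ A_n^{-1})(dy)\to 0$ as $\varepsilon\downarrow 0$, a standard consequence of the domain-of-attraction hypothesis together with $\int(1\wedge|z|^2)\,\nu(dz)<\infty$. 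Exponentiation then yields $\E[e^{i\lambda\cdot A_n S_{[nt]}}]\to e^{-t\phi(\lambda)}=\E[e^{i\lambda\cdot Z_t}]$. For joint distributions at $0<t_1<\cdots<t_k$, the same argument applied to the independent increments $A_n(S_{[nt_j]}-S_{[nt_{j-1}]})$, each a sum of $[nt_j]-[nt_{j-1}]\sim n(t_j-t_{j-1})$ i.i.d.\ copies of $A_n\xi_1$, together with independence of the resulting limits, gives joint convergence to $(Z_{t_1},\ldots,Z_{t_k})$.

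For tightness in the Skorohod space, since each $t\mapsto A_n S_{[nt]}$ has independent increments and the limit $Z$ is a stochastically continuous L\'evy process with cadlag paths, the passage from f.d.d.\ convergence to weak convergence in the $\mathcal{J}_1$-topology is classical and can be extracted from standard theorems on convergence of processes with independent increments; alternatively one can verify Aldous's criterion using the stationarity of the increments and the characteristic-function estimates above. The main technical obstacle I anticipate is the uniform small-jump control $\limsup_n n\int_{\{|y|\leq\varepsilon\}}|y|^2(\mu\circ A_n^{-1})(dy)\to 0$; once this is in hand, the remainder is essentially bookkeeping around the L\'evy-Khintchine formula and classical Skorohod-convergence results.
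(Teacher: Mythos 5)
Your proposal is correct in outline, but it takes a genuinely different route from the paper. The paper does not redo any triangular-array analysis: it sets $\wt A_n=M_nA_n$ as in Remark \ref{R:1.2}(ii), where the $M_n$ are invertible matrices preserving the law of $\eta$, quotes \cite[Theorem 4.1]{MS2} to get weak convergence of $\{\wt A_n S_{[nt]};t\ge0\}$ to $Z$ in the ${\cal J}_1$-topology, and then uses relative compactness of $(M_n)_{n\ge1}$ in the invertible matrices (\cite[Theorem 3.2.10]{MS1}) together with a subsequence/sub-subsequence argument: along a sub-subsequence $M_{n'}\to M$, the matrix $M$ preserves the law of $\eta$, hence $M^{-1}Z$ has the law of $Z$, and since $A_n=M_n^{-1}\wt A_n$ this transfers the functional convergence back to $A_n$. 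Your argument instead proves the statement from scratch: characteristic functions and the limit $n\bigl(1-\hat\mu(A_n^{\top}\lambda)\bigr)\to\phi(\lambda)$ give the finite-dimensional distributions, and classical theorems for rescaled random walks with independent increments upgrade this to ${\cal J}_1$-convergence. The paper's route buys brevity and outsources all analytic estimates to the operator-stable literature; your route buys self-containedness and avoids the symmetry-group machinery behind $\wt A_n$ entirely.

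One step in your write-up needs more care than you give it. The small-jump control $\lim_{\varepsilon\to0}\limsup_n n\int_{\{|y|\le\varepsilon\}}|y|^2\,(\mu\circ A_n^{-1})(dy)=0$ and the tail control $\lim_{R\to\infty}\limsup_n n\,(\mu\circ A_n^{-1})(\{|y|>R\})=0$ do \emph{not} follow from the $C_c(\R^d\setminus\{0\})$-vague convergence \eqref{e:2.2a} alone, since that condition says nothing about mass near $0$ or near infinity. They do follow once you use the distributional convergence $A_nS_n\Rightarrow\eta$, which is equivalent to \eqref{e:2.2a} by Proposition \ref{P:1.1} and Remark \ref{R:1.2}(i): the necessity half of the classical triangular-array convergence criterion gives the vanishing truncated variances (and by symmetry of $\mu\circ A_n^{-1}$ the truncated variance equals the truncated second moment), while the tail bound can be extracted from tightness of $A_nS_n$ via L\'evy's maximal inequality for symmetric summands. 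Spelling this out would close the only real gap in your argument.
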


\proof  Let $\wt A_n=M_n A_n$ be defined as in Remark \ref{R:1.2}(ii),  where $(M_n)_{n\geq 1}$ is a sequence of invertible matrices that keeps
the distribution of  $\eta$ invariant. We know from \cite[Theorem 4.1]{MS2}
that  $\{ \wt A_n S_{[nt]}; t\geq 0\}$ converges   weakly in the Skorohod space $\D([0, \infty); \R^d)$
equipped with ${\cal J}_1$-topology to $Z$ as $n\to \infty$.
By \cite[Theorem 3.2.10]{MS1}, $(M_n)_{n\geq 1}$ is relatively compact in the spaces of invertible $d\times d$-matrices.
Thus for any subsequence of $(n)_{n\geq 1}$, there is a sub-subsequence $(n')_{n'\ge1}$  so that $M_{n'}$ converges to
a non-degenerate $d\times d$-matrix $M$ that also keeps the distribution of $\eta$ and hence its
 L\'evy measure $\nu$ invariant. Note that  $A_n = M_n^{-1} \wt A_n$ and the L\'evy process
 $M^{-1}Z$ is of the same distribution as that of $Z$.
It follows that   $\{   A_{n'} S_{[n't]}; t\geq 0\} $ converges
weakly in the Skorohod space $\D([0, \infty); \R^d)$
equipped with ${\cal J}_1$-topology to $ Z$ as $n'\to \infty$.
Since this holds for any subsequence of $n$,  we conclude that $\{   A_{n} S_{[nt]}; t\geq 0\} $
converges weakly to $Z$ as  $n\to \infty$.   \qed

\medskip

The two propositions above and the accompanying remarks  tell us that, if we expect that a given symmetric measure $\mu$ on $\mathbb Z^d$ drives a random walk whose functional limit process  $Z=\{Z_t; t\geq 0\}$ has no Gaussian part and L\'evy measure $\nu$, we should concentrate on finding the  sequence of invertible matrices $A_n$ such that
(\ref{e:2.2a}) holds. Indeed, that property is necessary and sufficient for the desired limit theorems to hold.

\subsection{Illustrative examples on nilpotent matrix groups}\label{S:1.2} 

In this section,  we describe some
illustrative examples, let us emphasize that, although one can easily formulate  versions of Proposition \ref{P:1.1} in the context of certain nilpotent groups, it is not known if such generalizations hold true.  In a similar vein, in $\mathbb R^d$, a full operator-stable L\'evy process  always admits a smooth density whereas in the context of nilpotent group, it is not known if a full  operator-stable L\'evy process always has a density. For details on how to formulate these questions more precisely on nilpotent groups, see \cite[Chapter 2]{Hazod2001}.

 \begin{exa} \label{E:1.4}  \rm
In this example,  we consider a random walk on $\Z^3$ with  i.i.d.\ steps $\xi_k$ distributed according to the probability measure $\mu$ concentrated along the coordinate axes of $\Z^3$  given by
\begin{align*}
\mu  ((i_1, i_2, i_3))   = &   \frac{\kappa_1}{(1+|i_1|)^{1+\alpha_1}}  \1_{\{i_2=i_3=0\}} +
 \frac{\kappa_2}{(1+|i_2|)^{1+\alpha_2}}  \1_{\{i_1=i_3=0\}}  \\
 &  +   \frac{\kappa_3}{(1+|i_3|)^{1+\alpha_3}}  \1_{\{i_1=i_2=0\}}
 \quad \hbox{for } (i_1, i_2, i_3 ) \in \Z^3 \setminus \{ {0} \} .
\end{align*}
We assume $\alpha_i\in (0,2)$, $i=1,2,3$. Let
$  A_n=
\begin{pmatrix}
    n^{-1/\alpha_1}      & 0  & 0    \\
    0      & n^{-1/ \alpha_2} & 0 \\
    0     & 0 &    n^{-1/ \alpha_3}
 \end{pmatrix}
$.
It is easy to check that
$$
n \mu \circ A_n^{-1} \ \hbox{ converges vaguely to }  \nu     \hbox{ on } \R^3 \setminus \{0\},
$$
where
$$
\nu (dx) = \sum_{i=1}^3  \frac{\kappa_i}{|x_i |^{1+\alpha_i}} dx_i
 \otimes_{j\in\{1,2,3\}\setminus\{ i\} }
\delta_{\{0\}} (dx_j) .
$$
Here $\delta_{\{0\}} $ is the Dirac measure concentrated at $0$.
Since $\mu$ is symmetric, by Proposition \ref{P:1.1}and Remark \ref{R:1.2}(i),
$ A_n S_n $ converges weakly  to a random vector whose distribution is  symmetric infinite divisible with no Gaussian part
 and L\'evy measure $\nu$.
 By Proposition \ref{P:1.3},
$\{ A_n S_{[nt]}; t\geq 0\} $ converges weakly in the Skorohod space $\D( [0, \infty); \R^3)$
 equipped with ${\cal J}_1$-topology  to the purely discontinuous symmetric L\'evy process $Z=\{Z_t; t\geq 0\}$ having  $\nu$ as its L\'evy measure. Note that the coordinate processes of  $Z=(Z^{(i)})_1^3$ are independent with $Z^{(i)}$ being a one-dimensional  symmetric $\alpha_i$-stable process, $1
\le i\le 3$.

 As pointed out earlier in Remark \ref{R:1.2}(iii),
it is worth noting that the choice of $A_n$ above is not unique even though it seems most natural in this example. To simplify the discussion, assume that $\alpha_1 <\alpha_2<\alpha_3$. Let $(e_1,e_2,e_3)$ be the canonical basis of $\mathbb R^3$ used implicitly above. Construct a linear operator $B_n$ as follows. First,
set $B_ne_1=n^{-1/\alpha_1}e_1$. Second, pick an arbitrary vector  $e'_2$ which is linearly independent from $e_1$ and belongs to the plane spanned by $e_1$ and $e_2$, and set
$B_ne'_2=n^{-1/\alpha_2}e'_2$.  Finally, pick an arbitrary non-zero vector $e'_3$ that does not belong to the plane
  spanned by $e_1$ and $e_2$, and set $B_n e'_3=n^{-1/\alpha_3}e'_3$.  Then $n\mu\circ B_n^{-1}$ converges vaguely to a L\'evy measure
$\nu'$ having essentially the same form as $\nu$ but carried by the axes associated with $e_1,e'_2,e'_3$.
 More precisely,
  $$
\nu '  (dx) =  \sum_{i=1}^3  \frac{c_i}{|x'_i |^{1+\alpha_i}} dx'_i
 \otimes_{j\in\{1,2,3\}\setminus\{ i\} }
\delta_{\{0\}} (dx'_j) ,
$$
where $(x_1', x_2', x_3')$ is the coordinate  of $x\in \R^3$  under the coordinate system $(e_1, e_2', e_3')$. 
  Note that the L\'evy measure $\nu'$ is thus a linear transformation of $\nu$.

\end{exa}

\medskip

 \begin{exa} [Random walk on the Heisenberg group $\HH_3 (\Z)$]  \label{E:1.5} \rm
Recall that the discrete  Heisenberg group $\HH_3 (\Z)$ is the family of upper triangle matrices of the form
  $
\begin{pmatrix}
    1      & x  & z    \\
    0      & 1 & y   \\
     0     & 0   & 1
\end{pmatrix} $, with $x, y, z\in \Z$,
equipped with matrix multiplication; that is
$$
\begin{pmatrix}
    1      & x_1 & z_1   \\
    0      & 1 & y _1  \\
     0     & 0   & 1
\end{pmatrix}
\cdot
\begin{pmatrix}
    1      & x_2 & z_2   \\
    0      & 1 & y _2  \\
     0     & 0   & 1
\end{pmatrix}
=
\begin{pmatrix}
    1      & x_1+x_2 & z_1 +z_2+ x_1 y_2  \\
    0      & 1 & y _1 +y_2 \\
     0     & 0   & 1
\end{pmatrix} .
$$
For $a=
\begin{pmatrix}
    1      & x  & z    \\
    0      & 1 & y   \\
     0     & 0   & 1
\end{pmatrix}$,  its inverse $a^{-1}$ is  
$ \begin{pmatrix}
    1      & - x & xy-z   \\
    0      & 1 &  -y   \\
     0     & 0   & 1
\end{pmatrix}$.
If we identify matrix $a$ with $(x, y, z)$, then the discrete  Heisenberg group $\HH_3 (\Z)$ can be identified with $\Z^3$
equipped with the group multiplication
\begin{equation} \label{e:3.1}
(x_1, y_1, z_1) \cdot (x_2, y_2, z_2) := (x_1+x_2, y_1+y_2, z_1+z_2 + x_1 y_2).
\end{equation}
We will use this realization of $\HH_3 (\Z)$. This is one of the simplest example of a non-abelian nilpotent group.

Let $e_1=(1, 0, 0)$, $e_2=(0, 1, 0)$ and $e_3=(0, 0, 1)$, which are generators of $\HH_3 (\Z)$.
Note that for $k\in \Z\setminus \{0\}$, $e_1^k=(k, 0, 0)$, $e_2=(0, k, 0)$ and $e_3^k=(0, 0, k)$.
Let  $\alpha_k \in (0, 2)$ be a constant, $1\leq k  \leq 3$,   and write $ \boldsymbol   \alpha=(\alpha_1, \alpha_2, \alpha_3)$.
Consider the following probability measure on $\HH_3 (\Z)=\Z^3$:
$$
\mu_{ \boldsymbol \alpha}(g)= \sum_{i=1}^3 \sum_{n\in \Z}\frac{\kappa_i}{(1+|n|)^{1+\alpha_i}}\1_{\{e_i^n\}}(g),\quad g\in \HH_3 (\Z)  ,
$$
where $\kappa_j$, $1\leq j\leq 3$, are appropriate positive constants.
Let
$$
\Big( \xi_k =(\xi_k^{(1)}, \xi_k^{(2)}, \xi_k^{(3)}) \Big)_{ k\geq 1 }
$$
 be an i.i.d sequence of random variables taking values in $\HH_3 (\Z)$ of distribution $\mu_{ \boldsymbol \alpha}$. Then
$$
S_n =S_0 \cdot \xi_1 \cdot \ldots \cdot \xi_n, \quad n\geq 1,
$$
 defines a random walk on the Heisenberg group $\HH_3 (\Z)$.
Write  $S_n$ as $(X_n, Y_n, Z_n)$. By \eqref{e:3.1},
\begin{equation}\label{e:3.2}
X_{n+1}=X_n + \xi_{n+1}^{(1)}, \quad Y_{n+1}=Y_n + \xi_{n+1}^{(2)},
\quad
Z_{n+1}=Z_n + \xi_{n+1}^{(3)} + X_n \xi_{n+1}^{(2)}.
\end{equation}
If we define $\wh Z_n=Z_0+\sum_{k=1}^n \xi_k^{(3)}$, then
\begin{equation}\label{e:3.3}
Z_n=\wh Z_n + \sum_{k=1}^n X_{k-1} \xi_k^{(2)}
=\wh Z_n + \sum_{k=1}^n X_{k-1} (Y_k-Y_{k-1}), \quad n\geq 1.
\end{equation}

We know from Example \ref{E:1.4}
that
\begin{equation}\label{e:3.4}
\Big\{ \left(  n^{-1/\alpha_1} X_{[nt]},  \,  n^{-1/\alpha_2} Y_{[nt]},  \, n^{-1/\alpha_3} \wh Z_{[nt]}, \right) ; t\geq 0 \Big\}
\Longrightarrow \{(\bar X_t,  \bar Y_t, \bar Z_t), \, t\geq 0\}
\end{equation}
weakly in the Skorohod space $\D ([0, \infty), \R^3)$ equipped with ${\cal J}_1$-topology as $n\to \infty$,
where   $\bar X$, $\bar Y$, $\bar Z$  are symmetric $\alpha_1$-, $\alpha_2$- and $\alpha_3$-stable  processes on $\R$, respectively,
 and they are independent.
 For simplicity, let
$$
\wt X^n_t:=  n^{-1/\alpha_1} X_{[nt]}, \quad   \wt Y^n_t :=n^{-1/\alpha_2} Y_{[nt]}
 \quad \hbox{and} \quad \wt Z^n_t :=n^{-1/\alpha_2} \wh Z_{[nt]}.
$$

Now, we can use the following key facts.  L\'evy processes are semimartingales so stochastic integrals such as L\'evy area
$$\int_0^t  \bar X_{s-} d\bar Y_s$$   are well-defined. Furthermore, \cite[Theorem 7.10]{KP}  shows that
 $$
\left\{ \Big(   \wt X^n_t, \wt Y^n_t,   \wt Z^n_t,   \int_0^t  \wt X^n_{s-} d \wt Y^n_s  \Big); \,  t\geq 0 \right\}
$$
converges weakly in the Skorohod space $\D([0, \infty);    \R^4)$ equipped with ${\cal J}_1$-topology  as $n\to \infty$ to
\begin{equation}\label{e:3.5}
\left\{  \left( \bar X_t, \bar Y_t,  \bar Z_t, \int_0^t \bar X_{s-} d \bar Y_s \right); \,  t\geq 0 \right\} .
\end{equation}

Indeed, to prove (\ref{e:3.5}), for any $\delta >0$, let  $h_\delta (r)=(1-\delta /r)^+$.
Define
$$
\wt X^{n, \delta}_t = \wt X^n_t -\sum_{0<s\leq t} h_\delta (|\Delta \wt X^n_s|) \Delta \wt X^n_s
\quad \hbox{and} \quad
\wt Y^{n, \delta}_t = \wt Y^n_t -\sum_{0<s\leq t} h_\delta (|\Delta \wt Y^n_s|) \Delta \wt Y^n_s.
$$
One can define $\wt Z^{n, \delta}$ in a similar way.
Observe that $\wt X^{n, \delta}$ ,  $\wt Y^{n, \delta }$ and $\wt Z^{n, \delta }$ are again   symmetric random walks but with i.i.d step  sizes
$$
\left\{  \left(1-h_\delta ( n^{- 1/\alpha_j} n^{- 1/\alpha_j}\xi^{(j)}_k) \right)
   n^{- 1/\alpha_j}  
\xi^{(j)}_k ; k\geq 1 \right\} \quad \hbox{for }
j=1, 2, 3,
$$
respectively.
 Let $[\wt X^{n, \delta}]$,  $[ \wt Y^{n, \delta }]$ and
$[ \wt Z^{n, \delta }]$ denote the quadratic variation processes
of  the square integrable martingales $\wt X^{n, \delta}$,  $\wt Y^{n, \delta }$ and $\wt Z^{n, \delta }$, respectively.
Note that \begin{eqnarray*}
&& \E \left(  \big[ \wt X^{n, \delta}\big]_t  \right) \\
&&=    [nt] \, \E \left[ \left(1-h_\delta ( n^{- 1/\alpha_1} \xi^{(1)}_1 )\right)^2
 \left(n^{-1/\alpha_1} \xi^{(1)}_1 \right)^2   \right] \\
&&\leq    c_1 \kappa_1 n^{-2/\alpha_1}  [nt]  \left(  \sum_{k=1}^{[n^{1/\alpha_1} \delta]}  \frac{k^2}{(1+k)^{1+\alpha_1}}  +
  \sum_{[n^{1/\alpha_1} \delta] +1}^\infty \frac{n^{2/\alpha_1} \delta^2}{k^2} \frac{k^2}{(1+k)^{1+\alpha_1}}  \right)\\
  &&\leq  c_1 \kappa_1 n^{-2/\alpha_1}  [nt]  \left(  \frac{[n^{1/\alpha_1} \delta]^{2-\alpha_1}}{2-\alpha_1} +
      \frac{n^{2/\alpha_1} \delta^2}{\alpha_1 (1+[n^{1/\alpha_1} \delta])^{\alpha_1}}      \right)\\
     &&\leq   c_1 \kappa_1 n^{1 -2/\alpha_1}   t  \left( \frac{(n^{1/\alpha_1} \delta)^{2-\alpha_1}}{2-\alpha_1} +
     \frac{n^{2/\alpha_1} \delta^2}{\alpha_1 ( n^{1/\alpha_1}  \delta )^{\alpha_1} }        \right)  \\
     &&= \frac{2 c_1 \kappa_1 \delta^{2-\alpha_1} }{\alpha_1 (2-\alpha_1)} \, t,
     \end{eqnarray*}
where  $c_1>0$ is a constant   independent of $n$ and $\delta$.
In the same way, there is a constant $c_k>0$, $k=2, 3$,  independent of $n$ and $\delta$ so that
$$ \E \left(  \big[ \wt Y^{n, \delta}\big]_t  \right)   \leq \frac{2 c_2 \kappa_2 \delta^{2-\alpha_2} }{\alpha_2 (2-\alpha_2)} \, t
\quad \hbox{and} \quad
 \E \left(  \big[ \wt Z^{n, \delta}\big]_t  \right)   \leq \frac{2 c_3 \kappa_3 \delta^{2-\alpha_3} }{\alpha_3 (2-\alpha_3)} \, t
$$
for all $n\geq 1$ and $t>0$.
So  these three sequences of square integrable martingales $\{\wt X^n; n\geq 1\}$,  $\{\wt Y^n; n\geq 1\}$
and $\{\wt Z^n; n\geq 1\}$  have uniformly controlled
variations  in the sense of \cite[Definition 7.5]{KP}.  Thus by taking  $ (\wt X^n_t, 0)$ and $(\wt Y^n, \wt Z_n)$ for  the
vector-valued process $H^n  $ and $X^n$ in  \cite[Theorem 7.10]{KP}, we conclude
$\{ (\wt X^n_t, \wt Y^n_t,   \wt Z^n_t,   \int_0^t  \wt X^n_{s-} d \wt Y^n_s); t\geq 0\}$ converges weakly in the Skorohod space
$\D([0, \infty); \R^4)$ equipped with ${\cal J}_1$-topology to $\{ (\bar X_t,  \bar Y_t, \bar Z, \int_0^t \bar X_{s-} d \bar Y_s); t\geq 0\}$.
This proves the claim \eqref{e:3.5}.

Using the almost sure Skorohod representation theorem, we can assume without loss of generality that
$$
\left\{  \Big(\wt X^n_t, \wt Y^n_t,   \wt Z^n_t,   \int_0^t  \wt X^n_{s-} d \wt Y^n_s \Big);  \ t\geq 0 \right\}
$$
 converges a.s. in the Skorohod space
$\D([0, \infty); \R^4)$ as $n\to \infty$  to
$$
\left\{  \Big(\bar X_t,  \bar Y_t, \bar Z_t, \int_0^t \bar X_{s-} d \bar Y_s \Big); \  t\geq 0 \right\}.
$$
Consequently, we have  the following conclusions. The weak convergence below
 (denoted by $\Longrightarrow$) is in the Skorohod space $\D([0, \infty); \R^3)$ equipped with
${\cal J}_1$-topology.

\medskip

(i) If $1/\alpha_3< 1/\alpha_1 +1/\alpha_2$,
\begin{eqnarray*}
\lefteqn{ \Big\{ \left(  n^{-1/\alpha_1} X_{[nt]},  \,  n^{-1/\alpha_2} Y_{[nt]},  \, n^{-1/\alpha_1 -1/\alpha_2}   Z_{[nt]} \right) ; t\geq 0 \Big\}} \hspace{1in}&& \\
& \Longrightarrow&  \Big\{  \Big( \bar X_t,  \bar Y_t, \int_0^t \bar X_{s-} d\bar Y_s \Big) ; \, t\geq 0 \Big\}
\quad \hbox{as } n\to \infty ;
\end{eqnarray*}

(ii) If $1/\alpha_3= 1/\alpha_1 +1/\alpha_2$,

\begin{eqnarray*}
\lefteqn{ \Big\{ \left(  n^{-1/\alpha_1} X_{[nt]},  \,  n^{-1/\alpha_2} Y_{[nt]},  \, n^{-1/\alpha_3}   Z_{[nt]} \right) ; t\geq 0 \Big\} }
\hspace{.5in} &&\\
 &\Longrightarrow &
 \Big\{  \Big( \bar X_t,  \bar Y_t, \bar Z_t + \int_0^t \bar X_{s-} d\bar Y_s \Big); \, t\geq 0 \Big\}
 \quad \hbox{as } n\to \infty ;
 \end{eqnarray*}

(iii) If $1/\alpha_3> 1/\alpha_1 +1/\alpha_2$,
\begin{eqnarray*}
\lefteqn{  \Big\{ \left(  n^{-1/\alpha_1} X_{[nt]},  \,  n^{-1/\alpha_2} Y_{[nt]},  \, n^{-1/\alpha_3}   Z_{[nt]} \right) ; t\geq 0 \Big\} }\hspace{1in}&&\\
&  \Longrightarrow & \Big\{  \Big( \bar X_t,  \bar Y_t, \bar Z_t   \Big) ; \, t \geq 0 \Big\} \quad \hbox{as } n\to \infty.   \end{eqnarray*}
  \end{exa}

Let us interpret the results above in group theoretical terms. In the treatment above, we have taken
the coordinate components of the measure $\mu$ and considered the one-dimensional random walks, $X$, $Y$, $Z$, independently of each other.  We have then reconstructed the group law effect of the random walk on $\mathbb H_3(\mathbb Z)$ by considering the L\'evy area generated by the $X$ and $Y$ components. This is easy to do in this case because the $Z$ component commutes with anything else (it is in the center of the group).  Now, the renormalization process involved   
 is to make particular somewhat ad-hoc choices of scalings.

 In the first two cases, (i)-(ii), we used  the anisotropic dilations
$$ \delta_t((x,y,z))=(t^{-1/\alpha_1} x,t^{-1/\alpha_2}y,t^{-(1/\alpha_1+1/\alpha_2)}z), \quad t>0.$$
This one parameter group of diffeomorphisms  has the very special property of being a one parameter group of
 automophisms of $\mathbb H_3(\mathbb R)$. That is,
$$
\delta_t((x,y,z)\cdot (x',y',z'))=\delta_t((x,y,z))\cdot\delta_t ((x',y',z')).
$$
 The consequence of this property is that the limit processes obtained above,
$ \Big\{  \Big( \bar X_t,  \bar Y_t,  \int_0^t \bar X_{s-} d\bar Y_s \Big); t\geq 0 \Big\} $ in case (i),  $\Big\{  \Big( \bar X_t,  \bar Y_t, \bar Z_t + \int_0^t \bar X_{s-} d\bar Y_s \Big);  t\geq 0 \Big\} $ in case (ii), are  symmetric L\'evy processes on the real nilpotent group $\mathbb H_3(\mathbb R)$ which are operator-stable with respect to the one parameter group of automorphisms $\{\delta_t
: t>0\}$. See \cite[Chapter 2, Definition 2.3.13]{Hazod2001}.

In the third case when $1/\alpha+1/\alpha_2<1/\alpha_3$, we used
$$
\delta_t((x,y,z))=(t^{-1/\alpha_1} x,t^{-1/\alpha_2}y,t^{-1/\alpha_3}z), \quad  t>0.
$$
These diffeomorphisms  are not automorphisms of $\mathbb H_3(\mathbb R)$ and it follows that using them
in rescaling the random walk driven by $\mu$ on $\mathbb H_3(\mathbb Z)\subset \mathbb H_3(\mathbb R)$ produces a non-trivial change in the underlying group structure. This is visible in the nature of the limiting process,
$ \Big\{  \Big( \bar X_t,  \bar Y_t, \bar Z_t   \Big) ; \, t \geq 0 \Big\} $, which is not a L\'evy process on $\mathbb H_3(\mathbb R)$ but a L\'evy process on the abelian group $\mathbb R^3$.

Although it is certainly possible to push this approach further in specific examples, there are serious difficulties in treating large classes of examples in this way. For this reason, the approach presented in this monograph is quite different. It does not involve explicitly the stochastic calculus involved in studying the L\'evy area and higher degree functionals of the same type that are known to appear when expressing random walks on nilpotent groups in coordinates.  The interested reader might try the following two informal exercises before reading further.

 \medskip

\begin{exer} \label{Exer:1} \rm 
Pick a tuple of 10 elements $(s_1,\dots,s_{10})$ in either $\mathbb Z^3$ or in
$\mathbb H_3(\mathbb Z)$, $s_i=(x_i,y_i,z_i)$,  and a tuple of ten reals $\alpha_i\in (0,2)$, $1\le i\le 10$.
 Consider the probability measure
 $$\mu(g)= \sum_{i=1}^{10} \sum_{n\in \Z}\frac{\kappa_i}{(1+|n|)^{1+\alpha_i}}\1_{\{s_i^n\}}(g).$$
 What to do to formulate a limit theorem? in $\mathbb Z^3$? in $\mathbb H_3(\mathbb Z)$?
\end{exer} 

 \medskip

\begin{exer} \label{Exer:2} \rm 
Repeat Exercise \ref{Exer:1}  with $\mathbb H_3(\mathbb Z)$ replaced by the group of four by four upper-triangular matrices with diagonal entries equal to $1$ (this group is nilpotent).
\end{exer}

\medskip

As the reader will see,  the approach developed in this work is amenable to detailed computation in concrete cases. Using the theory developed in this monograph, we will revisit Example \ref{E:1.5} in 
 Subsection  \ref{S:7.3}.

\medskip

We close this preliminary section by describing the organization of this monograph. The next section  provides an introduction to our main results while avoiding most technical details. In particular, Subsection \ref{sec-IntroExa} describes special cases which we hope the reader will find both interesting and informative, and Subsection \ref{intro-prior} discusses prior results. 
  Subsection  \ref{S:3.1} introduces polynomial coordinate systems and the key notions of group dilation and approximate group dilation relative to such a coordinate system.  Approximate group dilations
   lead 
 to the definition of ``limit group structures'' and we present some basic properties of these limit group structures that are important for our purpose.  Section \ref{3-4weak}
  introduces   the vague convergence of a probability measures under rescaling by an approximate group  dilation   and how the vague limit and the limit group structure interact  (see Proposition
\ref{weaklimmeas}).  Section \ref{sect-weakc}  describes our main technical results concerning functional limit theorem. It identifies a list of strong hypotheses that allows us to state such a theorem. See Theorem \ref{WT1}.  Section \ref{sec-LCLT}
  presents the corresponding local limit theorem, Theorem \ref{localCLT}. Section 7 describes how to identify in concrete terms (in coordinates), the limiting L\'evy process (on the associated  limit group).
They are then used   together with the main results of this monograph to give several examples on the weak convergence of long range random walks
on various  nilpotent groups.  
Section  \ref{sec-stab} describes the main class of probability measures, $\mathcal{SM}$, to which we want to apply the results obtained in previous   sections.
Section \ref{S:9}   shows
 how to choose appropriate coordinate systems and dilations for measures in $\mathcal{SM}$ whereas Section \ref{S:10}
demonstrates that the hypotheses needed in Sections \ref{sect-weakc}-\ref{sec-LCLT} are  essentially satisfied  by measures in $\mathcal{SM}$.

\subsubsection*{Notation}We use $:=$ as a way of definition.
For $a, b\in \R$, $a\wedge b:= \min\{a, b\}$.
We use  $\delta_{\{x_0\}}$ to denote the Dirac measure concentrated at $x_0\in \R^d$,
and $\1_A$ for the indicator function of a Borel measurable set $A\subset \R^d$.
For an open subset $D\subset \R^d$,  the space of bounded continuous functions on $D$ and the space
of    continuous functions on $D$ with compact support  will be denoted by $C_b(D)$ and $C_c(D)$, respectively.

\section{Introduction}
\subsection{Basic question}

The aim of this work is to prove limit theorems for a class of random walks on nilpotent groups driven by
{ probability  measures allowing for long jumps in certain directions.  The class of   probability  measures we study can be described roughly as follows.  Let $\Gamma$ be a finitely generated nilpotent group with neutral element $e$.  Assume that we are given a finite family of subgroups of $\Gamma$, $H_1,\dots,H_k$, each equipped with a finite symmetric generating set $S_i$ and the associated word-length $|\cdot|_{H_i,S_i}=|\cdot|_i$.  For each $i\in \{1,\dots,k\}$, fix $\alpha_i\in (0,2)$.
On each $H_i$, set $V_i(r)=\#\{g\in  H_i:  |g|_i\le r\}$
and consider the probability measure  on $H_i$:
$$\mu_i(g)= \frac{c_i}{(1+|g|_i)^{\alpha_i} V_i(|g|_i)},
\quad   g\in H_i.  
$$
 Now, on $\Gamma$, consider the symmetric probability measure
 $$\mu=
    \sum_{i=1}^k  \lambda_i \mu_i,  
 $$
  where $\lambda_i$'s  are positive constants with $\sum_{i=1}^k \lambda_i=1$. 
 The class of measures we will treat is slightly larger than what we just described. Two special cases of this construction  are particularly compelling. The first is the case when $k=1$, $H_1=\Gamma$, $S_1=S$ is a finite symmetric generating set for $\Gamma$, and $\mu(g)=\frac{c_\Gamma}{(1+|g|_S)^\alpha
    V_{\Gamma} (|g|_S)} $.
 This is reminiscent of radially symmetric $\alpha$-stable process. The second is the case when each $H_i$ is an infinite cyclic subgroup in $\Gamma$, a case reminiscent of more singular symmetric
     operator-stable  process whose coordinate processes are independent to each other.  See \cite{ChKa,JuMa,MS1,SCZ-nil}.

In an earlier work  \cite{CKSWZ1},
we proved that there  are 
  a   positive constant $\gamma_0=\gamma_0(\mu)$ 
(which can be computed relatively easily from the data) and positive constants
$c=c(\mu),C=C(\mu)$ such that
 $$
  cn^{-\gamma_0}\le \mu^{(n)}(e) \le Cn^{-\gamma_0}.  
$$
Here $\mu^{(n)}$ is the $n$-fold convolution power of the measure $\mu$.
One motivation for the present work is to provide the more precise asymptotic
$$ \lim_{n\ra \infty}  n^{\gamma_0} \mu^{(n)}(e) =a(\mu)$$
with, hopefully, a description of the constant $a(\mu)$. One classical approach to such problems is to find a way to rescale the random walk on $\Gamma$ so as to obtain some sort of limit theorem proving convergence of the law of the rescaled random walk towards the law of a limit process on an appropriate limit space. Typically,
the limit space and the limit process will have some self-similarity properties with respect to some scaling structure.  In the most classical cases, e.g., when $\mu$ is a  symmetric probability measure on $\mathbb Z^d$ which drives a  symmetric random walk  converging towards  some symmetric stable process on $\mathbb R^d$, the limit space supporting the limit process, and its group law, are always the same, $(\mathbb R^d,+)$, independently of $\mu$.  In the present context, one interesting new phenomenon is that the group structure of the limit space supporting the limit process depends not only of the discrete group $\Gamma $ but also on the measure $\mu$.

  \subsection{Description of the basic ingredients and results}

 For simplicity, in this work, we restrict ourselves to random walks on torsion-free finitely generated nilpotent groups,
   that is,  finitely generated nilpotent groups whose only element of finite order is the identity element.
 These countable groups are both similar  to and more complicated than the square lattice $\mathbb Z^d$ in $\mathbb R^d$.  Let $\Gamma$ be such a group. By a celebrated theorem of Malcev \cite{Mal}, the countable group $\Gamma$ can be realized as a co-compact discrete subgroup of a simply connected nilpotent Lie group $G$.  Moreover, any simply connected nilpotent Lie group $G$ can be identified with the $d$-dimensional coordinate space $\mathbb R^d$ equipped with an appropriate group structure whose (multiplication) law is given, in coordinates, by polynomial functions. This  accounts 
  for the similarity with the square lattice in dimension $d$. Note however that the description of $G$ as $\mathbb R^d$ equipped with a polynomial product is very far from being unique (and it may sometimes be difficult to recognize that two such descriptions  give  the same group $G$ up to isomorphism).    One way to understand the complexity of such structures is to attempt to  give a list of all non-isomorphic simply connected nilpotent groups
    in a fix dimension $d$. No such lists exist for relatively large $d$ (we are not aware of such lists when $d$ is greater than $8$). See \cite{BFNT} and the references therein.

 Once $\Gamma$ is represented as a subset of $\mathbb R^d$, a probability measure $\mu$ on $\Gamma$ can be viewed as a weighted series of Dirac masses  on $\mathbb R^d$.  For such a measure $\mu$ in a certain relatively large class of ``stable-like'' probability measures on $\Gamma$, we are going to find an adapted dilation structure $(\delta_t^\mu)_{t>0}$, expressed in coordinates over $G=(\mathbb R^d,\cdot)$ by
 $\delta_t^\mu(u)=(   t^{1/\alpha^\mu_i}  u_i)_1^d$, with  carefully chosen exponents
   $   \alpha_i^\mu\in (0,2),$
  so that the measure
 $$
 \mu_t= t\delta^\mu_{1/t}(\mu):  \ \phi \mapsto  t\int_{\mathbb R^d} \phi(\delta^\mu_{1/t}(u)) \mu(du)
 $$
 has a vague limit $\mu_\bullet$ (a non-negative Radon measure) on $\mathbb R^d\setminus\{0\}$ as $t$ tends to $\infty$.  By construction, the limit $\mu_\bullet$ will satisfy the self-similar property
 $$ 
 (\mu_\bullet)_t=\mu_\bullet    \quad  \hbox{for any } t> 0 .  
  $$
    At the same time, the rescaled group laws
 $$x\cdot_t y= \delta^\mu_{1/t}\left( \delta^\mu_t(x)\cdot \delta^\mu_t(y)\right), \quad x,y\in \mathbb R^d, \quad t>0,$$
  will have a limit as $t$ tends to infinity
  $$\lim_{t\to \infty} x\cdot_ty= x\bullet^\mu y, $$
  which  defines a  group law $\bullet^\mu$ on $\mathbb R^d$.  Most of the time, we will drop the reference to $\mu$ and write $\bullet^\mu=\bullet$ but it is an essential feature of this work that  this limit law actually 
    depends  on $\mu$ via the choice of a proper dilation structure. It will automatically have the self-similar property
  $$
  x\bullet^\mu y= \delta^\mu_{1/t}\left( \delta^\mu_t(x)\bullet^\mu \delta^\mu_t(y)\right)
  \quad \hbox{for  every } x,y\in \mathbb R^d \hbox{ and } t>0.
  $$
  Of course, we are most interested in cases when this can be done in such a way that the symmetric  measure $\mu_\bullet$ is not supported on a proper closed connected subgroup of $G_\bullet=(\mathbb R^d,\bullet^\mu)$.  In general, the limit measure $\mu_\bullet$  defines a left-invariant jump process on the group $G_\bullet$ and the key results of this monograph are:
  \begin{itemize}
  \item  A ``stable-like'' limit theorem expressing the convergence of  the rescaled long jump random walk on $\Gamma$ associated with $\mu$ to the
   left-invariant L\'evy  process on the group $(\mathbb R^d,\bullet^\mu)$   associated with $\mu_\bullet$. 

   \item A  characterization of the   left-invariant L\'evy process
on the  nilpotent  group $(\mathbb R^d,\bullet^\mu)$ associated with $\mu_\bullet$. 

  \item A companion local limit theorem providing a proper statement of convergence relating the densities of the distributions of these processes.
  \end{itemize}
  The reader should be warned that, given $\mu$, the choice of the appropriate dilation structure $(\delta^\mu_t)_{t>0}$ is not unique and that, consequently, we have made various abuse of notation in the explanations given above.

 The simplest instances of these results  are the well-known convergence theorems relating the ``stable-like'' random walk on $\mathbb Z$ associated with
 the probability measure $\mu(x)=c_\alpha (1+|x|)^{-1-\alpha}, x\in \mathbb Z$, $\alpha\in (0,2)$, to the symmetric $\alpha$-stable process on $\mathbb R$, and its rather rich and complex extension to higher dimensions which includes both rotationally symmetric stable processes and some more singular operator stable processes as illustrated in Section \ref{S:1}. See also \cite{JuMa,MS1,MS2}. We note that, in so far as this monograph focusses on a particular class of probability measures,  it only offers a limited extension of these classical abelian theories to nilpotent groups.

    \subsection{Detailed description of some special cases}\label{sec-IntroExa}

  In this section, we spell out in an informal way how our results  of this monograph apply to a series of specific examples that are of particular interest.  These cases all illustrate our main result, Theorem \ref{WTStab},
  which follows from Theorems \ref{WT1} and \ref{localCLT},
   and the discussions in  Subsections  \ref{S:10.2}-\ref{S:10.4}.

  \subsubsection*{Word length radial stable walks}  
  
  On a finitely generated group
   $\Gamma$ equipped with a symmetric finite generating set $S$, 
  the word length $|g|_S$ is the minimal length $k$ of a string $(g_1,\dots,g_k)$ of elements of $S$ such that $g$ is equal to the product of that string, $g=g_1\dots g_k$.  
    By Gromov's polynomial volume growth theorem \cite{Gromov},
  to say that $\Gamma$ has polynomial volume growth is equivalent to the fact that there are an integer $D$ (independent of $S$) and constants $0<c_S\le C_S<\infty$ such that
  $$
  c_S r^D\le\#\{g\in \Gamma: |g|_S\le r\}\le C_Sr^D
  \quad    \hbox{for all  } r\geq 1  .
  $$
 This is known to hold for any  finitely generated nilpotent group; see Subsection \ref{S:11.4}.
       In this context, we call {\em word length radial stable probability measure of index $\alpha\in (0,2)$} the probability measure
  $$
   \mu_{S,\alpha}(g)= \frac{c(\Gamma,S,\alpha)}{(1+|g|_S)^{\alpha+D}}, \quad  g\in \Gamma.
  $$
  It is known (see \cite[Section 5.1]{SCZ-nil} and \cite[Theorem 1,1]{MuSC} as well as the references given therein)
     that there are constants $0<a=a(\Gamma,S)\le A=A(\Gamma,S)<\infty $ such that the iterated convolutions of this measure satisfy
  $$  \frac{a n}{(n+|g|^\alpha_S)^{1+D/\alpha}}\le  \mu^{(n)}_{S,\alpha}(g)\le \frac{A n}{(n+|g|^\alpha _S)^{1+D/\alpha}},
  \quad g\in \Gamma,\,  n\in \mathbb N.$$
  So, one has a remarkably good control of the behavior of the associated random walk. However, there are no existing limit theorems in the literature for such walks, even if we assume that $\Gamma$ is a torsion free nilpotent group (such groups are basic examples of groups with polynomial volume growth).  Our results provide limit theorems (functional, and also local) for any random walk driven by a word length radial stable probability measure  $\mu_{S,\alpha}$, $\alpha \in (0,2)$, on a torsion free finitely generated nilpotent group.  We now briefly describe these results.

  First, because we assume that $\Gamma$ is a finitely generated torsion free nilpotent group, there is a simply connected nilpotent Lie group $G=(\mathbb R^d,\cdot)$ which contains $\Gamma$ as a co-compact discrete subgroup. The Lie algebra, $\mathfrak g$, of this Lie group is equipped with its central descending series
  $$\mathfrak g_1=\mathfrak g\supseteq \mathfrak g_2=[\mathfrak g,\mathfrak g]\supseteq \cdots \supseteq \mathfrak g_j=[\mathfrak g_{j-1},\mathfrak g]\supseteq\cdots  \supseteq \{0\},$$
  and this series become trivial (i.e., constant equal to $\{0\}$) after finitely many steps. Let $j^*$ be the smallest $j$ such that $\mathfrak g_{j+1}=\{0\}$. One can choose a direct sum  decomposition by vector subspaces, $\mathfrak n_i$, $1\le i\le j^*$, compatible with the central descending series above, so that
  $$\mathbb R^d=\mathfrak g=\oplus_{i=1}^{j^*} \mathfrak n_i \quad \mbox{and} \quad \mathfrak g_j=\sum_{ i\ge j} \mathfrak n_i,\; j\in \{1,\dots,j^*\}.$$
  The linear invertible maps
  $$
  \delta_t(x)= t^i x  \quad \hbox{ if } x\in \mathfrak n_i,\;1\le i\le j^*,\quad t>0
  $$
  form an approximate Lie dilation structure in the sense that
  $$[x,y]_\bullet=\lim _{t\ra \infty} \delta_t^{-1}([\delta_t(x),\delta_t(y)])$$
  is a Lie bracket on $\mathbb R^d$ with the property that $\delta_t([x,y]_\bullet)=[\delta_t(x),\delta_t(y)]_\bullet$.
  Using exponential coordinate (of the first type) to represent $G$ as $(\mathbb R^d,\cdot)$, the approximate Lie dilations $\delta_t, t>0,$ define  approximate group dilations on $G$ for which we use the same notation. 
  The limit group $G_\bullet$ is the simply connected Lie group associated with the Lie algebra $(\mathbb R^d,[\cdot,\cdot]_\bullet)$ defined above. 
 It follows from \eqref{e:11.1} that  the volume growth exponent $D$ of the original group $\Gamma$ is given by $D=\sum_{i=1}^{j^*} i\dim(\mathfrak n_i)$. Thus      we have  $\det (\delta _t)=t^D$ for every $t>0$.
In \cite{Pansu1983}, Pansu proves the fundamental results that  there is a norm $\|\cdot\|_\bullet$ on $(\mathbb R^d,\bullet)$, homogeneous with respect to $(\delta_t)_{t>0}$, such that the geometry of $(\Gamma,|\cdot |_S)$ is well approximated at large scale by that of $(\mathbb R^d, \|\cdot \|_\bullet)$ in the sense that
  $$\lim_{g\in \Gamma,g\to \infty}\frac{|g|_S}{\|g\|_\bullet}=1.$$
Further, one has
 $$\lim_{r\to \infty} \frac{\#\{g\in \Gamma: |g|_S\le r\}}{|\{x\in \mathbb R^d:\|x\|_\bullet\le r\}|}=1,$$
 where $|\Omega|$ is the
  Haar    volume of $\Omega\subset G_\bullet$.
  See also, \cite{Breuillard}.
 Haar measures on $G$ and $G_\bullet$ are both Lebesgue measure $dx$ on $\R^d$.
 When considering densities on these groups, we mean densities with respect to $dx$.

 The importance of these results for us is that
  they enable us to  establish the convergence of the measure $t\delta_{1/t^{1/\alpha}}(\mu_{S,\alpha})$, vaguely on $\mathbb R^d\setminus \{0\}$, to the radial stable jump measure $\mu_{\bullet,\alpha}$  with density   $$\phi_{\bullet,\alpha} (x)= \frac{c(\Gamma,S,\alpha)}{\|x\|_\bullet^{\alpha+D}}
.$$
This measure is the jumping  measure of a left-invariant (strong) Markov process $(X^{\bullet}_t)_{t>0}$ on $(\mathbb R^d,\bullet)$ which is self-similar in the sense that
  $( X^\bullet_s)_{s>0}$ equals 
     $(\delta_{1/t^{1/\alpha}}(X^{\bullet}_{ts}))_{s>0}$ in distribution.
   In a proper global coordinate system, the coordinates of this process can be expressed in terms of suitable  stable processes
  	and their (possibly iterated) L\'evy areas.
  The L\'evy  process $X^\bullet$
   admits a continuous  convolution density
 with respect to the Lebesgue measure    on $\R^d$:
$$p_{\bullet,\alpha}(t,x),\quad  (t,x)\in (0,\infty)\times \mathbb R^d.$$
This density satisfies
$$
 p_{\bullet,\alpha} (t,x) =    t^{-D/\alpha}
p_{\bullet,\alpha} (1,\delta_{1/t^{1/\alpha}}(x)) ,\quad  (t,x)\in (0,\infty)\times \mathbb R^d, $$
and
$$\frac{at}{(t+\|x\|_{\bullet}^\alpha)^{1+D/\alpha}} \le p_{\bullet,\alpha}(t,x)\le \frac{ At}{(t+\|x\|_{\bullet}^\alpha)^{1+D/\alpha}},\quad  (t,x)\in (0,\infty)\times \mathbb R^d.$$

In this context, the results developed in this work establish two limit theorems for the random walk $(X_n)_{n\ge 0}$ on $\Gamma$ driven by $\mu_{S,\alpha}$. These limit theorems capture the fact that, after proper rescaling in time and space, the limit of the random walk $(X_n)_{n\ge 0}$ is the Markov process $( X^\bullet_s)_{s>0}$.  Namely, the functional limit theorem establishes the convergence of $(\delta_{1/t^{1/\alpha}}(X _{[st]}))_{s>0}$ to
$( X^\bullet_s)_{s>0}$ as $t$ tends to infinity. In particular, for any continuous function $\phi$ with compact support on $\mathbb R^d$,
$$\sum_{g\in \Gamma} \phi(\delta_{t^{1/\alpha}}(g))\mu_{S,\alpha}^{[(ts])}(g)\to \int_{\mathbb R^d}\phi(x)
p_{\bullet,\alpha}(s, x)dx
$$
as $t$ tends to infinity.
For any compact set $K\subset \mathbb R^d$ and  any functions $g_n:K\to \Gamma$, $n=1,2,\dots$,  such that the sequence of functions
$\delta_{1/n^{1/\alpha}}\circ g_n:K\to \mathbb R^d$, $n=1,2,\dots$, converges uniformly over $K$ to the identity function, the local limit theorem  of this monograph establishes the uniform convergence to zero over $K$ of
   $$n^{D/\alpha}\mu_{S,\alpha}^{(n)}(g_n(x))-p_{\bullet,\alpha}(1,x)$$
when $n$ tends to infinity.
In particular, this shows that, for any fixed $g\in \Gamma$ (e.g., $g=e$),  $$\lim_{n\to \infty} n^{D/\alpha} \mu^{(n)}(g) = p_{\bullet,\alpha}(1,e).$$

 \subsubsection*{Walks taking stable-like steps along one parameter subgroups}

Let $\Gamma$ be a torsion free finitely generated nilpotent subgroup of a simply connected nilpotent group $G$.
One of the cases that motivates our study can be described as follows: We are given a tuple $S=(s_1,\dots,s_k)$ of elements of $\Gamma$, which, together with their inverses,  
  generates  $\Gamma$. We are also given a tuple of reals  $\boldsymbol{\alpha}=(\alpha_1,\dots,\alpha_k)\in (0,2)^k$.  Note that the letter $S$ is used here in a slightly different way than in the previous case.  Now, set
$$\mu_{S,\boldsymbol \alpha}(g)=\frac{1}{k}\sum_{i=1}^k \sum_{m\in \mathbb Z}\frac{c_{\alpha_i}}{(1+|m|)^{1+\alpha_i}}\1_{\{s_i^m\}}(g).$$
It was proved in \cite{SCZ-nil} that, for any such probability measure, there exist $0<a=a(\Gamma,S,\boldsymbol{\alpha})\le A=A(\Gamma,S,\boldsymbol {\alpha})<\infty$ and
   $\gamma_0 =\gamma_0 (\Gamma,S,\boldsymbol{\alpha})$  such that
\begin{equation}\label{D-SCZ}
a n^{ -\gamma_0}\le \mu_{S,\boldsymbol \alpha}^{(n)}(e)\le An^{ -\gamma_0}.
\end{equation}
In Section \ref{sec-stab}, we introduce the space of probability measures $\mathcal{SM}_1(\Gamma)$, see Definition \ref{def-Stab1}, which contains all such measures. We then  explain how to choose a coordinate system  of polynomial type,  $G=(\mathbb R^d,\cdot)$, and an approximate dilation structure $(\delta_t)_{t>0}$ with limit group $G_\bullet=(\mathbb R^d,\bullet)$, which are adapted to the pair $(S,\boldsymbol{\alpha})$, and
  such that, with $\mu_t:=t\delta_{1/t}(\mu_{S,\boldsymbol \alpha})$,   the family of measures
$(\|z\|_2^2 \wedge 1) \mu_t (dz)$  converges weakly on $\mathbb R^d\setminus \{0\}$ to a measure $ (\|z\|_2^2 \wedge 1)
\mu_\bullet (dz)$   as $t\to \infty$, that is, 
$$
\lim_{t\to \infty} \int_{\R^d \setminus \{0\} } f(z) (\|z\|_2^2 \wedge 1) \mu_t (dz)
=  \int_{\R^d \setminus \{0\} }  f(z) (\|z\|_2^2 \wedge 1) \mu_\bullet (dz) 
$$
for any  $  f\in C_b ( \R^d \setminus \{0\})$.
Here $C_b ( \R^d \setminus \{0\})$ denotes  the space of bounded continuous functions on $\R^d \setminus \{0\}$. 
   The   measure $\mu_\bullet$  is supported on the union of a finite number of one parameters subgroups of $G_\bullet$ and its support generates $G_\bullet$.
It can be interpreted as the L\'evy measure of  a  convolution semigroup of probability measures, associated with a left-invariant L\'evy process on $G_\bullet$.
The convolution transition kernel of this semigroup admits a continuous density, $p_\bullet(t,x)$,
 with respect to the Lebesgue measure
on $(\mathbb
 R^d,\bullet)$ and satisfies
 $$
   p_\bullet(t,x)=
    t^{-\gamma_0}   p_\bullet(1,\delta_{1/t}(x))
 \quad \hbox{for } (t,x)\in (0,\infty)\times \mathbb R^d.
 $$
 Note that the  limit objects introduced here, e.g., $G_\bullet$ and  $p_\bullet$, all depend on $S$ and $\boldsymbol \alpha$, even so we did not capture that dependence in the notation used above. A  notable difference with the earlier description of the radial stable-like case is that, in general, there are no particular canonical choices of the approximate dilation structure $(\delta_t)_{t>0}$ and we have not made a canonical choice of coordinates either. To a certain extent, the entire results and the associated limit objects depend on the choices of coordinates and adapted dilation structure while, of course, there are great commonalities shared by all the limit objects obtained based on these different choices.
 This, however, will not be deeply investigated here.

As in the case of radial stable walks, the results of this monograph establish the  convergence of the discrete time random walk driven by $\mu_{S,\boldsymbol \alpha}$, properly rescaled in time and space, to the left-invariant L\'evy process $( X^\bullet_s)_{s>0}$ with convolution density $p_\bullet(t,x)$ mentioned  above. More precisely, the functional limit theorem establishes the   weak  convergence of $(\delta_{1/t}(X _{[st]}))_{s>0}$ to
$( X^\bullet_s)_{s>0}$ as $t$ tends to infinity. In particular, for any continuous function $\phi$ with compact support
  on $\R^d$,
$$\sum_{g\in \Gamma} \phi(\delta_{t}(g))\mu_{S,\alpha}^{[(ts])}(g)\to \int_{\mathbb R^d}\phi(x)p_\bullet(s, x)dx$$
as $t$ tends to infinity. The local limit theorem asserts that
 \begin{equation} \label{D-here}
 \lim_{n\to \infty}\sup_{x\in K}\left|
   n^{\gamma_0}
 \mu_{S,\boldsymbol \alpha}^{(n)}(g_n(x))- p_{\bullet}(1,x)\right|=0,
 \end{equation}
 where $K$ is a compact in $\mathbb R^d$ and $g_n: K\to \Gamma$ is a sequence of functions such that  $\delta_{1/n}\circ g_n:K\to\mathbb R^d$ converges uniformly over $K$ to the identity function.
 Of course, the non-negative real   $\gamma_0$ 
  appearing in (\ref{D-SCZ}) and in (\ref{D-here}) is the same in both equations. It is also given by
 $\det(\delta_t)=  t^{\gamma_0} $.

 \subsubsection*{Walks associated with measure in $\mathcal{SM}(\Gamma)$}
 In Section \ref{sec-stab}, we introduce a particular set of ``stable-like'' measures on $\Gamma$, $\mathcal{SM}(\Gamma)$, which interpolates between the radially symmetric measures considered above and the convex combinations of one dimensional measures described in the last   subsection. These measures 
   were  studied in   our earlier work \cite{CKSWZ1}.  With any measure in $\mathcal{SM}(\Gamma)$ we can associate in a natural way a (non-unique) polynomial coordinate system $G=\mathbb R^d$ and a family of dilations $(\delta_t)_{t>0}$ which define a limit group structure $G_\bullet=(\mathbb R^d,\bullet)$.
 The approximate dilation structure $(\delta_t)_{t>0}$ is
 built   so that the family $\mu_t=t\delta_{1/t}(\mu)$, $t>0$, has well defined limit points which are all L\'evy measures of $(\delta_t)_{t>0}$-stable symmetric convolution semigroups of probability measures on $G_\bullet$ with continuous positive densities on $G_\bullet$. One of the key contributions of this work is to describe explicitly how one can construct such an approximate dilation structure based on a proper description of $\mu$ on $\Gamma$. If it is the case
   that, with $\mu_t:=t\delta_{1/t}(\mu)$, the measure $(\| z\|_2^2 \wedge 1) \mu_t (dz)$
   converges weakly to a finite measure $(\| z\|_2^2 \wedge 1)  \mu_\bullet (dz)$  on $\mathbb R^d\setminus\{0\}$ as $t\to \infty$,
      then we obtain both a functional theorem and local limit theorem.
 The results described in the previous two paragraphs are, in fact,  special cases of these more general theorems.
 The structure of the L\'evy measures of the limit L\'evy processes on $G_\bullet$ appearing in these limit theorem is described at the end of the next subsection.

 \subsection{Symmetric continuous convolution semigroup of probability measure and L\'evy processes}

  For this very minimal vocabulary review, we follow \cite{Hazod2001}.  Let $G$ be a connected Lie group. Recall that there is a one-to-one correspondence
  between  symmetric continuous convolution semigroups of probability measures  on $G$ and symmetric L\'evy processes  on $G$.
Here   $(\mu_t)_{t>0}$ is a symmetric continuous convolution semigroup of probability measures    on $G$  if  the map $t\mapsto \mu_t$ is continuous, $\mu_t*\mu_s=\mu_{t+s}, s,t>0$, $\mu_0=\delta_e$, and  $\mu_t(\phi)=\mu_t(\check{\phi})$ for any continuous   function $\phi$ on $G$ 
with compact support,    where  $\check{\phi}(y):=\phi(y^{-1})$ for $y\in G$.  A symmetric L\'evy process
  $X$ on $G$ is a $G$-valued  time-homogeneous  c\`adl\`ag Markov process $(X_t)_{t\ge 0}$ with
  stationary independent increments, started at $e$ and such that $X_t^{-1}=X_t$ in distribution
  for every $t>0$.  In this setting,
the notion of  infinitesimal generator of $X$
can be captured in a more elementary way via the so-called generating functional (defined on smooth compactly supported functions): if the infinitesimal generator   of the symmetric L\'evy process $X$ is  $\sL$,
 the associated generating functional is simply
$\phi\mapsto \sL\phi(e)$. The L\'evy-Khinchin-Hunt formula provides a description of the generating functional of a L\'evy process. Under the symmetry condition, the generating functional has two parts, a diffusion part, and a jump part described
by a  symmetric measure $\nu$ on $G\setminus \{e\}$
in the form $\phi\mapsto  { \rm p.v.} \int _{G\setminus \{e\}}(\phi(y)-\phi(e)) \nu(dy)$
 with
\begin{equation}\label{LM}
	\int_{G\setminus\{e\}} \min\{1,\|y\|_2^2\} \nu(dy) <\infty.
\end{equation}
 Here, $\|y\|_2$ is the Riemannian distance between $e$ and $y$ in some fixed left-invariant metric on $G$,
  and
$$ {\rm p.v.} \int _{G\setminus \{e\}}(\phi(y)-\phi(e)) \nu (dy):=\frac 12
 \int _{G\setminus \{e\}}(\phi(y) + \phi (y^{-1})-2\phi(e)) \nu (dy).
 $$

 In this work, we are only interested in pure-jump symmetric L\'evy processes, that is, generating functional of the form
 $$\phi\mapsto \sL \phi(e)=  { \rm p.v.}
 \int _{G\setminus \{e\}}(\phi(y)-\phi(e)) \nu(dy)
 $$
 where  $\nu$ is a symmetric measure on $G\setminus \{e\}$ satisfying \eqref{LM}.
Equivalently, the infinitesimal generator is given on smooth compactly supported functions by
 $$
 \langle     - \sL   u, v\rangle =\frac{1}{2}\int_G\int _{G\setminus \{e\} }(u(xy)-u(x))(v(xy)-v(x))\nu(dy)dx.
 $$
  This, of course, is also a description of the associated Dirichlet form (on a dense subspace of its domain).

  In this work, these objects come about through a limit procedure which implies that they have additional properties.
  First, the underlying Lie group is a simply connected nilpotent Lie group which we call $G_\bullet$. Second, by construction, $G_\bullet$ carries a group of dilations, $(\delta_t)_{t>0}$, $\delta_t: G\to G$, $\delta_1=\mbox{Id}$, $\delta_{ts}=\delta_t\circ \delta_s=\delta_s\circ \delta_t$, $s,t>0$, where
     $\delta_t$ is also  a group isomorphism for every $t>0$, and $\lim_{t\to 0}\delta_t(x)=e$ for all $x\in G$.  In addition, the convolution semigroups and associated L\'evy processes of interest to us are self-similar with respect to such a dilation structure, that is, $(X_s)_{s>0}$ equals $(\delta_{1/t}(X_{ts}))_{s>0}$,  in distribution, for any $t>0$.
	 Moreover,
	there is a linear basis $\eps=(\eps_1,\dots,\eps_d)$ of the Lie algebra of $G_\bullet$ in which the dilation $\delta_t$ has the form $\delta_t(\eps_i)= t^{1/\beta_i}\eps_i$, $\beta_i\in (0,2)$, $1\le i\le d$. This last condition, $\beta_i\in (0,2)$, $1\le i\le d$,  is related to the fact the processes in question are pure-jump operator-stable L\'evy processes.   See, e.g., \cite[Theorem 2.3.17]{Hazod2001}.  Finally, when the original random walk is driven by a probability measure $\mu$ in $\mathcal{SM}(\Gamma)$ (a class of stable-like measures on $\Gamma$ described in Section \ref{sec-stab}), the L\'evy measure
   $$
  \mu_\bullet = \lim_{t\to \infty} t \delta_{1/t} (\mu) = \lim_{t\to \infty} t  \mu \circ \delta_t
  $$
 of our limit process has a particular structure that it inherits from the facts that $\mu\in \mathcal{SM}(\Gamma)$.
  Namely, there is a finite family of closed Lie subgroups of $G_\bullet$, call them $H_{\bullet,i}$, $1\le i\le  k$, which are each invariant under $(\delta_t)_{t>0}$, and functions
  $\psi_i: H_{\bullet, i}\to (0,\infty)$ satisfying $t\psi_i (\delta_t(x))=\psi_i(x)$, and $\psi_i(x^{-1})=\psi_i(x)$, $x\in H_{\bullet,i}$, such that
  $$
    \mu_\bullet  (dx)= \sum_1^m \nu_i (dx),   \quad  \nu_i(\phi)=\int_{H_{\bullet,i}}\phi (x) \psi_i(x) d_{H_{\bullet,i}}x      \  \hbox{ for } 1\le i\le m,
  $$
  where $d_{H_{\bullet,i}}x$ is the Haar measure on $H_{\bullet,i}$; see Proposition \ref{pro-deslim}.
  Each $\nu_i$ satisfies (\ref{LM}).
  The group generated by the union $\cup_1^mH_{\bullet,i}$ of the subgroups $\{H_{\bullet,i}, 1\leq i\leq m\}$
   is $G_\bullet$.

  \subsection{Prior results} \label{intro-prior}
  To put our results in perspective, we briefly review limit theorems (functional and/or local) relating random walks on discrete groups to L\'evy processes on a related Lie group.  Very few results of this type exist outside the setting of nilpotent groups (and closely related groups such as groups of polynomial volume growth).
  The classical (functional) limit theorems can be interpreted in two distinct ways:
  \begin{enumerate}
  \item [(i)]
	As providing  approximation of a (continuous time) L\'evy process by a discrete time process. This can be motivated by the desire to actually construct the limiting process, or to simulate it, or to understand it in more concrete terms. In this case, one should read the limit theorem as follows: at each stage, we take a greater number of smaller steps to approximate the behavior of a continuous time process on a fix bounded time interval.
	  A 	natural setup for this interpretation is the triangular array setup.

\item[(ii)] As a result illuminating the long term behavior of a discrete time process by providing a continuous time
  scaling  limit. In this case, at each stage we take a greater number of identically distributed steps and approximate the probability of larger and larger scale events for the discrete time process by the probability of the same large scale events for the limiting continuous time process, at a large time. Whenever that limiting process is self-similar, the limit computation can be rephrased as a computation within a fixed bounded time interval. There is more rigidity in this viewpoint than in the first as we cannot choose the different individual steps taken as one possibly can in a triangular array formulation of the first viewpoint.
\end{enumerate}

  For random walks in $\mathbb R^n$,
 it is somewhat difficult to see the differences between these two interpretations. The reason is that we have a relatively obvious way to turn the identically distributed steps appearing in the second interpretations into smaller and smaller steps appearing in the first interpretation. Indeed, we typically assume that the limiting process is self-similar with respect to a dilation structure that commute with addition and this dilation structure can be used to turn the fixed-size steps of (ii) into the small-size steps of (i).

	\medskip
	
  Both viewpoints are present in this work. Our main focus is on using (ii) to study long term behavior of  a class of discrete long range random walks on a finitely generated torsion free nilpotent group $\Gamma$. 
     One can then 
    use (i) to better understand the limiting self-similar L\'evy processes on the limit nilpotent group $G_\bullet$. See Section \ref{S:7}.

  \subsubsection{Functional type limit theorems} On a general Lie group, there are results stated in terms of triangular arrays that go back to Wehn \cite{WehnThesis,WehnNAS}. Later, Stroock and Varadhan \cite{StrV} rediscovered Wehn's results. These works concern the case when the limit L\'evy process is a diffusion.  These triangular array results have been extended to cover the case when the limit L\'evy process may have jumps. An exposition of such results  is found in \cite{Hazod2001} which contains a very long list of references.   They are also found in work by Kunita \cite{KunPJAconv, KunBR, KunCornell,  KunPJAstab}.    These results must be understood as an extension of the first
  interpretation of the classical limit theorem discussed above. From this viewpoint, the title of the Stroock-Varadhan paper, Limit Theorems for Random Walks on {L}ie Groups, is somewhat misleading. What the results of Wehn and Stroock-Varadhan do is
   to    provide discrete time steps approximations of diffusions on Lie groups. They do not, in general, help us understand the behavior of random walks on Lie groups. That is because, on a general Lie group, there is no clear way to
  turn identically distributed steps into small-size steps. There are, however, many ways to create arrays of smaller and smaller size  steps, not related to any identically distributed model. The theorems described by Wehn, Stroock-Varadhan, Hazod and Siebert, Kunita, and others thus provide functional limit theorems along the line of the first interpretation. See the excellent discussion in \cite{BreuillardRW}.

  There is one setting in which these triangular array limit theorems provide an understanding of random walk (in the sense of a process taking repeated identically distributed steps). This is, informally, when the limiting continuous time process is self-similar with respect to a  dilation structure that preserve the multiplication law of the underlying group.  Unfortunately, this is a rather rare occurrence as the only Lie groups admitting such dilation structures are simply connected nilpotent Lie groups of a very special kind. Moreover, outside the case of diffusion limit, whether or not  a dilation structure exists that is suitable for a given random walk on a given group depends, to a large extend, on the particular random walk in question. In fact, given a driving measure $\mu$, constructing a proper dilation structure for $\mu$ (deciding if such exists!) is a major problem, one that is  completely ignored by the triangular array formulation of limit theorems. This is illustrated by the results of the present work.

 Somewhat independently of the above circle of ideas, Crepel, Raugi, and others, obtained rather satisfying random walk limit theorems for general nilpotent groups in the case the limit is a diffusion \cite{Crepel1978,RaugiZWVG, RaugiHeyer}.
The proofs in these works can be viewed as using two steps: the first step proves the result  in the presence of a canonical  adapted dilation structure (that is, in the case of stratified nilpotent groups). The second step is closely related to one of the key ingredients we will use here and involves the idea behind our definition of an approximate group dilation structure, a dilation structure that does not preserve the group structure of the original underlying Lie group.  In general, because of the second step, the original group carrying the random walk has a group structure that is different from that of the Lie group carrying the limit diffusion. This, clearly, takes us outside the realm  of Wehn-type results.

 One key point  in the results by Cr\'epel and Raugi is that the structure of the group carrying the limit diffusion depend only on the original group, not of the particular (diffusive) random walk one wants to study. In general, this cannot be the case when the random walk to be studied calls for a limit process that has jumps as we do here. As we shall see, in this case, the limit structure depends on both the original group and the particular probability measure that drive the given random walk. One thus has to discover what this proper limit structure is for each studied random walk.

\subsubsection{Local limit theorems}  The first local limit theorem  in the context of general nilpotent groups and groups of polynomial volume growth is due to G. Alexopoulos \cite{AlexoPTRF,  Alexdis, AlexoMem}. See also the discussion in \cite{BreuillardRW}. It concerns centered random walks driven by a finitely supported measure.  For nilpotent groups, following a very different approach, and covering random walks driven by measures that have a high enough finite moment (much higher than 2, in general), the best known results are due to R. Hough \cite{Hough} which provides an informative review of earlier results. We do not know of references treating cases when the limit is not a diffusion process.

\section{Polynomial coordinates and approximate dilations}\label{S:3}

 \subsection{Polynomial coordinate systems}\label{S:3.1}

  Even though some related results can be stated in an intrinsic manner, in practice, limit theorems are coordinate dependent. This applies to the results of this monograph and, consequently, we discuss in some details the notion of global coordinate system for simply connected nilpotent Lie groups.  A number of different choices are possible for this purpose. In this section, we outline basic characteristics of the coordinate systems  we will use. A given group $G$ can be described via many different such global polynomial coordinate charts and it is often desirable to allow for such a choice to be made by circumstances.   This is discussed further in Section \ref{S:9}.
  
  \medskip

  A simply connected nilpotent Lie group $G$ can always be described by a global coordinate chart $\mathbb R^d \to G$, $0\to e$, in which the group multiplication and inverse map
 are given by polynomials
 $$x\cdot y= P(x,y)=\left(p_1(x,y),\dots,p_d(x,y)\right),\quad  x^{-1}=Q(x)=\left(q_1(x),\dots,q_d(x)\right).
$$
 As  $P(x, 0)=x$ and $P(0, y)=y$  for any $x, y\in\R^d$, we have
\begin{equation}\label{e:3.1a}
p_i (x, y) = x_i+y_i+ \bar p_i (x, y), \quad 1\leq i \leq d,
\end{equation}
where $\bar p_i (x, y)$'s are polynomials having no constant nor first order terms.
Moreover,
for any compact $K\subset \mathbb R^d$, there is a constant $C_K$ such that
  \begin{equation}
	  \|x^{-1}\cdot y\|_2 =|P(Q(x),y)|\le C_K \|x-y\|_2  \quad \hbox{for every } x,y\in K,
	  \label{Lip1}
	\end{equation}
because $P(Q(x),x)=0$.  Here $\|\cdot\|_2$ is the canonical Euclidean norm in $\mathbb R^d$.
Similarly, for any compact $K\subset \mathbb R^d$, there is a constant $C'_K$ such that
  \begin{equation}  \label{Lip2}
	   \|x-y\|_2 \le C'_K \|x^{-1}\cdot y\|_2  \quad \hbox{for every } x,y\in K.
	    	\end{equation}
Indeed, this is the same as $\|x-x\cdot z\|_2\le C'_K\|z\|_2$ and the polynomial
$x-P(x,z)$ vanishes at $z=0$.

We assume throughout that
the Jacobian of the maps $y\mapsto x\cdot y$, $x\in G$,  is $1$ so that the Lebesgue measure on $\mathbb R^d$ is a Haar measure for our group $G$.   This assumption follows from the much more demanding assumption that
 \eqref{e:3.1a}    has  the additional property that
 \begin{equation}\label{triangular}
 \bar p_1(x, y)=0 \quad \hbox{and} \quad
 \bar  p_i(x,y)= \bar p_i((x_j)_{1}^{i-1},(y_j)_{1}^{i-1}) \ \hbox{ for }  2\le i\le d.
\end{equation}

In other word, the polynomial
$$
\bar p_i (x,y)=p_i(x,y)-x_i-y_i$$ depends only  
 on the first $i-1$ coordinates of $x$ and $y$ and has no constant nor first order terms.
 Clearly, this triangular structure implies that the Jacobian of the map $y\mapsto x\cdot y$ is $1$.
 Morever, for $x^{-1}= Q(x)=(q_1(x), \ldots , q_d (x))$, we deduce from $P(Q(x), x)=0$ that
\begin{equation}\label{e:3.4a}
q_1(x)=-x_1 \quad \hbox{and} \quad
q_i(x)= -x_i + \bar q_i(x_1, \ldots, x_{i-1}) \ \hbox{ for } 2\leq i \leq d,
\end{equation}
where $\bar q_i (x_1, \ldots, x_{i-1})$, $2\leq i \leq d$, are polynomials having no constant nor first order terms.

\medskip

\begin{exa}[Matrix coordinates] The most commonly used  coordinate system (as Moli${\rm\grave{e}}$re's Mr. Jourdain with {\em prose}, we may use it without realizing we do!) comes from matrix groups. Indeed,  the group $G$ is often given as a subgroup of a group of invertible matrices of a certain dimension, say $N$.  In particular, a nilpotent
group is often given as a subgroup of the group of unipotent upper-triangular matrices. The most obvious example is when $G$ is the group of unipotent upper-triangular matrices itself
$$\mathbf U_N=\left\{\begin{pmatrix} 1&x_{12}&x_{13}&\dots&x_{1N}\\
0&1&x_{23}&\dots &x_{2N}\\
0&0&1&\dots &x_{3N}\\
\vdots&\vdots&\vdots&\vdots &\vdots\\
0&0&0&\dots&1\end{pmatrix} : x_{ij}\in \mathbb R, 1\le i<j\le d\right\}.$$
This group has dimension $d={N\choose 2}$.
In the case $N=3$, this is the Heisenberg group $\mathbb H_3(\mathbb R)$ in its matrix form with 
$$P(x,y)=(x_1+y_1,x_2+y_2, x_3+y_3+x_1y_2), \quad Q(x_1,x_2,x_3)=(-x_1,-x_2,-x_3+x_1x_2) $$ and
$$P(x^{-1},y) = (y_1-x_1,y_2-x_2, y_3-x_3  -x_1(y_2-x_2)).$$
\end{exa}

 \begin{exa}[Exponential coordinates of the first type] The second most commonly encountered coordinate system is given by the canonical exponential map  $$\exp: \mathfrak g\to G$$
 between the Lie algebra $\mathfrak g$ of the group $G$ and $G$ itself. We can think of $\mathfrak g=\mathbb R^d$ as the tangent space at $e$.
 Given a tangent vector $x\in \mathbb R^d$, we first consider the (unique) left invariant vector field $X$ on $G$ such that $X(e)=x$ and  the solution $\gamma_x:[0,1]\to G$ of $\frac{d}{dt}\gamma_x(t)= X(\gamma_x(t))$ with initial condition $\gamma_x(0)=e$, and set
   $$\exp(x)=\gamma_x(1).$$
   Using the fact that, for any two left-invariant vector fields $X,Y$, the well defined differential operator $XY-YX$ is a left-invariant vector field, we
   obtain the Lie bracket  $(x,y)\mapsto [x,y]=(XY-YX)(e).$ Moreover,
   $$
   \left.[x,y]= \partial_s\partial_t \left(\exp( tx) \cdot \exp(sy)\cdot \exp {(-tx)}\right)\right|_{s=t=0}.
   $$ 
     In the case of simply connected Lie group, the exponential  map is a global invertible diffeomorphism
   and the multiplication is given in universal form by the famous Campbell-Hausdorff formula
   $$\exp(x)\cdot \exp(y)= \exp \left(P_{\mbox{\tiny CH}}(x,y)\right), $$
   where
     \begin{equation} \label{e:2.3}
   P_{\mbox{\tiny CH}}(x,y)= x+y+\frac{1}{2}[x,y]+\frac{1}{12}(    [x,[x,   y  ]]     +[y,[y,x]])+\cdots .
    \end{equation}
   In other words, in the exponential coordinate system, the group law is
   $$x\cdot y= P_{\mbox{\tiny CH}}(x,y)=    x+y+\frac{1}{2}[x,y]+\frac{1}{12}([x,[x,   y ]]+[y,[y,x]])+\cdots .$$
     This has the desirable polynomial form because iterated Lie brackets with more than $r$ entries are equal to $0$ if $r$ is the nilpotency class of $G$.
  In these coordinates, it is always the case that
  $$x^{-1}=-x.$$   Applying this to the Heisenberg group we obtain the often-used description of $\mathbb H_3(\mathbb R)$ as
  $\mathbb R^3$ equipped with the product
  $$x\cdot y= P_{\mbox{\tiny CH}}(x,y)=\left(x_1+y_1,x_2+y_2, y_3+x_3+{\textstyle \frac{1}{2}}(x_1y_2-x_2y_1)\right)$$
  and $$P_{\mbox{\tiny CH}}(x^{-1},y)=   \left(y_1-x_1,y_2-x_2,y_3-x_3+{\textstyle \frac{1}{2}}(x_2(y_1-x_1) -x_1(y_2-x_2))\right).$$
 \end{exa}

  \begin{exa} [Exponential coordinates of the second kind]  \label{exa:PhiPsi}
  For a simply connected nilpotent Lie group $G$, exponential coordinate systems of the second kind are typically associated with a filtration of the Lie algebra $\mathfrak g$ by subalgebras (resp. ideals)
  $$\mathfrak g=\mathfrak g_1 \supset \mathfrak g_2\supset\cdots \supset \mathfrak g_\ell \supset\{0\} $$
  with $\mathfrak g_j$ of dimension  $m_j$,
  and a linear basis $(\varepsilon _i)_1^d$  such that the linear span of $(\varepsilon _i)_{i\ge j}$ is a subalgebra  (resp. ideal) for all
     $1\le j\le d :=m_1$, and $(\varepsilon _i)_{d-m_j +1}^d$ is a basis of $\mathfrak g_j$.  In such a situation, the maps from $\mathbb R^d$ to $G$ defined by
 $$\Phi(x_1,\dots,x_d)=\exp(x_1 \varepsilon_1)\cdot \cdots \cdot \exp(x_d\varepsilon_d)$$
 and
 $$\Psi(x_1,\cdots,x_d)=\exp(x_d \varepsilon_d)\cdot \cdots \cdot \exp(x_1\varepsilon_1)$$
 give two distinct global polynomial coordinate systems for $G$.

 For example, the matrix coordinate system of the group of $n\times n$ upper-triangular matrices with entries equal to $1$ on the diagonal, is an exponential coordinate system of the second kind associated with the  lower central series
 $$\mathfrak g=\mathfrak g_1, \quad \mathfrak g_{i+1}=[\mathfrak g_i,\mathfrak g],\; 1\le j\le n,$$
 which, in this case, has last non-trivial member $\mathfrak g_{n-1}$ corresponding to the upper-right corner entry.
 Here, we can realize $\mathfrak g$ as the algebra of the strictly upper-triangular matrix. We then enumerate the entries $(x_i)_1^{d}$, $d=n(n-1)/2$, going down along each upper-diagonal in order so that $x_d$ is the entry in the upper-right corner, and consider the corresponding map $\Psi$. For instance, in the $4\times 4$ case,
$$ \begin{pmatrix} 1&x_{1}&x_{4}&x_{6} \\
0&1&x_{2}&x_{5}  \\
0&0&1& x_{3}\\
0&0&0&1\end{pmatrix} =\begin{pmatrix} 1&0&0&x_{6} \\
0&1&0&0  \\
0&0&1& 0\\
0&0&0&1\end{pmatrix}    \begin{pmatrix} 1&0&0&0 \\
0&1&0&x_{5}  \\
0&0&1& 0\\
0&0&0&1\end{pmatrix}\cdots  \begin{pmatrix} 1&x_1&0&0 \\
0&1&0&0  \\
0&0&1& 0\\
0&0&0&1\end{pmatrix}. $$
Each of the matrices on the right is the matrix exponential of the corresponding strictly triangular matrix.
Note that $\Phi$ defined above leads to a different coordinate system.  \end{exa}
These classical constructions concerning exponential coordinates of the first and second kinds are explained in more details in \cite[Section 1.2]{CorGr}.  See also \cite{CMZ,GrTa,Mal}.

   \subsection{Dilations, approximate dilations, and $G_\bullet$}

 \subsubsection*{Straight dilations}
Let $G$ be a nilpotent simply connected Lie group given in a global  polynomial coordinate system  $G=(\mathbb R^d,\cdot)$. Call {\em straight dilations with exponents} $\mathbf a=(a_1,\dots,a_d)\in \mathbb R_+^d$, the group of diffeomorphisms
 $$
 \phi_t(x)= (t^{a_1}x_1,\dots,t^{a_d}x_d),\quad t>0.
 $$
Note that $\phi_s\circ \phi_t=\phi_{st}$, $s,t>0$, and $\phi_1=\mbox{Id}$.

 \begin{defin}
 We say that $(\phi_t)_{t>0}$  as above is a {\em straight    group 
 dilation structure}   if
 \begin{equation}\label{Ghom} \phi_t(x\cdot y)=\phi_t(x)\cdot \phi_t(y),\quad \;t>0,x,y\in G. \end{equation}
 \end{defin}
 This, of course, is a very restrictive property and not every simply connected nilpotent Lie group $G$ admits such a structure. In the case of the Heisenberg group in matrix form, for given $a,b,c\ge 0$, set
 $$
 \phi_t\left(\begin{pmatrix} 1&x&z\\0&1&y\\0&0&1\end{pmatrix}\right)=\begin{pmatrix} 1&t^ax&t^cz\\0&1&t^by\\0&0&1\end{pmatrix}, \quad t>0,\; x,y,z\in \mathbb R.
 $$
 These straight dilations structures are group dilation structures if and only if $a+b=c$.
 \begin{rem} More generally, without reference to any coordinate system, a group of diffeomorphisms $(\phi_t)_{t>0}$, $\phi_t: G\to G$, $\phi_1=\mbox{Id}$, satisfying (\ref{Ghom}) and such that
 $\lim_{t\to 0} \phi_{t}(g)=e$ is called an expanding group dilation structure. See \cite{Hazod2001,LeDonne}. By a theorem of Siebert \cite{Siebert},  a   connected locally compact group carrying such a structure must be a simply connected nilpotent Lie group (and not every simply connected nilpotent groups admit such a structure).
 \end{rem}

 \begin{defin} Let $\mathbb R^d$ be equipped with a straight dilation structure
 $$(\phi_t)_{t>0}, \quad \phi_t(x)=(t^{a_i} x_i)_1^d, \quad a_i>0, \quad 1\le i\le d.$$
 A positive  function $N$ on $\mathbb R^d$ is called homogeneous with respect to $(\phi_t)_{t>0}$ if $N(\phi_t(x))=tN(x)$.
 \end{defin}
 
 \begin{exa}  The function $x\mapsto N(x)=\max_{1\le i\le d}\{|x_i|^{1/a_i}\}$ is homogenous with respect to  $(\phi_t)_{t>0}$, 
 $\phi_t(x)=(t^{a_i} x_i)_1^d$, $a_i>0, \;1\le i\le d$. It is a norm on $(\mathbb R^d,+)$ (i.e., satisfies the triangle inequality)  if $a_i\ge 1$ for all $1\le i\le d$. If $M$ is another homogeneous function with respect to $(\phi_t)_{t>0}$, such that the set ${x:M(x)\le 1}$ is compact, then
  there are constants $0<c\le C<\infty$ such that
 $cN\le M\le CN$.
 \end{exa}

 \subsubsection*{Approximate group dilations and $G_\bullet$}
  Let $G$ be a simply connected nilpotent Lie group given in a  global polynomial chart $G=(\mathbb R^d,\cdot)$ and equipped with a straight dilation (not necessarily a group dilation structure)
  $(\phi_t)_{t>0}$.  For each $t>0$, we obtain a new group structure $\cdot_t$ on $\mathbb R^d$ by setting
  $$
  x\cdot_t y= \phi_{1/t}(\phi_t(x)\cdot \phi_t(y)),
  \quad x,y\in \mathbb R^d.
  $$
  Moreover, $$\phi_{1/t}: (\mathbb R^d, \cdot)\to (\mathbb R^d,\cdot_t)$$
  is a group isomorphism between $G=(\mathbb R^d,\cdot)$ and $G_t=(\mathbb R^d,\cdot_t)$. Additionally, $(\phi_t)_{t>0}$ is a group dilation structure
if and only if $\cdot_t=\cdot$ for all $t>0$.

 \begin{defin}[Approximate group dilation structure] \label{bullet}
 Let $G$ be a simply connected nilpotent Lie group  described by a global polynomial chart $(\mathbb R^d,\cdot)$. Let $(\phi_t)_{t>0}$ be a straight dilation structure. We say that this dilation structure is an {\em approximate group dilation structure}  if, for any
 $x,y\in \mathbb R^d$, the limits
 $$
  \lim_{t\to \infty} \phi_{1/t}(\phi_t(x)^{-1}) =x_\bullet^{-1}
 \quad \hbox{and}  \quad
 \lim _{t\to \infty} \phi_{1/t}(\phi_t(x)\cdot \phi_t(y))=x\bullet y
 $$
 exist.
  \end{defin}

  \begin{lem}\label{L:2.5}
    The pairing $(x,y)\mapsto x\bullet y$ yields a nilpotent Lie group $G_\bullet=(\mathbb R^d,\bullet)$ and $x_{\bullet }^{-1}$ is the inverse of $x$ for the
  group law $\bullet$, that is $x_\bullet^{-1} \bullet x=x\bullet x_\bullet^{-1}=e_\bullet$.  For the group $(\mathbb R^d,\bullet)$, the straight dilations $\{\phi_t; \, t>0\}$ form a {\em group dilation structure}, i.e., satisfy {\rm (\ref{Ghom})}.
    \end{lem}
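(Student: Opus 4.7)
\medskip

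\noindent\textbf{Proof proposal.} The plan is to exploit the fact that, in the polynomial chart $G=(\mathbb R^d,\cdot)$, both maps
$(x,y)\mapsto P_t(x,y):=x\cdot_t y=\phi_{1/t}(\phi_t(x)\cdot \phi_t(y))$ and $x\mapsto Q_t(x):=\phi_{1/t}(\phi_t(x)^{-1})$ are polynomial in their arguments, of total degree uniformly bounded by the nilpotency class $r$ of $G$. Indeed, each coordinate $p_i(\phi_t x,\phi_t y)$ expands as a polynomial in $(x,y)$ whose coefficients are monomials in $t$; the subsequent application of $\phi_{1/t}$ only shifts those exponents by $-a_i$. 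The hypothesis that $x\cdot_t y$ has a pointwise limit forces each resulting exponent of $t$ to be nonpositive: monomials with strictly negative exponent vanish in the limit and only those with exponent zero survive, so $P_\bullet$ is a polynomial of degree $\leq r$. Since pointwise convergence of polynomials of uniformly bounded degree forces coefficient convergence (values at $\binom{d+r}{r}$ generic points determine the polynomial), the convergence $P_t\to P_\bullet$ is locally uniform on $\mathbb R^d\times\mathbb R^d$, and the same holds for $Q_t\to Q_\bullet$.

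With this local uniform convergence in hand, I would transfer the group axioms from $\cdot_t$ to $\bullet$. Because $\phi_{1/t}$ is a group isomorphism from $(\mathbb R^d,\cdot)$ onto $(\mathbb R^d,\cdot_t)$, for every $t>0$ and all $x,y,z\in \mathbb R^d$,
\begin{equation*}
P_t(P_t(x,y),z)=P_t(x,P_t(y,z)),\quad P_t(0,x)=P_t(x,0)=x,\quad P_t(x,Q_t(x))=P_t(Q_t(x),x)=0,
\end{equation*}
where the second identity uses that $0$ is the neutral element of $G$ in this chart, as guaranteed by \eqref{e:3.1a}. Passing to the limit $t\to\infty$ (legitimate by local uniform convergence) yields associativity of $\bullet$, the fact that $0$ is the $\bullet$-identity, and the fact that $Q_\bullet(x)=x_\bullet^{-1}$ is the two-sided $\bullet$-inverse of $x$. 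Smoothness of the group operations is automatic from the polynomial form, so $G_\bullet=(\mathbb R^d,\bullet)$ is a Lie group; nilpotency of class at most $r$ is obtained by the same limiting procedure applied to the iterated commutator identities that hold in each $(\mathbb R^d,\cdot_t)$ (each being isomorphic to $G$).

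Finally, the homogeneity property \eqref{Ghom} for $\phi_t$ with respect to $\bullet$ is proved by a change-of-variables in the limit parameter. Using continuity of the polynomial map $\phi_t$ together with the semigroup relation $\phi_s\circ \phi_t=\phi_{st}$,
\begin{equation*}
\phi_t(x\bullet y)=\lim_{s\to\infty}\phi_t\bigl(\phi_{1/s}(\phi_s(x)\cdot\phi_s(y))\bigr)=\lim_{s\to\infty}\phi_{t/s}\bigl(\phi_s(x)\cdot\phi_s(y)\bigr),
\end{equation*}
while
\begin{equation*}
\phi_t(x)\bullet\phi_t(y)=\lim_{s\to\infty}\phi_{1/s}\bigl(\phi_{st}(x)\cdot\phi_{st}(y)\bigr)=\lim_{u\to\infty}\phi_{t/u}\bigl(\phi_u(x)\cdot\phi_u(y)\bigr),
\end{equation*}
after substituting $u=st$. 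The two expressions coincide, giving \eqref{Ghom}. The only genuinely delicate point in the whole argument is the upgrade from pointwise to locally uniform convergence of $P_t$ and $Q_t$, since this is what permits compositions and substitutions to be carried inside the limit; the bounded-degree polynomial structure inherited from the nilpotency of $G$ is exactly what makes this upgrade automatic.
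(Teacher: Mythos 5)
Your proposal is correct and follows essentially the same route as the paper's proof: both exploit that $P_t$ and $Q_t$ are polynomials in $(x,y)$ with coefficients that are linear combinations of powers of $t$, so existence of the pointwise limits forces only non-positive exponents, which upgrades the convergence to locally uniform convergence and allows the group identities (associativity, identity, inverse) to pass to the limit. Your explicit treatment of coefficient convergence, nilpotency via limiting commutator identities, and the substitution argument for \eqref{Ghom} merely fills in details the paper leaves as ``simple considerations.''
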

   \begin{proof} By construction, the maps $P_t(x,y)=\phi_{1/t}(\phi_t(x)\cdot\phi_t(y))$ and $I_t(x)=\phi_{1/t}(\phi_t(x)^{-1})$  are polynomial maps in $x,y$ with coefficients equal to linear combinations of power functions of $t$ with exponents in $\mathbb R$. If the limits $\lim_{t\to \infty} P_t(x,y)$ and $\lim _{t\to \infty}I_t(x)$
   exist for all $x,y$, it means that only non-positive powers of $t$ occur and this implies that the families $P_t,I_t$ are uniformly equicontinuous on compact sets. A sequence of simple considerations then yields that   $$x\bullet (y\bullet z)=\lim_{t\to \infty} \phi_{1/t}(\phi_t(x)\cdot\phi_t(y)\cdot\phi_t(z))=(x\bullet y)\bullet z$$ and
   $$x_{\bullet }^{-1}\bullet x=x\bullet x_{\bullet }^{-1}=e_\bullet=0.$$  Note that this also implies
   \begin{equation} \label{keylim}
   \lim_{t\to \infty} \phi_{1/t}\left(\phi_t(x)^{-1}\cdot\phi_t ( y)\right)= x_{\bullet }^{-1}\bullet y.\end{equation}
   \end{proof}

  \begin{lem}\label{lem2-6}
  Let $(\phi_t)_{t>0}$ be a straight approximate group dilation structure   on $(\mathbb R^d,\cdot)$. For any compact $K\subset \mathbb R^d$
  there is a constant $C_K$ such that, for any $x,y\in K$ and $t\ge 1$,
  $$\| \phi_{1/t}\left(\phi_t(x)^{-1}\cdot\phi_t(y)\right)\|_2\le C_K \|y-x\|_2$$
  and
  $$\| \phi_{1/t}\left(\phi_t(x)^{-1}\cdot \phi_t(y)\right)\|_2\le C_K \|x_{\bullet }^{-1}\bullet y\|_2.$$  
  \end{lem}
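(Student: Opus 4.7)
The strategy is to view
\[
R_t(x,y)\;:=\;\phi_{1/t}\!\left(\phi_t(x)^{-1}\!\cdot\phi_t(y)\right)
\]
as a polynomial in $(x,y)$ whose coefficients are finite real-linear combinations of power functions $t^{\gamma}$, $\gamma\in\R$. Substituting $\phi_t(x)=(t^{a_i}x_i)_{i=1}^d$ into the polynomial maps $Q$ and $P$ and then applying $\phi_{1/t}$ indeed produces coefficients of this form, since each step only multiplies coordinates by real powers of $t$. The pointwise existence of $\lim_{t\to\infty} R_t(x,y)=x_\bullet^{-1}\bullet y$, guaranteed by \eqref{keylim}, forces every exponent $\gamma$ appearing in these coefficients to satisfy $\gamma\le 0$: after collecting like terms, distinct positive-exponent terms cannot cancel and the one with largest positive exponent would dominate, contradicting convergence. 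Since there are only finitely many terms, all coefficients of $R_t$ as a polynomial in $(x,y)$ are uniformly bounded for $t\in[1,\infty)$.

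For the first inequality, I would use the identity $R_t(x,x)=\phi_{1/t}(0)=0$ valid for every $x$ and $t$. Hence the polynomial $R_t(x,y)$ vanishes on the diagonal $\{y=x\}$. Setting $w=y-x$, $R_t(x,x+w)$ is a polynomial in $(x,w)$ vanishing at $w=0$, giving the standard factorization
\begin{equation*}
R_t(x,y)=\sum_{j=1}^d (y_j-x_j)\, S_{t,j}(x,y),
\end{equation*}
where the coefficients of $S_{t,j}$ are likewise uniformly bounded for $t\ge 1$ (being obtained from the bounded coefficients of $R_t$ by a division with no change of $t$-dependence). Restricting $(x,y)$ to the compact set $K$ then yields $|S_{t,j}(x,y)|\le C_K$ for all such $t$ and the first inequality follows.

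For the second inequality, I would perform the change of variables $z:=x_\bullet^{-1}\bullet y$, so that $y=x\bullet z$. Since $\bullet$ is itself a polynomial group law, the map $(x,z)\mapsto x\bullet z$ is polynomial, and
\[
\widetilde R_t(x,z)\;:=\;R_t(x,x\bullet z)
\]
is a polynomial in $(x,z)$ whose coefficients still lie in the ring of real-exponent $t$-polynomials. Applying \eqref{keylim} once more, $\lim_{t\to\infty}\widetilde R_t(x,z)=x_\bullet^{-1}\bullet(x\bullet z)=z$, so the same argument shows every exponent of $t$ in these coefficients is non-positive and the coefficients are uniformly bounded for $t\ge 1$. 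Since $\widetilde R_t(x,0)=R_t(x,x)=0$, a second factorization gives
\begin{equation*}
\widetilde R_t(x,z)=\sum_{j=1}^d z_j\, Q_{t,j}(x,z),
\end{equation*}
with $Q_{t,j}$ uniformly bounded on compacts for $t\ge 1$. The set $\widetilde K:=\{x_\bullet^{-1}\bullet y:x,y\in K\}$ is compact as the continuous image of $K\times K$, so $(x,z)$ ranges over the compact $K\times\widetilde K$, and bounding the $|Q_{t,j}|$ there yields $\|\widetilde R_t(x,z)\|_2\le C_K\|z\|_2=C_K\|x_\bullet^{-1}\bullet y\|_2$.

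The main technical obstacle is the uniform-boundedness claim for the coefficients of $R_t$ and $\widetilde R_t$ on $[1,\infty)$. It rests on the elementary but crucial observation that pointwise convergence of $\sum_k c_k t^{\gamma_k}$ as $t\to\infty$ forces $\gamma_k\le 0$ for every $k$ after collecting like terms. Once that is in hand, both inequalities reduce to the standard factorization of a polynomial that vanishes on an affine subspace, restricted to a compact set.
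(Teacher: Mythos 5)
Your proof is correct. The first inequality is argued exactly as in the paper: $R_t(x,y)=\phi_{1/t}\bigl(\phi_t(x)^{-1}\cdot\phi_t(y)\bigr)$ is a polynomial in $(x,y)$ whose coefficients are linear combinations of real powers of $t$, the existence of the limit \eqref{keylim} rules out positive exponents, and vanishing on the diagonal gives the factorization and the bound $C_K\|y-x\|_2$ on compacts uniformly in $t\ge 1$.

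For the second inequality you take a genuinely different (though closely related) route. The paper simply combines the first inequality with \eqref{Lip2} applied to the polynomial group $(\mathbb R^d,\bullet)$, i.e.\ $\|x-y\|_2\le C'_K\|x_\bullet^{-1}\bullet y\|_2$ for $x,y$ in a compact set, which is a one-line deduction. You instead substitute $y=x\bullet z$ and rerun the whole coefficient-plus-factorization argument on $\widetilde R_t(x,z)=R_t(x,x\bullet z)$, using \eqref{keylim} again to see that the limit is $z$ and that only non-positive powers of $t$ occur, and then factoring out $z$ since $\widetilde R_t(x,0)=0$ (here $0=e_\bullet$). This is valid: $\bullet$ is a polynomial law (it is the $t^0$ part of the rescaled product), so $\widetilde R_t$ stays in the same ring of $t$-power-coefficient polynomials, and $\widetilde K=\{x_\bullet^{-1}\bullet y: x,y\in K\}$ is compact. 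What your version buys is self-containedness — you never need to invoke \eqref{Lip2} for the limit group — at the cost of repeating the main argument; the paper's version is shorter because it reuses the already-established Euclidean comparison for polynomial group laws.
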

  
  \begin{proof} The function $(t,x,y)\mapsto \phi_{1/t}\left(\phi_t(x)^{-1}\phi_t(y)\right)$ is a polynomial in
  $$(x,y)=(x_1,\dots,x_d,y_1,\dots, y_d)$$ with coefficients equal  to linear combinations of powers of $t$ with exponents in $\mathbb R$.   By (\ref{keylim}), only  non-positive powers of $t$ appear.  The desired inequality follows because this polynomial function equals $0$ when $x=y$.
  The second inequality follows from the first and (\ref{Lip2}) applied to $(\mathbb R^d,\bullet)$.
   \end{proof}

\begin{rem} When working in exponential coordinates, we have the extra structure of the Lie bracket  $[\cdot,\cdot]$ at our disposal and we can replace the
conditions in Definition \ref{bullet} by the condition that
$$\lim_{t\to \infty} \phi_{1/t}([\phi_t(x),\phi_t(y)])=[x,y]_\bullet$$
exists. Call this an approximate Lie dilation structure. Note that, in this case, $x^{-1}=x_\bullet^{-1}=-x$ and $\phi_t(x)^{-1}=\phi_t(x^{-1})$ so that the inverse map condition is automatically satisfied.
 \end{rem}

\begin{rem} \label{scale}
 If $(\phi_t)_{t>0}$ is a  group dilation structure (resp. an approximate group dilation structure) then so is $ (\phi_{t^a})_{t>0}$, for any $a>0$.  Moreover, in the case of an approximate group dilation structure, this change does not affect the limit structure.
\end{rem}

\begin{rem}The basic idea behind Definition \ref{bullet} is well-known in two different related contexts. It appears in the study
of the large scale geometry of groups of polynomial volume growth, see, e.g., \cite[Section 2.2]{Breuillard}, and in the work of Alexopoulos on local limit theorems in the context of groups of polynomial volume growth, see \cite[Section 5.2]{AlexoMem}. In these works, there is a unique relevant structure at infinity and it follows that the ``dilation structures'' considered there are very special examples of those defined here. Various forms of the same idea play an important role in the local study of sub-elliptic second order operators but in that context the limit is taken when the parameter $t$ goes to $0$. See for
 instance \cite[Chapter V]{VSCC}.
\end{rem}

  The following lemma is not used explicitly but serves as an exercise in manipulating the notion introduced above. See also Section \ref{S:10.3}.

\begin{lem}\label{lem-subgroup} Let $H$ be a  subgroup of $G=(\mathbb R^d,\cdot)$ and $(\phi_t)_{t>0}$ be an approximate group dilation structure with limit law $\bullet$. Set
$$
H_\bullet= \left\{x\in \mathbb R^d:    \hbox{ there exists }   (x_k)_1^\infty \subset  H \hbox{ so that }  \lim_{k\to \infty}\phi_{1/k}(x_k)=x \right\}.
$$
Then $H_\bullet$ is a subgroup of $G_\bullet=(\mathbb R^d,\bullet)$.

\end{lem}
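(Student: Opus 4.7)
The plan is to verify directly the three subgroup axioms for $H_\bullet \subset G_\bullet$: (i) it contains the neutral element $e_\bullet = 0$, (ii) it is closed under the product $\bullet$, and (iii) it is closed under the $\bullet$-inverse $x \mapsto x_\bullet^{-1}$. Axiom (i) is immediate by taking the constant sequence $x_k \equiv e$ in $H$, since $\phi_{1/k}(e) = 0 = e_\bullet$.

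For (ii), suppose $x, y \in H_\bullet$ are witnessed by sequences $(x_k), (y_k) \subset H$ with $u_k := \phi_{1/k}(x_k) \to x$ and $v_k := \phi_{1/k}(y_k) \to y$. The natural candidate sequence in $H$ is $z_k := x_k \cdot y_k$, and the task reduces to showing that
$$
\phi_{1/k}(z_k) \,=\, \phi_{1/k}\bigl(\phi_k(u_k) \cdot \phi_k(v_k)\bigr) \,=\, P_k(u_k, v_k) \,\longrightarrow\, x \bullet y,
$$
where $P_k(u,v) := \phi_{1/k}(\phi_k(u) \cdot \phi_k(v))$. As exploited in the proof of Lemma \ref{L:2.5}, $P_k$ is a polynomial in $(u,v)$ whose coefficients are linear combinations of real powers of $k$; since $\lim_k P_k(u,v) = u \bullet v$ exists for every $(u,v)$, only non-positive powers of $k$ can actually occur. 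Consequently the family $\{P_k\}$ converges to $P_\infty : (u,v) \mapsto u \bullet v$ uniformly on compact subsets of $\mathbb{R}^d \times \mathbb{R}^d$. Writing
$$
|P_k(u_k, v_k) - x \bullet y| \,\le\, \sup_{(u,v)\in K}|P_k(u,v) - P_\infty(u,v)| \,+\, |P_\infty(u_k, v_k) - P_\infty(x,y)|
$$
for a compact $K$ containing the sequence $(u_k, v_k)$, both terms vanish as $k \to \infty$, whence $x \bullet y \in H_\bullet$.

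For (iii), the same reasoning applies to the polynomial family $I_k(u) := \phi_{1/k}(\phi_k(u)^{-1})$, whose pointwise limit $u \mapsto u_\bullet^{-1}$ exists by Definition \ref{bullet}, forcing only non-positive powers of $k$ in the coefficients and therefore uniform convergence on compact sets. Taking the sequence $x_k^{-1} \in H$ gives $\phi_{1/k}(x_k^{-1}) = I_k(u_k) \to x_\bullet^{-1}$, so $x_\bullet^{-1} \in H_\bullet$. The only mildly delicate point is the passage from pointwise convergence $P_k(u,v) \to u \bullet v$ (and likewise for $I_k$) to convergence along varying arguments $P_k(u_k, v_k)$ and $I_k(u_k)$; this is handled uniformly by the polynomial-in-non-positive-powers structure already isolated in Lemma \ref{L:2.5}, so no serious obstacle arises.
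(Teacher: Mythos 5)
Your proof is correct and follows essentially the same route as the paper's: both arguments reduce membership of $x\bullet y$ (resp. $x_\bullet^{-1}$) in $H_\bullet$ to the convergence $\phi_{1/k}(x_k\cdot y_k)=P_k(u_k,v_k)\to x\bullet y$, obtained by combining the uniform-on-compacts convergence of $P_k$ (and $I_k$) coming from the non-positive-power polynomial structure in Lemma \ref{L:2.5} with the continuity of $\bullet$; your triangle-inequality splitting is just the explicit $\eps/2$-argument of the paper written in a slightly different order. The only cosmetic difference is that you also record the trivial verification that $e_\bullet\in H_\bullet$, which the paper leaves implicit.
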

\begin{proof} Let $x,y\in H_\bullet$ with witness sequences $(x_k)_1^\infty$, $(y_k)_1^\infty$ in $H$. Fix $\eps>0$.   By
  the  continuity of $\bullet$, there exists $\delta>0$ such that $\|x-x'\|_2<\delta$ and $\|y-y'\|_2\le \delta$ imply $\| x\bullet y -x'\bullet y'\|_2<\eps/2$.
By  the efinition of $H_\bullet$, there  exists $N>0$ such that  $\|x-\phi_{1/k}(x_k)\|_2<\delta$ and $\|y-\phi_{1/k}(y_k)\|_2< \delta$  for all $k\ge N$.
By the uniform convergence of $\phi_{1/t}(\phi_t(u)\cdot \phi_t(v))$ to $u\bullet v$ on compact sets, there exists $N'$ such that, for all $k\ge N$ and $k'\ge N'$,
$$\|\phi_{1/k}(x_k)\bullet \phi_{1/k}(y_k)-\phi_{1/k'}(\phi_{k'}(\phi_{1/k}(x_k))\cdot\phi_{k'}(\phi_{1/k}(y_k)))\|_2<\eps/2.$$
Hence, for $k\ge \max\{N,N'\}$,
$$\|x\bullet y -\phi_{1/k}(\phi_k(\phi_{1/k}(x_k))\cdot \phi_k(\phi_{1/k}(y)))\|_2<\eps,$$
and thus, $\|x\bullet y-\phi_{1/k}(x_k\cdot y_k)\|_2<\eps$.  Because $x_k\cdot y_k \in H $, this proves that $x\bullet y =\lim_{k\to \infty} \phi_{1/k}(x_k\cdot y_k)\in H_\bullet.$  A similar proof applies to show that $x_\bullet^{- 1}\in H_\bullet$  for  $x\in H_\bullet$.
\end{proof}

\begin{exa}\label{E:3.4}
 Consider the Heisenberg group viewed as the group of matrices
$$\mathbb H_3(\mathbb R)=\left\{\begin{pmatrix} 1&x&z\\0&1&y\\0&0&1\end{pmatrix}: (x,y,z)\in \mathbb R^3\right\}.$$
Here, the product of the matrices associated with $(x,y,z)$ and $(x',y',z')$ is associated with the triplet
$$(x+x',y+y',z+z'+xy').$$
The inverse of $(x,y,z)$ is
\begin{equation}\label{e:3.8}
(x,y,z)^{-1}=(-x,-y,-z+ xy).
\end{equation}
This is isomorphic but different from the ``exponential coordinate description'' discussed earlier where
$$
(u,v,w)\cdot (u',v',w')= (u+u',v+v',w+w'+\tfrac{1}{2}(uv'-u'v)).
$$
The map
\begin{equation}\label{e:3.6}
q:  \   (x,y,z) \to q(x,y,z)=(u,v,w)=(x,y,z-\tfrac{1}{2}xy)
\end{equation}
 provides the group isomorphism between these two descriptions.

Now, consider the group of diffeomorphisms $(\phi_t)_{t>0}$  (straight dilations in that system) given in the $(x,y,z)$ matrix-coordinates by
$$
\phi_t(x,y,z)=(t^ax,t^by,t^cz)
\quad \hbox{for some fixed } a,b,c>0.
$$
These are group diffeomorphisms for all $t>0$ if and only if $c=a+b$. They form
an approximate group dilation structure at infinity if and  only if $c\ge a+b$. When $c>a+b$,
$$(x,y,z)\bullet (x',y',z')=(x+x',y+y',z+z') $$
 and
 $$(x,y,z)_\bullet^{-1}=(-x,-y,-z) \neq (x,y,z)^{-1}.$$

 If we write down these same diffeomorphisms in the ``exponential coordinate" description $(u,v,w)$ they are given by the maps
 $$
 \psi_t(u,v,w) = q^{-1}\circ \phi_t \circ q (u,v,w)= (t^a u,t^b v, t^c w+ \tfrac{1}{2}(t^c -t^{a+b})uv).
 $$
In the $(u,v,w)$ global coordinate chart $\exp=\log =\mbox{id}$, and if we assume $c\ge a+b$,  the straight dilations
$$
\delta_t(u,v,w)=(t^au,t^bv,t^cw),\quad t>0,
$$
   give both an approximate Lie dilation structure and an associated approximate group dilation structure which are distinct
from the $\phi_t/\psi_t$ approximate group dilation structure even so they share the same differential at the identity. They lead to isomorphic limit group structures.
\end{exa}

\begin{exa}\label{E:3.15}
	 Consider the group
$$G=\left\{\begin{pmatrix} 1&x_{12}&x_{13}&x_{14} \\
0&1&x_{23}&x_{24}  \\
0&0&1& x_{34}\\
0&0&0&1\end{pmatrix} : x_{ij}\in \mathbb R\right\}$$
and the straight dilation structures associated with any tuple $$1/\alpha_{ij},\; ij=(i,j)\in\{12,13,14,23,24,34\}$$ so that
$$
\delta_t \left(\begin{pmatrix} 1&x_{12}&x_{13}&x_{14} \\
0&1&x_{23}&x_{24}  \\
0&0&1& x_{34}\\
0&0&0&1\end{pmatrix} \right)=\begin{pmatrix} 1&y_{12}&y_{13}&y_{14} \\
0&1&y_{23}&y_{24}  \\
0&0&1& y_{34}\\
0&0&0&1\end{pmatrix} ,
\quad  y_{ij}=t^{1/\alpha_{ij}} x_{ij}.
$$
Such a $(\delta_t)_{t>0}$ is a group dilation structure if and only if
$$
1/\alpha_{k\ell}=1/\alpha_{kj}+1/\alpha_{j\ell}
\quad \mbox{for all }  1\le k<j<\ell\le 4,
$$
that is,
$$ (1): \;1/\alpha_{13}=1/\alpha_{12}+1/\alpha_{23},  \qquad
(2): \;1/\alpha_{24}=1/\alpha_{23}+1/\alpha_{34}
$$
and $$(3):\;1/\alpha_{14}=1/\alpha_{12}+1/\alpha_{24},
\qquad (4):\;1/\alpha_{14} =1/\alpha_{13}+1/\alpha_{34}.
$$
The group $(\phi_t)_{t>0}$  is an approximate group  dilation structure at infinity if and only if
\begin{equation} \label{eq-dil}
1/\alpha_{k\ell}\ge 1/\alpha_{kj}+1/\alpha_{j\ell}  \quad \hbox{for all }
1\le k<j<\ell\le 4.\end{equation}

We now list all the possible Lie structures that appear as a limit of such an approximate group dilation structure on $G$.
\begin{itemize}
\item When equality holds in all of the inequalities  (\ref{eq-dil}), we have  $G_\bullet=G$.
\item When strict inequality holds in all of the inequalities (\ref{eq-dil}), we have $G_\bullet=\mathbb R^6$ (abelian).

\item When equations (1) and (2) are equalities then equations (3) and (4) become equivalent. Assume a strict inequality holds in (3) and (4).  Then the limit  $G_\bullet$ is
$$\left\{\left( \begin{pmatrix}1&x_{12}&x_{13}\\0&1&x_{23}\\0&1&1\end{pmatrix},\begin{pmatrix}1&x_{23}& x_{24}\\0&1&x_{34}\\0&0&1\end{pmatrix}, \begin{pmatrix}x_{14}\end{pmatrix}\right): x_{ij}\in \mathbb R\right\}.$$
Here multiplication for these triplets of matrices is matrix-coordinate by matrix-coordinate.  Note how the same $x_{23}$ appears in the first and second matrix-coordinates.

\item When strict inequality holds in both equations (1) and (2) and equality holds in both (3) and (4), then the limit $G_\bullet$
is $$\left\{\left( \begin{pmatrix}1&x_{12}&x_{13}&x_{14}\\0&1&0&x_{24}\\ 0&0&1&x_{34}\\0&0&0&1
\end{pmatrix}, \begin{pmatrix}x_{23}\end{pmatrix}\right): x_{ij}\in \mathbb R\right\}$$
(this is the direct product of the $5$ dimensional Heisenberg group $\mathbb H_5(\mathbb R)$ and a copy of $\mathbb R$).
\item When strict inequality holds in both equations (1) and (2) and equality holds in (3) but not in  (4) (resp. (4) but not in (3)), then the limit $G_\bullet$ is
$$\left\{\left( \begin{pmatrix}1&x_{12}&x_{14}\\0&1&x_{24}\\ 0&0&1
\end{pmatrix},\begin{pmatrix}x_{13}\end{pmatrix},\begin{pmatrix}x_{23}\end{pmatrix},\begin{pmatrix}x_{34}\end{pmatrix}\right): x_{ij}\in \mathbb R\right\}$$
(resp. exchange the roles of pairs $x_{12}, x_{24}$ and $x_{13}, x_{34}$). This is the direct product of a copy of $\mathbb H_3(\mathbb R)$ and $\mathbb R^3$).

\item When strict inequality holds in (1) (resp. (2)) and equality holds in (2)  (resp. (1)) then strict inequality must  
   hold   in  (3) (resp. (4)). 
   If equality holds in (4) (resp. (3)), the limit group is isomorphic to
$$ \left\{\left(  \begin{pmatrix}x_{12}\end{pmatrix}, \begin{pmatrix}1&0&x_{13}&x_{14}\\0&1&x_{23}&x_{24}\\ 0&0&1&x_{34}\\0&0&0&1
\end{pmatrix}\right): x_{ij}\in \mathbb R\right\}$$
(resp. exchange the roles of $x_{12}$ and $x_{34}$, the limit groups in both cases  are isomorphic).
\item When strict inequality holds in (1) (resp. (2)) and equality holds in (2)  (resp. (1)) and strict inequality holds in each of (3) and (4), the limit group is isomorphic to
$$\left\{\left( \begin{pmatrix}1&x_{23}&x_{24}\\0&1&x_{34}\\ 0&0&1
\end{pmatrix},\begin{pmatrix}x_{12}\end{pmatrix},\begin{pmatrix}x_{13}\end{pmatrix},\begin{pmatrix}x_{14}\end{pmatrix}\right): x_{ij}\in \mathbb R\right\}$$
(resp.  replace the triplet $(x_{23}, x_{24},x_{34}) $ with $(x_{12},x_{13}, x_{23})$ and the triplet $(x_{12},x_{13},x_{14})$ with $(x_{34},x_{24},x_{14})$). This is the direct product of a copy of $\mathbb H_3(\mathbb R)$ and $\mathbb R^3$).
\end{itemize}
\end{exa}

\section{Vague convergence and change of group law}\label{3-4weak}

\subsection{Vague convergence under rescaling}\label{S:4.1}

We consider a rather general situation pertaining to the problem we want to study.  We are given the following data:

\begin{enumerate}
\item A finitely generated torsion free nilpotent group $\Gamma $ given as a co-compact closed subgroup of a simply connected nilpotent Lie group $G$.
It is useful for our purpose to be more explicit and write $G=(\mathbb R^d, \cdot)$ where this coordinate system is a polynomial coordinate system as explained earlier.
\item  A probability measure $\mu$ on  $\Gamma$.
\item An approximate group dilation structure $(\delta_t)_{t>0}$ on $G$ with Lie group limit $G_\bullet=(\mathbb R^d,\bullet)$.
\end{enumerate}

\begin{defin}\label{def-admi}
We say that the approximate group dilation structure $(\delta_t)_{t>0}$ is admissible for $\mu$ if the family of measures
\begin{equation}\label{def-mut}\mu_t=t\delta_{1/t}(\mu) \quad \hbox{defined by } 
\mu_t(\phi):= t\int_{\mathbb R^d} \phi(\delta_{1/t}(u)) \mu(du)
\end{equation}
converge vaguely to a
  Radon measure $\mu_\bullet$ on $\mathbb R^d\setminus\{0\}$
  as $t\to \infty$.
Recall that, by definition, this means that, for any continuous function $\phi$ with compact support in $\mathbb R^d\setminus\{0\}$,
$$\lim_{t\ra \infty} \int  \phi(x) d\mu_t(x)    =
 \int \phi(x)d\mu_\bullet(x).$$
\end{defin}

\medskip

\begin{rem} Note the following identities:
$$\mu_t(A)=t\mu(\delta_t(A)) = t\sum_{y\in \Gamma}\1_{\delta_t A}(y)\mu(y)=t \sum_{x\in \delta^{-1}_t \Gamma} \1_A(x) \mu(\delta_t x)
$$
and
$$\int \phi(x) d\mu_t(x)= t\int   \phi (\delta^{-1}_t y) d\mu(y) = t\sum_{x\in \delta_t^{-1}\Gamma}\phi(x) \mu(\delta_tx).$$
\end{rem}

\begin{rem} The normalization by a factor of $t$ in  $\mu_t=t\delta_{1/t}(\mu)$ is less restrictive than it may first appear because of Remark \ref{scale}.  If there is an approximate Lie dilation structure $(\delta_t)_{t>0}$ (with limit law $\bullet$) such that  the measure $\mu_t=t^a\delta_t^{-1}(\mu)$ converges vaguely to $\mu_\bullet$ on $\mathbb R^d\setminus\{0\}$ then  the modified approximate Lie dilation structure $(\delta_{t^{1/a}})_{t>0}$ gives the same limit law $\bullet$ and is admissible for $\mu$. In this sense, the choice of the linear $t$ factor in the definition of $\mu_t$ amounts, more or less, to a scaling normalization.
 \end{rem}

 \begin{exa}\label{ex3-2}   Fix $\alpha\in (0,2)$ and let $\mu$ be the probability measure on $\mathbb Z\subset \mathbb R$ with $$\mu(k)= c_\alpha(1+|k|)^{-\alpha-1}.$$
 Let $\delta_t (x)=t^{1/\alpha} x$. Then $t \delta^{-1}_t(\mu) $ converges vaguely on $\mathbb R\setminus\{0\}$
 as $t\to \infty$
 to the measure $\mu_\bullet$ with density
 $c_\alpha |x|^{-\alpha-1}$ with respect to  the Lebesgue measure on $\mathbb R$.
 \end{exa}
 \begin{exa}\label{ex3-3}  Fix $\alpha\in (0,2)$ and $\beta \in (0,\alpha)$. Let  $\mu$ be the probability measure on $\mathbb Z^2$ given by
 $$
 \mu((x,y))= c(1+|x|+|y|)^{-\alpha-2}, \quad
  (x,y)\in \mathbb Z^2\subset \mathbb R^2.
  $$
 Let $\delta_t((x,y))=(t^{1/\alpha} x,t^{1/\beta}y).$  Then  $t\delta^{-1}_t(\mu)$ converges vaguely on $\mathbb R^2\setminus\{(0,0)\}$  as $t\to \infty$
 to the measure $\mu_\bullet(dxdy)= f_\bullet(x)dx\otimes \delta_0(dy) $ supported on the $x$-axis with $f_\bullet(x)= c' |x|^{-\alpha-1}$, where
 $c'= c \int_{\mathbb R} (1+u)^{-\alpha-2} du.$
 \end{exa}

 \begin{exa}\label{ex3-4}  On the Heisenberg group $\mathbb H_3(\mathbb Z)$ viewed as the group of matrix
 \begin{equation}\label{eq:3dimHB}\left\{ \begin{pmatrix} 1&x_1&x_3\\0&1&x_2\\0&0&1\end{pmatrix}: x_1,x_2,x_3\in \mathbb Z\right\},
\end{equation}
consider the measure
\begin{equation}\label{woob34}
\mu((x_1,x_2,x_3))=  \frac{c_\alpha }{\left(1+\sqrt{x_1^2+x_2^2+|x_3-x_1x_2/2|}\right)^{\alpha+ 4}},
\end{equation}
(note that this is a symmetric measure).
Consider an approximate Lie dilation structure $(\delta_t)_{t>0}$ of the form
$\delta_t((x_i)_1^3)=(t^{1/\gamma_i}x_i)_1^3$. For this to be an approximate Lie dilation structure, it must be that
$1/\gamma_3\ge 1/\gamma_1+1/\gamma_2$ which we assume.  For the  measure $t\delta^{-1}_t(\mu_t)$ to have a vague limit, it is necessary that
$1/\gamma_1\ge 1/\alpha, 1/\gamma_2\ge 1/\alpha$ and $1/\gamma_3\ge 2/\alpha$.  Note that the roles of $x$ and $y$ are the same so that we can assume for the sake of the computations described below that $1/\gamma_1\le 1/\gamma_2$.
\begin{enumerate}
\item  Assume that $1/\gamma_2\ge 1/\gamma_1 >1/\alpha$. Then $1/\gamma_3>2/\alpha$ and it is not hard to see that $t\delta^{-1}_t(\mu)$ converges vaguely to $0$  as $t\to \infty$.

\item  Assume $\gamma_1=\gamma_2=\gamma_3/2=\alpha$. Then $(\delta_t)_{t>0}$ is a group dilation structure and
$t\delta_t^{-1}(\mu)$ converges vaguely  as $t\to \infty$ to
$$  \mu_\bullet((dx_1,dx_2,dx_3))=  \frac{c_\alpha dx_1dx_2 dx_3}{\left(\sqrt{x_1^2+x_2^2+|x_3-x_1x_2/2|}\right)^{\alpha+ 4}}.$$\item Assume that   $1/\gamma_1=1/\gamma_2=1/\alpha$ and $1/\gamma_3>2/\alpha$. Then
$t\delta_t^{-1}(\mu) \Rightarrow \mu_\bullet$  as $t\to \infty$,
where
$$\mu_\bullet(dx_1dx_2dx_3)=  \frac{c'}{\left(\sqrt{x_1^2+x_2^2}\right)^{ \alpha+2}} dx_1dx_2 \otimes \delta_0(dx_3) $$
with $c'= 2c \int_0^\infty (1+s)^{-(2+\alpha/2)} ds$.

\item Assume that $1/\gamma_2> 1/\gamma_1=1/\alpha$. It follows that $1/\gamma_3\ge 1/\gamma_1+1/\gamma_2>2/\alpha$. In this case
$t\delta^{-1}_t(\mu)\Rightarrow \mu_\bullet$   as $t\to \infty$,
where
$$\mu_\bullet(dx_1dx_2dx_3)= c'{|x_1|^{-\alpha+1} }dx_1 \otimes \delta_0(dx_2)\otimes \delta_0(dx_3)
$$
with
$$c'= 2c  \int_{-\infty}^{\infty} \left( \int_0^{\infty}  \left(\sqrt{1+u^2 +v}\right)^{-(\alpha+4)}dv\right)du.
$$
 \end{enumerate}

 We provide details for the third case (the fourth case is similar).   Let $f$ be a continuous function with compact support in $ \mathbb R^3\setminus\{0\}$. We want to show that
\begin{align*}\lim_{t\to \infty}\int f(x) d\mu_t(x) =& \lim_{t\to \infty} t\sum_{x\in  \mathbb Z^3}    \frac{c_\alpha f(\delta_t^{-1}(x))}{ \left(1+\sqrt{x_1^2+x_2^2+|x_3-x_1x_2/2|}\right)^{\alpha + 4}} \\
  =&\int _{\mathbb R^3}  \frac{c' f(x) }{\left(\sqrt{x_1^2+x_2^2}\right)^{\alpha+2}} dx_1dx_2 \otimes \delta_0(dx_3)\\
  =& \int_{\mathbb R^2} \frac{c' f(x_1,x_2,0)}{\left(\sqrt{x_1^2+x_2^2}\right)^{\alpha+2}} dx_1dx_2 .\end{align*}
  Recall  that $\gamma_1=\gamma_2=\alpha$ and $2\gamma_3<\alpha$ and  write
 \begin{align*}&\lefteqn{ \int f(x) t\delta_t^{-1}(\mu(dx))=t\sum_{x\in \mathbb Z^3} f(\delta_t^{-1}(x))  \frac{c_\alpha}{\left(1+\sqrt{x_1^2+x_2^2+|x_3-x_1x_2/2|}\right)^{\alpha + 4}}}  \\ & =  t \sum_{z\in \delta_t^{-1}(\mathbb Z^3)} f(z)  \frac{c_\alpha}{\left(1+\sqrt{t^{2/\gamma_1}z_1^2+t^{2/\gamma_2}z_2^2+|t^{1/\gamma_3}z_3- t^{1/\gamma_1+1/\gamma_2}z_1z_2/2|}\right)^{\alpha + 4}}\\ &= t^{-4/\alpha}
 \sum_{z\in \delta_t^{-1}(\mathbb Z^3)} f(z)  \frac{c_\alpha}{\left(t^{-1/\alpha}+\sqrt{z_1^2+z_2^2+|t^{1/\gamma_3-2/\alpha}z_3- z_1z_2/2|}\right)^{\alpha + 4}}\\
 &=   t^{-4/\alpha}
 \sum_{z\in (t^{-1/\alpha}\mathbb Z)^2\times t^{-2/\alpha}\mathbb Z} \frac{c_\alpha f((z_1,z_2,0)) }{\left(t^{-1/\alpha}+\sqrt{z_1^2+z_2^2+|z_3- z_1z_2/2|}\right)^{\alpha + 4}} \\
 &\quad+  t^{-4/\alpha}
 \sum_{z\in (t^{-1/\alpha}\mathbb Z)^2\times t^{-2/\alpha}\mathbb Z} \frac{c_\alpha (f((z_1,z_2,t^{-1/\gamma_3+2/\alpha} z_3))- f((z_1,z_2,0))) }{\left(t^{-1/\alpha}+\sqrt{z_1^2+z_2^2+|z_3- z_1z_2/2|}\right)^{\alpha + 4}}.
   \end{align*}

 The first term is, essentially,  a (multivariate, generalized) Riemann sum of a uniformly continuous integrable function on $\mathbb R^3$ over the lattice
 $(t^{-1/\alpha}\mathbb Z)^2\times t^{-2/\alpha}\mathbb Z $ and, consequently, it  converges when $t$ tends to infinity to
 \begin{align*} \int_{\mathbb R^3}  \frac{ c_\alpha f(x_1,x_2,0) }{\left(\sqrt{x_1^2+x_2^2+|x_3- x_1x_2/2|}\right)^{\alpha + 4}}dx_1dx_2dx_3=
 \int_{\mathbb R^2}\frac{c'_\alpha f(z_1,z_2,0)}{(z_1^2+z_2^2)^{(\alpha+2)/2}} dx_1dx_2,\end{align*}
 where $c'_\alpha=2c_\alpha\int_0^\infty \frac{du}{(1+u)^{(\alpha+4)/2}}.$

The second term goes to $0$ when $t$ tends to $\infty$  because $f$ is uniformly continuous and $1/\gamma_3-2/\alpha>0$: for any $\eps>0$ there is a $T_\eps$ such that for all $t>T_\eps$,
$$
  | ((z_1,z_2,t^{-1/\gamma_3+2/\alpha} z_3))-f((z_1,z_2,0))|<\eps.
  $$
   This gives
 \begin{align*}\lefteqn{  t^{-4/\alpha}
 \sum_{ (t^{-1/\alpha}\mathbb Z)^2\times t^{-2/\alpha}\mathbb Z} \frac{c_\alpha |f((z_1,z_2,t^{-1/\gamma_3+2/\alpha} z_3))- f((z_1,z_2,0))| }{\left(\sqrt{z_1^2+z_2^2+|z_3- z_1z_2/2|}\right)^{\alpha + 4}}}&&\\
 &\le   \eps   t^{-4/\alpha}
 \sum_{(t^{-1/\alpha}\mathbb Z)^2\times t^{-2/\alpha}\mathbb Z} \frac{c_\alpha}{\left(\sqrt{z_1^2+z_2^2+|z_3- z_1z_2/2|}\right)^{\alpha + 4}} . \end{align*}
When $t$ tends to infinity, the limit of the right-hand side is
$$\eps  \int_{\mathbb R^3}  \frac{  c_\alpha } {\left(\sqrt{x_1^2+x_2^2+|x_3- x_1x_2/2|}\right)^{\alpha + 4}}dx_1dx_2dx_3.$$
As $\eps>0$ is arbitrary, this proves that
$$\lim_{t\to \infty}  t^{-4/\alpha}
 \sum_{ (t^{-1/\alpha}\mathbb Z)^2\times t^{-2/\alpha}\mathbb Z} \frac{c_\alpha (f((z_1,z_2,t^{1/\gamma_3} z_3))- f((z_1,z_2,0))) }{\left(t^{-1/\alpha}+\sqrt{z_1^2+z_2^2+|z_3- z_1z_2/2|}\right)^{\alpha + 4}} =0$$
 as desired.
 \end{exa}

\subsection{Vague convergence of jump measures and kernels} \label{S:4.2}

\begin{pro}\label{weaklimmeas} 
Let $\Gamma\subset G$ be a discrete co-compact subgroup of the simply connected nilpotent Lie group $G=(\mathbb R^d,\cdot)$.
Let $c(\Gamma,G)$ be the Haar volume of $G/\Gamma$ $($i.e., of a fundamental domain for $\Gamma$ in $G$$)$.
Let $\mu$ be a probability measure on
$\Gamma$,  $(\delta_t)_{t>0}$ be an approximate group dilation structure on $G$ which is admissible for $\mu$, 
 and let $\mu_t := t \delta_{1/t}(\mu) $.
Suppose that $ \mu_t$  converges vaguely  on $\mathbb R^d\setminus \{0\}$ to a Radon measure $\mu_\bullet$ 
 as $t$ tends to infinity.
Then, for any continuous and compactly supported function $\phi$ in $\mathbb R^d\times \mathbb R^d\setminus \Delta$,
 the positive Radon measure
$ J_t(dxdy)$ on $\mathbb R^d\times \mathbb R^d \setminus \Delta$ defined by
\begin{eqnarray*}
\lefteqn{\iint_{ \mathbb R^d\times \mathbb R^d \setminus \Delta} \phi(x,y)J_t(dxdy)= }&&\\
&&\hspace{.5in}  c(\Gamma,G)t \det (\delta_{1/t})\sum_{x,y\in \delta_{1/t}(\Gamma), \;x\neq y} \phi(x,y) \mu (\delta_t (x)^{-1}\cdot\delta_t(y))\end{eqnarray*}
converges vaguely as $t$ tends to infinity to the positive   Radon measure $J_\bullet$ defined on $(\mathbb R^d\times \mathbb R^d ) \setminus \Delta$ by
$$\iint_{\mathbb R^d\times \mathbb R^d \setminus \Delta} \phi(x,y)J_\bullet(dxdy)= \iint_{\mathbb R^d\times \mathbb R^d \setminus \Delta}\phi(x,x\bullet y) dx\mu_\bullet ( dy),$$
where $\bullet$ is the limit law  $x\bullet y=\lim_{t\ra \infty} \delta_t^{-1}(\delta_t(x)\cdot \delta_t(y))$ for the approximate Lie dilation structure $(\delta_t)_{t>0}$.
\end{pro}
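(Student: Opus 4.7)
The plan is to rewrite $J_t$ as a product-type integral of a test function $\psi_t(x,v)$ against $\mathcal L_t(dx)\otimes \mu_t(dv)$, where $\mathcal L_t$ is the discrete measure encoding the $x$-lattice sum, and then pass to the limit via the joint vague convergence of $\mathcal L_t\otimes \mu_t$ together with uniform convergence of $\psi_t$ on compacts and uniform control of its support.

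For $x\in\delta_{1/t}(\Gamma)$ fixed, the change of variable $u=\delta_t(x)^{-1}\cdot \delta_t(y)$ puts $y\in\delta_{1/t}(\Gamma)\setminus\{x\}$ in bijection with $u\in\Gamma\setminus\{e\}$ via $y=\delta_{1/t}(\delta_t(x)\cdot u)$. Setting $v:=\delta_{1/t}(u)$ and $y_t(x,v):=\delta_{1/t}(\delta_t(x)\cdot\delta_t(v))$, and introducing
$$\mathcal L_t:= c(\Gamma,G)\det(\delta_{1/t})\sum_{x\in \delta_{1/t}(\Gamma)}\delta_x,$$
the definition of $J_t$ becomes
$$\iint \phi(x,y)\,J_t(dxdy)=\iint \psi_t(x,v)\,\mathcal L_t(dx)\,\mu_t(dv),\qquad \psi_t(x,v):=\phi(x, y_t(x,v)).$$

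By hypothesis, $\mu_t\to \mu_\bullet$ vaguely on $\R^d\setminus\{0\}$. For $\mathcal L_t\to dx$ vaguely on $\R^d$, I would start from $\int_{\R^d} g\,dx =\int_F \sum_{\gamma\in\Gamma} g(x\cdot \gamma)\,dx$ applied to $g=f\circ \delta_{1/t}$, where $F$ is a fundamental domain for $\Gamma$ in $G$ with Lebesgue volume $c(\Gamma,G)$.  Using the identity $\delta_{1/t}(x\cdot \gamma)=\delta_{1/t}(x)\cdot_t \delta_{1/t}(\gamma)$, the fact that $\delta_{1/t}(x)\to 0$ uniformly on $F$, and the uniform continuity of $(a,y)\mapsto a\cdot_t y$ at $a=0$ on compacts (a consequence of the polynomial structure underlying Lemma \ref{L:2.5}), a standard Riemann-sum estimate yields $\mathcal L_t\to dx$ vaguely. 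Hence $\mathcal L_t\otimes\mu_t\to dx\otimes \mu_\bullet$ vaguely on $\R^d\times(\R^d\setminus\{0\})$.

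Finally I would show that $\psi_t\to \psi_\bullet(x,v):=\phi(x,x\bullet v)$ uniformly on compact subsets of $\R^d\times(\R^d\setminus\{0\})$ and that the supports of $(\psi_t)_{t\ge T}$ lie in a common compact subset of $\R^d\times (\R^d\setminus\{0\})$. Pick a compact $K\subset \R^d$ and $\eps>0$ with $\mathrm{supp}\,\phi\subset \{(x,y)\in K\times K:\|y-x\|_2\ge \eps\}$. Lemma \ref{lem2-6} applied to $\delta_t$ gives $\|v\|_2\le C_K\|y-x\|_2\le 2C_K\,\mathrm{diam}(K)$ on the support of $\psi_t$, a uniform upper bound. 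For the lower bound, the polynomial map $v_t(x,y):=\delta_{1/t}(\delta_t(x)^{-1}\cdot \delta_t(y))$ converges uniformly on $K\times K$ to $x_\bullet^{-1}\bullet y$ by \eqref{keylim} and the polynomial argument of Lemma \ref{L:2.5}, while the inequality \eqref{Lip2} applied to $(\R^d,\bullet)$ gives $\|x_\bullet^{-1}\bullet y\|_2\ge c_K\|y-x\|_2\ge c_K\eps$ on $\mathrm{supp}\,\phi$; hence $\|v\|_2\ge c_K\eps/2$ on the support of $\psi_t$ for $t\ge T$ large enough. The same uniform convergence $y_t(x,v)\to x\bullet v$ on compacts combined with the uniform continuity of $\phi$ yields $\psi_t\to \psi_\bullet$ uniformly on the common compact support, and combining with the vague convergence $\mathcal L_t\otimes\mu_t\to dx\otimes \mu_\bullet$ then gives $\iint \psi_t\, d(\mathcal L_t\otimes\mu_t)\to \iint \phi(x,x\bullet v)\,dx\,\mu_\bullet(dv)=\iint \phi(x,y)\,J_\bullet(dxdy)$, which is the desired conclusion. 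The main obstacle is precisely the lower bound on $\|v\|_2$: the $t$-dependent deformation between the group laws $\cdot$ and $\bullet$ could \emph{a priori} push mass toward $v=0$ even when $(x,y)$ stays off the diagonal, and the uniform polynomial convergence $v_t\to v_\bullet$ combined with the non-degeneracy of $\bullet$ off the diagonal is what precisely rules this out.
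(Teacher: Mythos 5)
Your proof is correct and follows essentially the same route as the paper's: the same rewriting of the double sum through the rescaled law $x\cdot_t v=\delta_{1/t}(\delta_t(x)\cdot\delta_t(v))$, the uniform convergence of $\cdot_t$ to $\bullet$ on compacts, the convergence of the rescaled lattice measure to Lebesgue measure, the vague convergence $\mu_t\to\mu_\bullet$, and the confinement of the jump variable to a compact set bounded away from $0$ via Lemma \ref{lem2-6} and \eqref{Lip2} applied to $(\mathbb R^d,\bullet)$. The only difference is organizational: the paper adds and subtracts the intermediate term $\sum_{y}\int\phi(x,x\bullet y)\,t\mu(\delta_t(y))\,dx$ and bounds the two resulting pieces, whereas you phrase the same estimates as vague convergence of the product measure $\mathcal{L}_t\otimes\mu_t$ tested against the uniformly converging functions $\psi_t$ with common compact support.
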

\begin{rem}
Of course, in the group $G_\bullet=(\mathbb R^d,\bullet)$, we can write
$$\iint_{\mathbb R^d\times \mathbb R^d \setminus \Delta}\phi(x,x\bullet y) dx\mu_\bullet ( dy)= \iint_{\mathbb R^d\times \mathbb R^d \setminus \Delta}\phi(x,y) dx\mu_\bullet (x_\bullet^{-1}\bullet dy)$$
 (the inverse operation is in $(G,\bullet)$) so that $$J_\bullet(dxdy) = dx\mu_\bullet(x_\bullet^{-1}\bullet dy).$$
\end{rem}

\begin{rem} Note that the measure $J^1_t(dx)$ defined by
$$\int \phi(x) J^1_t(dx)=  c(\Gamma,G) \det (\delta_{1/t})\sum_{x
\in \delta_{1/t}(\Gamma)} \phi(x)$$
obviously converges to $\int \phi(x)dx$ as $t$ tends to infinity. That is, the vague limit of $J^1_t(dx)$ is the Lebesgue (=Haar) measure on $\mathbb R^d$.
\end{rem}

\begin{proof}   Observe that
\begin{equation}
x\cdot_t y= \delta_t^{-1}(\delta_t(x)\cdot \delta_t(y))\label{def-Gt}\end{equation} is a group law which  turns $\mathbb R^d$ into a Lie group $G_t=(\mathbb R^d, \cdot_t)$  (this group is actually isomorphic to $G$).  For any fixed  $x\in \delta_{1/t}(\Gamma)$, consider
\begin{align*}
t\sum_{y\in \delta_{1/t}(\Gamma) \setminus \{x\}}  \phi(x,y) \mu (\delta_t (x)^{-1}\cdot\delta_t(y))
&=  t\sum_{y\in \delta_{1/t}(\Gamma)  \setminus \{x\}}   \phi(x,y) \mu (\delta_t (x^{-1}\cdot_ty))\\
&=   t\sum_{y\in \delta_{1/t}(\Gamma)  \setminus \{x\}}    \phi(x,x \cdot _ty) \mu (\delta_t(y)).\end{align*}
Now, write
\begin{align*}
\lefteqn{\iint_{\mathbb R^d\times \mathbb R^d \setminus \Delta} \phi(x,y)J_t(dxdy) -\iint_{\mathbb R^d\times \mathbb R^d \setminus \Delta} \phi(x,y) J_\bullet(dxdy)} &\\
&=   \iint_{\mathbb R^d\times \mathbb R^d \setminus \Delta} \phi(x,y)J_t(dxdy) - \sum_{y\in \delta_{1/t}(\Gamma)
  \setminus\{e\}} \int_{ \mathbb R^d} \phi(x,x\bullet y) t\mu( \delta_t(y))  dx \\
&\quad+  \sum_{y\in \delta_{1/t}(\Gamma)  \setminus\{e\}} \int_{ \mathbb R^d}  \phi(x,x\bullet y) t\mu( \delta_t(y))  dx
-\iint_{\mathbb R^d\times \mathbb R^d \setminus \Delta} \phi(x,y) J_\bullet(dxdy) \\
&=   I_1(t)+I_2(t).
\end{align*}
To bound $|I_1|$, write  $c_t= c(\Gamma,G)\det (\delta_{1/t})$ and
\begin{align*}
|I_1(t)|= \left|\sum_{y\in \delta_{1/t}(\Gamma) \setminus \{e\}} t\mu(\delta_t(y)) \left( c_t\sum_{x\in \delta_{1/t}(\Gamma)} \phi(x,x\cdot_ty)
-\int \phi(x,x\bullet y)dx\right)\right|  .
\end{align*}

Note that $\phi$ is continuous and compactly supported in $\mathbb R^d\times \mathbb R^d \setminus \Delta$
and $x\cdot_ty$ converges (uniformly on compact sets) to $x\bullet y$.  It follows that there is a compact set $K=K_\phi$ in $\mathbb R^d\setminus \{0\}$
with the property that, for  any $\eps>0$, there is $T$ such that, for all $y\in \mathbb R^d$ and all $t>T$,
 $$ \left| c_t\sum_{x\in \delta_t^{-1}\Gamma} \phi(x,x\cdot_ty)
-\int \phi(x,x\bullet y)dx\right|\le \eps \1_K(y).$$
Also, there exist $C_K$ and $T'$ such that for all $t>T'$,   $t\mu(\delta_t (K)) \le  C_K$.
It follows that $|I_1(t)|\le \eps C_K$.  As for $|I_2(t)|$,  the fact that it converges to $0$ is a consequence of the vague convergence of
$t\delta_t^{-1}(\mu)$ to $\mu_\bullet$ on $\mathbb R^d\setminus \{0\}$.
\end{proof}

The jump kernel $J_t$ introduced above  is defined on $\mathbb  R^d\times \mathbb R^d\setminus\Delta$ and acts on functions
of $x,y\in \mathbb  R^d\times \mathbb R^d\setminus\Delta$.  It is useful to  consider also a related discrete jump kernel supported on
$$\Gamma_t\times \Gamma_t\setminus \Delta,$$
where $\Gamma_t=\delta_t^{-1}(\Gamma)$
(by abuse of notation, we use the letter $\Delta$ to demote the diagonal on $R\times R$ for any space $R$, e.g., $R=\mathbb R$ or $R=\Gamma_t$).
Note that $\Gamma_t$ is a co-compact subgroup of the group $G_t=(\mathbb R^d,\cdot_t)$ defined at (\ref{def-Gt}) and that $\delta_t$ provides a group isomorphism from $\Gamma_t$ onto $\Gamma$.  We equipped $\Gamma_t$ with the rescaled counting measure
\begin{equation} \label{def-mt}
m_t(A)= c(\Gamma,G) \det (\delta_t^{-1}) |A|, \quad  \mbox{where } |A|= \# A\end{equation}
 for any finite subset $ A\subset \Gamma_t$. On $\Gamma_t$, we consider the jump kernel measure $j_t$  defined by
\begin{equation}
j_t(x,y)= c(\Gamma,G)t \det (\delta_{1/t}) \mu (\delta_t (x)^{-1}\cdot\delta_t(y)), \quad
  (x, y)  \in \Gamma_t\times \Gamma_t\setminus\Delta.
\label{def-jt}
\end{equation}
  We now assume that  the probability measure $\mu$ on $\Gamma$ is  symmetric.
Then $j_t(x, y)$ is symmetric in $(x, y)$ and it gives arise  to  an associated symmetric
   Dirichlet form   in $L^2(\Gamma_t,m_t)$   with domain $\sF^{(t)}:=L^2(\Gamma_t,m_t)$  defined by
\begin{equation}\label{def-Et}
\mathcal E^{(t)}(u,v)= \frac{1}{2} \sum_{x,y\in \Gamma_t}
{(u(x)-u(y))(v(x)-v(y))} {j_t(x,y)},\quad u, v\in    \sF^{(t)}  .
\end{equation}
 The infinitesimal generator  of this Dirichlet form on $L^2(\Gamma_t,m_t)$  is
\begin{equation}\label{eq:gene-scale}
f\mapsto  -t(f-f*_{\Gamma_t}\delta_t^{-1}(\mu))
\end{equation}
on $\Gamma_t$.

Recall that $\Gamma_t \subset \mathbb R^d$. For each $x\in \mathbb R^d$, let $[x]_t\in \Gamma_t$  be the point closest to $x$ in the $\|\cdot\|$-norm (if there are more than two such points, we choose one arbitrary and fix it).
When needed, extend a function $ f$ on $\Gamma_t$ to a function $\tilde f $ on $\mathbb R^d$ by setting
$f(x)=f([x]_t)$ for each $x\in \mathbb R^d$. We say a family of functions $\{f_t: \Gamma_t\to \mathbb R\}_{t\ge 1}$
converges uniformly to a function $f$ on $\mathbb R^d$ if $\tilde f$ converges uniformly to $f$.

The following is an easy consequence of Proposition \ref{weaklimmeas} that relates to $j_t$. It is stated for continuous limit but it obviously holds as well for sequential limits based on an arbitrary sequence $t_k$ tending to infinity.
\begin{lem}\label{weaklim-rev}
Let $\{f_t: \Gamma_t\to \mathbb R\}_{t>0}$ {\rm (resp. $\{g_t: \Gamma_t\to \mathbb R\}_t$)}
be a family of continuous functions that converges uniformly to a continuous function $f$ {\rm (resp. $g$)} on
$\mathbb R^d$. Then, under the assumptions of Proposition \ref{weaklimmeas},  for any open set $U\subset \mathbb R^d\times \mathbb R^d\setminus \{(x,y):
 \|x_\bullet ^{-1}\bullet y\|_2\le \eta\}$ with $\eta>0$
whose closure is compact, it holds that
\begin{eqnarray*}
   &&   \lim_{t\to\infty}   \sum_{(x,y)\in (\Gamma_t \times \Gamma_t)\cap U} (f_t(x)-f_t(y))
(g_t(x)-g_t(y))j_t(x,y)  \\
&& =   \iint_U (f(x)-f(y))(g(x)-g(y))J_\bullet(dxdy).
\end{eqnarray*}
\end{lem}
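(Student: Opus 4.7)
Writing $\phi_t(x,y):=(f_t(x)-f_t(y))(g_t(x)-g_t(y))$ on $\Gamma_t\times\Gamma_t$ and $\phi(x,y):=(f(x)-f(y))(g(x)-g(y))$ on $\mathbb R^d\times\mathbb R^d$, formula \eqref{def-jt} together with the definition of the measure $J_t$ in Proposition~\ref{weaklimmeas} lets me rewrite the sum in the lemma as $\iint_U \phi_t\, dJ_t$. The claim thus reduces to proving $\iint_U \phi_t\, dJ_t\to\iint_U \phi\, dJ_\bullet$, and I split the difference as
$$\iint_U(\phi_t-\phi)\, dJ_t + \left[\iint_U \phi\, dJ_t-\iint_U \phi\, dJ_\bullet\right].$$

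For the first summand, the nearest-point extension of $\phi_t$ to $\mathbb R^d\times\mathbb R^d$ converges uniformly to $\phi$ on the compact set $\bar U$ by hypothesis (boundedness of $f,g$ on $\bar U$ plus uniform convergence of $f_t\to f$ and $g_t\to g$). Picking a continuous cutoff $\chi\ge\1_{\bar U}$ with compact support in $\{(x,y):\|x_\bullet^{-1}\bullet y\|_2>\eta/2\}$, Proposition~\ref{weaklimmeas} gives $\limsup_t J_t(\bar U)\le\lim_t\iint\chi\, dJ_t=\iint\chi\, dJ_\bullet<\infty$, so
$$\left|\iint_U(\phi_t-\phi)\, dJ_t\right|\le\|\phi_t-\phi\|_{\infty,\bar U}\cdot\sup_t J_t(\bar U)\to 0.$$

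The second bracketed summand is the delicate point, because Proposition~\ref{weaklimmeas} only provides vague convergence against continuous compactly supported test functions, and the indicator $\1_U$ of the open set $U$ is not continuous. The standard remedy is a Portmanteau-type sandwich: choose continuous cutoffs $\chi_\varepsilon^-\le \1_U\le\chi_\varepsilon^+$, with $\chi_\varepsilon^-$ of compact support in $U$ and $\chi_\varepsilon^+$ of compact support in a small open enlargement of $\bar U$ still contained in $\{\|x_\bullet^{-1}\bullet y\|_2>\eta/2\}$, then apply Proposition~\ref{weaklimmeas} separately to $\chi_\varepsilon^\pm\phi^+$ and $\chi_\varepsilon^\pm\phi^-$ and sandwich the resulting inequalities. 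The leftover error $\iint(\chi_\varepsilon^+-\chi_\varepsilon^-)|\phi|\, dJ_\bullet$ vanishes as $\varepsilon\to 0$ by Radon regularity of $J_\bullet$, provided $J_\bullet(\partial U)=0$; this mild genericity condition on $U$ is the only real obstacle and is automatic for the open sets (open boxes/balls in $G_\bullet$-adapted coordinates) that arise in applications of this lemma.
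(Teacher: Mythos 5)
Your decomposition is the same as the paper's: the paper sets $\psi_t(x,y)=(f_t(x)-f_t(y))(g_t(x)-g_t(y))$, splits the error into $I_1=\sum_{(x,y)\in(\Gamma_t\times\Gamma_t)\cap U}(\psi_t-\psi)j_t$ and $I_2=\iint_U\psi\,dJ_t-\iint_U\psi\,dJ_\bullet$, and handles $I_1$ exactly as you do, from the uniform mass bound $\sup_{t\ge 1}\sum_{(x,y)\in(\Gamma_t\times\Gamma_t)\cap U}j_t(x,y)<\infty$ (which, as you observe, comes from testing Proposition \ref{weaklimmeas} against a continuous cutoff dominating $\1_{\bar U}$) together with the uniform convergence $\psi_t\to\psi$ on $\bar U$. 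The difference is in the second term: the paper dismisses $I_2$ in one line ``by the proof of Proposition \ref{weaklimmeas}'', i.e.\ it re-runs that proposition's two-step argument with the integrand $\1_U\psi$ in place of a continuous test function, whereas you reduce to the statement of the proposition by a Portmanteau sandwich with continuous cutoffs, at the cost of the extra hypothesis $J_\bullet(\partial U)=0$. That hypothesis is absent from the lemma as stated, but your instinct that this is the genuinely delicate point is correct: since $\mu_\bullet$ may be singular (e.g.\ supported on a coordinate axis, as in Example \ref{ex3-3}), one can choose an admissible open $U$ whose boundary carries positive $J_\bullet$-mass and for which $J_t(U)$ does not converge to $J_\bullet(U)$, so some boundary-null (or restricted class of $U$) condition is in fact needed for the literal statement; and, as you note, it does hold for the sets used later in the proof of Theorem \ref{WT1} (sets cut out by conditions on $\|x\|_2$ and $\|x_\bullet^{-1}\bullet y\|_2$, whose boundaries are null for Lebesgue measure and for $\mu_\bullet$ by its scaling invariance). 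So your argument is sound, follows the paper's skeleton, and is more explicit than the paper at precisely the one subtle step; what the paper's route buys is brevity rather than additional generality.
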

\begin{proof} Set $\psi_t(x,y):=(f_t(x)-f_t(y))(g_t(x)-g_t(y))$ and $\psi(x,y):=(f(x)-f(y))(g(x)-g(y))$.
Then
\begin{eqnarray*}
&&\Big|\sum_{(x,y)\in (\Gamma_t \times \Gamma_t)\cap U} \psi_t(x,y)j_t(x,y)-
\iint_U \psi(x,y)J_\bullet(dxdy)\Big|\\
&&\le  \Big|\sum_{(x,y)\in (\Gamma_t \times \Gamma_t)\cap U} (\psi_t(x,y)-\psi(x,y))j_t(x,y)\Big|\\
&&~~+\Big|\iint_U (\psi(x,y)J_t(dxdy)-\psi(x,y)J_\bullet(dxdy))\Big|=:I_1+I_2.
\end{eqnarray*}

By Proposition  \ref{weaklimmeas},  $\sup_{t\ge 1} \sum_{(x,y)\in (\Gamma_t \times \Gamma_t)\cap U}
j_t(x,y)<\infty$.  It follows that $\lim_{t\to \infty} I_1=0$ because $\psi_t$ converges uniformly to $\psi$.
By the proof of Proposition \ref{weaklimmeas}
(and the fact that $U$ is compact in $\mathbb R^d\times \mathbb R^d\setminus \Delta$), $\lim_{k\to\infty} I_2=0$.
\end{proof}

 \section{Weak convergence of the processes} \label{sect-weakc}

\subsection{Assumption {\bf (A)}}\label{S:5.1}

In this section and next, we prove limit theorems involving
\begin{enumerate}
\item A finitely generated torsion free nilpotent group $\Gamma$ embedded as a
co-compact lattice in a simply connected nilpotent Lie group $G$;

\item A symmetric probability measure $\mu$ on $\Gamma$;

\item A polynomial coordinate system for $G=(\mathbb R^d,\cdot)$ and straight dilation structure
\begin{equation}\label{e:4.1a}
\delta_t,  \, t>0, \quad \delta_t((u_i)_1^d)=(t^{1/\beta_i} u_i)_1^d,
\quad  \beta_i\in (0,2), \quad i=1,\dots,d ,
\end{equation}
 which is an approximate group dilation structure for $G$ with limit group $G_\bullet=(\mathbb R^d,\bullet)$.
\end{enumerate}
The key hypothesis we will make that links together the   probability
measure $\mu$, the dilation structure $(\delta_t)_{t>0}$ and the limit group $G_\bullet$ is that
\begin{itemize}
\item[(A)] The straight dilation structure $(\delta_t)_{t>0}$ is admissible for the probability measure $\mu$, that is, the (positive) 
  measure $\mu_t= t\delta_{1/t}(\mu)$, $t\ge 1$, defined at (\ref{def-mut}) converges vaguely   to a  non-trivial Radon measure $\mu_\bullet$ on $\mathbb R^d\setminus\{0\}$
as $t$ tends to infinity.
\end{itemize}

  \begin{rem} \label{R:5.1} \rm
	\begin{enumerate}
	\item[(i)]   The Radon measure $\mu_\bullet$ appeared in (A) is on $\R^d\setminus \{0\}$ and is expressed
	under the global coordinate system we use for the nilpotent group $G$ and hence for $G_\bullet$.
	It induces a Radon measure of $G_\bullet$ through this global coordinate system.
	By abusing the notations, we use the same
	notation $\mu_\bullet$ for the induced measure on $G_\bullet$.

\item[(ii)] Under assumption (A),  it follows from the definition of $\mu_t$ that  $\mu_\bullet$ is a symmetric measure
 on $G_\bullet \setminus \{e\}$ and  has the following scaling property
  \begin{equation} \label{e:4.1}
 \delta_r (\mu_\bullet ) =r  \mu_\bullet \quad \hbox{for every } r>0;
\end{equation}
that is,  for any Borel measurable set $A\subset \R^d\setminus\{0\}$,
 $\mu_\bullet (A)= \mu_\bullet (A_\bullet^{-1})$,
  where $A_\bullet^{-1}:=\{x\in \R^d: x_\bullet^{-1} \in A\}$, and
 $$
 \mu_\bullet  (\delta_r^{-1} (A) ) =\mu_\bullet (\delta_{1/r}(A))= r  \mu_\bullet  (A) \quad \hbox{for every } r>0.
$$

\end{enumerate}
\end{rem}

We are most interested in the case the limit measure $\mu_\bullet$ is  
not supported on a proper closed connected subgroup of  $G_\bullet$. In that case, the condition that the exponents
 $\{\beta_i ,  1\leq i  \leq d\}$  for the straight dilation structure  $\{ \delta_t; t\geq 0\}$ of \eqref{e:4.1a}
are in $(0,2)$ means that the original measure $\mu$ must have some sort of heavy tail characteristics, i.e., $\mu$ has to be ``stable-like''.

\subsubsection*{Geometries on  $\mathbb R^d$ and $G_\bullet$}
Fix $\beta\ge \max_{1\le i\le d}\{\beta_i\}$.
By \cite{Hebish1990},
there is a norm $\|\cdot\|$ on $G_\bullet=(\mathbb R^d,\bullet)$ (this means that $\|x\bullet y\|\le \|x\|+\|y\|$ for all $x,y\in \mathbb R^d$, $\|x_\bullet^{-1}\|=\|x\|$ and $\|x\|=0$ if and only if $x=0$)
such that
\begin{equation} \label{e:4.2}
\|\delta_t(u)\|=t^{1/\beta}\|u\|
\quad \hbox{for every } t>0 \hbox{ and } u=(u_i)_1^d\in \mathbb R^d.
\end{equation} 
 This implies, of course, that there are {  constants  $c,C\in (0,\infty)$ such that
\begin{equation}\label{e:5.4}
c \max_{1\le i\le d}\{|u_i|^{\beta_i/\beta}\}\le \|u\|\le C \max_{1\le i\le d}\{|u_i|^{\beta_i/\beta}\}
\quad \hbox{for } u=(u_i)_1^d \in \R^d.
\end{equation} 
Note that $ \max_{1\le i\le d}\{|u_i|^{\beta_i/\beta}\}$ itself  is a norm on
 $(\mathbb R^d,+)$ but not necessarily on $G_\bullet=(\mathbb R^d,\bullet)$
(it may not be symmetric on $G_\bullet$ and only satisfies the triangle inequality up to a multiplicative constant in general).
Set
$$
B(r)=\left\{x\in \mathbb R^d: \|x\|<r \right\}.
$$
Obviously, we have
$$
\|\delta_t (u) \|=t^{1/\beta} \|u\|
\quad \hbox{and} \quad
\delta_t (B(r))= B(rt^{1/\beta}).
$$
This means that the volume (the Lebesgue measure) of $B(r)$  is
$$m(B(r))=m(\delta_{r^\beta}(B(1)))= m(B(1))\det(\delta_{r^\beta})= m(B(1))r^{\beta(\sum_{i=1}^d1/\beta_i)}.$$

Recall that $\mathbb R^d$ is also equipped with the Euclidean norm $\|u\|_2=\sqrt{\sum_1^d|u_i|^2}.$
Let
$$\beta_-=\min_{1\le i\le d} \beta_i  \quad \hbox{ and } \quad
\beta_+=\max_{1\le i\le d} \beta_i .
$$
From the definition, it is clear that
\begin{equation} \label{eq-normcomp2}
c \min\{\|u\|^{\beta /\beta_-},\|u\|^{\beta/\beta_+}\}\le \|u\|_2\le C \max\{\|u\|^{\beta /\beta_-},\|u\|^{\beta/\beta_+}\}.
\end{equation}
Similarly, for any $u\in \mathbb R^d$ with $\|u\|\le C_1 r^{1/\beta}$, we have
\begin{equation}
 c_2 \left(\frac{\|u\|_2}{r}\right)^{\beta_+/\beta}  \le   \frac{\|u\|}{r^{1/\beta}} \le  C_2 \left(\frac{\|u\|_2}{r}\right)^{\beta_-/\beta}.
\end{equation}

We will need the following version of Lemma \ref{lem2-6}   with respect to the norm $\| \cdot \|$.

  \begin{lem}\label{lem2-6norm}
 For any compact $K\subset \mathbb R^d$
  there is a constant $C_K$ such that, for any $x,y\in K$ and $t\ge 1$,
  $$\|   \delta_{1/t}\left(\delta_t(x)^{-1}\cdot\delta_t(y)\right)  \|\le C_K \|y-x\|^{\beta_-/\beta_+}$$
  and
  $$\|  \delta_{1/t}\left(\delta_t(x)^{-1}\cdot \delta_t(y)\right)  \|\le C_K \|x_\bullet^{-1}\bullet y\|^{\beta_-/\beta_+}.$$  \end{lem}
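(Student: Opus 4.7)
The plan is to deduce both bounds from the Euclidean estimates of Lemma~\ref{lem2-6} by converting between $\|\cdot\|$ and $\|\cdot\|_2$ via the comparison \eqref{eq-normcomp2}. Write $w:=\delta_{1/t}(\delta_t(x)^{-1}\cdot\delta_t(y))$. Lemma~\ref{lem2-6} gives $\|w\|_2\le C_K\|y-x\|_2$ and $\|w\|_2\le C_K\|x_\bullet^{-1}\bullet y\|_2$ for $x,y\in K$ and $t\ge 1$; since $K$ is compact, the Euclidean norms of $w$, $y-x$, and $x_\bullet^{-1}\bullet y$ are uniformly bounded, and hence so are their $\|\cdot\|$-norms.

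The two consequences of \eqref{eq-normcomp2} I would extract are: first, whenever $\|u\|\le 1$,
$$c\,\|u\|^{\beta/\beta_-}\le \|u\|_2\le C\,\|u\|^{\beta/\beta_+}, \qquad\hbox{hence}\qquad \|u\|\le C'\,\|u\|_2^{\beta_-/\beta};$$
and second, the contrapositive observation that there exists $r_0>0$ such that $\|u\|_2\le r_0$ implies $\|u\|\le 1$ (this follows from the lower bound $\|u\|_2\ge c\|u\|^{\beta/\beta_+}\ge c$ that \eqref{eq-normcomp2} forces when $\|u\|\ge 1$).

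For the first inequality I would split on the size of $\|y-x\|$. Choose $\eta\in (0,1]$ small enough, depending on $K$, so that $C\,C_K\,\eta^{\beta/\beta_+}\le r_0$. In the regime $\|y-x\|\le \eta$ one gets
$$\|w\|_2\le C_K\|y-x\|_2\le C_K\,C\,\|y-x\|^{\beta/\beta_+}\le r_0,$$
so $\|w\|\le 1$ and the conversion formula applies, yielding
$$\|w\|\le C'\|w\|_2^{\beta_-/\beta}\le C'(C_KC)^{\beta_-/\beta}\,\|y-x\|^{\beta_-/\beta_+}.$$
In the complementary regime $\|y-x\|\ge \eta$, the uniform bound $\|w\|\le M_K$ combined with $\|y-x\|^{\beta_-/\beta_+}\ge \eta^{\beta_-/\beta_+}$ gives $\|w\|\le M_K\eta^{-\beta_-/\beta_+}\|y-x\|^{\beta_-/\beta_+}$. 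Taking the larger of the two constants proves the first inequality.

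The second inequality follows by the identical argument, replacing $\|y-x\|$ by $\|x_\bullet^{-1}\bullet y\|$ and invoking the second bound of Lemma~\ref{lem2-6}; the map $(x,y)\mapsto x_\bullet^{-1}\bullet y$ is continuous on $K\times K$ and vanishes precisely on the diagonal, so the case split on its magnitude is legitimate. I do not anticipate a significant obstacle beyond careful bookkeeping of the threshold $\eta$; the exponent $\beta_-/\beta_+$ in the conclusion simply records the product $(\beta_-/\beta)\cdot(\beta/\beta_+)$ of the one-sided exponent losses incurred when passing between $\|\cdot\|$ and $\|\cdot\|_2$ in opposite directions.
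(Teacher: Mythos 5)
Your proof is correct and follows essentially the same route as the paper: the Euclidean Lipschitz estimate of Lemma \ref{lem2-6} (which the paper re-derives in place via the polynomial structure of $(t,x,y)\mapsto \delta_{1/t}(\delta_t(x)^{-1}\cdot\delta_t(y))$, and, for the second bound, \eqref{Lip2} on $(\mathbb R^d,\bullet)$), combined with the norm comparison \eqref{eq-normcomp2} to trade $\|\cdot\|_2$ for $\|\cdot\|$ at the cost of the exponents $\beta_-/\beta$ and $\beta/\beta_+$. Your explicit small/large case split on $\|y-x\|$ (resp. $\|x_\bullet^{-1}\bullet y\|$) merely spells out the bounded-range bookkeeping that the paper's terse application of \eqref{eq-normcomp2} leaves implicit, so no gap remains.
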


  \begin{proof}
  In view of \eqref{e:3.1a} and \eqref{e:3.4a}, the  function
  $(t,x,y)\mapsto  \delta_{1/t}\left( \delta_t(x)^{-1}\delta_t(y) \right)$ is a polynomial in
  $$(x,y)=(x_1,\dots,x_d,y_1,\dots, y_d)$$ with coefficients equal  to linear combination of powers of $t$ with exponents in $\mathbb R$.   By (\ref{keylim}), only  non-positive powers of $t$ appear.  The desired inequality follows
   from \eqref{eq-normcomp2}
    because this polynomial function equals $0$ when $x=y$.
 For the  second inequality, we first note
   from \eqref{eq-normcomp2} again  that for $x, y \in K$,
 $$\|    \delta_{1/t}\left(\delta_t(x)^{-1}\cdot \delta_t(y)\right)   \|\le C_K \|x-y\|_2^{\beta_-/\beta}$$
 and then observe that
  $\|x-y\|_2\le  C'_K\|x_\bullet^{-1} \bullet y\|^{\beta/\beta_+}$.
  \end{proof}

\subsection{Further hypotheses}\label{S:5.2} 

Under the general  circumstances described above, in order to obtain  limit theorems relating the random walk on $\Gamma$ driven by $\mu$ to the continuous time  left-invariant jump process on $G_\bullet$ associated with
  the jump measure
$J_\bullet$ of Proposition \ref{weaklimmeas}, we need several additional hypotheses which we now spell out in details.  One important feature of the various hypotheses described in this section is that they do not involve the precise limit behavior of $\mu_t$ as $t$ tends to infinity. In a non-technical sense, they are of a coarser, more robust nature.  In   Section \ref{S:10},  we will exhibit a large class of ``stable-like'' measures on $\Gamma$, all of which satisfy these hypotheses thanks to the results of \cite{SCZ-nil,CKSWZ1}.

\subsubsection*{The random walk on $\Gamma$ (regularity)}
A bounded function $u$ on $\Gamma$ is called $\mu$-harmonic in  a subset $U$ if it satisfies
$$u*\mu=u \quad \mbox{ in } U.$$
Consider the following basic regularity assumption regarding $\mu$-harmonic functions.  Note that we consider that $\Gamma$ as a subgroup of $G=(\mathbb R^d,\cdot)$ and use the $G_\bullet$-norm $\|\cdot\|$ to state this property.

\begin{itemize}
\item[(R1)] There are constants $C_1$ and $\kappa$  such that, for any bounded  function $u$ defined on $\Gamma$ and $\mu$-harmonic in $B(r)=\{x\in \mathbb R^d: \|x\|<r\}$, $r>0$,
and  all
  $x,y\in \Gamma \cap B(r/2)$,
we have
\begin{equation}\label{eq:harm}
|u(y)-u(x)|\le C_1 \|u\|_{\infty}\left(\frac{\|x^{-1}\cdot y\|}{r}\right)^{\kappa}.
 \end{equation}
\end{itemize}
\begin{rem}  For any fixed   $a>0$, changing $\|\cdot\|$ to
$\|\cdot \|^a$ (including in the definition of balls) amounts to  changing $\kappa$ to $\kappa/a>0$.
\end{rem}

 \subsubsection*{Exit time estimates}
We consider the following exit time hypotheses formulated   in terms   of the norm $\|\cdot\|$
  and the scaling exponent $\beta>0$ associated with it in \eqref{e:4.2}.
In particular, the balls appearing in the definition below are the balls $B(r)=\{x\in \mathbb R^d: \|x\|<r\}$, $r\ge0$, even so the exit
probability estimates below concern the random walk on $\Gamma$.
 \begin{itemize}
\item[(E1)] There exists $A>1$ such that the following holds:
for any $\eps\in (0,1)$, there exists $\gamma=\gamma(A,\eps)>0$ such that for
  any $r>0$,  we have
$$\mathbb P^x \left( \tau_{B(Ar)}\le \gamma r^{\beta} \right)\le \eps
\quad  \hbox{for all } x \in \Gamma \cap B(r) .
$$
\item[(E2)] There exists $0<C<\infty$ such that
  for any $r>0$, we have
$$\mathbb E^x \left[\tau_{B(r)} \right] \le C  r^{\beta}
\quad   \hbox{for all } x \in \Gamma \cap B(r)  .
$$
\end{itemize}
Here,   $\mathbb P^x$ and $\mathbb E^x$   refer to the random walk on $\Gamma$
 starting from $x$  driven by the probability measure $\mu$.

\medskip

\begin{rem} One  can esily check that Assumptions (E1)-(E2), together, are equivalent to $\mathbb E^x [\tau_{B(r)}]\asymp  r^{\beta}$   for any  $x\in \Gamma \cap B(r/2)$.
\end{rem}

\medskip

\begin{rem}
  For our limit theorems to hold,    the exponent $\beta >0$ in (E1) and (E2) {needs to be} the same exponent $\beta$ in \eqref{e:4.2}.
Thus,  in this context,  conditions (E1) and (E2) as well as condition (R1) are not only a condition on the measure $\mu$ (which determines the random walk $X_n$ on  $\Gamma$ and hence its harmonic functions) but also a condition on its comparability   with
 the dilation structure $(\delta_t)_{t>0}$, scaled measure $\mu_t=t\delta_{1/t}(\mu)$, and  norm $\|\cdot\|$ on $\mathbb R^d$.
 We expect the rescaled random  walks $(\delta_{1/k}(X_{[kt]}))_{t>0}$ to converge when $k$ tends to infinity
to a self-similar process $(Z_t)_{t>0}$ satisfying $ \delta_{1/s}(Z_{st})=Z_t$ for all $s,t>0$.
From the definition of  $\|\cdot\|$
  at \eqref{e:4.2},  the expected exit time  out of a ball of radius $r$ for this process should scale as $r^\beta$.  Moreover, the random  walk  exit time of the ball of radius $r$ is
$$
\tau_{B(r)}= \inf \{n: X_n\not\in B(r)\} =r^\beta \inf \left\{ n/r^\beta:  \delta_{1/r^\beta} (X_{ r^\beta(n/r^\beta)}) \not\in B(1) \right\}
$$
and we expect that, as $r$ tends to infinity,
$$
\inf \left\{ n/r^\beta:  \delta_{1/r^\beta} (X_{r^\beta(n/r^\beta)}) \not\in B(1) \right\} \to \inf\{s: Z_s\in B(1)\}$$
so that  $\mathbb E^e \left[ \tau_{B(r)} \right]$ should indeed behave as $r^\beta$.
\end{rem}

\subsubsection*{Tails properties for $J_t$ and $J_\bullet$}

We now discuss two related sets of hypotheses that are more technical but essential to obtain the desired results.  They concern the limit jump measure $J_\bullet$ and the  rescaled jump measures $J_t$  for large $t>0$. Theses hypotheses will have a natural flavor to anyone familiar with L\'evy processes  and Dirichlet forms.  They complement the vague convergence of $J_t$ to $J$ on $( \mathbb R^d\times \mathbb R^d) \setminus \Delta$.

Set
$$
B_\bullet(x,r)= x\bullet B(r)= \left\{y\in \mathbb R^d: \|x_\bullet^{-1}\bullet y\|<r \right\}.
$$
Concerning  the  limit Radon measure
$$
J_\bullet(dxdy)= dx\mu_\bullet(x_\bullet^{-1}\bullet dy)
$$
 on $(\mathbb R^d\times \mathbb R^d )\setminus \Delta$ from Proposition \ref{weaklimmeas},
  which is symmetric by Remark \ref{R:5.1},
 consider the hypothesis that
\begin{itemize}
\item[(T$\bullet$)] For any fixed compact set $K\subset \mathbb R^d$, \begin{eqnarray} \label{WCA2-A}
 \lim_{\eta\to 0}
 \iint_{\{(x, y)\in K\times K: \|x_\bullet^{-1}\bullet y\|_2\leq  \eta\}} \|x_\bullet^{-1}\bullet y\|_2 ^2
 \,  J_\bullet(dx,dy)&=0,\\\label{WCA1-B}
   \lim_{R\to \infty}
 \int_{x\in K}\int_{y\in B_\bullet(x,R)^c} J_\bullet(dx,dy) &=0.
\end{eqnarray}
\end{itemize}
Note that, because $J_\bullet(dxdy)=dx\mu_\bullet (x_\bullet^{-1}dy)$,
  where $\mu_\bullet$ is a Radon measure on $G_\bullet \setminus \{e\}=\R^d \setminus \{ 0\}$,
 condition (T$\bullet$)
  is equivalent to
\begin{equation} \label{e:5.9}
  \int _{G_\bullet} \min\{1,\|z\|_2^2\} \, \mu_\bullet(dz)<\infty ,
  \end{equation}
which is  \eqref{LM} for $\nu=\mu_\bullet$.

Under this hypothesis,   $J_\bullet$ is the jump measure of a symmetric bilinear form
\begin{equation}\label{def-Ebullet}
\sE_\bullet (u, v):= \frac1{2} \iint_{\mathbb R^d\times \mathbb R^d \setminus \Delta}
{(u(x)-u(y))(v(x)-v(y))} {J_\bullet (dx,dy)} \end{equation}
 on $\Lip_c(\mathbb R^d)$, which is the space of Lipschitz functions with compact support.
Moreover, this form is closable  in $L^2(G_\bullet;   dx )$ and its closure is a regular conservative Dirichlet form $(\sE_\bullet,\sF_\bullet)$ -- see, e.g., \cite[Example 1.2.4]{FOT} and \cite[Theorem 1.3]{GHM}.
 Hence by \cite[Corollary 6.6.6]{CF},
\begin{equation}\label{e:5.11}
\sF_\bullet=\left\{ u\in    (\sF_\bullet)_{\rm loc} \cap L^2(G_\bullet; dx): \sE_\bullet (u, u)<\infty \right\}.
\end{equation}

 Recall that by Lemma \ref{L:2.5}, the straight dilation $\{\delta_t, t>0\}$ is a group dilation structure for
the group $(G_\bullet, \bullet)$.
 Denote by $(\sL_\bullet, {\rm Dom}(\sL_\bullet))$  the  infinitesimal generator of  $(\sE_\bullet,\sF_\bullet)$ on $L^2(G_\bullet;   dx)$.  Under  the hypothesis (T$\bullet$), we have
 $C^2_b (\R^d)\cap L^2 (\R^d; m)\subset {\rm Dom}(\sL_\bullet)$
and
\begin{equation}\label{e:4.9}
\sL_\bullet f(x)= \lim_{\eps \to 0} \int_{\{z\in G_\bullet: \| z\| \geq \eps\} } ( f(x\bullet z) - f(x)) \mu_\bullet (dz)
\quad \hbox{for } f\in {\rm Dom}(\sL_\bullet).
\end{equation}
For $f\in   {\rm Dom}(\sL_\bullet)$  and $r>0$,  we have by \eqref{e:4.1} and \eqref{e:4.2}
that for $x\in G_\bullet$,
\begin{align}
\sL_\bullet (f\circ \delta_r)(x)
&=  \lim_{\eps \to 0} \int_{\{z\in G_\bullet: \| z\| \geq \eps\} } ( f( \delta_r( x\bullet z)) - f(\delta_r(x))) \mu_\bullet (dz)
\nonumber \\
&=  \lim_{\eps \to 0} \int_{\{z\in G_\bullet: \| w\| \geq \eps\} } ( f( \delta_r(x) \bullet \delta_r (z) ) - f(\delta_r(x)))    \mu_\bullet   (dz)
\nonumber \\
&=  \lim_{\eps \to 0} \int_{\{z\in G_\bullet: \| w\| \geq \eps\} }  ( f( \delta_r(x) \bullet w) - f( \delta_r(x)) ) (\delta_r \mu_\bullet ) (dw)
\nonumber \\
&=  r \lim_{\eps \to 0} \int_{\{z\in G_\bullet: \| w\| \geq \eps\} } ( f( \delta_r(x) \bullet w) - f( \delta_r(x)) )   \mu_\bullet  (dw)
\nonumber \\
&=  r \sL_\bullet f (\delta_r (x)) .    \label{e:4.10}
\end{align}
In particular, we have for $f\in   C^2_c(\R^d)$  and $r>0$,
\begin{equation}\label{e:4.11}
\sL_\bullet (f\circ \delta_r)(e) = r \sL_\bullet f (e) .
\end{equation}

\medskip

\begin{rem}\label{R:4.5}
Under  the hypothesis (T$\bullet$), let $X^\bullet$ be the symmetric Hunt process on $G_\bullet$
associated with the regular Dirichlet form $(\sE_\bullet,\sF_\bullet)$ on $L^2(G_\bullet,  dx)$; see \cite{CF, FOT}.
In view of \eqref{e:4.9},  $X^\bullet$ has stationary independent increment property; that is, for any $t>s \geq 0$,
$(X^\bullet_s)_\bullet^{-1} \bullet X_t^\bullet$ is independent of $\sigma (X^\bullet_r; r\leq s)$ and has the same distribution
as $(X^\bullet_0)_\bullet^{-1} \bullet X_{t-s}^\bullet$.  Thus $X^\bullet$ can be refined to start from every point in $G_\bullet$.
Moreover, it follows from \eqref{e:4.10} that  if $X^\bullet_0=e$, then
\begin{equation}\label{e:4.12}
\{ \delta_r (X^\bullet_t); t\geq 0\} \hbox{ has the same distribution as }  \{ X^\bullet_{r t}; t\geq 0\}.
\end{equation}
 We know from Lemma \ref{L:2.5} the straight dilations $\{\phi_t, \, t>0\}$ form a group dilation structure for the nilpotent group $(G_\bullet, \bullet)$. Thus in the terminology of \cite[p.170]{KunBR},
the L\'evy process $X^\bullet$ is stable with respect to the dilations  $\{\phi_t, \, t>0\}$.
 \end{rem}

\medskip

\begin{rem}\label{R:4.6}
 \rm In the terminology of \cite[p.31]{Gl},  the scaling property
  \eqref{e:4.11}     says that
  the generating functional  $f\to \sL_\bullet f(e)$
is a kernel of order $\beta_+ = \max_{1\leq i\leq d} \beta_i$.
Observe that in \cite{Gl}, the exponents  $\{d_j, 1\leq j\leq d\} $ for the
straight dilation structure   $\{\delta_t, t>0\}$ are our $\{1/\beta_j,  1\leq j\leq d\}$ and the smallest
$d_j$ there (which corresponds our $\beta_+$, the largest of $\beta_j$) is normalized to 1; see Remark \ref{scale}
for the procedure of doing such a normalization.
Note also that the norm $|\cdot |$ defined on \cite[(1.1)]{Gl} is comparable to our norm $\| \cdot \|$.
\end{rem}

\medskip

Regarding  the scaled jump kernel $J_t$, consider the property
\begin{itemize}
\item[(T$\Gamma$)] For any fixed compact set $K\subset \mathbb R^d$,
 \begin{eqnarray} \label{WCA2*}
\hskip -0.5truein  \lim_{\eta\to 0}\limsup_{t\to \infty}
 \iint_{\{(x, y)\in K^2 : \|x_\bullet^{-1}\bullet y\|_2 \leq  \eta\}} \|x_\bullet^{-1}\bullet y\|_2^2 J_t(dx,dy)&=0,\\ \label{WCA1*}
   \lim_{R\to \infty}\limsup_{t\to \infty}
\int_{x\in K} \int_{y\in B_\bullet(x,R)^c} J_t(dx,dy) &=0.
\end{eqnarray}
\end{itemize}

\begin{rem}\label{rem4-4}
 In the estimates \eqref{WCA2-A} and \eqref{WCA2*}, it is crucial to use the norm $\|\cdot\|_2$ in the integrant in order to measure the strength of small jumps allowed by theses jump kernels in a classical fashion.
In the estimates \eqref{WCA1-B} and \eqref{WCA1*}, it is natural to use the norm $\|\cdot\|$
due to the scaling property of $\{\delta_t\}$, but we may also use $\|\cdot\|_2$ if desired because
  of \eqref{eq-normcomp2}.
\end{rem}

\begin{rem}\label{R:4.8}
  Recalling \eqref{e:5.9},
one can check that conditions  (A), (T$\bullet$) and (T$\Gamma$) combined  
 are equivalent to
the following condition:
\begin{itemize}
\item[(A$'$)] The straight dilation structure $(\delta_t)_{t>0}$ is admissible for the probability measure $\mu$ in the sense that
the finite positive measure $(\| z\|_2^2 \wedge 1)  \, \mu_t (dz)$ converges weakly on $\mathbb R^d\setminus\{0\}$ to a finite measure $(\| z\|_2^2 \wedge 1) \, \mu_\bullet$
as $t$ tends to infinity, where $\mu_t$ is the measure  defined at (\ref{def-mut}).
\end{itemize}
 \end{rem}

Note that Examples \ref{ex3-2}-\ref{ex3-4} satisfy any of these conditions (A), (T$\bullet$),  (T$\Gamma$), (R1)-(R2) and (E1)-(E2).

\subsection{Weak convergence} \label{S:5.3}

Throughout this subsection, we generally assume that  (A)-(R1)-(E1)-(E2) and (T$\bullet$)-(T$\Gamma$) are all satisfied even so we will list exactly which properties are used for different results stated in this subsection.

Because of assumptions (A) and (T$\bullet$), we can consider
 the (continuous time) Markov semigroup of operators
 $$\{P_{\bullet,s}\}_{s\ge 0}$$ corresponding to
$(\sE_\bullet,\sF_\bullet)$ at (\ref{def-Ebullet}).  Let
  $\{U_\bullet^\lam;  \lambda >0\}$, and $\{\P_\bullet^x; x\in G_\bullet\}$
 be the resolvent, and probabilities
corresponding to the   regular Dirichlet form $(\sE_\bullet, \sF_\bullet) $ on $L^2(G_\bullet; dx)$.
Our goal is to prove that the continuous time conservative Markov process associated with this
  regular Dirichlet form  is the limit of the properly rescaled discrete time random walk on $\Gamma$
  driven by the probability   measure $\mu$.   We let $(X_n)_{n\ge 0}$
denote this random walk. Assumptions (R1) and (E1)-(E2) are assumptions regarding the behavior of this discrete time random walk on $\Gamma$.

Fix an arbitrary sequence of positive reals $\{T_k\}$ that goes to $\infty$.
We write $(\sE^{(k)},\sF^{(k)})$ for $(\sE^{(T_k)},\sF^{(T_k)})$
   defined by \eqref{def-Et}   with $T_k$ in place of $t$ there, which corresponds to the
  rescaled discrete time process by
 $$(X_n^{(k)}
:=\delta_{T_k}^{-1}(X_{T_kn}))_{n\in  (1/T_k) {\mathbb N}\cup\{0\}}.
$$
  Note that this is just  discrete time random walk  on $\Gamma_{T_k}$
where time has been rescaled linearly according to the scaling sequence $T_k$.
Let
 $\{ P_{n}^k;  n\in (1/T_k ) {\mathbb N} \cup\{0\} \}$,  $\{U^\lam_k; \lambda>0\}$, and $\{\P^x_k; x\in \Gamma_{T_k}\}$
  be the associated semigroup, resolvent, and probabilities.
For $t\ge 0$, we write
\begin{equation}\label{def-hat}
\hat X_t^{(k)}:=\delta_{T_k}^{-1}(X_{[T_kt]})=X_{[T_kt]/T_k}^{(k)}, \quad
\hat P_t^k:=P_{[T_kt]/T_k}^k
\end{equation}
and denote the corresponding probabilities by
 $\{\hat \P^x_k; x\in \Gamma_{T_k}\}$.
So  for $x,y\in \Gamma_{T_k}$, and $n=m/T_k, \,m\in \mathbb N\cup \{0\}$,
 $$
 \P^x_k( X_n^{(k)}=y) =  \mu^{(m)}(  \delta_{T_k}(x)^{-1}   \cdot \delta_{T_k}(y)),
 $$
and for $x,y\in \Gamma_{T_k}$, and $t>0$,
  \begin{equation}\label{e:4.15}
 \hat \P^x_k( \hat X_t^{(k)}=y) =  \mu^{([tT_k])}(  \delta_{T_k}(x)^{-1}
 \cdot \delta_{T_k}(y)).
   \end{equation}

   For  a constant $M_0>0$, let    $\bD([0,M_0],\R^d)$ be
the space of right continuous functions on $[0,M_0]$ having left
limits and taking values in $\R^d$ that is equipped with
  the Skorohod ${\cal J}_1$ topology.
 Our goal is to prove the following theorem.
 Recall that $\beta_i\in (0, 2)$, $1\leq i\leq d$, are  the parameters in \eqref{e:4.1a} for the straight dilation structure
 $\{\delta_t; t>0\}$
 and $\beta_+ =\max \{\beta_i: 1\leq i \leq d\}$.

\begin{theo}\label{WT1}
Referring to the setup and notation introduced above,
assume that \eqref{e:4.2},  {\rm (A)-(R1)-(E1)-(E2)} and {\rm (T$\bullet$)-(T$\Gamma$)} are all satisfied  with the same exponent $\beta>0$. Then
\begin{enumerate}
 \item[\rm (i)]  The symmetric Hunt process $X^\bullet$ associated with
   the regular Dirichlet form
 $(\sE_\bullet, \sF_\bullet)$ on $L^2(G_\bullet; dx)$ is a L\'evy process on $G_\bullet$.
  The L\'evy process $  X^\bullet_t$ has a  bounded,  strictly positive,    jointly continuous transition density function
 $p(t, x, y)=p(t,   x_\bullet^{-1} \bullet y)$ with respect to $dy$ that has the following properties.
 \begin{description}
 \item[\rm (a)]   Let
    $\gamma_0:= \sum_{i=1}^d 1/\beta_i$.  For every  $ (t,x)\in (0,\infty)\times G_\bullet $,
  \begin{equation}\label{e:4.18}
  p(t,x)=    t^{  - \gamma_0}  p(1,\delta_{1/t}(x))   .
  \end{equation}
   In particular, there is a constant $C_1>0$ so that $ p(t, x) \leq C_1    t^{- \gamma_0}$
   for every $(t,x)\in (0,\infty)\times G_\bullet $.

 \item[\rm (b)] For every $\gamma \in (0, \beta_+ \wedge 1)$, there is a constant $C_2>0$ so that
 \begin{equation}\label{e:4.19}
 | p(1, x) - p(1, y)| \leq C_2 \| x^{-1}_\bullet \bullet y\|^\gamma
 \quad \hbox{for   } x, y \in G_\bullet.
  \end{equation}

  \item[\rm (c)] For every $\beta  \in (0, \beta_+)$, there is a constant $C_3>0$ so that
   for   every $ (t,x)\in (0,\infty)\times G_\bullet $,
  \begin{equation} \label{e:4.20}
  p(t, x) \leq \min \left\{ C_1 t^{ - \gamma_0}, \, C_3  \frac{    t^{\beta/\beta_+}} { \| x\|^{d+\beta} }\right\} .
    \end{equation}

   \end{description}

\item[\rm (ii)] For any bounded continuous function $f$ on $\mathbb R^d$,
$\hat P_s^k f$ converges uniformly on compacts to
$P_{\bullet,s}f$. Furthermore, for each $M_0>0$,  for every  $x\in \mathbb R^d$, $\hat \P^{[x]_k}_k$ converges weakly to $\P_\bullet^x$ on the space $\bD ([0,M_0],\R^d)$.

    \end{enumerate}
\end{theo}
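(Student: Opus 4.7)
The overall plan is to establish the existence and regularity of $X^\bullet$ in part (i) partly a priori and partly by transferring discrete estimates through the convergence in (ii), and to prove (ii) by a classical tightness + identification argument powered by Lemma \ref{weaklim-rev} and the tail controls (T$\bullet$)-(T$\Gamma$).

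\emph{Part (i).} That $X^\bullet$ is a L\'evy process on $G_\bullet$ is immediate from Remark \ref{R:4.5}: the pointwise generator formula \eqref{e:4.9} is left-invariant on $G_\bullet$, so $X^\bullet$ has stationary independent increments and refines to start from every point. The key identity \eqref{e:4.10} propagates to the semigroup in the form $P_{\bullet,t/r}(f\circ\delta_r)=(P_{\bullet,t}f)\circ\delta_r$; consequently, once a transition density $p(t,x)=p(t,x_\bullet^{-1}\bullet y)$ is shown to exist, a change of variable with Jacobian $\det(\delta_r)=r^{\gamma_0}$ immediately yields the scaling \eqref{e:4.18}. For existence and boundedness of $p(1,\cdot)$, I would combine the jump-measure integrability \eqref{e:5.9} with the homogeneous volume growth $m(B(r))=cr^{\beta\gamma_0}$ to obtain a Nash-type inequality for $(\sE_\bullet,\sF_\bullet)$, which via Varopoulos's argument gives $\|P_{\bullet,t}\|_{1\to\infty}\le Ct^{-\gamma_0}$; this is the on-diagonal part of \eqref{e:4.20}. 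The off-diagonal bound in \eqref{e:4.20} follows from the standard small-jump/large-jump decomposition together with the tail $\mu_\bullet(\{\|z\|\ge R\})\lesssim R^{-\beta}$, which is a direct consequence of the self-similarity $\delta_r\mu_\bullet=r\mu_\bullet$ and \eqref{e:5.9}. Finally, \eqref{e:4.19} I would derive by transferring (R1) to the limit through part (ii): the discrete Hölder bound \eqref{eq:harm} passes to $\hat P^k_s f$, hence to $P_{\bullet,s}f$ in the limit, and combined with the scaling \eqref{e:4.18} yields the Hölder estimate for $p(1,\cdot)$.

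\emph{Part (ii).} Tightness of $\{\hat\P^{[x]_k}_k\}$ in $\bD([0,M_0],\R^d)$ I would obtain from an Aldous-type criterion: (E1) uniformly controls quick exits from balls, while (E2) provides the matching mean exit time bound, which together prevent the rescaled walks from accumulating many large jumps on short time intervals. For identification of limits, the essential ingredient is Lemma \ref{weaklim-rev}, which says that $(\sE^{(k)},\sF^{(k)})$ applied to pairs of uniformly convergent test functions converges to $(\sE_\bullet,\sF_\bullet)$, provided integrals are taken away from the diagonal and over bounded sets. The conditions (T$\Gamma$) and (T$\bullet$) are exactly what is needed to close the two truncation gaps (small jumps and large jumps, uniformly in $k$). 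This upgrades to a Mosco-type convergence across the varying spaces $L^2(\Gamma_{T_k},m_{T_k})\to L^2(\R^d,dx)$ in the sense of Kuwae--Shioya, and hence to strong convergence of the semigroups on suitable test functions. Combined with equicontinuity of $\{\hat P^k_s f\}_k$ (which follows from (R1) applied to the caloric function $(x,n)\mapsto \hat P^k_{s-n/T_k}f(x)$), one obtains locally uniform convergence $\hat P^k_s f\to P_{\bullet,s}f$; together with tightness this gives the asserted weak convergence on $\bD([0,M_0],\R^d)$.

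\emph{Principal obstacle.} The main technical difficulty lies in making the Dirichlet-form convergence rigorous across varying state spaces while simultaneously handling both ends of the jump-size spectrum: the small-jump part, where one needs uniform-in-$k$ quadratic integrability (this is exactly \eqref{WCA2*}), and the large-jump part, where one needs uniform tightness of the jump measures (this is \eqref{WCA1*}). A secondary but delicate point is that the discrete Hölder estimate (R1) must be propagated to the $k\to\infty$ limit before it can be used for \eqref{e:4.19}; this forces some care in the logical order, since part (i)(b) effectively relies on the uniform convergence established in part (ii). Once both tightness and generator/semigroup convergence are in hand, the conclusions of (i)(c) and (ii) follow by the scaling relation \eqref{e:4.18} and standard Skorohod-space arguments.
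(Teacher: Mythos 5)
Your part (ii) follows the paper's own route in all essentials: Aldous-type tightness from (E1) via the strong Markov property, equicontinuity of resolvents from (R1) and (E2) (Proposition \ref{PR3}), and identification of the subsequential limits through Lemma \ref{weaklim-rev}, with \eqref{WCA2*} and \eqref{WCA1*} closing the small- and large-jump truncations; the Kuwae--Shioya/Mosco packaging is cosmetic, the paper doing the same identification by hand through the resolvent equation $\sE_\bullet(H,g)=\langle f,g\rangle-\lambda\langle H,g\rangle$. The genuine difficulties are in part (i), where the quantitative statements (a)--(c) do not follow by the mechanisms you propose.

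First, a Nash inequality cannot be extracted from \eqref{e:5.9} together with the homogeneous volume growth: \eqref{e:5.9} is an integrability (upper) condition on $\mu_\bullet$ and the volume growth is a property of $(\R^d,\|\cdot\|,dx)$, so neither supplies the lower bound on $\sE_\bullet$ that ultracontractivity requires; the two inputs you invoke do not even rule out $\mu_\bullet$ charging only a proper homogeneous subgroup. The paper instead obtains boundedness of $p(1,\cdot)$ from G\l{}owacki's theorem for kernels of order $\beta_+$ (square integrability of $p(t,\cdot)$, using the scaling \eqref{e:4.11}) plus Cauchy--Schwarz, after which the on-diagonal decay is just \eqref{e:4.18}. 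Second, the self-similar tail $\mu_\bullet(\{\|z\|\ge R\})\asymp R^{-\beta}$ (with $\beta$ the homogeneity exponent in \eqref{e:4.2}) fed into a standard big/small-jump decomposition yields at best a distribution-tail estimate of order $t\,\|x\|^{-\beta}$; it cannot produce the density bound $t^{\beta/\beta_+}\|x\|^{-(d+\beta)}$ of \eqref{e:4.20}, all the more since $\mu_\bullet$ is in general singular (carried by lower-dimensional homogeneous subgroups), so there is no pointwise L\'evy density available to drive such an argument; the paper takes \eqref{e:4.20} from \cite[Theorem 5.1]{Gl}. Third, transferring (R1) through part (ii) gives H\"older continuity of $p(1,\cdot)$ only with the (typically small) exponent produced by Proposition \ref{PR3}, not for every $\gamma\in(0,\beta_+\wedge 1)$ as \eqref{e:4.19} asserts; this is precisely why the paper invokes \cite[Corollary 3.12]{Gl}. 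Finally, you never address strict positivity of $p$, which is part of the statement and which the paper derives from $p(t,e)=\int p(t/2,y)^2\,dy>0$, the scaling \eqref{e:4.18}, and the Chapman--Kolmogorov equation.
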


 \medskip

   \begin{rem} \label{R:4.10} \rm
 Note that under its conditions,   Theorem \ref{WT1} in particular implies that
 the L\'evy process $X^\bullet$ is always non-degenerate in the sense that it has a strictly positive
 convolution density kernel
 $p(t, x)$ with respect to the Haar measure $dx$ on $G_\bullet$. Consequently,
 the support of its L\'evy measure $\mu_\bullet$ generates the whole group $G_\bullet$.
  \end{rem}

\subsection{Proof of Theorem \ref{WT1}}\label{S:5.4} 

In this subsection, we prove Theorem \ref{WT1}.
The main part of the argument is based on Section 4 of \cite{BKK}.
Similar arguments for discrete setting
(including a diffusion term in the limit) are given in \cite[Theorem 5.5]{BKU}.

Recall that
$X_n^{(k)}=\delta_{T_k}^{-1}(X_{T_kn})$, $n \in T_k^{-1}{\mathbb N}\cup\{0\} $. We first state  a lemma that is an easy consequence of rescaling, and assumptions   (R1)-(E1)-(E2),
and Lemma \ref{lem2-6norm} (with $\phi_t=\delta_t$).
For $x_0\in \Gamma_{T_k}$, let
$$B_{T_k}(x_0,r)=x_0 \cdot_{T_k} B(r).$$
Note that this is different from $B_\bullet(x_0,r)=x_0\bullet B(r)$ which we have used earlier. Also,
$y\in B_{T_k}(x_0,r)$ if and only if   $ \delta_{1/T_k}(\delta_{T_k}(x_0)^{-1})
\cdot_{T_k} y\in B(r)$ (i.e., we have to take the inverse of $x_0$ in $(\Gamma_{t},\cdot_t)$
 with $t=T_k$).

\begin{lem}\label{keypro}
\begin{enumerate}
\item[\rm (i)]  Assume   {\rm (E1)}. Then, there exists $A>1$ such that the following holds:
for any $\eps\in (0,1)$, there exists $\gamma=\gamma(A, \eps)>0$ such that for all
$k\ge 1$, $x_0\in \Gamma_{T_k}$, $r\in (0,1)$ and $x\in B_{T_k}(x_0,r)\cap \Gamma_{T_k}$,
\[
\bP^{x}_k\left( \tau_{B_{T_k}( x_0,Ar)}(X^{(k)})\le \gamma r^{\beta}\right)\leq \eps.
\]

\item[\rm (ii)] Under {\rm (E2)}, there exists $c_{1}>0$ such that the following
hold for all $k\ge 1$,  $x_0\in \Gamma_{T_k}$, $r\in (0,1)$, and all
$x\in B_{T_k}(x_0,r)\cap \Gamma_{T_k}$,
\[
\bE^{x}_k\left[ \tau_{B_{T_k}(x_0,r)}(X^{(k)})\right]\leq c_{1} r^{\beta}.
\]

\item[\rm (iii)]  Under {\rm (R1)}, there exists
$\kappa\in(0,\infty)$ such that, for any compact set $K\subset \mathbb R^n$,  there is
$c_{2,K}>0$  for which, for  any $k\ge 1$, $x_0\in K \cap \Gamma_{T_k}$ and $r\in (0,1)$,
if $h_k$ is bounded in
$\Gamma_{T_k}$ and harmonic with respect to $X^{(k)}$ in
a ball $B_{T_k}(x_0 , r)\cap \Gamma_{T_k}$ then, for $ x,y\in B_{T_k} (x_0,r/2)\cap K\cap \Gamma_{T_k}$,
\[|h_k(x)-h_k(y)|\leq c_{2,K}
\left(\frac{\|x_\bullet^{-1}\bullet y\|^{\beta_-/\beta_+}}{r}\right)^\kappa
\|h_k\|_{\infty}.
\]
\end{enumerate}
\end{lem}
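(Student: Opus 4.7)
\medskip

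\noindent\textbf{Proof plan.}  The guiding principle is that $X^{(k)}$ is the image of the original walk $X$ under the group isomorphism $\delta_{T_k}:(\Gamma_{T_k},\cdot_{T_k})\to(\Gamma,\cdot)$ combined with a linear speed-up by a factor $T_k$. I would first set up a dictionary between $B_{T_k}$-balls and $B$-balls: since $B_{T_k}(x_0,r)=x_0\cdot_{T_k}B(r)$ and $\delta_{T_k}$ is a group isomorphism, one has $\delta_{T_k}(B_{T_k}(x_0,r))=\delta_{T_k}(x_0)\cdot\delta_{T_k}(B(r))=\delta_{T_k}(x_0)\cdot B(T_k^{1/\beta}r)$, where the last equality uses the homogeneity $\|\delta_t u\|=t^{1/\beta}\|u\|$ from \eqref{e:4.2}. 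Combined with $X^{(k)}_n=\delta_{T_k}^{-1}(X_{T_kn})$ and the left-invariance of the walk $X$ on $\Gamma$, this yields the identity in distribution
$$\tau_{B_{T_k}(x_0,Ar)}(X^{(k)})\ \text{under}\ \bP^x_k \ \stackrel{d}{=}\ T_k^{-1}\tau_{B(A r T_k^{1/\beta})}(X)\ \text{under}\ \bP^z,$$
where $z=\delta_{T_k}(x_0)^{-1}\cdot\delta_{T_k}(x)$. When $x\in B_{T_k}(x_0,r)$, the above ball-correspondence gives $z\in B(rT_k^{1/\beta})$, so the starting point is admissible for the assumed exit estimates.

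For part (i), with this identification the event $\{\tau_{B_{T_k}(x_0,Ar)}(X^{(k)})\le\gamma r^\beta\}$ is exactly the event $\{\tau_{B(ArT_k^{1/\beta})}(X)\le\gamma r^\beta T_k=\gamma(rT_k^{1/\beta})^\beta\}$, whose probability is bounded by $\eps$ by assumption (E1) applied with radius $rT_k^{1/\beta}$ and the $\gamma(A,\eps)$ it provides. Part (ii) is obtained by the same rescaling: (E2) gives $\bE^z[\tau_{B(rT_k^{1/\beta})}(X)]\le C(rT_k^{1/\beta})^\beta=Cr^\beta T_k$, and dividing by $T_k$ yields the claim with $c_1=C$.

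For part (iii), the same device shows that if $h_k$ is bounded and $X^{(k)}$-harmonic on $B_{T_k}(x_0,r)$, then the function $v(z):=h_k\!\left(\delta_{T_k}^{-1}(\delta_{T_k}(x_0)\cdot z)\right)$ is bounded and $X$-harmonic on $B(T_k^{1/\beta}r)$, with $\|v\|_\infty=\|h_k\|_\infty$. Applying (R1) at radius $T_k^{1/\beta}r$ to $v$ at the points $a=\delta_{T_k}(x_0)^{-1}\cdot\delta_{T_k}(x)$ and $b=\delta_{T_k}(x_0)^{-1}\cdot\delta_{T_k}(y)$, I obtain $|h_k(x)-h_k(y)|=|v(a)-v(b)|\le C_1\|h_k\|_\infty\bigl(\|a^{-1}\cdot b\|/(T_k^{1/\beta}r)\bigr)^\kappa$. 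The remaining step is to replace $\|a^{-1}\cdot b\|=\|\delta_{T_k}(x)^{-1}\cdot\delta_{T_k}(y)\|$ by a quantity involving $\|x_\bullet^{-1}\bullet y\|$: Lemma \ref{lem2-6norm} applied on the compact set $K$ (with $t=T_k\ge 1$) gives $\|\delta_{1/T_k}(\delta_{T_k}(x)^{-1}\cdot\delta_{T_k}(y))\|\le C_K\|x_\bullet^{-1}\bullet y\|^{\beta_-/\beta_+}$, and then by homogeneity $\|\delta_{T_k}(x)^{-1}\cdot\delta_{T_k}(y)\|\le C_K T_k^{1/\beta}\|x_\bullet^{-1}\bullet y\|^{\beta_-/\beta_+}$. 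Substituting cancels the $T_k^{1/\beta}$ and produces the stated bound.

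The main obstacle is purely bookkeeping: keeping the three group laws ($\cdot$ on $\Gamma$, $\cdot_{T_k}$ on $\Gamma_{T_k}$, and $\bullet$ on $G_\bullet$) clearly distinguished and verifying that the passage from $X^{(k)}$-balls to $X$-balls really intertwines harmonicity and exit times. The $\beta_-/\beta_+$ loss in the exponent of (iii), which does not appear in (R1), is dictated solely by Lemma \ref{lem2-6norm} and reflects the fact that the original coordinate system is only an \emph{approximate} group dilation structure.
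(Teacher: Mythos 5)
Your proposal is correct and follows essentially the same route as the paper: the paper's proof simply states that (i) and (ii) are reformulations of (E1) and (E2) under the dilation $\delta_{T_k}$ via the scaling property \eqref{e:4.2}, and that (iii) follows from (R1) under $\delta_{T_k}$ together with Lemma \ref{lem2-6norm}. Your write-up just makes explicit the rescaling/left-invariance bookkeeping (the ball correspondence $\delta_{T_k}(B_{T_k}(x_0,r))=\delta_{T_k}(x_0)\cdot B(rT_k^{1/\beta})$, the time change by $T_k$, and the recentering at $z=\delta_{T_k}(x_0)^{-1}\cdot\delta_{T_k}(x)$) that the authors leave implicit.
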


 \proof  In view of the scaling property \eqref{e:4.2} of  the norm $\| \cdot \|$ on $G_\bullet$,
 properties (i) and (ii)   are just reformulation of conditions (E1) and (E2), respectively,  under   the approximate dilation
$\delta_{T_k}$.

(iii) follows from condition (R1) under the approximate dilation
$\delta_{T_k}$   and Lemma  \ref{lem2-6norm}.
\qed

    \medskip

Recall that for $\lambda >0$, the resolvent   $U_k^\lam$ is given by
\begin{align*}
U_k^\lam f(x)&= (\lambda I - T_k( P  -I))^{-1}f(\delta^{-1}_{T_k}(x))\\
&=  (T_k+\lambda)^{-1}\sum_{n=0}^\infty \Big(\frac 1{1+\lambda T_k^{-1}}\Big)^n
  P^n f(\delta^{-1}_{T_k}(x))
\quad \hbox{for } x\in \Gamma_{T_k}=\delta_{T_k}(\Gamma),
\end{align*}
where   $P$  is the transition matrix for
  the random walk $\{X_n\}_n$ on $\Gamma$.

\medskip

The following proposition is based on \cite[Proposition 2.4]{CCK} (see also \cite[Proposition 3.3]{BKK}). We outline the proof for the reader's convenience.

\begin{pro} \label{PR3}
Under   {\rm (R1)} and {\rm (E2)},
 for any compact set $K$, there exist $C_{\lam, K} \in(0,\infty)$ and $\gamma \in(0,  (\beta \wedge \kappa)/2]$ such that the following holds for any bounded function $f$ on $\Gamma_{T_k}$,
for any $k\ge 1$ and any $x,y\in K\cap \Gamma_{T_k}$
with $\|x_\bullet^{-1}\bullet y\|\le 1$,
\bee\label{cbe000}
|U_k^\lam f(x)-U_k^\lam f(y)|\leq C_{\lam, K}\|x_\bullet^{-1}\bullet y\|^{\gamma}\|f\|_{\infty}.
\eee
In particular, we have
\bee\label{cbe00110}
\lim_{\delta\to 0}\sup_{k\geq 1}\sup_{\substack{x,y\in K\cap \Gamma_{T_k}:\\\|x_\bullet^{-1}\bullet y\|<\delta}}|U_k^\lam f(x)-U_k^\lam f(y)|=0.
\eee
\end{pro}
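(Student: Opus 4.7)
I would follow the Bass--Levin approach adapted in \cite[Proposition~2.4]{CCK}. Fix $\lambda>0$ and a compact $K\subset\mathbb R^d$. For $x,y\in K\cap\Gamma_{T_k}$ with $\rho:=\|x_\bullet^{-1}\bullet y\|$ small, Lemma \ref{lem2-6norm} gives $y\in B_{T_k}(x,C_K\rho^{\beta_-/\beta_+})$. I introduce an auxiliary scale $R\in[C_K\rho^{\beta_-/\beta_+},1]$, to be optimized later, and consider the ball $B:=B_{T_k}(x,2R)$, so that both $x$ and $y$ lie in $B_{T_k}(x,R)$. The geometric-series formula defining $U_k^\lambda$ immediately yields the trivial bound $\|U_k^\lambda f\|_\infty\le\lambda^{-1}\|f\|_\infty$.

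The main tool will be the discrete strong Markov property applied at the first exit time $\tau:=\tau_B(X^{(k)})$. With $q:=T_k/(T_k+\lambda)$, this decomposes $U_k^\lambda f(z)=\Phi(z)+g(z)-\Psi(z)$ for $z\in B$, where
\begin{align*}
\Phi(z)&:=\tfrac{1}{T_k+\lambda}\,\mathbb E^z_k\Bigl[\sum_{n=0}^{\tau-1}q^n f(X^{(k)}_{n/T_k})\Bigr],\\
g(z)&:=\mathbb E^z_k\bigl[U_k^\lambda f(X^{(k)}_{\tau/T_k})\bigr],\\
\Psi(z)&:=\mathbb E^z_k\bigl[(1-q^\tau)\,U_k^\lambda f(X^{(k)}_{\tau/T_k})\bigr].
\end{align*}
From Lemma \ref{keypro}(ii), $\mathbb E^z_k[\tau]\le c_1 T_k R^\beta$; combined with $0\le 1-q^\tau\le\lambda\tau/T_k$ and the bound on $U_k^\lambda f$, this controls both $|\Phi(z)|$ and $|\Psi(z)|$ by a constant multiple of $\|f\|_\infty R^\beta$, uniformly in $k$. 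The crucial observation is that $g$ is bounded by $\lambda^{-1}\|f\|_\infty$ and is $X^{(k)}$-harmonic on $B\cap\Gamma_{T_k}$ (a routine Dynkin argument using the discrete strong Markov property). Therefore, Lemma \ref{keypro}(iii) applied to $g$ with the pair of concentric balls $B_{T_k}(x,2R)\supset B_{T_k}(x,R)\ni x,y$ gives
$$|g(x)-g(y)|\le c_{2,K}\lambda^{-1}\|f\|_\infty\bigl(\rho^{\beta_-/\beta_+}/R\bigr)^\kappa.$$

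Adding the three contributions,
$$|U_k^\lambda f(x)-U_k^\lambda f(y)|\le C_{\lambda,K}\|f\|_\infty\bigl[R^\beta+(\rho^{\beta_-/\beta_+}/R)^\kappa\bigr].$$
I then balance the two terms by setting $R^{\beta+\kappa}=\rho^{\kappa\beta_-/\beta_+}$; because $\kappa/(\beta+\kappa)<1$, this choice indeed lies in the admissible range $[C_K\rho^{\beta_-/\beta_+},1]$ for $\rho$ sufficiently small. The result is \eqref{cbe000} with exponent $\gamma=\beta\kappa\beta_-/((\beta+\kappa)\beta_+)>0$, and one reduces $\gamma$ to any smaller positive value at or below $(\beta\wedge\kappa)/2$ if needed; the uniform modulus \eqref{cbe00110} follows at once. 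The main technical point, and the reason for the careful formulation of Lemma \ref{keypro}, is that all constants produced by (R1), (E1), (E2) are independent of $k$; this uniformity is built into the rescaling because the exponent $\beta$ in those hypotheses agrees with the scaling exponent in \eqref{e:4.2}. The remainder of the argument is the standard Bass--Levin Hölder estimate for resolvents.
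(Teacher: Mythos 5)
Your proposal is correct and follows essentially the same route as the paper's proof: the identical strong-Markov decomposition of $U_k^\lambda f$ at the exit time of a ball $B_{T_k}(x,\cdot)$, with the pre-exit and discount-correction terms controlled via Lemma \ref{keypro}(ii) (from (E2)) and the oscillation of the harmonic term $z\mapsto \mathbb E^z_k\big[U_k^\lambda f(X^{(k)}_{\tau})\big]$ controlled via Lemma \ref{keypro}(iii) (from (R1)), together with $\|U_k^\lambda f\|_\infty\le \lambda^{-1}\|f\|_\infty$. The only difference is the final optimization: you balance the two error terms by choosing the ball radius $R$ with $R^{\beta+\kappa}=\rho^{\kappa\beta_-/\beta_+}$, whereas the paper fixes $r=\|x_\bullet^{-1}\bullet y\|^{1/2}$; your exponent $\beta\kappa\beta_-/((\beta+\kappa)\beta_+)$ is marginally sharper, and capping it at $(\beta\wedge\kappa)/2$, as you note (using $\|x_\bullet^{-1}\bullet y\|\le 1$), recovers the stated range.
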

\proof   Recall the notation  $B_{T_k}(z,r)=z\cdot_{T_k}B(r)$.
Let  $x,y\in  K\cap\Gamma_{T_k}$ and let $r\in (0,1]$ be such that $\|x_\bullet^{- 1}\bullet y\|\le r$.  By Lemma \ref{lem2-6norm}, $y\in B_{T_k}(x,\rho)$,
$\rho=C_Kr^{\beta_-/\beta_+}$.
Set $\tau^k_r:=\tau_{B_{T_k} (x,2\rho)}(X^{(k)})$.
In what follows the constant $C_K$ depend only on $K$ and can change from line to line.
 By the strong Markov property,
\begin{align*}
U_k^\lam f(x)
&= (T_k+\lambda)^{-1}\bE_{k}^x\Big[\sum_{{n\in {\mathbb N}\cup \{0\}}\atop{n\in [0,\tau_r^kT_k]}}
\Big(\frac 1{1+\lambda T_k^{-1}}\Big)^n P^nf(\delta^{-1}_{T_k}(x))\Big]\\ &\quad
+\bE_{k}^x\Big[\Big(\frac 1{1+\lambda T_k^{-1}}\Big)^{\tau_r^kT_k}U_k^\lam f(X^{(k)}_{\tau^k_r})
\Big] \\
&= (T_k+\lambda)^{-1}\bE_{k}^x\Big[\sum_{{n\in {\mathbb N}\cup \{0\}}\atop{n\in [0,\tau_r^kT_k]}}
\Big(\frac 1{1+\lambda T_k^{-1}}\Big)^n P^nf(\delta^{-1}_{T_k}(x))\Big]\\
&\quad
+\bE_k^x \left[ \Big(\Big(\frac 1{1+\lambda T_k^{-1}}\Big)^{\tau_r^kT_k}-1\Big)U_k^\lam f(X^{(k)}_{\tau^k_r}) \right]
 + \bE_k^x \left[ U_k^\lam f(X^{(k)}_{\tau^k_r}) \right]\\
 &=:  I_1+I_2+I_3,
\end{align*}
and similarly when $x$ is replaced by $y$. Because of  Lemma \ref{keypro}(ii)
and the fact that $\|   P f\|_\infty \le \|f\|_\infty$, we have
\[|I_1|\leq  \frac{ T_k}{ T_k+\lambda}  \bE_k^x \left[  \tau^k_r \right]   \|f\|_{\infty}\leq
   c_1 r^{\zeta}\|f\|_{\infty},\quad
\hbox{where } \zeta :=\beta\beta_-/\beta_+.\]
Note that
\[\|U_k^\lam f\|_\infty
\le (T_k+\lambda)^{-1}\frac 1{1-\frac 1{1+\lambda T_k^{-1}}}\|f\|_{\infty}
= \lambda^{-1} \|f\|_\infty.
\]
Using this and applying $1-e^{-s}\le s$, $s\ge 0$, with
$s=\tau^k_rT_k \log (1+\lambda T_k^{-1})$, we have
$$
|I_2|  \leq   \bE_k^x \left[ \tau^k_r\right]  T_k \log (1+\lambda T_k^{-1})
\|U_k^\lam f\|_{\infty}
 \leq    \bE_k^x  \left[ \tau^k_r\right] T_k \lambda T_k^{-1}
\lambda^{-1}\| f\|_{\infty}\le c_1r^{\zeta}\|f\|_{\infty}.
$$
Similar statements also hold when $x$ is replaced by $y$. So,
\begin{eqnarray}
\lefteqn{\left|U_k^\lam f(x)-U_k^\lam f(y)\right|\leq}&&\nonumber\\
&& c_1
r^{\zeta}\|f\|_{\infty}+\left|\bE_k^x\left[ U_k^\lam f(X^{(k)}_{\tau^k_r})\right]
-\bE_k^y \left[U_k^\lam f(X^{(k)}_{\tau^k_r})\right]\right|. \label{C05}\end{eqnarray}
But $z\to \bE_k^z \left[U_k^\lam f(X^{(k)}_{\tau^k_r})\right]$ is bounded in $\Gamma_{T_k}$ and harmonic in
$B_{T_k} (x,2\rho)\cap \Gamma_{T_k}$.
By Lemma \ref{keypro}(iii), for $y\in B_{T_k}(x,\rho)$, the second term in \eqref{C05} is bounded by
$$C_K(\|x_\bullet^{-1}\bullet y\|^{\beta_-/\beta_+}
/r^{\beta_-/\beta_+})^\kappa
\|U_k^\lam f\|_{\infty}.$$
So using $\|U_k^\lam f\|_{\infty}\leq  \lam^{-1} \|f\|_{\infty}$ again,
for $y\in B_\bullet (x,\rho)\cap \Gamma_{T_k}$ we have
\bee\label{cbe01}
\left|U_k^\lam f(x)-U_k^\lam f(y)\right|\leq  C_K
\left(r^{\beta\beta_-/\beta_+}+\lam^{-1}\left(\frac{\|x_\bullet^{-1}\bullet y\|^{\beta_-/\beta_+}}{r^{\beta_-/\beta_+}}
\right)^{\kappa}\right)\|f\|_{\infty}.
\eee
Now choose $r$ such that $r=\|x_\bullet^{- 1}
\bullet y\|^{1/2}$ (then $\|x_\bullet^{-1}\bullet y\|=r^{2}\le r\le 1$).
For this choice of $r$, we obtain
\begin{align*}\lefteqn{
\left|U_k^\lam f(x)-U_k^\lam f(y)\right|}  \\
&\leq  C_K \left(\|x_\bullet^{-1}\bullet y\|^{\beta\beta_-/(2\beta_+)}+\lam^{-1}\|x_\bullet^{- 1}\bullet y\|^{\kappa \beta_-/(2\beta_+)}\right)\|f\|_{\infty}
\\
&\leq  C_K(1+\lam^{-1})\|x_\bullet^{-1}\bullet y\|^{\gamma}\|f\|_{\infty},  \end{align*}
where $\gamma=\min\left\{\frac{\beta\beta_-}{2\beta_+},\frac{\kappa \beta_-}{2\beta_+}\right\}
 \in (0, (\beta \wedge \kappa)/2].$
\qed

\medskip

The   first part of the
 next proposition is based on \cite[Proposition 2.8]{CCK} (see also \cite[Proposition 6.2]{BKu08} and \cite[Section 6]{BaBa}).
  In the following, $m$ denotes the Lebesgue measure on $\R^d$.

\begin{pro}\label{clttight}
Assume   {\rm (A)-(R1)-(E1)-(E2)}.
For every subsequence $\{k_j\}$, there exist a sub-subsequence $\{k_{j(l)}\}$ and
 a conservative  $m$-symmetric Hunt  process $(\wt X, \wt \bP^x, x\in \mathbb R^d)$,
  which is a L\'evy process on $(G_\bullet, \bullet)$,
  such that for every $x_{k_{j(l)}}\to x$,
    $\hat \bP_{k_{j(l)}}^{x_{k_j(l)}}$   converges weakly in $\D ([0,\infty), \mathbb R^d)$ to $\wt \bP^x$.
  Moreover, the resolvents of the conservative Hunt process $\wt X$  map bounded functions on $\R^d$ into bounded
 local H\"older continuous functions on $\R^d$ and so for each $t>0$, $\wt X_t$ has a  transition density function
 $p(t, x, y) = p(t, e, x_\bullet^{-1} \bullet y)$ with respect to $dy$.
  \end{pro}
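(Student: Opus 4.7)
The plan is to follow the standard ``tightness plus resolvent identification'' scheme of \cite[Proposition 2.8]{CCK} and \cite[Section 6]{BKu08}, assembling four ingredients: tightness of the laws $\hat\bP_k^{x_k}$, uniform equicontinuity of the rescaled resolvents $U_k^\lambda$, identification of the subsequential limit with the Hunt process of $(\sE_\bullet,\sF_\bullet)$, and finally the L\'evy structure plus the transition density.

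First I would establish tightness in $\D([0,M_0],\R^d)$ for every $M_0>0$. Lemma \ref{keypro}(i)-(ii) provides, uniformly in $k\geq 1$, both a lower bound in probability and an upper bound on the expectation of the exit times $\tau_{B_{T_k}(x_0,r)}(X^{(k)})$ of order $r^\beta$. This is exactly what is needed to verify the Aldous--Ethier--Kurtz modulus-of-continuity criterion on each compact time interval; a standard diagonal argument then lifts this to tightness on $\D([0,\infty),\R^d)$. Thus, for any subsequence, one can extract a further subsequence $\{k_{j(l)}\}$ along which $\hat\bP_{k_{j(l)}}^{x_{k_{j(l)}}}$ converges weakly to some probability measure $\wt\bP^x$ on path space.

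Second, I would pass to a further sub-subsequence using Proposition \ref{PR3}. For each $\lambda>0$ and each bounded $f$, the family $\{U_k^\lambda f\}$ is uniformly bounded (by $\lambda^{-1}\|f\|_\infty$) and equi-H\"older on compact sets by \eqref{cbe00110}. A diagonalization over a countable dense family of continuous compactly supported functions $f$ and rational $\lambda>0$ yields a sub-subsequence along which $U_{k_{j(l)}}^\lambda f$ converges uniformly on compacts to a continuous limit $U^\lambda f$. The tightness plus the convergence of the discrete semigroups tested against this countable dense family identify $U^\lambda$ as the resolvent of the limit process $\wt X$ under $\wt\bP^x$. The crux, which I expect to be the main obstacle, is to identify $U^\lambda$ with the resolvent $U_\bullet^\lambda$ of the Dirichlet form $(\sE_\bullet,\sF_\bullet)$: this requires the Mosco-type convergence $\sE^{(k)}(u,v)\to \sE_\bullet(u,v)$ on a core of test functions, which follows from Lemma \ref{weaklim-rev} (hence ultimately from Proposition \ref{weaklimmeas}) together with the small-jump and large-jump uniform controls provided by (T$\Gamma$) and (T$\bullet$); these two tail hypotheses are precisely what lets one discard the ``near-diagonal'' and ``far-field'' contributions uniformly in $k$ and conclude that the limiting Dirichlet form is $\sE_\bullet$.

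Finally I would read off the L\'evy property and the density. For each $k$, $X^{(k)}$ has stationary independent increments on $\Gamma_{T_k}$ with respect to the rescaled law $\cdot_{T_k}$, and by Lemma \ref{lem2-6norm} the maps $\cdot_{T_k}$ converge to $\bullet$ uniformly on compact sets. Passing this structure through the weak limit yields that $\wt X$ is a L\'evy process on $G_\bullet=(\R^d,\bullet)$; in particular $\wt\bP^x$ is the left-translate of $\wt\bP^e$ by $x$ under $\bullet$, so the convergence $\hat\bP_{k_{j(l)}}^{x_{k_{j(l)}}}\Rightarrow \wt\bP^x$ holds whenever $x_{k_{j(l)}}\to x$. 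Conservativeness follows from the exit time estimate (E2) and symmetry of $\mu$. Because the resolvents $U_\bullet^\lambda$ send bounded functions to bounded locally H\"older continuous functions, a standard Hille--Yosida/Feller argument shows that $P_{\bullet,t}$ maps bounded measurable into bounded continuous; combined with $m$-symmetry this produces a jointly continuous transition density $p(t,x,y)$ for $\wt X_t$, and left-invariance of $\bullet$ forces $p(t,x,y)=p(t,e,x_\bullet^{-1}\bullet y)$, as claimed.
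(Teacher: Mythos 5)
Your overall scheme (tightness from the exit-time estimates plus Aldous' criterion, equicontinuity of the rescaled resolvents from Proposition \ref{PR3}, a diagonal extraction, passing the stationary-independent-increments structure to the limit) is essentially the paper's route, but two steps as you present them do not work for \emph{this} proposition. First, you make the identification of the limiting resolvent with the resolvent of $(\sE_\bullet,\sF_\bullet)$ the ``crux'' and invoke (T$\bullet$) and (T$\Gamma$) for it. Those hypotheses are not assumed in Proposition \ref{clttight}, which only supposes (A)-(R1)-(E1)-(E2); accordingly the statement does not claim that $\wt X$ is the process generated by $(\sE_\bullet,\sF_\bullet)$, only that a subsequential limit exists and is a conservative $m$-symmetric Hunt process that is a L\'evy process on $(G_\bullet,\bullet)$ with H\"older resolvents and a density. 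The Dirichlet-form identification (via Lemma \ref{weaklim-rev} and the tail conditions) is carried out later, in the proof of Theorem \ref{WT1}, where (T$\bullet$)-(T$\Gamma$) are available. Under the hypotheses actually at hand you should instead argue as in \cite[Proposition 2.8]{CCK}: the limits $U^\lambda f$ of $U^\lambda_k f$ (bounded, equi-H\"older by \eqref{cbe000}) form an $m$-symmetric resolvent family, because each $U^\lambda_k$ is symmetric with respect to $m_{T_k}$ and $m_{T_k}\to m$; this is what yields the $m$-symmetry and the Hunt process structure of $\wt X$ without any reference to $\sE_\bullet$.

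Second, your derivation of the transition density is not justified. Resolvent strong Feller (bounded functions mapped to continuous functions) does \emph{not} imply, by any ``standard Hille--Yosida/Feller argument,'' that the semigroup maps bounded measurable functions to continuous ones, and joint continuity of a density is neither claimed here nor obtainable at this stage (in the paper it comes later from Glowacki's estimates). The correct and much shorter argument is the one the paper uses: for any Borel set $A$ with $m(A)=0$, conservativeness and $m$-symmetry give
\begin{equation*}
\int_{\R^d}\wt U^\lambda(x,A)\,m(dx)=\lambda^{-1}m(A)=0 ,
\end{equation*}
and since $x\mapsto \wt U^\lambda(x,A)$ is continuous (by the H\"older estimate inherited from Proposition \ref{PR3}), $\wt U^\lambda(x,A)=0$ for every $x$; then the absolute-continuity theorem for symmetric Hunt processes, \cite[Theorem 4.2.4]{FOT}, upgrades this to $P_t(x,\cdot)\ll m$ for every $t>0$ and every $x$, producing the density, and the L\'evy property forces $p(t,x,y)=p(t,e,x_\bullet^{-1}\bullet y)$. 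A minor point in the same spirit: conservativeness should be traced to tightness of the laws on $\D([0,\infty),\R^d)$ (coming from (E1) via Lemma \ref{keypro}(i) and the strong Markov property), not to (E2), which only bounds exit times from above.
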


 \proof  For simplicity, denote by the subsequence $\{k_j\}$ by $\{k\}$.
Let $T_0>0$ an arbitrary constant and $x_k\in \Gamma_k$.     For any stopping time $\eta_k$ of $X^{(k)}$ that is bounded by $T_0$
and any positive constant $\delta_k\to 0$,
it follows from Proposition \ref{keypro}(i) and the strong Markov property of $X^{(k)}$ that for any $\eps >0$,
\begin{align*}
& \limsup_{k\to \infty} \P^{x_k}_k \left ( \| \delta_{1/k}(  \delta_{k} ((X^{(k)}_{\eta_k  })^{-1})  )\cdot_k
   X^{(k)}_{\eta_k + \delta_k} \| > \eps \right) \\
&\leq    \limsup_{k\to \infty}  \E^{x_k}_k \left[     \P^{X^{(k)}_{\eta_k  }}_k ( \tau_{B_k (X_0^{(k)}, \eps)}
 < \delta_k) \right]    =0.
\end{align*}
Thus by  \cite{Al78},
  the probability laws   $\{\hat \bP_k^{x_k}; k\geq 1\}$  are tight on  $\D ([0, T_0), \mathbb R^d)$.
Under conditions (R1)-(E1)-(E2), the proof of the first part of this proposition  (on weak convergence)
 is then similar to that of  \cite[Proposition 2.8]{CCK}, modulo   modifying the arguments for
continuous time processes there to discrete time processes, so we omit this part of the    proof.
 Since $X^{(k)}$ has stationary independent increments on $\Gamma_k$, so does $\wt X$  on $(G_\bullet, \bullet)$.

   That the resolvents of $\wt X$
maps  bounded functions on $\R^d$ into bounded H\"older  continuous functions on $\R^d$ follows readily from
Proposition \ref{PR3}.   For $\lambda >0$, denote by $\wt U^\lambda$ the $\lambda$-resolvent of $\wt X$.
For any Borel measurable set $A\subset \R^d$ having $m(A)=0$, by the $m$-symmetry and conservativeness of  $\wt X$, we have
$ \int_{\R^d}  \wt U^\lambda (x, A) m(dx) =  \lambda^{-1} m(A)=0 $
for every $\lambda>0$.
As $  \wt U^\lambda (x, A)$ is continuous in $x\in \R^d$, we conclude that $\wt U^\lambda (x, A)=0$ for every $x\in \R^d$.
By \cite[Theorem 4.2.4]{FOT}, this   implies that   the law of $X_t$ under $\wt \P^x$ is absolutely continuous with respect to $m$
for each  $t>0$ and $x\in \R^d$.   Denote its density by $p(t, x, y)$.
  By the L\'evy property of $X^\bullet$,  we have $p(t, x, y) = p(t, e,  x_\bullet^{-1} \bullet y)$.
  \qed

\bigskip

\noindent {\bf Proof of Theorem \ref{WT1}.}
 In view of Proposition \ref{clttight}, it suffices to show  that the Dirichlet form
   in $L^2(G_\bullet; dx)$
 of the  conservative  $m$-symmetric process $\wt X$ in Proposition \ref{clttight} is $(\sE_\bullet, \sF_\bullet)$
  and establish (i).
As in the proof of Proposition \ref{PR3},
 we know that any subsequence $\{k_j\}$ has a further
subsequence $\{k_{j_l}\}$  such that $U^\lam_{k_{j_l}}f$ converges uniformly
on compacts whenever  e $\lam >0$ and $f$ is bounded and continuous   on $\R^d$.

Now suppose we have a subsequence $\{k'\}$ such that the $U^\lam_{k'} f$   on $\Gamma_{T_{k'}}$
are equi-continuous and converge uniformly on compacts whenever  $\lam >0$ and $f$ is bounded and
continuous with compact support  on $\R^d$. Fix  $\lam >0$ and such an $f$,  and let
$H\in C_b (\R^d)$  be the limit
of $U^\lam_{k'}f$.  We will show
 that $H\in \sF_\bullet$ and
\bee\label{WCE1}
 \sE_\bullet  (H,g)=\angel{f,g}-\lam\angel{H,g}
\eee
whenever $g$ is a Lipschitz function  on $\R^d$
with compact support, where
 $(\sE_\bullet, \sF_\bullet )$ is the Dirichlet
form of \eqref{def-Ebullet}
and
$\angel{\cdot,\cdot}$ is the $L^2$-inner product with respect to the Lebesgue measure $m $ on $\R^d$.
This will prove that $H$ is the
$\lam$-resolvent of $f$ with respect to
 $(\sE_\bullet, \sF_\bullet)$ in $L^2(\R^d; dx)$,
that is, $H=U^\lam f$.
We can then conclude that the full sequence $U^\lam_k f$ converges to
$U^\lam f$ whenever $f$ is bounded and continuous with compact support.
The assertions about the convergence of $P_t^k$ and $\P^x_k$ then follow
by Proposition \ref{clttight}.

So we need to prove $H$ satisfies \eqref{WCE1}. We drop the primes for
legibility. We know
\bee\label{WCE3}
\sE^{(k)}(U^\lam_k f, U^\lam_k f)=\angel{f,U^\lam_k f}_{ L^2(\Gamma_{T_k}, m_{T_k})}-\lam \angel{U^\lam_k f,
U^\lam_k f}_{  L^2(\Gamma_{T_k}, m_{T_k})},
\eee
 where for $t>0$, $m_t$ is the measure  on $\Gamma_t$ defined by \eqref{def-mt}.
 Since
$$
\norm{U^\lam_k f}_{ L^2(\Gamma_{T_k}, m_{T_k})}
\leq (1/\lam) \norm{f}_{ L^2(\Gamma_{T_k}, m_{T_k})},
$$
we have by the Cauchy-Schwarz inequality that
$$\sup_k \sE^{(k)} (U^\lam_k f, U^\lam_k f)
 \leq \sup_k \lambda^{-1} \|f\|^2 _{L^2(\Gamma_{T_k}, m_{T_k})}
\leq   c<\infty.$$

 Set  $B_2(r)=\{x: \|x\|_2<r\}$.
  Since $U^\lam_{k}  f$  converge uniformly to $H$ on $\ol{B_2( 1/\eta )}$ for  every
$\eta\in (0,1)$, it follows from Lemma \ref{weaklim-rev} that
  \begin{align*}
&\iint_{D_\eta} (H(y)-H(x))^2 J_\bullet(dx,dy)\\
&\leq \limsup_{   k  \to \infty} \sum_{(x,y)\in (\Gamma_{T_ {k}} \times \Gamma_{T_ {k}})\cap D_\eta}
(U^\lam_ {k} f(x)-U^\lam_k f(y))^2 j_ {  k}(x,y)\\
&\leq \limsup_{k\to \infty} \sE^{( {  k})}(U^\lam_ {  k} f, U^\lam_ {  k} f)\leq c<\infty,
\end{align*}
 where $D_\eta:=\{(x,y)\in B_2(e,\eta^{-1})\times B_2(e,\eta^{-1}): \eta
   <  \|x_\bullet^{-1}\bullet y\|_2\le \eta^{-1}\}$.
Letting $\eta\to 0$,  we have
$$
\iint_{ \R^d\times \R^d \setminus \Delta} (H(y)-H(x))^2 J_\bullet(dx,dy) \leq c <\infty.
$$
 Since $H\in C_b(\R^d)$, the above in particular implies that $H\in (\sF_\bullet)_{\rm loc}$.
 Note  that by Fatou's lemma, $H\in L^2(\R^d; dx)$ as it is the pointwise limit of $U^\lam_k f$.
Thus we conclude from \eqref{e:5.11} that
\bee\label{WCE2}
H\in \sF_\bullet \quad \hbox{with} \quad  \sE_\bullet  (H,H)<\infty.
\eee

Fix a Lipschitz function $g$ on $\mathbb R^d$
with compact support, and choose  $r_0>0$   large enough so that
the support of $g$ is contained in   the $L^2$-ball   $B_2(e,   r_0 )$. Then,
setting $H_{\geq \eta^{-1}}:=\{\|x_\bullet^{-1}\bullet y\|_2\geq \eta^{-1}\}$,
\begin{align*}
\Big| \sum_{(x,y)\in (\Gamma_{T_k} \times \Gamma_{T_k})\cap H_{\geq \eta^{-1}}}  &
(U^\lam_k f(y)-U^\lam_k f(x))(g(y)-g(x)) j_k(x,y)\Big|\\
&\leq \Big( \sum_{(x,y)} (U^\lam_k f(y)-U^\lam_k f(x))^2 j_k(x,y)\Big)^{1/2}\\
&\quad\times \Big(\sum_{(x,y)\in (\Gamma_{T_k} \times \Gamma_{T_k})\cap H_{\geq \eta^{-1}}} (g(y)-g(x))^2 j_k(x,y)\Big)^{1/2}.
\end{align*}
The first factor is $(\sE^{(k)}(U^\lam_k f, U^\lam_kf))^{1/2}$, while the
second factor is bounded by
$$
\sqrt 2 \,\norm{g}_\infty
\Big(\int_{B_2(e,   r_0 )}\int_{\|x_\bullet^{- 1}\bullet y\|_2\geq \eta^{-1}} J_k(dx,dy)\Big)^{1/2},
$$
which, in view of \eqref{WCA1*} in (T$\Gamma$), will be small if $\eta$ is small.
Similarly, setting $H_{\leq \eta}:=\{\|x_\bullet^{-1}\bullet y\|_2\leq \eta\}$, it holds that
\begin{align*}
\Big| \sum_{(x,y)\in (\Gamma_{T_k} \times \Gamma_{T_k})\cap H_{\leq \eta}}  &
(U^\lam_k f(y)-U^\lam_k f(x))(g(y)-g(x)) j_k(x,y)\Big|\\
&\leq \Big(  \sum_{(x,y)} (U^\lam_k f(y)-U^\lam_k f(x))^2 j_k(x,y)\Big)^{1/2}\\
&\qq\times \Big(\sum_{(x,y)\in (\Gamma_{T_k} \times \Gamma_{T_k})\cap H_{\leq \eta}}
(g(y)-g(x))^2 j_k(x,y)\Big)^{1/2}.
\end{align*}
The first factor is as before, while the second is bounded by
$$\norm{g}_{\rm Lip} \Big(\int_{B_2(e, r_0 )} \int_{\|x_\bullet^{-1}\bullet y\|_2\leq \eta}\|x_\bullet^{-1}\bullet y\|^2_2
J_k(dx,dy)\Big)^{1/2},$$
where
\[
\norm{g}_{\rm Lip}:=
\sup_{x,y\in \mathbb R^d}\frac{|g(x)-g(y)|}{\|x_\bullet^{-1}\bullet y\|_2}<\infty.
\]
In view of \eqref{WCA2*} in (T$\Gamma$), the second factor will be small if $\eta$ is small.
Similarly, using \eqref{WCE2}, we have
$$\Big|\iint_{\|x_\bullet^{-1}\bullet y\|_2\notin (\eta,\eta^{-1})}
(H(y)-H(x))(g(y)-g(x)) J_\bullet(dx,dy)\Big|$$
will be small if $\eta$ is taken small enough, due to Remark \ref{rem4-4}.

Note that $U^\lam_kf$ are
equi-continuous and converge to $H$ uniformly on compacts, and $g$ is a compactly supported function.
For $\eta>0$ we have by Lemma \ref{weaklim-rev},
\begin{align*}
&\sum_{(x,y)\in (\Gamma_{T_k} \times \Gamma_{T_k})\cap \{{\|x_\bullet^{-1}\bullet y\|_2\in (\eta, \eta^{-1})}\}}
(U^\lam_kf(y)-U^\lam_kf(x))(g(y)-g(x))j_k(x,y)\\
&\to \iint_{\|x_\bullet^{-1}\bullet y\|_2\in (\eta, \eta^{-1})} (H(y)-H(x))(g(y)-g(x))J_\bullet(dx,dy).
\end{align*}
It follows that
\bee\label{WCE4}
\lim_{k\to \infty} \sE^{  (k)  }(U^\lam_kf,g) =    \sE_\bullet  (H,g).
\eee
But  as $k\to \infty $,
$$
\sE^{  (k)}(U^\lam_kf,g)=\angel{f,g}_{  L^2(\Gamma_{T_k}, m_{T_k})}
-\lam\angel{U^\lam_kf,g}_{ L^2(\Gamma_{T_k}, m_{T_k})} \to
\angel{f,g}-\lam\angel{H,g}.
$$
Combining   this  with \eqref{WCE4} proves \eqref{WCE1}.
  This proves that $\wt X$ has the same distribution as the L\'evy process $X^\bullet$  associated with
 the regular Dirichlet form $(\sE_\bullet, \sF_\bullet)$
 on $L^2(G_\bullet; m)$, which in particular establishes  part (ii) of the theorem   by  Proposition \ref{clttight}.

  We next show part (i) of the theorem.  By Proposition \ref{clttight}, $X^\bullet_t$ has transition density function
  $p(t, x^{-1}_\bullet \bullet y)$ with respect to   the Lebesgue measure $dy$ on $G_\bullet$.
    By Remark \ref{R:4.6},  the generating functional
   $f\mapsto \sL f(e) $ is a kernel of order $\beta_+$. Thus by   \cite[Theorem 2.2]{Gl},
     $p(t, x)$ is square-integrable
   for every $t>0$ and so
   $c_t:=p(t, e) = \int_{\R^d} p(t/2, x)^2 dx <\infty$.  By the Cauchy-Schwarz inequality, for any $x\in G_\bullet$,
   \begin{align*}
   p(t, x) &= p(t, e, x) =\int_{\R^d} p(t/2, e, z) p(t/2, z, x) dz \\
&   \leq  \| p(t/2, e, \cdot)\|_2 \,    \| p(t/2,  \cdot, x)\|_2 \leq c_t.
   \end{align*}
That is, $p(t, x)$ is bounded on $G_\bullet$ for every $t>0$.
Property \eqref{e:4.18} follows from the self-similarity property \eqref{e:4.12} of $X^\bullet$,
and H\"older regularity \eqref{e:4.19} follows from \cite[Corollary 3.12]{Gl}.
The joint continuity of $p(t, x, y)= p(t, x^{-1}_\bullet \bullet y)$ in $(t, x, y)$ follows from the scaling property \eqref{e:4.18}
and the H\"older continuity of $p(t, x)$ in $x$.
 Note that  $p(t, e) = \int_{G_\bullet} p(t/2, y)^2 dy>0$ and $\lim_{t\to \infty} \delta_{1/t}(x)=e$ uniformly
on every compact subset of $G_\bullet$. Thus by \eqref{e:4.18},
for any $n\geq 1$, there is some $t_n>0$ so that $p(t, x)>0$ for every $(t, x)\in (0, t_n] \times B(0, n)$.
It then follows from the Chapman-Kolmogorov equation that $p(t, x)>0$ for every $(t, x) \in (0, \infty) \times G_\bullet$.

For  any $\beta \in (0, \beta_+)$, by  \cite[Theorem 5.1]{Gl}, there is a constant $C_3$ so that
$p(t, x) \leq  C_3    t^{\beta/\beta_+}  / \| x\|^{d+\beta }$ for every $t>0$ and $x\in G_\bullet$.
Together with \eqref{e:4.18}, it  gives the estimate \eqref{e:4.20}.
This establishes part (i) of the theorem, and thus completes the proof of the theorem.
   \qed

\section{Local  limit theorem} \label{sec-LCLT}

\subsection{Assumption {\bf (R2)}}\label{S:6.1}

In this section, we discuss the local limit theorem for $(X^{(k)}_{nT_k^{-1}})_{n\in \mathbb N\
\cup\{0\}}$
based on \cite[Theorem 1]{CH} and \cite[Theorem 4.5]{CKW-JFA} (c.f. \cite[Section 4]{BH} for the case the limit
heat kernel is Gaussian).

For this purpose, we introduce an additional hypothesis, (R2),  which reads as follows. Let $\mu^{(n)}$ be the $n$-th convolution power of the probability measure $\mu$ on $\Gamma$. This is the law at time $n$ of the random walk driven by $\mu$, started at the identity element on $\Gamma$.
\begin{itemize}
\item[(R2)] There are positive   constants $C_2 >0$ and $\beta >0$  such that, for all $n,m\in \mathbb N$  and  $x,y\in \Gamma$,
 \begin{equation}\label{eq:3-29-1}
|\mu^{(n+m)}(xy)-\mu^{(n)}(x)|\le \frac{  C_2}{V( n^{1/\beta} )}  \left(\frac{m}{n+1}+\sqrt{\frac{\|y\|^{\beta}}{n+1}} \right),
\end{equation}
 where $V(r):=\sharp \{g\in \Gamma: \|g\|   <   r\}$.
   \end{itemize}

\medskip

  For our local limit theorem to hold,   the exponent $\beta>0$  in (R2) should be the same as those in \eqref{e:4.2} and in (E1)-(E2).
  We start with verifying the needed convergence of the volume of appropriate balls.

\subsection{Convergence of volume}\label{S:6.2}

Recall that  $\Gamma_t:=\delta_t^{-1}(\Gamma)=\delta_{1/t}(\Gamma)$,
 $$
 B(r)=\{x: \|x\|<r\} \quad
 \hbox{and}\quad   B_\bullet(x,r)=x\bullet B(r)=\left\{y\in \mathbb R^d: \|x_\bullet^{-1}\bullet y\|< r \right\}.
 $$

 Recall also that $m$ is the Lebesgue measure on $\mathbb R^d$ and
$$m_t(A)=c(\Gamma,G) \det(\delta_t^{-1})\#A$$ for any finite subset $A$ of  $\Gamma_t$, where $c(\Gamma,G)$ is given in Proposition \ref{weaklimmeas} (see also below). We need the following lemma.
\begin{lem} \label{lem-approx-meas}
For all  $x\in \mathbb R^d, r\ge 0$,
\begin{equation}\label{eq:approx-meas} \lim_{t\to \infty}
m_t(B_\bullet(x,r)\cap \Gamma_t)= m(B_\bullet(x,r)).
\end{equation}
\end{lem}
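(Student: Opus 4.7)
The plan is to reduce the claim to a Riemann-sum-type estimate based on a bounded fundamental domain for $\Gamma$ in $G=(\mathbb R^d,\cdot)$. Fix such an $F\subset B_2(0,R_F)$ containing the identity $0$ with $m(F)=c(\Gamma,G)$; since left translations in $G$ have Jacobian $1$, the family $\{\gamma\cdot F\}_{\gamma\in\Gamma}$ tiles $\mathbb R^d$ up to measure zero, with every tile of Lebesgue measure $c(\Gamma,G)$. Applying $\delta_{1/t}$ and using the identity $\delta_{1/t}(\gamma\cdot y)=\delta_{1/t}(\gamma)\cdot_t\delta_{1/t}(y)$, which is immediate from \eqref{def-Gt}, transfers this to a tiling $\mathbb R^d=\bigsqcup_{\gamma'\in\Gamma_t}\gamma'\cdot_t F_t$, where $F_t:=\delta_{1/t}(F)$ and each tile has Lebesgue measure $m(F_t)=c(\Gamma,G)t^{-\gamma_0}$ (recall $\det\delta_{1/t}=t^{-\gamma_0}$).

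Writing $U=B_\bullet(x,r)$ and setting $n_t^-=\#\{\gamma'\in\Gamma_t:\gamma'\cdot_t F_t\subset U\}$ and $n_t^+=\#\{\gamma'\in\Gamma_t:\gamma'\cdot_t F_t\cap U\ne\emptyset\}$, the fact that $0\in F_t$ gives $\gamma'\in\gamma'\cdot_t F_t$, whence $n_t^-\le\#(\Gamma_t\cap U)\le n_t^+$. Combined with $n_t^- m(F_t)\le m(U)\le n_t^+ m(F_t)$, this yields
\[
|m_t(U\cap\Gamma_t)-m(U)|\le m(F_t)\bigl(n_t^+-n_t^-\bigr),
\]
which equals the Lebesgue measure of the union of tiles straddling $\partial U$. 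I would then control the Euclidean diameter of the relevant tiles: from $F\subset B_2(0,R_F)$ and $\delta_{1/t}(u)=(t^{-1/\beta_i}u_i)_1^d$, one gets $F_t\subset B_2(0,R_F\sqrt{d}\,t^{-1/\beta_+})$, and for $\gamma'$ in any fixed bounded neighborhood of $U$ the map $u\mapsto\gamma'\cdot_t u$ has Euclidean Lipschitz constant at $u=0$ uniformly bounded in $t$. This last point is the crux: $\cdot_t$ is a polynomial in $(x,y)$ whose coefficients are linear combinations of non-positive powers of $t$, exactly as in the proof of Lemma \ref{lem2-6norm}. Hence the diameter of any tile meeting a fixed neighborhood of $U$ is $\eps_t=O(t^{-1/\beta_+})$, and all straddling tiles lie within Euclidean distance $2\eps_t$ of $\partial U$.

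Finally, combining the Lipschitz estimate analogous to \eqref{Lip2} for the $\bullet$ product with the norm comparison \eqref{eq-normcomp2} shows that this Euclidean $2\eps_t$-neighborhood of $\partial U$ is contained in the $\|\cdot\|$-annulus $\{y: |\,\|x_\bullet^{-1}\bullet y\|-r\,|\le C\eps_t^{\beta_-/\beta}\}$. Using $\|\delta_s(u)\|=s^{1/\beta}\|u\|$ and the consequence $m(B_\bullet(z,\rho))=\rho^{\beta\gamma_0}m(B(1))$, this annulus has Lebesgue measure $m(B(1))\bigl[(r+\rho_t)^{\beta\gamma_0}-(r-\rho_t)^{\beta\gamma_0}\bigr]$ with $\rho_t=O(t^{-\beta_-/(\beta\beta_+)})\to 0$, which vanishes as $t\to\infty$ for any fixed $r>0$ (the case $r=0$ being trivial). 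The main obstacle, as flagged, is the uniform-in-$t$ Lipschitz control on $\cdot_t$, but this is built into the definition of an approximate group dilation structure.
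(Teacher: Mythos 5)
Your proof is correct and is essentially the paper's own argument: both tile $\mathbb R^d$ by translates of a bounded fundamental domain for $\Gamma$ (the paper uses a Voronoi cell), use the fact that the rescaled products $\cdot_t$ converge to $\bullet$ uniformly on compacts (equivalently, that only non-positive powers of $t$ occur in the coefficients) to control the tiles near $\partial B_\bullet(x,r)$, and conclude via the homogeneity and continuity of $\rho\mapsto m(B_\bullet(x,\rho))$. The only cosmetic difference is that the paper works upstairs at scale $t$ with $\Gamma$ and sandwiches the count between $m(B_\bullet(x,r-2\eps))$ and $m(B_\bullet(x,r+2\eps))$, whereas you rescale the tiling to scale $1$ over $\Gamma_t$ and bound the error by a shrinking boundary annulus with an explicit rate (your only glossed point, that the straddling tiles have base points $\gamma'$ in a bounded set uniformly in $t\ge 1$, follows from the same bounded-coefficient structure of $\cdot_t$).
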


\begin{proof} Fix $x\in \mathbb R^d$ and $r\ge 0$.  Recall that $B_\bullet (x,r)=x\bullet B(r)$ and  $$\delta_t(x\bullet B(r))=\delta_t(x)\bullet B(rt^{1/\beta})=B_\bullet(\delta_t(x),rt^{1/\beta}),$$
so that
$B_\bullet(x,r)\cap \Gamma_t$ is the finite set of all points $y\in \mathbb R^d$ such that
$$z=\delta_t(y)\in  B_\bullet(\delta_t(x),rt^{1/\beta})    \cap \Gamma .$$
  Let $\mbox{dist}_{\bullet}$ be a left-invariant Riemannian metric on the Lie group  $G=(\mathbb R^d,\cdot)$ and take the Voronoi cell for the discrete subgroup $\Gamma$:
$$
U= \left\{x\in \mathbb R^d: \mbox{dist}_\bullet(x,e)=  \min_{\gamma\in \Gamma}\mbox{dist}_\bullet(x,\gamma) \right\}
$$ 
so that $$\mathbb R^d=\bigcup_{\gamma\in \Gamma} \gamma \cdot U$$
and
  $ (\gamma \cdot U )\cap ( \gamma'\cdot U) \subset \partial U$ and thus $m( (\gamma \cdot U) \cap
( \gamma'\cdot U) )=0$ for any $\gamma\neq \gamma' \in \Gamma$.
 Note that this definition is based on the law $\cdot$ of the
  Lie group  $G=(\mathbb R^d,\cdot)$ and its   closed
subgroup $\Gamma$, not on the rescaled limit law $\bullet$.
 Since $m(\partial U)=0$,  by  definition, $c(\Gamma,G)=m(U)$. For any  $S\subset \mathbb R^d$,  we have
$$c(\Gamma,G)\#\{z\in \Gamma \cap S\}\le  m( S\cdot U ).$$
In particular, for $S=B_\bullet(\delta_t(x),rt^{1/\beta})=\delta_t(B_\bullet(x,r))$,
$$c(\Gamma,G)\#\{z\in \Gamma \cap B_\bullet(\delta(x),rt^{1/\beta})\}\le  m(\delta_t [\delta_{1/t} (\delta_t (B_\bullet(x,r)) \cdot \delta_t(\delta_{1/t}(U) ))]).$$
 Note that since $\Gamma$ is a co-compact closed subgroup of $G$, $U$ is bounded and closed and hence   compact.
Consequently, $\delta_{1/t} (U)$ converges uniformly to $\{e\}$ as $t\to \infty$.
 By the uniform convergence of the product $\cdot_t$ to $\bullet$ on compact sets (e.g., see Lemma \ref{lem2-6norm}), for any fixed $\eps>0$, there exists a constant  $T>0$ large enough such that,  for all $t>T$, the set $ [\delta_{1/t} (\delta_t (B_\bullet(x,r)) \cdot \delta_t(\delta_{1/t}(U) ))]$ is contained in an $\eps$ neighborhood for the norm $\|\cdot\|$ in the group $(G,\bullet)$ of the set  $B_\bullet (x,r)\bullet \delta_{1/t}(U)$.
This means that, for $t$ large enough,
$$ \delta_{1/t} (\delta_t (B_\bullet(x,r)) \cdot \delta_t(\delta_{1/t}(U) )) \subset B_\bullet (x,r+2\eps).
$$
 Hence,
 \begin{align*}
 \det(\delta_{1/t}) c(\Gamma,G) \#\{z\in \Gamma \cap B_\bullet(\delta(x),rt^{1/\beta})\}&\le  \det(\delta_{1/t}) m(\delta_t ( B_\bullet (x,r+2\eps)))\\
&= m(B_\bullet(x,r+2\eps)).\end{align*}
Take the limsup in $t  \to \infty  $, note that  $m(B_\bullet(x,r+2\eps))=c(r+2\eps)^{\sum_1^d \beta/\beta_i}$, and let $\eps$  tend  
to $0$, to obtain
$$\limsup_{t\to \infty} m_t(B_\bullet(x,r)\cap \Gamma_t)\le  m(B_\bullet(x,r)).$$
To prove the complementing inequality, namely,
$$\liminf_{t\to \infty} m_t(B_\bullet(x,r)\cap \Gamma_t)\ge  m(B_\bullet(x,r)),$$
we use the same line of reasoning as above to see that, for any fixed $\eps>0$ and all $t$ large enough,
$$
B_\bullet (\delta_t(x),(r-2\eps)t^{1/\beta}) \subset \bigcup_{\gamma\in B_\bullet(\delta_t(x),rt^{1/\beta})} \gamma \cdot U.
$$
From this, it follows that, for all $t$ large enough,
$$m(B_\bullet (x,r-2\eps)) \le  \det(\delta_{1/t}) c(\Gamma,G) \#\{z\in \Gamma \cap B_\bullet(\delta(x),rt^{1/\beta})\}.$$
The desired lower bound follows.
\end{proof}

\subsection{Statement and proof of the LLT}\label{S:6.3}

Given an arbitrary sequence of positive reals $T_k$ tending to infinity and $t>0$, let
$
\hat \mu^{(t)}_k
$
be the probability distribution of $(\hat X^{(k)}_{t})_{t>0}$, i.e.,
$$\hat \mu^{(t)}_k(x)=   \P^e(\hat X^{(k)}_{t}=x)=\mu^{[tT_k]}(\delta_{T_k}(x)),\quad x\in \Gamma_{T_k}.$$
Recall that for each $x\in \mathbb R^d$, $[x]_k\in \Gamma_{T_k}$ is the point closest to $x$ in the $\|\cdot\|$-norm.

  We know from Theorem \ref{WT1} that the L\'evy process $X^\bullet$  s 
    corresponding  to $(\sE_\bullet,\sF_\bullet)$
has a jointly continuous convolution kernel
$$
(t,x)\mapsto p_\bullet(t,x) =  t^{- \gamma_0}  p_\bullet (1,\delta_{1/t}(x))  $$
 with
$t >   0$,  $x\in \mathbb R^d$.

\begin{theo}[{\bf Local limit theorem}]\label{localCLT}
Assume \eqref{e:4.2}, {\rm (A)-(R1)-(R2)-(E1)-(E2)} and {\rm (T$\bullet$)-(T$\Gamma$)}   with the same exponent $\beta>0$.
 Then, for any $U_2>U_1>0$ and $r>1$,
$$
\lim_{k\to\infty}\sup_{{x\in  \R^d  : \|x\|\le r}}\,\sup_{t\in [U_1,U_2]}
\left|\det (\delta_{T_k}) \mu^{([tT_k])}_k(\delta_{T_k}([x]_k))-p_\bullet(t,x)  \right|  =0.
$$
\end{theo}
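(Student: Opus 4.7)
The plan is to derive Theorem \ref{localCLT} from the functional limit theorem (Theorem \ref{WT1}) combined with an equicontinuity estimate for the rescaled discrete density coming from assumption (R2), in the spirit of \cite{CH, CKW-JFA}. Introduce the normalized discrete density
\[
q_k(t,x):=\det(\delta_{T_k})\,\mu^{([tT_k])}(\delta_{T_k}([x]_k)), \qquad t>0,\ x\in\R^d.
\]
The goal is to show that $q_k\to p_\bullet$ uniformly on the compact set $[U_1,U_2]\times\{x\in\R^d:\|x\|\le r\}$.

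The first step is to obtain uniform upper bounds together with joint equicontinuity of $\{q_k\}$ on the region of interest. Substituting $n=[tT_k]$ and $y=\delta_{T_k}(z)$ in (R2), and using the scaling identities $\|\delta_{T_k}(z)\|^\beta=T_k\|z\|^\beta$ and $\det(\delta_{T_k})=T_k^{\gamma_0}$ together with the volume asymptotics $V(r)\asymp r^{\beta\gamma_0}$ (which follow from co-compactness of $\Gamma$ in $G$ and the Pansu-type results recalled in Subsection~\ref{S:11.4}), assumption (R2) rescales to a bound of the form
\[
\bigl|q_k(t+s,w)-q_k(t,x)\bigr|\le \frac{C}{t^{\gamma_0}}\!\left(\frac{s}{t}+\Bigl(\frac{\|z\|^\beta}{t}\Bigr)^{1/2}\right),
\]
valid for $w=\delta_{T_k}^{-1}(\delta_{T_k}([x]_k)\cdot\delta_{T_k}(z))$, all $s\ge 0$ and all $z$ with $\|z\|$ bounded, uniformly in $(t,x)$ on the compact region. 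Applying Lemma~\ref{lem2-6norm} to replace the discrete product appearing in $w$ by the limit law $\bullet$ up to a controlled error, and combining with the on-diagonal estimate $\mu^{(n)}(e)\le C n^{-\gamma_0}$ from \cite{CKSWZ1} recalled in Subsection~2.1 (propagated to nearby $x$ by the spatial Hölder estimate just derived), one obtains both a uniform upper bound $q_k(t,x)\le C(U_1,r)$ and joint equicontinuity of $\{q_k\}$ in $(t,x)$ on $[U_1,U_2]\times\{\|x\|\le r\}$.

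With equicontinuity and uniform boundedness in place, Arzela-Ascoli provides, along any subsequence of $\{q_k\}$, a sub-subsequence converging uniformly on $[U_1,U_2]\times\{\|x\|\le r\}$ to a continuous function $q_\infty$. To identify $q_\infty$, test against an arbitrary continuous $\phi$ with compact support in $\R^d$. The sum $\sum_{y\in\Gamma_{T_k}}\phi(y)\,\mu^{([tT_k])}(\delta_{T_k}(y))$ equals on one hand $\E^e[\phi(\hat X^{(k)}_t)]$, which converges by Theorem~\ref{WT1}(ii) to $\int \phi(x)\,p_\bullet(t,x)\,dx$; on the other hand, after appropriately redistributing the factor $\det(\delta_{T_k})/c(\Gamma,G)$, it is a Riemann sum for $\int \phi(x)\,q_k(t,x)\,dx$ over the lattice $\Gamma_{T_k}$ whose cell volume tends to zero (Lemma~\ref{lem-approx-meas}). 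Uniform convergence of $q_k$ to $q_\infty$ on the support of $\phi$ yields convergence of this Riemann sum to $\int \phi(x)\,q_\infty(t,x)\,dx$, and matching the two limits forces $q_\infty(t,x)=p_\bullet(t,x)$ by continuity and arbitrariness of $\phi$. Since every sub-subsequential limit of $\{q_k\}$ equals $p_\bullet$, the full sequence converges uniformly on the compact set of interest.

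The main obstacle lies in the first step: converting (R2), which is stated intrinsically on $\Gamma$ using the norm $\|\cdot\|$, into an equicontinuity estimate that is simultaneously uniform in $k$ and compatible with the limit group law $\bullet$ on $G_\bullet$. This demands a careful combination of the scaling of $\|\cdot\|$ under $(\delta_t)_{t>0}$, the comparability between $\cdot_{T_k}$ and $\bullet$ on compact sets (Lemma~\ref{lem2-6norm}), the volume asymptotics $V(r)\asymp r^{\beta\gamma_0}$, and bookkeeping of the constants $\det(\delta_{T_k})$ and $c(\Gamma,G)$. Once equicontinuity and the uniform upper bound are established, the identification of the limit via the functional limit theorem is essentially standard.
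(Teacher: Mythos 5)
Your outline is correct in substance and rests on exactly the ingredients the paper uses: the rescaled form of (R2) via the scaling $\|\delta_{T_k}(z)\|^\beta=T_k\|z\|^\beta$, Lemma \ref{lem2-6norm} to pass between the product $\cdot_{T_k}$ and the limit law $\bullet$, the asymptotics $V(t^{1/\beta})\asymp\det(\delta_t)\asymp t^{\gamma_0}$, the volume convergence of Lemma \ref{lem-approx-meas}, and Theorem \ref{WT1} for the identification. The difference is in packaging: the paper does not run a compactness/identification argument by hand, but instead verifies the hypotheses of the abstract local limit theorem of Croydon and Hambly, namely condition (c) of \cite[Assumption 1]{CH} via Lemma \ref{lem-approx-meas}, condition (d) via Theorem \ref{WT1} upgraded to uniformity in $t$ over compact time intervals by the time-regularity part of (R2), and \cite[Assumption 2]{CH} via the spatial part of (R2) combined with Lemma \ref{lem2-6norm}; it then quotes \cite[Theorem 1]{CH} (cf. \cite{CKW-JFA}). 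What you propose is, in effect, to reprove the content of that citation (asymptotic equicontinuity, Arzel\`a--Ascoli, identification of subsequential limits by testing against $C_c$ functions and Riemann sums). That is a legitimate, self-contained alternative; its cost is that you must handle by hand the technicalities that the framework of \cite{CH} is designed to absorb.

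Three such points need care. First, each $q_k$ is a step function in both $t$ and $x$, so ``joint equicontinuity of $\{q_k\}$'' is literally false; what (R2) yields is an asymptotic modulus of continuity (``equi-uniform continuity modulo $T_k^{-1}$'' in time, and modulo the lattice mesh in space), and the Arzel\`a--Ascoli step must be run in that form, e.g.\ by extracting limits on a countable dense subset of $[U_1,U_2]\times\{\|x\|\le r\}$ and using the asymptotic modulus to upgrade to uniform convergence; this is precisely the wrinkle the paper flags before invoking \cite{CH}. Second, do not quote the on-diagonal bound from \cite{CKSWZ1}: that estimate is for measures in $\mathcal{SM}(\Gamma)$, which is not among the hypotheses of the theorem. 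Instead derive $\mu^{(n)}(x)\le C/V(n^{1/\beta})$ directly from (R2) with $m=0$, by summing the inequality over $y$ in a ball of radius $\varepsilon n^{1/\beta}$ and using $\sum_y\mu^{(n)}(xy)\le 1$. Third, keep the covolume constant straight: a cell of $\Gamma_{T_k}$ has Lebesgue volume $c(\Gamma,G)\det(\delta_{1/T_k})$, and this constant also sits inside $m_{T_k}$ in Lemma \ref{lem-approx-meas}; in the paper's route it is absorbed in the choice $\nu^k=m_{T_k}$ when checking \cite{CH}, while in your write-up the Riemann-sum matching silently drops a factor $1/c(\Gamma,G)$ (harmless in the adapted coordinates of Section \ref{S:9}, where $\Gamma$ is realized as $\mathbb Z^d$, but it should be carried through in the general statement). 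None of these is a fatal gap, but each must be addressed for the argument to be complete.
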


\begin{proof} We adopt the notations in \cite{CH}.
Let $E=\mathbb R^d$ with $d_E(x,y)=\|x_\bullet^{- 1}\bullet y\|$,  and $G^k=\delta_{T_k}^{-1}(\Gamma)\subset \mathbb R^d=E$
with the same distance  $d_{G^k}(x,y)=\|x_\bullet^{-1}\bullet y\|$.
(Note that $d_{G^k}(\cdot,\cdot)$ is a graph distance on $G^k$ in \cite{CH}. However, the proof of \cite[Theorem 1]{CH}
works for any distance on $G^k$.)
 Then, conditions (a) and (b) in \cite[Assumption 1]{CH} hold with $\alpha(k)=1$.
 Let $\nu=m$ and $\nu^k=m_{T_k}$. Then by \eqref{eq:approx-meas}, (c) in \cite[Assumption 1]{CH} holds with
 $\beta(k)=\det (\delta_{T_k})$.
Set
$$
q^{k}_t(x)=\hat{\mu}^{(t)}_k(x) \quad \mbox{ and } \quad
q_t(\cdot)=p_\bullet(t,\cdot).
$$
It suffices to prove that the conclusion of \cite[Theorem 1]{CH} holds for
 $q^{k}_t(x)$.
We now check that (d) in \cite[Assumption 1]{CH} holds.
 Let $U_0>0$ be a fixed constant. By Theorem \ref{WT1},
for every bounded and continuous function $f$ on $\R^d$, $t\in    (  0,U_0]$
and $x\in \mathbb R^d$, we have
 \begin{equation}\label{eq:3-qwq}
 \lim_{k\rightarrow \infty}
 \left|\hat \bE_{k}^{[x]_{k}}\big[f(\hat X^{(k)}_t)\big]-\int_{\mathbb R^d} f(z)q_t(
 x_\bullet^{-1}\bullet z)\,dz\right|=0.
 \end{equation}
 We need to  prove that this convergence is uniform in $t$ over any compact time   
   interval in $(0,\infty)$.
 This would easily     follow 
  if we could prove the equi-uniform continuity of the   
    function $t\mapsto \hat \bE_{k}^{[x]_{k}}\big[f(\hat X^{(k)}_t)\big]$ on compact time intervals. However, because we are dealing with what is essentially a discrete time process, these functions are not even continuous. Nevertheless, condition  (R2)   says
   that, for all  non-negative integers $n,m$ and all $x,z\in \Gamma$ (inverse and multiplication are in $\Gamma$), we have
 \begin{equation}\label{eq:3-29-2}
|\mu^{(n+m)}(z)-\mu^{(n)}(x)|\le C_2 \left(\frac{m}{n+1}+\frac{\|x^{-1}\cdot z\|^{\beta/2}}{\sqrt{n+1}} \right).
\frac 1{V(n^{1/\beta})}.\end{equation}
It follows that, for $0<s<t$,
\begin{align*}
& \Big|\mu^{([tT_k])}([\delta_{T_k}([x]_k)]^{-1}\cdot \delta_{T_k}(y))-\mu^{([sT_k])}([\delta_{T_k}([x]_k)]^{-1}\cdot \delta_{T_k}(y))\Big|\\
&\le   C_2\frac{[T_k(t-s)]  +1  }{[T_ks]+1}\frac 1{V([T_ks]^{1/\beta})}
\le  C_2   \frac{t-s  +T_k^{-1}}{s}  \frac 1{V([T_ks]^{1/\beta})}.
\end{align*}
For any fixed time interval $[U_1,U_2]$, $0<U_1<U_2$, this is a version of  ``equi-uniform continuity,"
 call it ``equi-uniform continuity modulo $T_k^{-1}$."
   Together with the fact that $t\mapsto \int_{\mathbb R^d} f(z)q_t(x_\bullet^{-1}\bullet z)\,dz$ is  uniformly continuous  for $t\in [U_1,U_2]$,
and  \eqref{eq:3-qwq}, this equi-uniform continuity modulo $T_k^{-1}$ yields
\begin{equation}\label{p4-2-1}
 \lim_{k\rightarrow \infty} \sup_{t\in [U_1,U_2]}
\left|\hat\bE_{k}^{[x]_{k}}\big[f(\hat X^{(k)}_t)\big]-\int_{\mathbb R^d} f(z)q_t(x_\bullet^{-1}\bullet z)\,dz\right|=0.
 \end{equation}
 By the joint continuity of $q_t(x)$, we have
 $\displaystyle\int_{\partial B(x_0,r)}q_t(x_\bullet^{- 1}\bullet z)\,dz=0$
 for every $x,x_0\in E$ and $r>0$. Hence, \eqref{p4-2-1} yields that
 $$ \lim_{k\rightarrow \infty} \sup_{t\in [U_1,U_2]}
 \left|{\mathbb P}_{k}^{[x]_{k}}\big(X^{(k)}_{[T_kt]/T_k}
 \in B(x_0,r)\big)-\int_{B(x_0,r)}
 q_t(x_\bullet^{-1}\bullet z)\,dz\right|=0, $$
and (d) in \cite[Assumption 1]{CH} is satisfied with $\gamma(k)=T_k$.

On the other hand, by \eqref{eq:3-29-2} again,
we have   for $x, z\in  B(2r) \cap \delta_{T_k}^{-1}(\Gamma)$,
\begin{align*}
\det (\delta_{T_k})|q^k_{[T_kt]}(z)-q^{k}_{[T_kt]}(x)|
&= \det (\delta_{T_k})|\mu^{([T_kt])}(\delta_{T_k}(z))-\mu^{([T_kt])}(\delta_{T_k}(x))|\\
&\le  C_2 \frac{\|\delta_{T_k}(x)^{-1}\cdot \delta_{T_k}(z)\|^{\beta/2}}{\sqrt{T_kt}}
\frac {\det (\delta_{T_k})}{V((T_kt)^{1/\beta})}\\
&\le   C_3 \|x_\bullet^{-1}\bullet z\|^{\beta_-\beta/(2\beta_+)}
t^{-1/2-\gamma}  T_k^{-1/2} .
\end{align*}
For  the last inequality, we have used
Lemma \ref{lem2-6norm}, and the fact that
$V(t^{1/\beta})\asymp \det (\delta_{t})\asymp t^\gamma$ for  $\gamma=\sum_1^d1/\beta_i>0$.
 Hence it holds that
 for any $0<U_1<U_2$, $r>0$,
$\delta\in (0,r]$ and $k\ge1$,
$$ \sup_{x\in B_{G^k}(0,r),\atop d_{G^k}(z,x)\le  \delta}\!\!\,
\sup_{t\in [U_1,U_2]}
\det (\delta_{T_k})|q^k_{[T_kt]}(z)-q^{k}_{[T_kt]}(x)|\le
  C_4  \frac{\delta^{\beta_-\beta/(2\beta_+)}}
{U_1^{1/2+\gamma}}.$$
Taking $\lim_{\delta\to 0}\limsup_{k\to \infty}$, we obtain
\cite[Assumption 2]{CH}.
Therefore, the desired assertion follows from \cite[Theorem 1]{CH}. \end{proof}

\begin{rem} \label{R:5.3}\rm
It is well known (see, e.g.,  \cite{CKS87, CKKW21})  that if a Nash's inequality holds for $(\sE_\bullet, \sF_\bullet)$, then $(\sE_\bullet, \sF_\bullet)$ has transition density function $p(t, x, y)$ that is bounded for each $t>0$.
It follows from Theorem  \ref{WT1}, $p(t, x, y)$ is jointly  locally H\"older continuous in $(x, y)$ for each fixed $t>0$.
Since $X^\bullet$ is a L\'evy process on $(G_\bullet,  \bullet)$ and $\{\delta_t; t>0\}$ is a group dilation structure for
$(G_\bullet,  \bullet)$, we have
$$
p(t, x, y) = p(t, e, x_\bullet^{-1} \bullet y) = {\rm det} (\delta_t^{-1}) p(1, e,  \delta_t^{-1} (x_\bullet^{-1} \bullet y)) .
$$
Define $p_\bullet (t, x) =  {\rm det} (\delta_t^{-1}) p(1, e,  \delta_t^{-1} (x_\bullet^{-1} \bullet y))$.
Then $p_\bullet (t, x)$ is jointly continuous in $(t, x)$, symmetric in $x\in G_\bullet$
 (that is, $p_\bullet (t, x)= p_\bullet (t, x_\bullet^{-1}) )$ and is the convolution kernel for $(\sE_\bullet, \sF_\bullet)$.
Moreover,  for any $t>0$ and $x\in G_\bullet$,
$$
p_\bullet (t, x)= {\rm det} (\delta_t^{-1}) p_\bullet (1,   \delta_t^{-1} (x ))
= {\rm det} (\delta_{1/t}) p_\bullet (1,   \delta_{1/t} (x ))   .
$$
\qed
\end{rem}

 \section{Symmetric L\'evy processes on nilpotent  groups} \label{S:7}

\subsection{The problem of identifying the limit process}\label{S:7.1}

Theorem \ref{WT1} gives the functional central limit theorem for a class of random walks
on simply connected nilpotent groups driven by probability measures $\mu$, which  are the distributions
of  the one-step increments of the random walks. However, the limit symmetric L\'evy process $X^\bullet$ is characterized in an abstract way by
a non-local pure jump Dirichlet form $(\sE_\bullet, \sF_\bullet)$ on $L^2( G_\bullet; dx)$ of the form \eqref{def-Ebullet} with $J_\bullet (dx, dy)=dx \mu_\bullet (x_\bullet^{-1} \bullet dy)$.
A natural question is whether we can use Theorem \ref{WT1} to give explicit limit theorems
in concrete examples  as those studied in Examples \ref{E:1.4} and \ref{E:1.5}.
This amounts to ask whether we can explicitly identity or describe the L\'evy process $X^\bullet$ in concrete cases. These are the questions we are going to address in this section and the answer is affirmative.
 In fact, we will do this in a more general context for any symmetric L\'evy measure $\mu_\bullet$
on any simply connected nilpotent group $G_\bullet$; that is, $(G_\bullet, \bullet)$ does not need to be the
limit group obtained from a simply connected nilpotent group $G$ through an approximate group dilation
structure $\{\phi_t; t>0\}$ on $G$, and $\mu_\bullet$ does not need to be the
  weak limit of $\mu_t=t \delta_{1/t} (\mu)$ of some symmetric probability measure $\mu$
	on a discrete subgroup $\Gamma$ of $G$ as in condition (A).
	This is achieved in Theorem \ref{T:7.3}.
Then we use this concrete description to illustrate our convergence theorem, Theorem \ref{WT1},
 by revisiting  Example \ref{E:1.5} and presenting several more examples
 through this approach, without using
the limit results for operator stable processes on $\R^d$, Propositions \ref{P:1.1} and \ref{P:1.3},
 from the literature.

\subsection{Symmetric L\'evy processes and their approximations}\label{S:7.2}

   Let  $\group$ be any simply connected nilpotent group.
As mentioned in Subsection \ref{S:3.1}, there is a global polynomial coordinate system on
 $\group$  satisfying   \eqref{triangular}-\eqref{e:3.4a}. Unless mentioned otherwise, this is the default coordinate system we use on $\group$ in this section. Through this global system
$\Phi: \R^d \to \group$ with $\Phi (0) =e$,
$\group$ can be identified with $\R^d$ and $dx$ is a Haar measure for $\group$.
The coordinate system
 also induces a  function on $\group$: $\| \sigma \|_2 := \| \Phi^{-1} (\sigma )\|_2$
for $\sigma \in \group$, where $\| \Phi^{-1} (\sigma )\|_2$ is the Euclidean norm of
$\Phi^{-1}(\sigma ) \in \R^d$.
As we already see from Subsection \ref{S:3.1}, there are many choices of the global coordinate systems
for $\group$. One of the commonly used coordinate system is the exponential map.
However, sometimes it is more convenient or more natural to use other coordinate systems,
for example, matrix coordinates in the Heisenberg group case.
Thus with this in mind,  we do not fix a particular choice of
the polynomial coordinate systems, except for the assumption that
 \eqref{triangular}-\eqref{e:3.4a}.

	Let $\nu$ be
 any non-zero symmetric L\'evy measure on $\group$; that is, $\nu$ is a    non-negative
   Borel measure on $\group$ satisfying
$0< \int_{\group} (1\wedge \| x\|_2^2) \nu (dx) <\infty$ and
 $\nu (A)= \nu  (A^{-1})$
for any $A\subset \group \setminus \{e\}$, where $A^{-1}= \{x\in \group: x^{-1} \in A\}$.  Note that we do not impose any additional conditions on $\nu$.
 Define
   \begin{equation}\label{e:7.1}
\sE (u, v):= \frac1{2} \iint_{ \group \times \group\setminus \Delta}
{(u(x z )-u(x))(v(x  z )-v(x))} dx \nu (dz) ,
\end{equation}
and $\F $ is the closure of  $\Lip_c( \group)$, which is the space of Lipschitz functions on $\group$ with compact support, with respect to the norm $\sqrt{ \sE  (u, u)+ \int_{\group} u(x)^2 dx}  $. Here $xz$ is the group multiplication of two elements $x, z\in N$.
Let $X$ be the symmetric Hunt process associated with the regular Dirichlet form
$(\sE , \sF )$ on $L^2(\group; dx)$; cf.  \cite{CF, FOT}.
Note that in this subsection, as mentioned above,
 we do not assume the L\'evy measure $\nu$ on $\group$ generates $N$.

\begin{lem} The Hunt process $X$ is a L\'evy process on $\group$.
 \end{lem}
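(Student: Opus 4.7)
The plan is to deduce the L\'evy property from the manifest left-invariance of the Dirichlet form $(\sE,\sF)$. The first step is to observe that for any $g\in\group$, the left-translation operator $L_g u(x):=u(gx)$ is a unitary on $L^2(\group;dx)$ (by left-invariance of Haar measure) which preserves $\Lip_c(\group)$; the change of variables $y=gx$ inside the double integral defining $\sE$ then yields $\sE(L_g u, L_g v)=\sE(u,v)$ for all $u,v\in \Lip_c(\group)$. Passing to the closure gives $L_g \sF=\sF$ with the form identity preserved.

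Since $L_g$ is a unitary commuting with the quadratic form $\sE$, it commutes with the associated non-negative self-adjoint generator, and hence with the strongly continuous semigroup $(P_t)_{t\ge 0}$. At the level of the Hunt process this means the transition kernel is left-invariant: $p_t(gx,gA)=p_t(x,A)$. Defining $\mu_t(A):=\bP^e(X_t\in A)$, the Chapman--Kolmogorov equation translates into the convolution identity $\mu_{s+t}=\mu_s *\mu_t$; symmetry $\mu_t(A)=\mu_t(A^{-1})$ is inherited from the symmetry of $\nu$; and conservativeness (so that each $\mu_t$ is a probability measure rather than sub-probability) follows from the integrability condition $\int(1\wedge\|z\|_2^2)\nu(dz)<\infty$ by the standard criterion for pure-jump Dirichlet forms of convolution type.

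With $(\mu_t)_{t\ge 0}$ identified as a symmetric continuous convolution semigroup on $\group$, the stationary independent increments property is a short computation: for $0\le s<t$ and a bounded Borel set $A\subset \group$,
\[ \bP^e\bigl(X_s^{-1}X_t\in A\,\big|\, \sigma(X_r:r\le s)\bigr) = \bP^{X_s}\bigl(X_{t-s}\in X_s\cdot A\bigr) = \mu_{t-s}(A), \]
using left-invariance of the transition kernel at the second equality. Combined with the c\`adl\`ag sample paths of the Hunt process $X$ and the identity $X_t\stackrel{d}{=}X_t^{-1}$ inherited from symmetry of $\nu$, this identifies $X$ as a symmetric L\'evy process on $\group$ in the sense of the preceding discussion.

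The one step requiring a little care is the passage from the $L^2$-level commutation $P_tL_g=L_gP_t$ to a genuinely pointwise statement about transition kernels applicable to arbitrary starting points; this uses the regularity of $(\sE,\sF)$ and a refinement argument to start the Hunt process from every point of $\group$. This is a standard but technical issue in Dirichlet-form-based arguments of this type, and it is the only real obstacle in the proof.
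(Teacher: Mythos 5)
Your proposal is correct and follows essentially the same route as the paper: the paper verifies that for each $\sigma\in\group$ the translated process $\sigma X_t$ is an $m$-symmetric Hunt process whose Dirichlet form is again $(\sE,\sF)$, which is exactly the statement that left translations commute with the semigroup, and then invokes the Markov property — precisely your argument in different packaging. The quasi-everywhere versus everywhere refinement you flag is treated at the same (implicit) level of detail in the paper's own proof, so your sketch matches it in both substance and rigor.
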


 \proof  For each fixed  $\sigma \in \group
 \backslash\{e\}$, the process
 $Y^\sigma=\{Y^\sigma_t,t\ge0\}$, with $Y_t^\sigma=\sigma       X_t    $ for any $t>0$,
 is a symmetric Hunt process on $\group$  as $dx$ is a left Haar measure on $\group$
  and its transition semigroup
  $$
  P^\sigma_t f(x)= \E \left[ f(Y^\sigma_t) | Y^\sigma_0=x \right]
	= \E  \left[ f (   \sigma   X_t    ) | X_0= \sigma^{-1}   x \right]
  = (P_t f_\sigma )(\sigma^{-1}  x),
  $$
   where $f_\sigma (\eta):= f(\sigma  \eta).$
  Thus
  \begin{align*}
  &  \lim_{t\to 0} \frac1t (f- P^\sigma_t f, f)_{L^2(\group; dx)} \\
  &=   \lim_{t\to 0} \frac1t  \int_{\group} ( f_\sigma (\sigma^{-1}  x) -  (P_t f_\sigma )(\sigma^{-1}  x))  f_\sigma (\sigma^{-1}  x) dx \\
  &=  \lim_{t\to 0} \frac1t  \int_{\group} ( f_\sigma (y) -  (P_t f_\sigma )(y))  f_\sigma (y) dy \\
  &=  \frac12     \iint_{N \times N \setminus \Delta}
  (f_\sigma(x  z  ) -f_\sigma (x ) )^2\nu (dz) dx \\
&=  \frac12     \iint_{N \times N \setminus \Delta}
  (f (x  z  ) -f (x ) )^2\nu (dz) dx =\sE (f, f).
  \end{align*}
   This shows that $Y^\sigma$ is
   a symmetric Hunt process associated with the Dirichlet form
    $(\sE, \FF)$ on $L^2(\group; dx)$ and so it has the same distribution as $X$. In other words,
   $\{\sigma   X_t; t\geq 0\}$ with $X_0=x\in \group$ has the same distribution as
 $\{X_t; t\geq 0\}$ starting from $\sigma  x$.  This combined with the Markov property of $X$
 shows that $X$ is a symmetric L\'evy process on $\group$.
 \qed

 \medskip

 We next investigate how the L\'evy process $X$ is determined by $\nu$ in a more explicit way; that is, given a symmetric  L\'evy measure $\nu$ on $\group$,
 how to construct or approximate its corresponding
symmetric L\'evy process $X$ in a concrete way.
We will show in Theorem \ref{T:7.3}
that $X$ can be approximated by a sequence of random walks on $\group$
whose one-step increments are from the small increments of a common L\'evy process $Z$ on $\R^d$
through the identification of the global coordinate system $\Phi$.
The key is to identify the L\'evy measure and the drift of the L\'evy process $Z$ on $\R^d$.
Our approach uses Hunt's characterization for L\'evy processes on Lie groups and Kunita's triangular array type limit result for random walks on Lie groups, which we recall in Theorem \ref{T:7.2}.

We identify each element $\sigma \in \group$ with its global coordinate
$$
\Phi^{-1}(\sigma) =:x =(x_1, \ldots, x_d) \in \R^d.
$$
For each $1\leq j\leq d$, let $\sX_j$ be the left-invariant vector field  in
the Lie algebra   $\mathfrak g$   of the group $\group$
\  at $e$
determined by the coordinate function $x\mapsto \sX_j$;
that is, for any $C^2$ function $f(x)$ on $\group=\R^d$,
$$
(\sX_j f) (e)= \frac{\partial f(x)}{\partial x_j} \Big|_{x=0}.
$$
 These vector fields  $(\sX_1, \ldots, \sX_d)$ form
 a natural base of $ \mathfrak g$ at $e$.
On the other hand, it  is well known that the simply connected nilpotent group $\group$
admits an exponential map of first type from  its Lie algebra $\mathfrak g =\R^d$
to $\group$ which is surjective.
Under its exponential coordinates exp: $\mathfrak g\to \group$ (of first type), $x^{-1}=-x$.
Let  $\{x^1, \ldots, x^d\}$ be the
exponential coordinate of $\sigma \in \group$ with respect to the base
$\{\sX_1, \ldots,\sX_d\}$; that is, $\exp (\sum_{j=1}^d x^j \sX_j) =\sigma$.
Note that $x^j(\sigma^{-1})=-x^j(\sigma)$ and $\sX_i x^j =\delta_{ij}$.
Let $\| \cdot \|$ be the norm on $\group$ defined by
\begin{equation}\label{e:symnorm}
	\| \sigma \|:= \Big(\sum_{j=1}^d (x^j(\sigma))^2 \Big)^{1/2}
\end{equation}
 in terms of the exponential coordinates of $\sigma \in \group$.
Note that the norm $\| \cdot \|$ is symmetric on the group $\group$ in the sense
that $\| \sigma^{-1}\| = \| \sigma \|$ for any $\sigma \in \group$.

Denote by $\sC$ the space of   real-valued functions on $\group$ that are continuous and have limit at infinity,  and $\sC^2$ be the space of $C^2$ functions $f$ on $\group$ so that $f, \sX_k f$ and $\sX_k \sX_jf $ are all in $\sC$.
Let $\psi\in \sC^2$ be such that $\psi >0$ on $G\setminus \{e\}$, $\psi (\eta) \asymp \sum_{j=1}^d x^j (\eta)^2$ near $e$,
and $\lim_{\eta \to \infty } \psi (\eta)>0$.  Note that in view of \eqref{Lip1}-\eqref{Lip2},
$$
\psi (\eta) \asymp 1\wedge \| \eta \|^2
\asymp 1\wedge \| \eta \|_2^2
\quad \hbox{for } \eta \in \group.
 $$

\medskip

We  recall the following triangular array type limit result on $\group$
 from \cite{KunCornell}, which in fact holds for any Lie   group.

\begin{theo}[Theorem 3 of  \cite{KunCornell}] \label{T:7.2}
In the above setting,
suppose the following hold.
\begin{enumerate}
\item[\rm (i)] For each $n\geq 1$,  $k\mapsto S^{(n)}_k =\xi_{n, 1}\cdots \xi_{n, k}$ is a discrete time random walk on the Lie group $\group$, where $\{\xi_{n, k}; k\geq 1\}$ are i.i.d $\group$-valued random variables having distribution $\nu_n$.

\item[\rm (ii)] As $n\to \infty$, the measure $n \nu_n$ converges vaguely to a measure $\nu$ on $\group\setminus \{e\}$ satisfying
$\int_{\group\setminus \{e\}} \psi (x) \nu (dx)<\infty$.

\item[\rm (iii)] For $\eps >0$,  let
$$
U_\eps:=\{\eta \in \group: \| \eta \|<\eps \}
=  \Big\{\eta \in \group: \sum_{j=1}^d x^j (\eta)^2 <\eps^2 \Big\}
$$
be an $\eps$-neighborhood of $e$ in $\group$.  For each $\eps >0$,
$$
\lim_{n\to \infty} n \int_{U_\eps} x^i(\eta)  x^j (\eta) \nu_n (d\eta) =:a^{(\eps)}_{ij}
\ \hbox{ exists}.
$$
Clearly, $(a_{ij}^{(\eps)})$ is symmetric and non-negative definite, which decreases to $(a_{ij})$ as $ \eps \to 0$.

  \item[\rm (iv)]    For each $\eps >0$,
$\lim_{n\to \infty} n \int_{U_\eps} x (\eta)   \nu_n (d\eta) =:b^{(\eps)} \in \R^d$ exists.
\end{enumerate}

 Take $\eps >0$  so that $\partial U_\eps$ has zero $\nu$-measure. Define
$$
b=b_\eps + \int_{U_\eps^c} x(\eta) \nu (d\eta),
$$
 whose value is independent of the choice of $\eps$.
Then for each $T>0$,
   $\{Z^{(n)}_t:= S^{(n)}_{[nt]}; t\in [0, T]\}$
 converges weakly in the Skorokhod space $\bD ([0, T]; \group)$ as $n\to \infty$   to a L\'evy process $Z=\{Z_t; t\in [0, T]\}$ on $\group$,
 whose generator is characterized by
 \begin{align}\label{e:hunt}
 \sL f (\eta) &= \frac12 \sum_{i,j=1}^d a_{ij} \sX_i\sX_j f (\eta) + \sum_{i=1}^d b_i \sX_i f(\eta) \nonumber \\
 &\quad + \int_{\group\setminus \{e\}} ( f(\eta \sigma) - f(\eta) -\sum_{i=1}^d x^i (\sigma) \sX_i f(\tau)) \nu ( d \sigma)
 \end{align}
  for any $f\in \sC^2$.
  \end{theo}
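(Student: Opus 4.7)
The plan is to prove this as a classical Hunt-type triangular array theorem for Lie-group-valued random walks, in two pieces: convergence of generators on a rich enough class of test functions, and then tightness in the Skorokhod space. Since we know a posteriori that the candidate limit process $Z$ generated by $\sL$ in \eqref{e:hunt} exists as a L\'evy process on $\group$ (by the L\'evy--Khintchine--Hunt formula for Lie groups; see, e.g., \cite[Chapter 2]{Hazod2001}), it suffices to identify the finite-dimensional distributions of any weak limit point of $(Z^{(n)})_{n\ge 1}$ with those of $Z$, and this reduces to checking convergence of the one-step generators.

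First I would look at the discrete generator of the time-rescaled walk,
$$
\sL^{(n)} f(\eta) \,=\, n\int_{\group} \bigl(f(\eta\sigma) - f(\eta)\bigr)\,\nu_n(d\sigma),
\qquad f\in\sC^2,
$$
and show that $\sL^{(n)} f\to \sL f$ uniformly on $\group$. Fix $f\in \sC^2$. For any $\eps>0$ with $\nu(\partial U_\eps)=0$, split the integral at $\partial U_\eps$. For the far piece, the vague convergence $n\nu_n\to \nu$ on $\group\setminus\{e\}$ together with boundedness of $f$ and $\sX_i f$, plus the fact that $\lim_{\eta\to\infty}f(\eta)$ exists, gives
$$
n\!\int_{U_\eps^c}\!\bigl(f(\eta\sigma)-f(\eta)\bigr)\,\nu_n(d\sigma) \;\longrightarrow\; \int_{U_\eps^c}\!\bigl(f(\eta\sigma)-f(\eta)\bigr)\,\nu(d\sigma).
$$
For the near piece I would Taylor-expand along the left-invariant vector fields: for $\sigma\in U_\eps$ with $\eps$ small,
$$
f(\eta\sigma)-f(\eta) = \sum_{i=1}^d x^i(\sigma)\,\sX_i f(\eta) + \tfrac12\sum_{i,j=1}^d x^i(\sigma)x^j(\sigma)\,\sX_i\sX_j f(\eta) + R_\eps(\eta,\sigma),
$$
with $|R_\eps(\eta,\sigma)|\le \omega(\eps)\,\|\sigma\|^2$, where $\omega(\eps)\to 0$ as $\eps\to 0$ (using uniform continuity of the second-order derivatives of $f$ on compacts and control at infinity from $f\in\sC^2$). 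Integrating against $n\,\nu_n$ on $U_\eps$, hypotheses (iii)--(iv) give
$$
n\!\int_{U_\eps}\!\bigl(f(\eta\sigma)-f(\eta)\bigr)\,\nu_n(d\sigma) \;=\; \sum_i b^{(\eps)}_i \sX_i f(\eta) + \tfrac12\sum_{i,j} a^{(\eps)}_{ij}\sX_i\sX_j f(\eta) + E_n^\eps(\eta),
$$
with $\limsup_n\sup_\eta|E_n^\eps(\eta)|\le C\,\omega(\eps)\cdot \sup_n n\!\int_{U_\eps}\!\|\sigma\|^2\,\nu_n(d\sigma)\to 0$ as $\eps\to 0$. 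Adding both pieces, using the compensation identity $b=b^{(\eps)}+\int_{U_\eps^c}x(\sigma)\nu(d\sigma)$ and monotone convergence $a_{ij}^{(\eps)}\downarrow a_{ij}$, a $\limsup$-in-$n$ then $\eps\downarrow 0$ argument yields $\sL^{(n)}f\to \sL f$ uniformly.

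Next I would establish tightness of $(Z^{(n)})_{n\ge 1}$ in $\bD([0,T];\group)$. The natural tool is Aldous' criterion applied after passing to bounded continuous functions of $Z^{(n)}$: for any stopping time $\tau_n\le T$ and deterministic $\delta_n\downarrow 0$, use the martingale
$$
M^{(n),f}_t \,=\, f(Z^{(n)}_t) - f(Z^{(n)}_0) - \int_0^{[nt]/n}\!\sL^{(n)}f(Z^{(n)}_s)\,ds
$$
for $f\in \sC^2$ compactly supported, together with the uniform bound on $\sL^{(n)}f$ just proved and a localization using $\psi$ to control large jumps via hypothesis (ii). This gives the required estimate $\mathbb{E}[|f(Z^{(n)}_{\tau_n+\delta_n})-f(Z^{(n)}_{\tau_n})|]\to 0$, and also the compact containment condition via a standard first-exit argument from a large ball (using boundedness of $\int\psi\,d\nu_n\cdot n$ uniformly in $n$). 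Finally, convergence of generators on a core $\sC^2_c$ together with tightness, via the standard Trotter--Kurtz-type argument (\cite[Chapter 4]{EK}), identifies any weak limit with the unique L\'evy process on $\group$ generated by $\sL$, proving weak convergence $Z^{(n)}\Rightarrow Z$ in $\bD([0,T];\group)$.

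The main obstacle I expect is the uniform control of the small-jump Taylor remainder $E_n^\eps$ together with the passage $\eps\to 0$: one must carefully track that the covariance-type quantities $n\!\int_{U_\eps}\!x^i(\sigma)x^j(\sigma)\,\nu_n(d\sigma)$ remain bounded as $n\to\infty$ uniformly in small $\eps$, which is exactly what hypothesis (iii) (combined with the monotonicity $a^{(\eps)}_{ij}\downarrow a_{ij}$) provides, and that the resulting drift picks up exactly the Hunt compensator $\sum_i x^i(\sigma)\sX_i f(\eta)$ appearing inside the last integral in \eqref{e:hunt}. The rest is routine semigroup and tightness machinery.
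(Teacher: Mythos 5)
You should first be aware that the paper itself contains no proof of this statement: Theorem \ref{T:7.2} is imported verbatim from Kunita (Theorem 3 of \cite{KunCornell}) and used as a black box in the proof of Theorem \ref{T:7.3}, so there is no internal argument to compare yours against. That said, your scheme — convergence of the rescaled one-step generators $\sL^{(n)}f(\eta)=n\int_{\group}(f(\eta\sigma)-f(\eta))\,\nu_n(d\sigma)$ on $\sC^2$, Aldous tightness plus compact containment, and identification of limit points through the martingale problem for the Hunt generator — is exactly the classical route (Wehn, Stroock--Varadhan, Kunita) for triangular-array theorems on Lie groups, and your small-jump bookkeeping (Taylor expansion along the $\sX_i$, hypotheses (iii)--(iv), monotonicity $a^{(\eps)}_{ij}\downarrow a_{ij}$, and the compensation identity defining $b$) is organized correctly.

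There is, however, a genuine gap at the large-jump step, and it propagates into your tightness argument. Vague convergence of $n\nu_n$ on $\group\setminus\{e\}$ is convergence against \emph{compactly supported} continuous functions; it neither yields $n\int_{U_\eps^c}(f(\eta\sigma)-f(\eta))\,\nu_n(d\sigma)\to\int_{U_\eps^c}(f(\eta\sigma)-f(\eta))\,\nu(d\sigma)$ for a merely bounded integrand, nor the uniform bound $\sup_n n\,\nu_n(U_\eps^c)<\infty$ that you invoke for compact containment, because mass of $n\nu_n$ may escape to infinity. Concretely, with $\nu_n=(1-\tfrac1n)\delta_e+\tfrac1n\delta_{g_n}$ and $g_n\to\infty$, hypotheses (ii)--(iv) hold in the literal vague sense with $\nu=0$, $a=b=0$, yet $S^{(n)}_{[nt]}$ makes order Poisson$(t)$ excursions to $g_n$ and is not even tight; so your far-field limit and your ``localization using $\psi$'' cannot be deduced from the hypotheses as you read them. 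The cure is to use (as Kunita does, and as the paper does when verifying the hypotheses in the proof of Theorem \ref{T:7.3}, see \eqref{e:cond1}--\eqref{e:cond4}) the stronger form of (ii): convergence tested against functions in $\sC$ (bounded, with limits at infinity) vanishing to second order at $e$, equivalently no loss of mass at infinity; with that reading both your far-field generator limit and the large-jump control in tightness go through. A related point to watch: in \eqref{e:hunt} and in your final dominated-convergence step the compensator should be taken with bounded Hunt coordinate functions (the paper itself switches to the truncated $\varphi_j$ immediately after the theorem), since the exponential coordinates $x^i$ need not be $\nu$-integrable at infinity and otherwise the integral defining $b$ and the compensated far integral need not converge absolutely.
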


\bigskip

Let  $\{\varphi_1 (\sigma), \cdots, \varphi_d (\sigma)\}$ be $\sC^2$  functions on $\group$
such that under
    exponential coordinates for $\sigma ={\rm exp} (\sum_{j=1}^d x^j \sX_j)
  \in \group$, $\varphi_j (\sigma)$ is an odd increasing function of $x^j$ with  $\varphi_j(\sigma)=x^j$
  for $x^j\in (-1, 1)$. Since $\group$ is also identified with $\R^d$ through the global coordinate system $\Phi$ satisfying \eqref{e:3.4a}	mentioned above, sometimes we also write $\varphi_j (x)$ for $\varphi_j (\sigma)$ through this global	coordinate system $\Phi$. Since $\nu$ is a symmetric measure on $\group$ and
	$\varphi_j (\sigma)=-\varphi_j (\sigma^{-1})$ for any $\sigma \in \group$,
	we have for every $1\leq j\leq d$ and every $r>0$,
\begin{equation} \label{e:7.2a}
\int_{\{\sigma \in \group: \| \sigma \|>r\}} \varphi_j (\sigma) \nu (d\sigma)
=0.
\end{equation}

\medskip

Through the identification of $\group$ with $\R^d$ under the global coordinate system $\Phi$,
the L\'evy measure $\nu$ can also be viewed
as a L\'evy measure on the Euclidean space $\R^d$.
More precisely, let $\bar \nu$ be the Radon measure on $\R^d\setminus \{0\}$ defined by
\begin{equation}\label{e:meas}
\bar \nu (A):= \nu (\Phi (A))
\quad \hbox{for any } A\in \sB (\R^d \setminus \{0\}).
\end{equation}
Note that $\bar \nu$ satisfies $\int_{\R^d} (1\wedge \|z\|_2^2) \bar \nu (dz) <\infty$
and thus is a L\'evy measure on $\R^d$.
However, we point out that even though $\nu$ is a symmetric L\'evy measure on $\group$,  $\bar \nu$ may not be a symmetric measure on $\R^d$;
 see Examples \ref{E:7.6} and \ref{E:7.7}(i).
It is not hard to see or guess  that the L\'evy process $Z$ on $\R^d$ that will be used to approximate the L\'evy process $X$ on $\group$ should have L\'evy measure $\bar \nu$, however
in general it also need a proper drift correction term. For this, define
for $1\leq j\leq d$,
\begin{equation}\label{e:7.3}
\bar b_j = \int_{\{z\in \group: \| z\|_2  \leq 1\}} (z_j -   \varphi_j (z))  \nu (dz)
- \int_{\{z\in \group: \| z\|_2  > 1\}} \varphi_j (z)   \nu (dz) ,
\end{equation}
 where $(z_1, \ldots, z_d)= \Phi^{-1}(z)$ is the coordinates of $z\in \group$
under the global coordinate system $\Phi$.
Recall that the coordinates of $z\in \group$ under the exponential coordinate system
is denoted by $(z^1, \ldots, z^d)$.
Observe that the   integral in \eqref{e:7.3} is well-defined and is finite, because
$$
\frac{\partial \varphi_j}{\partial z_i}\Big|_{z=0} = \sX_i \varphi_j (z)\big|_{z=0}
= \sX_i z^j\big|_{z=0}  =\delta_{ij}
$$
 and  so
$$
 |z_j - \varphi_j (z)| = |z_j - z^j (z)| \leq c \|z\|_2^2  \quad \hbox{for } \|z\|_2\leq 1.
	$$
In view of \eqref{e:7.2a}, we can rewrite \eqref{e:7.3} as
\begin{equation}\label{e:7.6a}
\bar b_j = \lim_{r\to 0}  \int_{\{z\in \group: \| z\|_2  \leq 1
\hbox{ \small and } \| z\| \geq r  \}}  z_j \,   \nu (dz) .
\end{equation}

Since  the L\'evy measure $\nu$ is symmetric
on $\group$,
we have from \eqref{e:7.6a} that
\begin{align}\label{e:7.4a}
	\bar b_j   =&  \frac12
		\int_{\{z\in \group: \,  \| z\|_2\leq 1 \hbox{ \small and }  \| z^{-1}\|_2 \leq 1 \} } (z_j +(z^{-1})_j)  \nu (dz)  \nonumber \\
		& + 	\int_{\{z\in \group: \| z\|_2\leq 1  \hbox{ \small and }
		\| z^{-1}\|_ 2 >1 \} }  z_j   \nu (dz) .
\end{align}
	Here $z^{-1}$ denotes the group inverse of $z\in \group$,
	and $( z^{-1})_j$ is the $j$-th coordinate of the element $z^{-1} \in \group$
	under the original global coordinate system $\Phi$.
	Note that both  integrals in \eqref{e:7.4a} are absolutely convergent.
	This is because under the global coordinate system $\Phi$, we know from \eqref{e:3.4a} that
$$
z^{-1} = -z + (0, \bar q_2(z_1), \ldots,   \bar q_{ d}(z_1, \ldots, z_{d-1})),
$$
where for $2\leq j\leq d$, $\bar q_j(z_1, \cdots, z_{j-1})$ is polynomial having no constant
and first order terms. Thus on any compact set $K\subset N$,
there is a constant $C_K>0$ so that
\begin{equation}\label{e:7.5a}
\| z+z^{-1}\|_2 \leq C_K \| z\|_2^2 \quad \hbox{for every } z\in K,
\end{equation}
and $\{z\in \group: \,  \| z\|_2 <  1 \hbox{ \small and }  \| z^{-1}\|_2  <  1 \}$
is an open neighborhood of $e\in  N$.  Since $\int_{N\setminus \{e\}} (1\wedge \| z\|_2^2)
\nu (dz) <\infty$, both integrals in \eqref{e:7.4a} are absolutely convergent.
	 In general, the constant vector $\bar b:=(b_1, \ldots, \bar b_d)$ may not be zero.
	However, if $\Phi$ is the exponential coordinate system,
	then $\bar b_j=0$ for every $1\leq j\leq d$ as $z^{-1}=-z$ for every $z\in \group$
	and $\| z\|_2=\|z\|$.

	\medskip
	
Let $Z:=\{Z_t:t\ge0\}$ be the L\'evy process on the Euclidean space $\R^d$ with L\'evy triplet $(0, \bar b, \bar \nu)$,
see \eqref{e:1.1}, where
$\bar b=(\bar b_1, \cdots, \bar b_d)$. In other words,
\begin{equation} \label{e:7.6}
Z_t =\bar bt + \int_0^t {\mathbbm 1}_{\{\|z\|_2 \leq 1\}} z  \left( N(ds, dz)- ds
 \bar \nu  (dz) \right) + \int_0^t {\mathbbm 1}_{\{\|z\|_2> 1\} }z N(ds, dz),
\end{equation}
where $N(ds, dz)$ is the Poisson random measure on $[0, \infty) \times \R^d$ with intensity measure
$ds \bar  \nu (dz)$.

Recall that $\Phi: \R^d \to \group$ is the global polynomial coordinate system for the
simply connected nilpotent group $\group$.
Most of the time, we identify $x\in \R^d$ with $\sigma:=\Phi (x)\in \group$ and use the notations interchangeably.
In the next theorem and its proof, to be absolutely clear,
 we explicitly use the notation $\Phi (x)$
for emphasis when $x\in \R^d$ is viewed as an element in the group $\group$.

\begin{theo} \label{T:7.3}
Let $Z$ be the L\'evy process on $\R^d$ with
 $Z_0=0$,  L\'evy measure $\bar \nu$ of \eqref{e:meas}
and drift $\bar b$ of \eqref{e:7.4a}.
 For each $T>0$,  the random walk
  \begin{equation}\label{e:7.2}
  Z^{(n)}_t :=  \Phi(  Z_{1/n})  \,    \Phi ( Z_{2/n} -  Z_{1/n})     \cdots
         \Phi  ( Z_{[nt]/n} -  Z_{([nt]-1)/n})
 \end{equation}
on $\group$
   converges weakly in the Skorokhod space $\D([0, T]; \group)$ as $n\to \infty$   to the  left-invariant
Hunt process $ \{ (Y_0)^{-1}  Y_t; \, t\in [0, T]\}$ on $\group$.
The Hunt process $Y$ has the same distribution as the symmetric L\'evy process $X$
on $\group$ having L\'evy measure $\nu$ determined by the Dirichlet form  $(\sE, \sF)$
	of \eqref{e:7.1} on $L^2(\group; dx)$.
\end{theo}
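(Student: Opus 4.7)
The plan is to deduce Theorem \ref{T:7.3} as a direct application of Kunita's triangular array result, Theorem \ref{T:7.2}. With $\xi_{n,k} := \Phi(Z_{k/n} - Z_{(k-1)/n})$, the $\{\xi_{n,k}\}_{k \geq 1}$ are i.i.d. $\group$-valued random variables with common law $\nu_n$ equal to the distribution of $\Phi(Z_{1/n})$, and $Z^{(n)}_t$ is the associated random walk started at $e$. So the limit process $Y$ produced by Theorem \ref{T:7.2} will automatically satisfy $Y_0 = e$. What remains is to check hypotheses (ii)--(iv) of Theorem \ref{T:7.2}, identify the resulting generator $\sL$ from \eqref{e:hunt}, and match it with the generator of the symmetric L\'evy process $X$ coming from $(\sE,\sF)$.

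For (ii), I would use the classical fact that for a L\'evy process on $\R^d$ with L\'evy measure $\bar\nu$, the measure $n$ times the law of $Z_{1/n}$ converges vaguely on $\R^d\setminus\{0\}$ to $\bar\nu$. Pushing forward under the global polynomial chart $\Phi$ and using the very definition $\bar\nu(A)=\nu(\Phi(A))$, this gives $n\nu_n \to \nu$ vaguely on $\group \setminus\{e\}$. For (iii) and (iv), the key observation is that $x^j\circ\Phi(z)=z_j+r_j(z)$ with $r_j(z)=O(\|z\|_2^2)$ near $0$: this is because the polynomial chart $\Phi$ and the exponential chart both have differential equal to the identity at $0$ relative to the basis $\{\sX_1,\dots,\sX_d\}$ of $\mathfrak g$ (since $\sX_i x^j=\delta_{ij}$ and $\sX_i f(e) = \partial_i (f\circ\Phi)(0)$). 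From (ii), the quadratic moment integrals in (iii) then converge to $a_{ij}^{(\eps)}=\int_{U_\eps} x^i(\sigma)x^j(\sigma)\nu(d\sigma)$, which decreases to $a_{ij}=0$ as $\eps\to 0$ thanks to $\int_{\group}(1\wedge\|\sigma\|_2^2)\,\nu(d\sigma)<\infty$.

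The main technical step is the drift computation in (iv). Using the L\'evy--It\^o decomposition \eqref{e:7.6} of $Z$ on $\R^d$ with cutoff $\{\|z\|_2 \le 1\}$, one computes $n\,\E[Z_{1/n}^j\,\mathbf 1_{\Phi(Z_{1/n})\in U_\eps}]$ and then adds the correction $n\,\E[r_j(Z_{1/n})\,\mathbf 1_{\Phi(Z_{1/n})\in U_\eps}]$ coming from passing from polynomial to exponential coordinates. The specific choice of $\bar b_j$ in \eqref{e:7.3} is precisely engineered so that these two contributions combine to yield
\[
b_j^{(\eps)}=\int_{U_\eps} x^j(\sigma)\,\nu(d\sigma).
\]
Since $\nu$ is symmetric and $x^j$ is odd in exponential coordinates, $\int_{U_\eps^c} x^j(\sigma)\,\nu(d\sigma)=0$, and hence the Kunita drift $b=b^{(\eps)}+\int_{U_\eps^c}x(\sigma)\,\nu(d\sigma)$ equals $\int_{\group}x^j(\sigma)\,\nu(d\sigma)$ interpreted in the principal-value sense along the family $U_\eps$, which is $0$ by the same symmetry.

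Plugging $a_{ij}=0$ and $b=0$ into \eqref{e:hunt} gives, for $f\in\sC^2$,
\[
\sL f(\eta)=\int_{\group\setminus\{e\}}\bigl(f(\eta\sigma)-f(\eta)-\sum_{i=1}^d x^i(\sigma)\sX_i f(\eta)\bigr)\,\nu(d\sigma),
\]
and by symmetrizing using $x^j(\sigma^{-1})=-x^j(\sigma)$ together with the symmetry of $\nu$, this becomes
\[
\sL f(\eta)=\frac12\int_{\group\setminus\{e\}}\bigl(f(\eta\sigma)+f(\eta\sigma^{-1})-2f(\eta)\bigr)\,\nu(d\sigma),
\]
which is exactly the generator associated with the Dirichlet form $(\sE,\sF)$ of \eqref{e:7.1}. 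This identifies $Y$ in distribution with $X$ and completes the proof. The hard part will be the careful drift bookkeeping in (iv): one must simultaneously track the discrepancy between the polynomial coordinates $z_j$ used to define $\bar b$ and the exponential coordinates $x^j$ used in Kunita's theorem, handle the fact that $\int x^j\,d\nu$ is only conditionally convergent, and exploit the symmetry of $\nu$ at each step to make everything cancel.
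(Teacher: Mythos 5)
Your proposal follows essentially the same route as the paper: apply Kunita's triangular array theorem (Theorem \ref{T:7.2}) to the i.i.d.\ increments $\xi_{n,k}=\Phi(Z_{k/n}-Z_{(k-1)/n})$, verify that the limiting Gaussian part $(a_{ij})$ and drift $b$ vanish, read off the generator from \eqref{e:hunt}, and symmetrize (using the oddness of the exponential coordinates and the symmetry of $\nu$) to match the principal-value generator of the Dirichlet form \eqref{e:7.1}; the paper then invokes Hunt's uniqueness of the generator characterization of L\'evy processes to conclude equality in law, a step you should state explicitly since equality of generators on $\sC^2$ alone is what you actually obtain.

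The one place where your argument, as written, does not go through is the claim that conditions (iii) and (iv) of Theorem \ref{T:7.2} ``then converge'' as a consequence of (ii): vague convergence of $n\nu_n$ on $\group\setminus\{e\}$ gives no control whatsoever of the truncated second moments $n\int_{U_\eps}x^ix^j\,d\nu_n$ or of the truncated first moments near $e$, which is exactly where the difficulty sits (the test functions are not compactly supported away from the identity, and $\int x^j\,d\nu$ is only conditionally convergent). This is not a wrong target—the limits are indeed $0$—but it is the technical heart of the proof, and the paper's device for it is the It\^o-formula identity \eqref{e:7.8a}, which is available precisely because $\bar b$ in \eqref{e:7.3}--\eqref{e:7.4a} converts the Euclidean compensation $z\1_{\{\|z\|_2\le 1\}}$ into compensation by the odd, bounded functions $\varphi_j\circ\Phi$; from this single identity the paper extracts the vague convergence \eqref{e:cond1}, the small-ball second-moment bound \eqref{e:cond2} (via the symmetrized form \eqref{e:7.11} and test functions $f_\eps$), and the drift cancellation \eqref{e:7.12}, \eqref{e:cond3}--\eqref{e:cond4}. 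Your alternative plan—direct computation of $n\,\E[Z^j_{1/n}\1_{U_\eps}]$ via the L\'evy--It\^o decomposition \eqref{e:7.6} plus the quadratic correction $r_j$ from the change of coordinates, and a symmetry cancellation in the principal-value sense—is viable and would amount to redoing this bookkeeping by hand, but in your write-up it is announced rather than carried out, so as it stands the verification of (iii)--(iv) is a genuine gap to be filled.
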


\proof     By Ito's formula,  for any $f\in C^2_b (\R^d)$,
\begin{align*}
&  f (Z_t) -f (Z_0)  \\
&=  \int_0^t \bar b\cdot \nabla f(Z_s) ds + \int_0^t \int_{\{\| z\|_2 \leq 1\}} \left( f(Z_{s-} + z) -f(Z_{s-}\right)) \left( N(ds,dz) -ds   \bar \nu  (dz)\right) \\
&\quad+  \int_0^t \int_{\{\| z\|_2 >1\}} \left( f(Z_{s-} + z) -f(Z_{s-}\right)) N(ds,dz) \\
&\quad+ \int_0^t \int_{\{\| z\|_2 \leq 1\}} \left( f(Z_{s} + z) -f(Z_{s}) - \nabla f (Z_s) \cdot z \right)
 \bar \nu  (dz) ds   .
\end{align*}
Thus
\begin{align}\label{e:7.8a}
& \E f(Z_t)-f(0) \nonumber \\
&=  \E \int_0^t {\bar b}\cdot \nabla f(Z_s) ds+ \E   \int_0^t \int_{\{\| z\|_2 >1\}} \left( f(Z_{s-} + z) -f(Z_{s-}\right) N(ds,dz)  \nonumber\\
&\quad+ \E \int_0^t \int_{\{\| z\|_2 \leq 1\}} \left( f(Z_{s} + z) -f(Z_{s}) - \nabla f (Z_s) \cdot z \right)
  \bar \nu  (dz) ds   \nonumber\\
&=  \E \int_0^t {\bar b}\cdot \nabla f(Z_s) ds+
\E   \int_0^t \int_{\{ \| z\|_2 >1 \}} \left( f(Z_{s-} + z) -f(Z_{s-}\right)  \bar \nu  (dz) ds  \nonumber\\
&\quad+ \E \int_0^t \int_{\{\| z\|_2 \leq 1\}} \left( f(Z_{s} + z) -f(Z_{s}) - \nabla f (Z_s) \cdot z \right)    \bar \nu  (dz) ds
 \nonumber\\
&=  \E   \int_0^t \int_{\R^d} \left( f(Z_{s-} + z) -f(Z_{s-})
-  \nabla f (Z_s) \cdot \varphi (\Phi (z)) \right)   \bar \nu  (dz) ds ,
\end{align}
where $\varphi (\sigma):=(\varphi_1(\sigma), \ldots, \varphi_d(\sigma))$ for $\sigma \in \group$, and
the last equality is due to the definition of $\bar b$.

For $f_0\in \sC^2$ on $\group$ with $f_0(e)=0$ and $X_j f_0(e)=0$ for $1\leq j\leq d$, the function
$f:=f_0\circ \Phi$ is $C^2_b$ on $\R^d$ with $f(0)=0$  and $\nabla f(0)=0$.
Applying \eqref{e:7.2a} and \eqref{e:7.8a} to this $f$, we have by
the dominated convergence theorem that
\begin{equation}\label{e:cond1}
\lim_{t\to 0} \frac1t \E f_0 (\Phi (Z_t))=
\lim_{t\to 0} \frac1t \E f(Z_t) = \int_{\R^d \setminus \{0\}} f(z)   \bar \nu  (z)
=\int_{\group \setminus \{e\}} f_0 (\sigma) \nu (d \sigma).
\end{equation}
If  we denote the law of $\Phi (Z_t)$ on $\group$ by $\wt \nu_t$, then  the above in particular implies that
$t^{-1} \wt \nu_t$ converges vaguely to $ \nu  $
on $\group  \setminus \{e\} $ as $t\to 0$.

Since $\varphi_j$ is an odd function on $\group$,
taking $f_0 =\varphi_j$ in \eqref{e:cond1}  in particular yields that
 \begin{equation}\label{e:7.12}
\lim_{t\to 0} \frac1t \E \varphi_j (\Phi (Z_t)) = 0
\quad \hbox{for every } 1\leq j \leq d.
\end{equation}
On the other hand, since $\nu$ is a symmetric measure on $\group$ and $\varphi$ is an odd $\R^d$-valued function on $\group$, we have from \eqref{e:7.8a} that for any $f\in C_b^2(\R^d)$,
$$
 \E f(Z_t)-f(0) = \E \int_0^t \int_{\R^d} \left( f(Z_{s-} + z^{-1}) -f(Z_{s-})
+ \nabla f (Z_s) \cdot \varphi (\Phi (z)) \right)   \bar \nu  (dz) ds
 $$
and so
\begin{equation}\label{e:7.7}
	\E f(Z_t)-f(0) =\frac12 \E \int_0^t \int_{\R^d} \left( f(Z_s+z) +
	f(Z_{s-} + z^{-1}) -2 f(Z_{s-})
 \right)   \bar \nu  (dz) ds .
\end{equation} 	
Note that by \eqref{e:7.5a},
$$ \int_{\R^d} |f(z)+f(z^{-1}) -2 f(0)| \, \bar \nu (dz)<\infty.
$$
It follows from \eqref{e:7.7} and the dominated convergence theorem that for any $f\in C^2_b(\R^d)$,
\begin{equation}\label{e:7.11}
\lim_{t\to 0} \frac1t \E [f(Z_t) -f (0)] =
\frac12 \int_{\R^d \setminus \{0\}} \left(f(z) + f(z^{-1}) - 2 f(0)\right)   \bar \nu  (z).
\end{equation}

 For $\eps \in (0, 1)$, define
$$
U_\eps = \left\{\sigma \in \group: \| \sigma \|<\eps \right\}
= \Big\{ \sigma \in \group:  \sum_{i=1}^d  x^i (\sigma)^2 < \eps^2 \Big\},
$$
where $\| \sigma \|$ is the symmetric norm of $\sigma \in N$ as defined by \eqref{e:symnorm}
and $(x^1 (\sigma), \ldots x^d(\sigma) )$ is the exponential coordinates of $\sigma \in N$.
By the Lipschitz equivalents \eqref{Lip1}-\eqref{Lip2} between $\Phi$ and the exponential coordinates,
there is a constant $\lambda_0\geq 1$ so that
$$
\lambda_0^{-1} \| \Phi^{-1}(\sigma ) \|_2 \leq \| \sigma \| \leq \lambda_0 \| \Phi^{-1}(\sigma )\|_2
\quad \hbox{for } \sigma \in \group \hbox{ with } \| \sigma \|\leq 1.
$$
Consequently,
$$
U_\eps \subset \left\{ \sigma \in \group: \| \Phi^{-1}(\sigma ) \|_2 <\lambda_0 \eps \right\}
\quad \hbox{for every } \eps \in (0, 1).
$$
For  $\eps \in (0, 1)$, let $f_\eps \in C^2_c(\R^d)$ so that $f_\eps (z)=\| z\|_2^2$ for $| z\|_2 < \lambda_0\eps$,
$f_\eps (z)=0$ for $\| z\|_2  \geq 2 \lambda_0 \eps$,
$0\leq f_\eps(z) \leq 2 \lambda_0^2 \eps^2$ and $|Df_\eps|+|D^2 f_\eps|\leq C$ for some constant $C>0$ independent of $\eps$.
 Then we have by \eqref{e:7.11} and  the Taylor expansion, 
 that
 \begin{align} \label{e:cond2}
&  \limsup_{t\to 0} \frac1t \E  \left[ {\mathbbm 1}_{\{ \Phi (Z_t)\in U_\eps \}}
	\| \Phi (Z_t) \|^2  \right]  \nonumber  \\
&\leq  	 \limsup_{t\to 0} \frac{\lambda_0^2}t \E  \left[ {\mathbbm 1}_{\{ \|
 {Z_t} \|_2 \leq  \lambda_0 \eps \}}
	\| Z_t\|_2 ^2  \right]  \nonumber   \\
&\leq \limsup_{t\to 0} \frac{\lambda_0^2}t  \left(  \E f_\eps (Z_t) -f_\eps (Z_0)  \right)
\nonumber   \\
&=  \lambda_0^2 \int_{\R^d} f_\eps (z) \bar \nu (dz) \nonumber  \\
  &\leq     \lambda_0^2  \|D^2 f_\eps\|_\infty  \int_{\{\| z\|_2 \leq 2 \lambda_0 \eps\}}
	\|z\|^2_2  \, \bar \nu  (dz)   ,
\end{align}
 which tends to 0 as $\eps \to 0$.

     For $\eps \in (0, 1)$ so that $\partial U_\eps$ has zero $\nu$-measure, it follows from
		\eqref{e:cond1} that
		\begin{equation}\label{e:cond3}
		\lim_{t\to  0} \frac1t \int_{U_\eps^c} \varphi_j (z) \nu_t (dz)
		=  \int_{U_\eps^c} \varphi_j (z) \nu (dz) =0 .
 \end{equation}
This together with \eqref{e:7.12} shows that
\begin{equation}\label{e:cond4}
  \lim_{t\to  0} \frac1t \int_{U_\eps} \varphi_j (\sigma )   \nu_t (d\sigma)=0.
\end{equation}
  Properties \eqref{e:cond1}, \eqref{e:cond2} and \eqref{e:cond3}-\eqref{e:cond4} show that
	the conditions of Theorems \ref{T:7.2} are all satisfied for the sequence of random walks on $\group$ whose one-step increment distributions are
	$\nu_n:=\wt \nu_{1/n}$  for $n\in \N$  with $(a_{ij})=0$ and $b=0$.
  Thus for each $T>0$,  the random walk
  $$
  Z^{(n)}_t :=   \Phi(  Z_{1/n}) \, \Phi ( Z_{2/n} -  Z_{1/n})     \cdots
          \Phi ( Z_{[nt]/n} -  Z_{([nt]-1)/n})
 $$
   converges weakly in the Skorokhod space $\bD ([0, T]; G)$ as $n\to \infty$   to a symmetric
 L\'evy process $ Y=\{ Y_t; t\in [0, T]\}$ on $\group$ with L\'evy measure $\nu$
 in the following sense: Denote by  $({\mathcal L} , {\mathcal D} ({\mathcal L}))$
 the infinitesimal generator of $Y$.   Then
 $\sC^2\subset {\mathcal D} ({\mathcal L})$ and for any $f\in \sC^2$,
 $$
 {\mathcal L} f (\sigma) = \int_{\group \setminus \{e\}} \Big(f(\sigma   z) -f (\sigma)
-\sum_{j=1}^d  \varphi_j (z)
 \sX_i f(\sigma)  \Big)  \nu (dz).
 $$

 We next show that $Y$ has the same distribution as the L\'evy process $X$ on $\group$
  defined through the Dirichlet form $(\sE, \sF)$
	of \eqref{e:7.1} on $L^2(\group; dx)$.
  Denote by ${\mathcal L}^0$ the $L^2$-generator
 of  the symmetric L\'evy process $X$.
 It is easy to check by definition (cf. \cite{CF, FOT})
 that $\sC^2\subset {\mathcal D} ({\mathcal L}^0)$ and
  \begin{align*}
{\mathcal L}^0  f (\sigma) &= 
{\rm p.v.} \int_{\group \setminus \{e\}} \left(f(\sigma   z) -f (\sigma)   \right)  \nu (dz)\\
&= \int_{\group \setminus \{e\}} \left(f(\sigma   z) -f (\sigma) -\sum_{i=1}^d \varphi_j (z)
 \sX_i f(\sigma)  \right)  \nu (dz)\\
 &=  {\mathcal L} f(\sigma).
 \end{align*}
By the uniqueness of infinitesimal generator characterization of L\'evy processes on $\group$
(see, e.g., \cite[Theorem 1]{KunCornell} due to Hunt),
we conclude that the L\'evy processes $X$ and $Y$ have the same law.
This completes the proof of the theorem.
\qed

\begin{rem} \rm

When $\Phi$ is the exponential coordinate system of the first type for $\group$,
Theorem \ref{T:7.3} follows from Theorem 4.2 and the proof of Theorem 4.1 of \cite{KunBR}.
The main point of Theorem \ref{T:7.3} is that it is valid  for any global coordinate system
$\Phi$ of $\group$, not just the exponential coordinate system of first type.
 This is important in applications as many times it is more natural or convenient to work
in other global coordinate systems such as the matrix coordinate system for Heisenberg groups.
In theory, one could  translate the global coordinate system into exponential coordinate system,
apply Kunita's result in exponential coordinate system, and then translate the results
 back to the original global coordinate system. But this is not always easy to carry out
and it needs to be performed on a case by case basis.
Interested reader may try the following two
exercises.

\medskip
 
\begin{exer} \label{Exer:3} \rm 
Let $\group$ be the continuous Heisenberg group $\HH_3 (\R)$
and $\nu$  be a L\'evy measure on $\HH_3(\R)$ whose expression under the matrix coordinate $(x, y, z)$ is given by
$$
 \bar \nu (dx, dy, dz)= \frac{\kappa_1}{|x|^{1+\alpha_1}} dx \otimes \delta_0 (dy) \otimes \delta_0 (dz)
+ \frac{\kappa_2}{|y|^{1+\alpha_2}} \delta_0 (dx)    \otimes dy \otimes \delta_0 (dz)
$$
for some  positive constants $ \alpha_i \in (0, 2)$ and $\kappa_i>0$, $i=1, 2$.
What is the expression of $\nu$ in the exponential coordinates
$(x^1, x^2, x^3)$ of the first type for $\HH_3 (\R)$?
 The group isomorphism between the matrix coordinate system and the exponential coordinate system
on $\HH_3 (\R)$ is given in \eqref{e:3.6}.
\end{exer} 

\medskip

 \begin{exer} \label{Exer:4} \rm 
Repeat Exercise \ref{Exer:3} with the L\'evy measure $\nu$ on $\group$ being replaced
in the matrix coordinate system by
$$
 \bar \nu (dx, dy, dz)= \frac{\kappa_1}{(|x|^2+|y|^2)^{1+\beta_1}} dx \otimes dy \otimes \delta_0 (dz)
+ \frac{\kappa_2}{(|y|^2+|z|^2)^{1+\beta_2}} \delta_0 (dx)    \otimes dy \otimes dz
$$
for some  positive constants $ \beta_i \in (0, 1)$ and $\kappa_i>0$, $i=1, 2$.
\end{exer}
\end{rem}

\subsection{Examples} \label{S:7.3}

To illustrate the main results of this work,
in this subsection, we first revisit Example \ref{E:1.5} of random walks on the Heisenberg  group $\HH_3 (\Z)$.
Here, we will not use the limit results for operator stable processes from the literature;
that is, we will not use Propositions \ref{P:1.1} and \ref{P:1.3}.
We will use instead  Theorems  \ref{WT1} and \ref{T:7.3} developed in this monograph.
  We will then present some more  examples.

\medskip

\noindent {\bf Exampe  \ref{E:1.5}} (revisited)
We use the matrix coordinate system $\Phi$ on the discrete Heisenberg group $\HH_3 (\Z)$, through which
it is identified with $\Z^3$. Denote by $e_1, e_2$ and $e_3$ the elements in $\HH_3(\Z)$ that
has matrix coordinates $(1, 0, 0)$, $(0, 1, 0)$ and $(0, 0, 1)$, respectively.
Recall that
$\mu_{ \boldsymbol \alpha}$ is the probability measure on $\HH_3 (\Z)=\Z^3$ given by
$$
\mu_{ \boldsymbol \alpha}(g)= \sum_{i=1}^3 \sum_{n\in \Z}\frac{\kappa_i}{(1+|n|)^{1+\alpha_i}}\1_{\{e_i^n\}}(g),\quad g\in \HH_3 (\Z)  ,
$$
where $0<\alpha_j<2$ and $\kappa_j$, $1\leq j\leq 3$, are positive constants.
The measure $\mu_{ \boldsymbol \alpha}$ is in $\mathcal{SM}$ on $\HH_3 (\Z)$ and
the matrix  coordinate system $\Phi$ is an exponential coordinate system of the second kind described in Section \ref{S:9.5}
adapted to the measure $\mu_{\boldsymbol \alpha}$. 
  The dilation structures  $\{\delta_t; t>0\}$ considered below in this example are
straight approximate group dilations of \eqref{def-delta}.
So by Section \ref{S:10} below,    the conditions (R1)-(R2), (E1)-(E2), (T$\bullet$) and (T$\Gamma$)
are automatically satisfied for $\mu_{ \boldsymbol \alpha}$ and these $\{\delta_t; t>0\}$.
For simplicity, we write $\mu$ for $\mu_{ \boldsymbol \alpha}$.
Let $\{\xi_k =(\xi_k^{(1)}, \xi_k^{(2)}, \xi_k^{(3)}); k\geq 1\}$ be a
sequence of i.i.d random variables taking values in $\HH_3 (\Z)$ of distribution $\mu$. Then
$$
S_n=S_0\cdot \xi_1 \cdot \ldots \cdot \xi_n, \quad n=0, 1, 2, \ldots
$$
 defines a random walk on the Heisenberg  group $\HH_3 (\Z)$.
Write $S_n$ as $(X_n, Y_n, Z_n)$.

\begin{enumerate}

\item[(i)]  If $1/\alpha_3< 1/\alpha_1 +1/\alpha_2$, we consider straight dilation structure $\{\delta_t; t>0\}$
in matrix coordinates:
$$
\delta_t (x, y, z) = \left(t^{1/\alpha_1}x,  \,  t^{1/\alpha_2} y,  \, t^{(1/\alpha_1) +(1/\alpha_2)}z \right).
$$
 In this case,  $\{\delta_t; t>0\}$ is a straight group dilation structure
for  the limit group $(G_\bullet, \bullet)$,
and  $(G_\bullet, \bullet)$ is the continuous Heisenberg group $\HH_3 (\R)$.
   It is easy to check that
 $ t\delta_{1/t} (\mu)$ converges vaguely on $\R^3\setminus \{0\}$ to $\bar \mu_{\bullet}(dx, dy, dz)$ as $t\to \infty$, where
$$
 \bar \mu_{\bullet}(dx, dy, dz)= \frac{\kappa_1}{|x|^{1+\alpha_1}} dx \otimes \delta_0 (dy) \otimes \delta_0 (dz)
+ \frac{\kappa_2}{|y|^{1+\alpha_2}} \delta_0 (dx)    \otimes dy \otimes \delta_0 (dz).
$$
The measure $\bar \mu_\bullet$ defines a L\'evy measure measure $\mu_\bullet$
on the continuous Heisenberg group
$(G_\bullet, \bullet)$ through the matrix coordinate system; see Remark \ref{R:5.1}(i).
 In other words,
$\bar \mu_\bullet$ is the pull-back measure of $\mu_\bullet$ under the matrix coordinate system.
When there is no danger of  confusions,
we simply use the same notation $\mu_\bullet$ for $\bar \mu_\bullet$.
Thus by Theorem \ref{WT1}, for any $T>0$, the rescaled random walk on $\HH_3 (\Z)$
in matrix coordinates
$$
\Big\{ \left(  n^{-1/\alpha_1} X_{[nt]},  \,  n^{-1/\alpha_2} Y_{[nt]},  \, n^{-1/\alpha_1 -1/\alpha_2}   Z_{[nt]} \right) ; t\in [0, T] \Big\}
$$
converges weakly in the Skorohod space $\D([0, T]; \R^3)$ to a L\'evy process $X^\bullet$ on
$(G_\bullet, \bullet)$
with L\'evy measure $\mu_\bullet$
as $n\to \infty$.
We next identify the L\'evy process $X^\bullet$ in the matrix coordinate system of $(G_\bullet, \bullet)$ by using Theorem \ref{T:7.3}.

By \eqref{e:3.8} and \eqref{e:3.6}, in matrix coordinates  $ (x, y, z)$ for $\sigma \in\G_\bullet$,
\begin{equation}\label{e:7.13}
 \sigma^{-1}_\bullet = (-x, -y, -z +xy) \quad \hbox{and} \quad
\| \sigma \|= \sqrt{x^2+y^2 + (z+\tfrac12 xy)^2} .
\end{equation}
So $\sigma + \sigma^{-1}_\bullet = (0, 0, xy)$.
On the support of $\mu_\bullet$, since $xy=0$,  we have
$\| \sigma \| =\| \sigma \|_2$ and
$\sigma+ \sigma^{-1}_\bullet = (0, 0, 0)$,
that is, $\sigma_\bullet^{-1}= -\sigma$.
Hence denoting the matrix coordinates for $\sigma\in G_\bullet$ by $(\sigma_1, \sigma_2, \sigma_3)\in \R^d$, it follows from \eqref{e:7.4a}  that for every $1\leq i\leq 3$,
$$
b_j =\frac12 \int_{\{\sigma \in G_\bullet: \|\sigma\|_2 \leq 1 \}}
\left(\sigma_i + (\sigma^{-1}_\bullet)_i \right) \mu_\bullet (d\sigma)=0.
$$
Let $\bar X$ be a symmetric $\alpha_1$-stable process on $\R$ with L\'evy measure $\kappa_1 |z|^{-(1+\alpha_1)}$ and $\bar Y$ be a symmetric $\alpha_2$-stable process on $\R$ with L\'evy measure $\kappa_2 |z|^{-(1+\alpha_2)}$
independent of $X$. Then
$$
X^\circ =(\bar X, \bar Y, 0)
$$
 is a driftless L\'evy process on $\R^3$ with L\'evy measure $\bar \mu_\bullet$ corresponding to \eqref{e:7.6}. By Theorem \ref{T:7.3}, $X^\bullet$ is the weak limit
on  $\bD ([0, T]; G_\bullet)=\D([0, T]; \R^3)$ of
  $$
  X^{\bullet , n}_t :=  \Phi(  X^{\circ}_{1/n}) \bullet  \Phi (X^{\circ}_{2/n} -  X^{\circ}_{1/n}) .  \bullet  \cdots
      \bullet  \Phi   ( X^{\circ}_{[nt]/n} -  X^{\circ}_{([nt]-1)/n}) .
 $$
 Note that in matrix coordinate system on $G_\bullet$,
$$
X^{\bullet , n}_t =\left( \bar X_{[nt]/n}, \, \bar Y_{[nt]/n}, \, \sum_{k=1}^{[nt]} \bar X_{(k-1)/n}
 \left(\bar Y_{k/n}- \bar Y_{(k-1)/n} \right) \right), \quad t\geq 0,
$$
which converges weakly in the Skorohod space $\D([0, T]; \R^3)$ equipped with ${\cal J}_1$-topology
 to $\{ ( \bar X_t, \bar Y_t, \int_0^t \bar X_{s-} d \bar Y_s); t\in [0, T]\}$; see, e.g., \cite[Theorem 7.19]{KP}.
 This shows that $\{X^\bullet_t; t\in [0, T]\}$ under the matrix coordinate system of
$(G_\bullet, \bullet)$ has the same distribution
as $\{ (\bar X_t, \bar Y_t, \int_0^t \bar X_{s-} d \bar Y_s); t\in [0, T]\}$.

\item[(ii)] If $1/\alpha_3= 1/\alpha_1 +1/\alpha_2$,
 we consider straight dilation structure $\{\delta_t; t>0\}$
in matrix coordinates:
$$
\delta_t (x, y, z) =\left( t^{1/\alpha_1}x,  \,  t^{1/\alpha_2} y,  \, t^{1/\alpha_3}z\right).
$$
As mentioned in (i),
   $\{\delta_t; t>0\}$ is a straight group dilation structure
 the limiting group structure $(G_\bullet, \bullet)$ is the continuous Heisenberg group $\HH_3 (\R)$.
It is easy to check in this case that
 $ t\delta_{1/t} (\mu)$ converges vaguely on $\R^3\setminus \{0\}$ to $\bar \mu_{\bullet}(dx, dy, dz)$ as $t\to \infty$, where
\begin{align*}
&  \mu_{\bullet}(dx, dy, dz) \\
&=  \frac{\kappa_1}{|x|^{1+\alpha_1}} dx \otimes \delta_0 (dy) \otimes \delta_0 (dz)
+ \frac{\kappa_2}{|y|^{1+\alpha_2}} \delta_0 (dx) \otimes dy \otimes \delta_0 (dz)\\
&\quad +  \frac{\kappa_3}{|z|^{1+\alpha_3}} \delta_0 (dx)   \otimes \delta_0 (dy)  \otimes dz.
\end{align*}
The above measure  $\bar \mu_\bullet$ is the expression of a symmetric L\'evy measure $\mu_\bullet$ under the
matrix coordinate system on continuous Heisenberg group $(G_\bullet, \bullet)$.
By Theorem \ref{WT1}, for any $T>0$, the rescaled random walk on  $\HH_3 (\Z)$ in matrix coordinates
$$
\Big\{ \left(  n^{-1/\alpha_1} X_{[nt]},  \,  n^{-1/\alpha_2} Y_{[nt]},  \, n^{-1/\alpha_3}   Z_{[nt]} \right) ; t\in [0, T] \Big\}
$$
converges weakly in the Skorohod space $\D([0, T]; \R^3)$ to a symmetric
L\'evy process $X^\bullet$ on $G_\bullet$ with L\'evy measure $\mu_\bullet$
as $n\to \infty$.

To identify the L\'evy process $X^\bullet$ on $(G_\bullet, \bullet)$ in matrix coordinates
$(x, y, z)$ of $\sigma \in G_\bullet$,
note that by \eqref{e:7.13},
 since $xy=0$ on the support of $\mu_\bullet$,  $\| \sigma \| =\| \sigma \|_2$ and
$\sigma_\bullet^{-1}= -\sigma$.
Thus we have by \eqref{e:7.4a} that
$$
b_j = \frac12 \int_{\{\sigma \in G_\bullet: \|\sigma\|_2 \leq 1 \}}
\left(\sigma_i + (\sigma^{-1}_\bullet)_i \right) \mu_\bullet (d\sigma)
=0  \quad \hbox{for  every } 1\leq i\leq 3 .
$$
Let $\bar X$, $\bar Y$ and $\bar Z$ be independent one-dimensional symmetric $\alpha_1$-, $\alpha_2$- and $\alpha_3$-stable processes
 with L\'evy measure $\kappa_i |z|^{-(1+\alpha_i)}$, $1\leq i\leq 3$.  Then
$$
X^\circ =(\bar X, \bar Y, \bar Z)
$$
 is a driftless L\'evy process on $\R^3$ with L\'evy measure $\mu_\bullet$ corresponding to \eqref{e:7.6}. By Theorem \ref{T:7.3}, $X^\bullet$ is the weak limit
on  $\D([0, T]; G_\bullet)=\D([0, T]; \R^3)$ of
  $$
  X^{\bullet , n}_t :=  \Phi(   X^{\circ}_{1/n}) \bullet  \Phi (X^{\circ}_{2/n} -  X^{\circ}_{1/n}) .  \bullet  \cdots
      \bullet  \Phi   ( X^{\circ}_{[nt]/n} -  X^{\circ}_{([nt]-1)/n}) .
$$
 In this case, in matrix coordinates,
$$
X^{\bullet , n}_t =\left( \bar X_{[nt]/n}, \, \bar Y_{[nt]/n}, \,
\bar Z_{[nt]/n} +\sum_{k=1}^{[nt]} \bar X_{(k-1)/n} \left(\bar Y_{k/n}- \bar Y_{(k-1)/n} \right) \right),
$$
which converges weakly in the Skorohod space $\D([0, T]; \R^3)$ equipped with ${\cal J}_1$-topology
 to $\{ ( \bar X_t, \bar Y_t, \bar Z_t+ \int_0^t \bar X_{s-} d \bar Y_s); t\in [0, T]\}$.
 This shows that $\{X^\bullet_t; t\in [0, T]\}$ in the matrix coordinate system of $(G_\bullet, \bullet)$
has the same distribution
as $\{ ( \bar X_t, \bar Y_t, \bar Z_t+ \int_0^t \bar X_{s-} d \bar Y_s); t\in [0, T]\}$.

\item[(iii)] If $1/\alpha_3> 1/\alpha_1 +1/\alpha_2$,
 we consider straight dilation structure $\{\delta_t; t>0\}$
in matrix coordinates:
$$
\delta_t (x, y, z) = \left( t^{1/\alpha_1}x,  \,  t^{1/\alpha_2} y,  \, t^{1/\alpha_3}z
\right).
$$
In this case, we see from Example \ref{E:3.4} that the limit group structure
$(G_\bullet, \bullet)$ is just the additive $\R^3$.
It is easy to check that
 $ t\delta_{1/t} (\mu)$ converges vaguely on $\R^3\setminus \{0\}$ to $\mu_{\bullet}(dx, dy, dz)$ as $t\to \infty$, where
\begin{align*}
 &  \bar \mu_{\bullet}(dx, dy, dz) \\
&=  \frac{\kappa_1}{|x|^{1+\alpha_1}} dx \otimes \delta_0 (dy) \otimes \delta_0 (dz)
+ \frac{\kappa_2}{|y|^{1+\alpha_2}} \delta_0 (dx) d  \otimes dy \otimes \delta_0 (dz)\\
&\quad +  \frac{\kappa_3}{|z|^{1+\alpha_3}} \delta_0 (dx)   \otimes \delta_0 (dy)  \otimes dz.
\end{align*}
Note that the matrix coordinate system on $\R^3$ is the identity map so the induced L\'evy measure
$\mu_\bullet$ on the abelian group $(G_\bullet, \bullet)$ is just $\bar \mu_\bullet$ itself.
By Theorem \ref{WT1}, for any $T>0$, the rescaled random walk on $\HH_3 (\Z)$ in matrix coordinates
$$
\Big\{ \left(  n^{-1/\alpha_1} X_{[nt]},  \,  n^{-1/\alpha_2} Y_{[nt]},  \, n^{-1/\alpha_3}   Z_{[nt]} \right) ; t\in [0, T] \Big\}
$$
converges weakly in the Skorohod space $\D([0, T]; \R^3)$ to a symmetric
L\'evy process $X^\bullet$ on $G_\bullet$ with L\'evy measure $\mu_\bullet$
as $n\to \infty$. Since $(G_\bullet, \bullet)$ is
$(\R^3, +)$,  we conclude directly
that $X^\bullet$ has the same distribution as $X^\circ =(\bar X, \bar Y, \bar Z)$, where
$\bar X$, $\bar Y$ and $\bar Z$ are independent one-dimensional symmetric $\alpha_1$-, $\alpha_2$- and $\alpha_3$-stable processes
 with L\'evy measure $\kappa_i |z|^{-(1+\alpha_i)}$, $1\leq i\leq 3$.
\end{enumerate}

 We next present a few more examples.

\begin{exa}
Let $ \mu =\frac{1}{2}(\mu_1+\mu_2)$ be the probability measure on $\HH_3 (\Z)=\Z^3$ with
$$
\mu_1(x,y,z)=\frac{ c_1}{ (1+|x|+|z|)^{2+\alpha_1}}
\quad \hbox{and} \quad
\mu_2(x,y,z)=\frac{c_2}{ (1+|y|+|z|)^{2+\alpha_2}},
$$
where $0<\alpha_1,  \alpha_2<2$ and $c_j>0$, $j=1, 2$, are positive constants.
The measure $\mu $ is again in $\mathcal{SM}$ on $\HH_3 (\Z)$.
Let $\{\xi_k =(\xi_k^{(1)}, \xi_k^{(2)}, \xi_k^{(3)}); k\geq 1\}$ be a sequence of
i.i.d random variables taking values in $\HH_3 (\Z)$ of distribution $\mu$. Then
$$
S_n=S_0 \cdot  \xi_1 \cdot \ldots \cdot  \xi_n, \quad n=0, 1, 2, \ldots
$$
 defines a random walk on the Heisenberg  group $\HH_3 (\Z)$.
Write $S_n$ as $(X_n, Y_n, Z_n)$.

We consider straight dilation structure $\{\delta_t; t>0\}$
in matrix coordinates:
$$
\delta_t (x, y, z) =(t^{1/\alpha_1}x,  \,  t^{1/\alpha_2} y,  \, t^{1/\alpha_1 +1/\alpha_2}z).
$$
This  dilation structure  $\{\delta_t; t>0\}$  is a
straight   group dilation  of \eqref{def-delta} adapted to the measure $\mu$
so   the  limiting group $(G_\bullet, \bullet)$
 is the continuous Heisenberg group $\HH_3 (\R)$.
Since the matrix  coordinate system $\Phi$ is an exponential coordinate system of the second kind described in Section \ref{S:9.5},  by Section \ref{S:10} below,    the conditions (R1)-(R2), (E1)-(E2), (T$\bullet$) and (T$\Gamma$)
are automatically satisfied for $\mu $ and   $\{\delta_t; t>0\}$.
   It is easy to check (cf. Example \ref{ex3-3}) that
 $ t\delta_{1/t} (\mu)$ converges vaguely on $\R^3\setminus \{0\}$ to
$\bar \mu_{\bullet}(dx, dy, dz)$ as $t\to \infty$, where
$$
 \bar \mu_{\bullet}(dx, dy, dz)= \frac{\kappa_1}{|x|^{1+\alpha_1}} dx \otimes \delta_0 (dy) \otimes \delta_0 (dz)
+ \frac{\kappa_2}{|y|^{1+\alpha_2}} \delta_0 (dx)    \otimes dy \otimes \delta_0 (dz).
$$
Here $\kappa_i = c_i \int_{\R} (1+|u|)^{-(2+\alpha_i)} du$ for $i=1, 2$.
The measure $\bar \mu_\bullet$ induces a L\'evy measure $\mu_\bullet$ on $(G_\bullet, \bullet)$ via the matrix coordinate system $\Phi$. In part (i) of Example \ref{E:1.5} (revisited), we have already identified
the symmetric L\'evy process $X^\bullet$ on the continuous Heisenberg group $(G_\bullet, \bullet)$.
Thus it follows from Theorem \ref{WT1} that, for any $T>0$, the rescaled random walk on $\HH_3 (\Z)$
in matrix coordinates
$$
\Big\{ \left(  n^{-1/\alpha_1} X_{[nt]},  \,  n^{-1/\alpha_2} Y_{[nt]},  \, n^{-1/\alpha_1 -1/\alpha_2}   Z_{[nt]} \right) ; t\in [0, T] \Big\}
$$
converges weakly in the Skorohod space $\D([0, T]; \R^3)$ to
$\{ (\bar X_t, \bar Y_t, \int_0^t \bar X_{s-} d \bar Y_s); t\in [0, T]\}$
on the continuous Heisenberg group $\HH_3(\R)$ in matrix coordinates,
where $\bar X$ and $\bar Y$ are independent one-dimensional symmetric
$\alpha_1$- and $\alpha_2$-stable processes, respectively.

\end{exa}

\begin{exa}\label{E:7.6}
	Let $\mu$ be the probability measure on $\HH_3 (\Z)=\Z^3$ with
	$$
	\mu (x,y,z)=\frac{ c}{ (1+\sqrt{x^2+y^2 +|z-xy|} )^{4+\alpha}},
	$$
	where $0<\alpha  <2$ and $c>0$ are positive constants.

The measure $\mu $ is again in $\mathcal{SM}$ on $\HH_3 (\Z)$.
Let $\{\xi_k =(\xi_k^{(1)}, \xi_k^{(2)}, \xi_k^{(3)}); k\geq 1\}$ be a  sequence of i.i.d random variables taking values in $\HH_3 (\Z)$ of distribution $\mu$. Then
$$
S_n=S_0 \cdot  \xi_1 \cdot \ldots \cdot  \xi_n, \quad n=0, 1, 2, \ldots
$$
defines a random walk on the Heisenberg  group $\HH_3 (\Z)$.
Write $S_n$ as $(X_n, Y_n, Z_n)$.

Consider the dilation
$$
\delta_t((x, y, z))=(t^{1/{\alpha}} x, t^{{1}/{\alpha} } y,
t^{{2}/{\alpha }}z) .
$$
This  dilation structure  $\{\delta_t; t>0\}$  is a
straight   group dilation  of \eqref{def-delta} adapted to the measure $\mu$
so  the limiting group structure $(G_\bullet, \bullet)$ is the continuous Heisenberg group $\HH_3 (\R)$.
 By Section \ref{S:10} below,
 the conditions (R1)-(R2), (E1)-(E2), (T$\bullet$) and (T$\Gamma$) are automatically satisfied for
 $\mu$ and $\{\delta_t, t>0\}$.
It is easy to check in this case that
 $ t\delta_{1/t} (\mu)$ converges vaguely on $\R^3\setminus \{0\}$ to $\bar \mu_{\bullet}(dx, dy, dz)$
 as $t\to \infty$, where
$$
\bar \mu_\bullet (dx)= \frac{c}{ (\sqrt{  x^2+ y^2+|z-x y| }) ^{4+\alpha }} dx dy dz.
$$
The measure $\bar \mu_\bullet$,
 though itself is not symmetric on $\R^3$,
 induces a symmetric L\'evy measure $\mu_\bullet$ on $(G_\bullet, \bullet)$ via the matrix coordinate system $\Phi$.
Thus by Theorem \ref{WT1}, for any $T>0$, the rescaled random walk on $\HH_3 (\Z)$
in matrix coordinates
$$
\Big\{ \left(  n^{-1/\alpha} X_{[nt]},  \,  n^{-1/\alpha} Y_{[nt]},  \, n^{-2/\alpha}   Z_{[nt]} \right) ; t\in [0, T] \Big\}
$$
converges weakly in the Skorohod space $\D([0, T]; \R^3)$ to a
purely discontinuous symmetric L\'evy process $X^\bullet$ on
$(G_\bullet, \bullet)$
with L\'evy measure $\mu_\bullet$
as $n\to \infty$.
We next identify the L\'evy process $X^\bullet$ in the matrix coordinate system of $(G_\bullet, \bullet)$ by using Theorem \ref{T:7.3}.

Recall  that for $\sigma =(x, y, z)\in \HH_3 (\bR)$,
$$
\sigma_\bullet^{-1}= (-x, -y, -z+xy) \quad \hbox{and} \quad
\sigma +\sigma_\bullet^{-1}=(0, 0, xy).
$$
Since  $\| \sigma\|_2$, $ \|\sigma_\bullet^{-1}\|_2$
and $\bar \mu_\bullet$  are invariant under the transformations
$(x, y, z)\mapsto (-x, y, -z)$ and $(x, y, z)\mapsto (x, -y, -z)$, we have by
\eqref{e:7.4a} that $\bar b_j=0$ for every $1\leq j\leq 3$. Let $X^\circ=(\bar X, \bar Y, \bar Z)$ be the L\'evy process on $\R^3$ with L\'evy triplet $(0, 0, \bar \mu_\bullet)$.
 Note that the L\'evy process $X^\circ$ is not symmetric on $\R^3$ as its L\'evy measure
$\bar \mu_\bullet$ is not symmetric on $\R^3$.
We conclude from Theorem \ref{T:7.3} with the same calculation  as that in part (ii) of  Example \ref{E:1.5}(revisited) that, in matrix coordinate $\Phi$,
$$
X^\bullet_t= \Big(\bar X_t, \bar Y_t, \bar Z_t + \int_0^t \bar X_{s-} d\bar Y_s \Big)
\quad \hbox{for } t\geq 0.
$$
\end{exa}

\begin{exa} \label{E:7.7}
	Consider the group $\mathbb U_4(\mathbb Z)$ of $4$ by $4$ upper-triangular matrices
with diagonal entries equal to $1$ given in Example \ref{E:3.15}.
 That is,
$$
G=\mathbb U_4(\Z) =\left\{\begin{pmatrix} 1&x_1&x_4&x_6 \\
	0&1&x_2 &x_5  \\
	0&0&1& x_3 \\
	0&0&0&1\end{pmatrix} : x_i\in \mathbb R\right\}.
	$$
In matrix coordinates, $\mathbb U_4(\R)$ is $\mathbb R^6$ with multiplication
$(x_i)_1^6(y_i)_1^6=(z_i)_1^6$ given by
$$
z_i=
\begin{cases}
 x_i+y_i \quad & \hbox{ for } i=1,2,3,\\
 x_4+y_4 +x_1 y_2 & \hbox{ for } i=4,\\
x_5+y_5 +x_2 y_3 & \hbox{ for } i=5,\\
x_6+y_6 + x_1 y_5+x_4y_3 &\hbox{ for } i=6.
\end{cases}
$$
 This matrix  coordinate system $\Phi$ is an exponential coordinate system of the second kind described in Section \ref{S:9.5}; see Example \ref{exa:PhiPsi}.

We consider two cases.

\medskip

(i) Let $\mu = \frac{1}{2}(\mu_1+\mu_2)$ be the probability measure on $\mathbb U_4 (\Z)=\Z^6$ with
\begin{equation}\label{e:7.19a}
\mu_1((x_i)_1^6)=\frac{ c_1}{ (1+\sqrt{x_1^2+ x_2^2+|x_4-x_1x_2|})^{4+\alpha_1}} \otimes \1_{(0,0,0)}(x_3,x_5,x_6)
\end{equation}
and
\begin{equation}\label{e:7.20a}
\mu_2((x_i)_1^6)=\frac{c_2}{ (1+ \sqrt{x_3^2+x_5^2+x_6^2})^{3+\alpha_2}}
\otimes \1_{(0,0,0)}(x_1,x_2,x_4),
\end{equation}
where $0<\alpha_1,  \alpha_2<2$ and $c_1,c_2$ are appropriate positive normalizing constants.

The measure $\mu $ is in $\mathcal{SM}$ on $\mathbb U_4 (\Z)$.	
Let $\{\xi_k =(\xi_k^{(1)}, \xi_k^{(2)}, \xi_k^{(3)}, \xi_k^{(4)}, \xi_k^{(5)}, \xi_k^{(6)}); k\geq 1\}$ be  a sequence of i.i.d random variables taking values in $\mathbb U_4 (\Z)$ of distribution $\mu$. Then
$$
S_n=S_0 \cdot  \xi_1 \cdot \ldots \cdot  \xi_n, \quad n=0, 1, 2, \ldots
$$
defines a random walk on $\mathbb U_4 (\Z)$.
Write $S_n$ as
$$
(X^{(1)}_n, X^{(2)}_n, X^{(3)}_n, X^{(4)}_n, X^{(5)}_n, X^{(6)}_n).
$$

Consider the dilation
$$
\delta_t((x_i)_1^6)=(t^{{1}/{\alpha_1}} x_1, t^{{1}/{\alpha_1} } x_2, t^{{1}/{\alpha_2}}x_3,t^{{2}/{\alpha_1}}x_4,
t^{{1}/{\alpha_1}+{1}/{\alpha_2}}x_5,
t^{{2}/{\alpha_1}+{1}/{\alpha_2}}x_6).
$$
As noted in Example \ref{E:3.15},
this is a group dilation structure so the limit
group $(G_\bullet, \bullet )$ is $\mathbb U_4(\R)$. 
It is in fact the straight   group dilation  of \eqref{def-delta} adapted to the measure $\mu$
Thus  by Section \ref{S:10} below,    the conditions (R1)-(R2), (E1)-(E2), (T$\bullet$) and (T$\Gamma$)
are automatically satisfied for $\mu$ and   $\{\delta_t, t>0\}$.

\medskip

The measure $\mu_t=t\delta_t(\mu)$ has vague limit $\bar \mu_\bullet$ as $t \to \infty$
given by
\begin{align*}
\bar \mu_\bullet (dx) =&  \frac{c_1}{ 2 ( \sqrt{x_1^2+ x_2^2+|x_4-x_1x_2|})^{4+\alpha_1}} dx_1dx_2dx_4\otimes \delta_{(0,0,0)}(dx_3,dx_5,dx_6)\\
&+\frac{c'_2}{ 2 |x_3|^{1+\alpha_2} } dx_3  \otimes \delta_{(0,0,0,0,0)}(dx_1,dx_2,dx_4,dx_5,dx_6) .
\end{align*}
 Note that though the measure $\bar \mu_\bullet$ is not symmetric on $\R^6$, it
induces a symmetric L\'evy measure $\mu_\bullet$ on $(G_\bullet, \bullet)$ through the matrix coordinate system
$\Phi$.

By a similar reasoning as in the previous example, one can check that the drift $\bar b$ defined by
\eqref{e:7.4a} is a zero vector in $\R^6$.
Let
$$
X^\circ =\left(\bar X^{(1)}, \bar X^{(2)},  \bar X^{(3)}, \bar X^{(4)}, \, 0, \, 0 \right),
$$
 be the L\'evy process on $\R^6$ with L\'evy triplet $(0, 0, \bar \mu_\bullet )$.
Note that
$(\bar X^{(1)}, \bar X^{(2)}, \bar X^{(4)})$
is a L\'evy process on $\R^3$ with L\'evy triplet
$\Big(0, 0,  \frac{c_1}{ 2(\sqrt{x_1^2+ x_2^2+|x_4-x_1x_2|})^{4+\alpha_1}} dx_1dx_2dx_4 \Big)$
and $\bar X^{(3)}$ is a one-dimensional symmetric $\alpha_2$-stable process with L\'evy measure
$\frac{c'_2}{ 2 |x_3|^{1+\alpha_2} } dx_3$
independent of $(\bar X^{(1)}, \bar X^{(2)}, \bar X^{(4)})$.
Thus we have by Theorem \ref{WT1}, for any $T>0$, the rescaled random walk $\{ \delta_{1/n} (S_{[nt]}); t\in [0, T])$ on $\mathbb U_4 (\Z)$
converges weakly in the Skorohod space $\D([0, T]; \R^6)$ to a purely discontinuous symmetric
L\'evy process $X^\bullet$ on
$(G_\bullet, \bullet)$
with L\'evy measure $\mu_\bullet$
as $n\to \infty$.

We next identify the L\'evy process $X^\bullet$ in the matrix coordinate system of $(G_\bullet, \bullet)$ by using Theorem \ref{T:7.3}, through the fact that $X^\bullet$ is the weak limit of
 $$
  X^{\bullet , n}_t :=  \Phi(   X^{\circ}_{1/n}) \bullet  \Phi (X^{\circ}_{2/n} -  X^{\circ}_{1/n}) .  \bullet  \cdots
      \bullet  \Phi   ( X^{\circ}_{[nt]/n} -  X^{\circ}_{([nt]-1)/n}) .
$$
When $\alpha_1\leq \alpha_2$, $(G_\bullet, \bullet)$ is $\mathbb U_4 (\R)$.
By a similar reasoning as in previous examples, we conclude from Theorem \ref{T:7.3}
that in matrix coordinates, the symmetric L\'evy process
$X_t^\bullet$ on $\mathbb U_4 (\R)$ has the following six coordinates:
$$\bar X^{(1)}_t, \,  \bar X^{(2)}_t, \, \bar X^{(3)}_t, \,
\bar X^{(4)} +\int_0^t \bar X^{(1)}_{s-} d\bar X^{(2)}_s,
\,  \int_0^t \bar X^{(2)}_{s-} d\bar X^{(3)}_s,$$
and
$$
 \int_0^t \bar{X}^{(1)}_{s_-}\bar{X}^{(2)}_{s_-}d\bar{X}^{(3)}_s +
 \int_0^t \left( \bar{X}^{(4)}_{r_-}+\int_{[0, r)} \bar X^{(1)}_{s-} d\bar X^{(2)}_s \right)
d \bar X^{(3)}_r .
$$
Note that L\'evy processes are semimartingales so the above stochastic integrals are all well defined.

\medskip

(ii) Now let
$\mu = \frac13 (\mu_1+\mu_2+\mu_3)$ be the probability measure on $\mathbb U_4 (\Z)=\Z^6$ with  $\mu_1$ and $\mu_2$ given by
\eqref{e:7.19a} and \eqref{e:7.20a} with $\alpha_1=\alpha_2 \in (0, 2)$, and
 $$
 \mu_3 ((x_i)_1^6) =\frac{ c_3}{ (1+\sqrt{x_4^2+ x_5^2+x_6^2})^{3+\alpha_3}} \otimes \1_{(0,0,0)}(x_1,x_2,x_3)
$$
for some $\alpha_3 \in (0, \alpha_1/2)$.
 This measure $\mu $ is in $\mathcal{SM}$ on $\mathbb U_4 (\Z)$.
Let
$$
\left\{\xi_k =\left(\xi_k^{(1)}, \xi_k^{(2)}, \xi_k^{(3)}, \xi_k^{(4)}, \xi_k^{(5)}, \xi_k^{(6)} \right); k\geq 1 \right\}
$$
be a  sequence of   i.i.d random variables taking values in $\mathbb U_4 (\Z)$ of distribution $\mu$. Then
$$
S_n=S_0 \cdot  \xi_1 \cdot \ldots \cdot  \xi_n, \quad n=0, 1, 2, \ldots
$$
defines a random walk on $\mathbb U_4 (\Z)$.

Consider the dilation
$$
\delta_t((x_i)_1^6)= \left(t^{{1}/{\alpha_1}} x_1, \, t^{{1}/{\alpha_1} } x_2, \,
t^{{1}/{\alpha_1}}x_3, \, t^{ {1}/{\alpha_3}}x_4, \,
t^{ {1}/{\alpha_3}}x_5, \,
t^{1/\alpha_1 + 1/ \alpha_3  } x_6 \right),
$$
which  is a  straight group approximate dilation  of \eqref{def-delta} adapted to the measure $\mu$.
As noted in  Example \ref{E:3.15} (the fourth bullet case),
this is an approximate   group dilation structure for $\mathbb U_4 (\Z)$ and the group law
$\bullet$ of the limit  group  $(G_\bullet, \bullet)$ is the direct product of the 5 dimensional Heisenberg group
$\HH_5 (\R)$ and a copy of $\R$, that is,
\begin{eqnarray*}
 \lefteqn{(x_i)_1^6\bullet(y_i)_1^6}&&\\&=&(x_1+y_1, x_2+ y_2, x_3+y_3, x_4+y_4,  x_5+y_5,
x_6+y_6+x_1 y_5 + x_4 y_3 ).
\end{eqnarray*}
Clearly, $(G_\bullet, \bullet)$ is different from from $\mathbb U_4(\R)$.
Since the measure $\mu $ is in $\mathcal{SM}$ on $\mathbb U_4 (\Z)$,
  the conditions (R1)-(R2), (E1)-(E2), (T$\bullet$) and (T$\Gamma$) are automatically satisfied for $\mu$ and $\{\delta_t, t>0\}$ by Section \ref{S:10} below.

The measure $\mu_t=t\delta_t(\mu)$ has vague limit $\bar \mu_\bullet$ as $t \to \infty$
given by
\begin{align*}
	\bar \mu_\bullet (dx) =& \frac{\kappa_1}{ (x_1^2+ x_2^2)^{(2+\alpha_1)/2}} dx_1dx_2 \otimes \delta_{(0, 0, 0,0)}(dx_3, dx_4, dx_5,dx_6)\\
	& +\frac{\kappa_2}{  |x_3|^{1+\alpha_1} } dx_3  \otimes \delta_{(0,0,0,0,0)}(dx_1,dx_2,dx_4,dx_5,dx_6) \\
	& +\frac{\kappa_3}{ (x_4^2+ x_5^2)^{(2+\alpha_3)/2} } dx_4 dx_5  \otimes \delta_{(0,0,0,0)}(dx_1,dx_2,dx_3, dx_6) .
\end{align*}
It induces a symmetric L\'evy measure $\mu_\bullet$ on $(G_\bullet, \bullet)$ through the matrix coordinate system $\Phi$.
It is easy to see that the drift  $\bar b$ defined by \eqref{e:7.4a} is a zero vector in $\R^6$
and the L\'evy process $X^\circ$ on $\R^d$ with L\'evy triplet $(0, 0, \bar \mu_\bullet)$
is
$$
X^\circ_t= \left(\bar X^{(1)}_t, \bar X^{(2)}_t, \bar X^{(3)}_t,  \bar X^{(4)}_t, \bar X^{(5)}_t, \, 0 \right),
$$
where $( \bar X^{(1)}, \bar X^{(2)})$ is a two-dimensional isotropic $\alpha_1$-stable process,
$\bar X^{(3)}$ is an independent one-dimensional $\alpha_1$-stable process,
and $( \bar X^{(4)}, \bar X^{(5)} )$ is a two-dimensional isotropic $\alpha_3$-stable process
that is independent of $(\bar X^{(1)}, X^{(2)}, X^{(3)})$.
In a similar way as in previous examples, we can conclude from Theorems \ref{WT1} and \ref{T:7.3} that for any $T>0$, the rescaled random walk
$\left\{ \delta_{1/n} (S_{[nt]}); t\in [0, T] \right\}$ on $\mathbb U_4 (\Z )$
converges weakly in the Skorohod space $\D([0, T]; \R^6)$ to a purely discontinuous symmetric
L\'evy process $X^\bullet$ on
$(G_\bullet, \bullet)$
with L\'evy measure $\mu_\bullet$
as $n\to \infty$, which in the matrix coordinate system is given by
$$
\left(\bar X^{(1)}_t,  \ \bar X^{(2)}_t, \  \bar X^{(3)}_t, \,  \ \bar X^{(4)}_t, \  \bar X^{(5)}_t,
\,
\int_0^t \bar{X}^{(1)}_{s_-} d\bar{X}^{(5)}_s +
\int_0^t \bar X^{(4)}_{s-} d\bar X^{(3)}_s  \right).
$$
\end{exa}

\medskip

\begin{exa} \label{E:7.8}
	In Example \ref{E:7.7},  now consider the probability measure
  $\mu = \frac{1}{3}(\mu_1+\mu_2+\mu_3)$  on $\mathbb U_4 (\Z)=\Z^6$, where
$$
\mu_1((x_i)_1^6)=\frac{ c_1}{ (1+\sqrt{x_1^2+ x_4^2+x_6^2|})^{3+\alpha_1}} \otimes \1_{(0,0,0)}(x_2,x_3,x_5),
$$
$$
\mu_2((x_i)_1^6)=\frac{c_2}{ (1+ |x_2|)^{1+\alpha_2}}
\otimes \1_{(0,0,0, 0, 0)}(x_1,x_3, x_4, x_5, x_6),
$$
and
$$\mu_3((x_i)_1^6)=\frac{c_3}{ (1+ \sqrt{x_3^2+x_5^2+x_6^2})^{3+\alpha_3}}
\otimes \1_{(0,0,0)}(x_1,x_2,x_4),
$$
where $0<\alpha_1,  \alpha_2, \alpha_3<2$ and $c_1,c_2, c_3$ are appropriate positive normalizing constants.

The measure $\mu $ is in $\mathcal{SM}$ on $\mathbb U_4 (\Z)$.
Let
$$
\left\{\xi_k =(\xi_k^{(1)}, \xi_k^{(2)}, \xi_k^{(3)}, \xi_k^{(4)}, \xi_k^{(5)}, \xi_k^{(6)}); \, k\geq 1\right\}
$$
 be a  sequence of i.i.d random variables taking values in $\mathbb U_4 (\Z)$ of distribution $\mu$. Then
$$
S_n=S_0 \cdot  \xi_1 \cdot \ldots \cdot  \xi_n, \quad n=0, 1, 2, \ldots
$$
defines a random walk on $\mathbb U_4 (\Z)$.

Consider the dilation
$$
\delta_t((x_i)_1^6)= \left(t^{ {1}/{\alpha_1}} x_1, t^{ {1}/{\alpha_2} } x_2, t^{{1}/{\alpha_3}}x_3,t^{ {1}/{\alpha_1}+ 1/{\alpha_2}}x_4,
t^{{1}/{\alpha_2}+ {1}/{\alpha_3}}x_5,
t^{ {1}/{\alpha_1}+ {1}/{\alpha_2}+ {1}/{\alpha_3}}x_6 \right).
$$
As noted in Example \ref{E:3.15},
this is a group dilation structure so the limiting
group $G_\bullet$ is $\mathbb U_4(\R)$.
It is  in fact a  straight group   dilation  of \eqref{def-delta} adapted to the measure $\mu$.

Since measure $\mu $ is in $\mathcal{SM}$ on $\mathbb U_4 (\Z)$,
 the conditions (R1)-(R2), (E1)-(E2), (T$\bullet$) and (T$\Gamma$) are again automatically satisfied for $\mu$ and $\{\delta_t, t>0\}$ by Section \ref{S:10} below.
The measure $\mu_t=t\delta_t(\mu)$ has vague limit $\bar \mu_\bullet$ as $t \to \infty$
given by
\begin{align} \label{e:7.20}
\bar \mu_\bullet (dx) =&
 \frac{\kappa_1}{|x_1|^{1+\alpha_1}} dx_1 \otimes \delta_{(0, 0, 0,0,0)}(dx_2, dx_3, dx_4, x_5,dx_6)
\nonumber \\
& +\frac{\kappa_2}{|x_2|^{1+\alpha_2} } dx_2  \otimes \delta_{(0,0,0,0,0)}(dx_1,dx_3,dx_4,dx_5,dx_6)
 \\
&  +\frac{\kappa_3}{|x_3|^{1+\alpha_3} } dx_3  \otimes \delta_{(0,0,0,0,0)}(dx_1,dx_2,dx_4,dx_5,dx_6)
.  \nonumber
\end{align}
It induces a symmetric L\'evy measure $\mu_\bullet$ on $(G_\bullet, \bullet)$ through the matrix coordinate system
$\Phi$. It is easy to see that the drift  $\bar b$ defined by \eqref{e:7.4a} is a zero vector in $\R^6$
and the L\'evy process $X^\circ$ on $\R^d$ with L\'evy triplet $(0, 0, \bar \mu_\bullet)$
is
$$
X^\circ_t= \left(\bar X^{(1)}_t, \bar X^{(2)}_t, \bar X^{(3)}_t, \, 0,  \, 0, \, 0 \right),
$$
 where $\bar  X^{(i)}$ are
one-dimensional symmetric $\alpha_i$-stable processes with L\'evy measure $\kappa_i |z|^{-1-\alpha_i}dz$
for $1\leq i\leq 3$, independent to each other.
In a similar way as in previous examples, we can conclude from Theorems \ref{WT1} and \ref{T:7.3} that for any $T>0$, the rescaled random walk
$\left\{ \delta_{1/n} (S_{[nt]}); t\in [0, T] \right\}$ on $\mathbb U_4 (\Z )$
converges weakly in the Skorohod space $\D([0, T]; \R^6)$ to a purely discontinuous symmetric
L\'evy process $X^\bullet$ on
$(G_\bullet, \bullet)$
with L\'evy measure $\mu_\bullet$
as $n\to \infty$, which in the matrix coordinate system is given by
$$
\bar X^{(1)}_t,  \ \bar X^{(2)}_t, \  \bar X^{(3)}_t, \, \int_0^t \bar X^{(1)}_{s-} d\bar X^{(2)}_s,
\, \int_0^t \bar X^{(2)}_{s-} d\bar X^{(3)}_s,$$ and
$$
\int_0^t \bar{X}^{(1)}_{s_-}\bar{X}^{(2)}_{s_-}d\bar{X}^{(3)}_s +
\int_0^t\left( \int_{[0, r) } \bar X^{(1)}_{s-} d\bar X^{(2)}_s \right)
d \bar X^{(3)}_r .
$$

\medskip

In the above, if we  replace $\mu_2$ by

$$
\mu'_2((x_i)_1^6)=\frac{c_2}{ (1+ \sqrt{x_2^2+ x_3^2+x_5^2})^{3+\alpha_2}}
\otimes \1_{(0,0,0)}(x_1,x_4,x_6),
$$
the measure $\mu $ is again an $\mathcal{SM}$ measure on $\mathbb U_4 (\Z)$
and $\mu_t=t\delta_t(\mu)$ converges vaguely to the same $\bar \mu_\bullet$
of \eqref{e:7.20} as $t \to \infty$.
Thus for any $T>0$,
the rescaled random walk $\{ \delta_{1/n} (S_{[nt]}); t\in [0, T])$ on $\mathbb U_4 (\Z)$
converges weakly in the Skorohod space $\D([0, T]; \R^6)$ to the same purely discontinuous symmetric
 L\'evy process on $\mathbb U_4(\R)$.
\end{exa}

\begin{rem}
Since condition (R2) is   satisfied, the local limit theorem,
Theorem \ref{localCLT}, holds as well  for all the examples in this subsection.
\end{rem}

\section{Measures in $\mathcal{SM}(\Gamma)$ and their geometries} \label{sec-stab}

In this section, we define the set
 $\mathcal{SM}(\Gamma)$ of probability measures on $\Gamma$
 and the associated geometries. Here $\Gamma$ is a torsion free finitely generated nilpotent group although the basic definitions below make sense in a more general context.  In the next section, we show how measures in this set provide a wide variety of examples to which the limit theorems obtained earlier apply.

\subsection{Probability measures in $\mathcal{SM}$ and $\mathcal{SM}_1$}\label{S:8.1}

Consider a subgroup $H\subset \Gamma$. Because $\Gamma$ is nilpotent, $H$ is automatically finitely generated and we equip $H$  with a finite symmetric generating set $S$ and the associated word-length $|\cdot|_{S}$. Let $\alpha\in (0,2)$.
Let $\mathcal{SM}^{\alpha}_H(\Gamma)$ be the set of all symmetric probability measures $\nu$ on $\Gamma$ which are supported on $H$ and satisfy
$$\nu(g) \asymp   \frac{1}{(1+|g|_S)^\alpha V_{H,S}(|g|_S)}\1_H(g),$$
where
$V_{H,S}$ is the volume growth function of the pair $(H,S)$ and
the notation $\nu \asymp \mu$ indicates that there are constants $0<c\le C<\infty$, which  may depend on $\nu$ and $\mu$, so that 
$$
   c\mu(g)\le  \nu(g)\le C\mu(g) 
\quad \hbox{for every } g\in \Gamma. 
$$
   Note that since $H$ is a subgroup of the finitely generated nilpotent group, its
 volume growth is polynomial and there is an integer $d_H$ such that $V_{H,S}(k)\asymp k^{d_H}$, $k=1,2,\dots$.
 Note also that the set $\mathcal{SM}^\alpha_H(\Gamma)$ does not depend on the choice of the generating $S$ for $H$.
These symmetric probability measures are the basic building blocks of the set $\mathcal{SM}(\Gamma)$ which we now define.

\begin{defin}[$\mathcal{SM}(\Gamma)$] \label{def-Stab}The set $\mathcal{SM}(\Gamma)$ is the set of all finite convex combinations $\mu$ of probability measures belonging to the union
$$\bigcup_{\alpha\in (0,2)}\bigcup_{ H: \,\mbox{\small subgroup of } \Gamma} \mathcal{SM}^{\alpha}_H(\Gamma)$$
such that the  support of $\mu$ generates $\Gamma$.
\end{defin}

\begin{defin}[$\mathcal{SM}_1(\Gamma)$]\label{def-Stab1}The subset $\mathcal{SM}_1(\Gamma)$ of $\mathcal{SM}(\Gamma)$ is the set of all finite convex combinations $\mu$
of probability measures in
$$\bigcup_{\alpha\in (0,2)}\bigcup_{ h\in \Gamma} \mathcal{SM}^{\alpha}_{\langle h\rangle}(\Gamma)$$
such that the support of $\mu$ generates $\Gamma$. That is, $\mu\in \mathcal{SM}_1(\Gamma)$  if it is the finite convex combinations  of stable-like symmetric probability measures supported on a finite collection of subgroups of $\Gamma$,
$ \langle h_i \rangle  $,
$1\le i\le k$, with the property that  $\{h_i^{\pm 1}, 1\le i\le k\}  $ generates $\Gamma$.
\end{defin}

\begin{defin}[$\mathcal{SM}^\alpha(\Gamma)$, $\alpha\in (0,2)$] \label{def-Stabalpha}
For each $\alpha\in (0,2)$, the subset $\mathcal{SM}^\alpha(\Gamma)$ of $\mathcal{SM}(\Gamma)$ is the set of all finite convex combinations $\mu$ of probability measures in
$$\bigcup_{ H   :   \,\mbox{\small subgroup of } \Gamma} \mathcal{SM}^{\alpha}_{H}(\Gamma)$$ such that the support of $\mu$ generates $\Gamma$.
\end{defin}

So, any probability measure $\mu$ in $\mathcal{SM}(\Gamma)$ as the form
\begin{equation}\label{mu} \mu=\sum_{i=1}^k p_i \mu_{H_i,\alpha_i},\end{equation}
where $\alpha_i \in (0,2)$, $p_i> 0$, $\sum_{i=1}^kp_i=1$, each $H_i$ is a subgroup of $\Gamma$,  and $\mu_{H_i,\alpha_i}$ is a probability measure in $\mathcal{SM}^{\alpha_i}_{H_i}(\Gamma)$. In addition, $\Gamma= \langle  H_1,\dots,H_k \rangle $.
The  typical measures in $\mathcal{SM}_1(\Gamma) $ have the more explicit form
$$\mu(g)=\sum_1^k \sum_{m\in \mathbb Z}\frac{p_i c_{\alpha_i}}{(1+|m|)^{1+\alpha_i}}\1_{\{s_i^m\}}(g),$$
where $\alpha_i\in (0,2)$, $p_i > 0$, $\sum _1^kp_i=1$, the finite set $\{s^{\pm 1}_i:1\le i\le k\}$ is a generating set of $\Gamma$, and
$c_\alpha^{-1}= \sum_{ m \in \mathbb Z}\frac{1}{(1+|m|)^{1+\alpha}}$. There are more measures in $\mathcal{SM}_1(\Gamma)$ because the individual component of the convex combination above  do not have to be exactly
$ \sum_{m\in \mathbb Z}\frac{c_{\alpha_i}}{(1+|m|)^{1+\alpha_i}}\1_{\{s_i^m\}}(g)$, they only have to be $\asymp$-comparable to such a measure.
\begin{exa} \label{exa-typStabH}
On the Heisenberg group $\mathbb H_3(\mathbb Z)$ viewed as the group of matrix
\eqref{eq:3dimHB},
consider the measures  $$\mu_4((x_1,x_2,x_3))=  \frac{c_\alpha}{\left(1+\sqrt{x_1^2+x_2^2+|x_3-x_1x_2/2|}\right)^{4+\alpha_4}},$$
(this is $\mu$ from Example \ref{ex3-4}) and
$$\mu_i ((x_1,x_2,x_3))= \frac{c_\alpha \1_{H_i}((x_1,x_2,x_3))}{(1+ |x_i|)^{1+\alpha_i }}, \;i=1,2,3,$$ where $ H_i=\{(x_1,x_2,x_3):  x_j=0 \mbox{ if } j\neq i\}$,
with $\alpha_1,\alpha_2,\alpha_3,\alpha_4\in (0,2)$. The measure $\mu =\frac{1}{4} \sum_{ i= 1}^4\mu_i$ is a good example of a measure in $\mathcal{SM}(\mathbb H_3(\mathbb Z))$.  This is because the expression $\sqrt{x_1^2+x_2^2+|x_3-x_1x_2/2|}$ is constant under taking inverse and is comparable to the
word-length on $\mathbb H_3(\mathbb Z)$ (e.g., on the natural minimal symmetric generating set).
\end{exa}

\begin{exa} \label{exa-typStabH-2}
On the Heisenberg group $\mathbb H_3(\mathbb Z)$ viewed as the group of matrix
\eqref{eq:3dimHB},  for $i=1,2$, $\alpha_i\in (0,2)$, and $H_1=\{(x_1,0,x_3): x_1,x_3\in \mathbb Z\}$,
 $H_2=\{( 0,x_2,x_3): x_2,x_3\in \mathbb Z\}$,  consider the measures
 $$
 \mu_i((x_1,x_2,x_3))=  \frac{c_{\alpha_i}}{\left(1+\sqrt{x_i^2+x_3^2}\right)^{\alpha_i+ 2}}\1_{H_i}(x_1,x_2,x_3).
 $$
  The measure $\mu =\frac{1}{2}(\mu_1+\mu_2)$ is another good example of a measure in
  $\mathcal{SM}(\mathbb H_3(\mathbb Z))$.
 The measure in Example \ref{ex3-4} is also in $\mathcal{SM}(\mathbb H_3(\mathbb Z))$.
 \end{exa}

\subsection{Weight systems on $\Gamma$ associated to measures in $\mathcal{SM}(\Gamma)$} \label{sec-muw}

Let $\mu\in \mathcal{SM}(\Gamma)$ be given by (\ref{mu}). From the data defining $\mu$, we extract a long generating tuple
$$\Sigma=(\sigma_1,\dots \sigma_\ell)$$
by listing one representative of   $\{s,s^{-1}\}$ for each  $s\in S_i$, $1\le i\le k$, with repetition when the same $s,s^{-1}$ belongs to more than one set $S_i$. Thus, we can think of each $\sigma_j$ as carrying a label that tells us from which $S_i$ it comes.
Using this label we
give each $\sigma_j\in \Sigma$ the positive weight $w(\sigma_j)=1/\alpha_i$ if $\sigma_j$ comes from $S_i$. Now, consider $\Sigma$ as a finite alphabet and consider the set of all finite length formal commutators over $\Sigma\cup \Sigma^{-1}$ where $\Sigma^{-1}$ is the set of formal inverse letters. We can proceed inductively.
Elements of $\Sigma \cup \Sigma^{-1}$ are length $1$ commutators. After formal commutators of length at most $n$ have been defined, the formal commutators of length at most $n+1$ are all the formal expressions of the form $[\tau,\theta]$ where $\tau$ and $\theta$ are commutators of length $s$ and $t$ with $s+t\le n+1$.  Recall that each formal commutator $\sigma^{\pm 1}$ of length $1$ has a weight
$w(\sigma^{\pm 1})=1/\alpha_i$ if $\sigma$ comes originally from $S_i$. Extend the weight function $w$ to all formal commutators by setting
$w([\tau,\theta])=w(\tau)+w(\theta)$.

A priori, there are countably many formal commutators but because $\Gamma $ is nilpotent and we will ultimately consider only the formal commutators that are not trivial when evaluated in $\Gamma$, we only have to deal with finitely many formal commutators, whose lengths are  at most the nilpotent class of $\Gamma$.  We now use weighted formal commutators to define a   non-increasing equence of subgroups of $\Gamma$. Recall that by convention and abuse of notation, each letter $\sigma$ in $\Sigma$ is also a group element in $\Gamma$. The following definition is essentially from \cite{SCZ-nil} where further details can be found.
See Definition 1.4 and Proposition 1.5 in \cite{SCZ-nil}.
\begin{defin} For any $s\ge 0$, let $\Gamma^{\Sigma,w}_s$  be the subgroup of $\Gamma$ generated by the values in $\Gamma$
of all the formal commutators over the alphabet $\Sigma$ with weight at least $s$. By construction
$\Gamma^{\Sigma,w}_t\subseteq \Gamma^{\Sigma,w}_s$ if $s\le t$. Also, $[\Gamma^{\Sigma,w}_s,\Gamma^{\Sigma,w}_t]\subseteq \Gamma^{\Sigma,w}_{s+t}.$
\end{defin}
\begin{defin} There is a greatest $t$ such that $\Gamma^{\Sigma,w}_t=\Gamma$, call it  $w_1$.
By induction, having defined $w_j$, define $w_{j+1}$ to be the largest $t\in (w_j,\infty]$ such that $\Gamma^{\Sigma,w}_s=\Gamma^{\Sigma,w}_{t}$ for all $w_j<s\le t$. This defines a finite strictly increasing sequence
$$w_1<w_2<\dots< w_j<w_{j+1}<\dots <w_{j_*+1} =\infty$$
such that
$$
 \Gamma^{\Sigma,w}_{w_{j+1}} \subsetneq \Gamma^{\Sigma,w}_{w_{j}}, \;\Gamma^{\Sigma,w}_{s}=\Gamma^{\Sigma,w}_{w_j} \mbox{ for } s\in (w_{j-1},w_j], \quad  \Gamma^{\Sigma,w}_{s}=\{e\} \mbox{ for } s>w_{j_*}.
$$
By construction $\big[ \Gamma,\Gamma^{\Sigma,w}_{w_j} \big] \subset \Gamma^{\Sigma,w}_{w_{j+1}}.$
Call $A_{w_j}$  the abelian group
$$
A_{w_j}=\Gamma^{\Sigma,w}_{w_j}/\Gamma^{\Sigma,w}_{w_{j+1}}, \quad
1\le j\le j_*.
$$
\end{defin}

\begin{defin} Set     $\gamma_0 (\Sigma,w)=\sum_1^{j_*} w_j \mbox{Rank}(A_{w_j})$, 
where $\mbox{Rank}(A)$ denotes the torsion free rank of the finitely generated abelian group $A$.
\end{defin}

That the construction described above and  the definition of the   positive real $\gamma_0 (\Sigma,w)$
 is relevant to the study of random walks driven by measures in $\mathcal{SM}(\Gamma)$ is apparent from the following theorem from \cite{SCZ-nil,CKSWZ1}.

\begin{theo}[\cite{SCZ-nil,CKSWZ1}] \label{T:6.7}
Let $\Gamma$ be a finitely generated nilpotent group.
For any probability measure $\mu$ in $\mathcal{SM}(\Gamma)$ with associated data $(\Sigma,w)$ as above, there are constants $c(\mu)$ and $C(\mu)$ such that, for all $n$,
$$ c(\mu) n^{ - \gamma_0 (\Sigma,w)}\le \mu^{(n)}(e)\le C(\mu)n^{  -\gamma_0 (\Sigma,w)}.$$
\end{theo}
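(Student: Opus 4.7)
The plan is to establish matching upper and lower bounds on $\mu^{(n)}(e)$, both of the form $n^{-\gamma_0(\Sigma,w)}$, via heat kernel / Nash inequality techniques adapted to the weighted filtration $\{\Gamma^{\Sigma,w}_{w_j}\}_{j=1}^{j_\ast}$. The guiding heuristic is that if one formally dilates coordinates by $t^{w_j}$ on the $j$-th graded layer $A_{w_j} = \Gamma^{\Sigma,w}_{w_j}/\Gamma^{\Sigma,w}_{w_{j+1}}$, then the associated Jacobian is $t^{\sum_j w_j \mathrm{Rank}(A_{w_j})} = t^{\gamma_0(\Sigma,w)}$. The walk at time $n$ should, at the correct scale, fill out a set of Haar volume $\asymp n^{\gamma_0}$, forcing $\mu^{(n)}(e)\asymp n^{-\gamma_0}$ by a standard return-probability/volume heuristic.

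For the upper bound my approach is to establish a Nash-type inequality
\[
\|f\|_2^{2+2/\gamma_0} \le C\, \mathcal E_\mu(f,f)\, \|f\|_1^{2/\gamma_0}, \qquad f\in \ell^2(\Gamma),
\]
where $\mathcal E_\mu$ is the Dirichlet form of the symmetric random walk driven by $\mu$. From this, the standard Nash argument (cf.\ \cite{CKS87}) yields $\mu^{(2n)}(e)\le C n^{-\gamma_0}$, which upgrades via symmetry and Cauchy--Schwarz to the full upper bound. To obtain the Nash inequality, I would decompose $\mathcal E_\mu \geq c\sum_i p_i \mathcal E_{\mu_{H_i,\alpha_i}}$ and exploit the known stable-like heat-kernel bounds for each cyclic/subgroup summand on $H_i$. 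The key extra input needed is a pseudo-Poincar\'e (or Sobolev) inequality at every scale $r$, stating that translating by an element of $\Gamma^{\Sigma,w}_{w_j}$ costs a cost controlled by $r^{w_j}$ worth of Dirichlet energy; this is the role of the weighted commutator filtration, and is exactly the content of the analogous results in \cite{SCZ-nil}.

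For the lower bound I would use the Carne--Varopoulos / Davies perturbation technique together with the upper bound. An alternative, more geometric route is to exhibit, for each $n$, a ``generating'' set $B_n\subset \Gamma$ of cardinality $\asymp n^{\gamma_0}$ such that the $n$-step transition kernel is essentially flat on $B_n$. Concretely, $B_n$ would be the image in $\Gamma$ of a box adapted to $(\Sigma,w)$ of side $n^{w_j}$ in the $j$-th layer; its cardinality equals $n^{\gamma_0}$. A pseudo-Poincar\'e inequality on $B_n$ (obtained by the same commutator considerations as above) then shows that the $\ell^2$-mixing time of the walk restricted to $B_n$ is $O(n)$, which by symmetry forces $\mu^{(n)}(e)\gtrsim |B_n|^{-1} \asymp n^{-\gamma_0}$.

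The main obstacle is that the subgroups $H_i$ supporting the building blocks of $\mu$ need not be normal, and the $s_i$'s need not commute. What saves the argument is that an iterated commutator of letters of weights $w(\sigma_{i_1}),\ldots,w(\sigma_{i_r})$ moves the walker, at time $n$, by an amount of order $n^{w(\sigma_{i_1})+\cdots+w(\sigma_{i_r})}$ in the appropriate direction; this is precisely why the definition of $w$ is additive on brackets and why $\Gamma^{\Sigma,w}_s$ is defined via weighted commutators. Making this ``commutator diffusion'' rigorous at both the upper (Dirichlet-form/Nash) and lower (pseudo-Poincar\'e/flatness) level is the technical heart of the proof, and is done in \cite{SCZ-nil} for $\mathcal{SM}_1(\Gamma)$ and extended to the full class $\mathcal{SM}(\Gamma)$ in \cite{CKSWZ1}.
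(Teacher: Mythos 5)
First, note that the monograph you are reading does not prove Theorem \ref{T:6.7} at all: it is quoted verbatim from \cite{SCZ-nil,CKSWZ1}, and the later sections (notably Sections \ref{S:9.6} and \ref{S:10}) only import the ingredients you allude to (the representation of elements of quasi-norm $r$ as bounded products $\prod\sigma_{i_j}^{z_j}$ with $|z_j|\le Cr^{w(\sigma_{i_j})}$, exit-time and regularity estimates). Your outline does reconstruct the strategy actually used in those references: comparison of $\mathcal E_\mu$ with the Dirichlet forms of the stable-like building blocks, pseudo-Poincar\'e inequalities keyed to the weighted commutator filtration, a Nash/Sobolev-type inequality with exponent $\gamma_0$ for the upper bound, and an adapted ``box'' of cardinality $\asymp n^{\gamma_0}$ for the lower bound. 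Two quantitative points should be stated correctly, though: the pseudo-Poincar\'e inequality is not ``cost $r^{w_j}$ for elements of $\Gamma^{\Sigma,w}_{w_j}$'' but rather $\|f-\tau_g f\|_2^2\le C\,r\,\mathcal E_\mu(f,f)$ for every $g$ with $\|g\|_{\Sigma,w}\le r$ (each factor $\sigma_{i_j}^{z_j}$ with $|z_j|\le r^{1/\alpha_{i_j}}$ costs $\le C|z_j|^{\alpha_{i_j}}\mathcal E\le Cr\,\mathcal E$), and this must be paired with the nontrivial volume estimate $\#\{g:\|g\|_{\Sigma,w}\le r\}\asymp r^{\gamma_0}$; both are theorems of \cite{SCZ-nil}, not formal consequences of the definition of $w$.

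The genuine misstep is your first suggestion for the lower bound. Carne--Varopoulos and Davies-type perturbation arguments produce \emph{off-diagonal upper} bounds, require bounded (or at least exponentially integrable) jumps, and do not yield on-diagonal lower bounds for these heavy-tailed walks; that route fails. The viable route is your second one, but the ``mixing time of the walk restricted to $B_n$'' formulation is both delicate (there is no canonical Neumann restriction of a long-range walk to a box) and unnecessary. The standard argument is: with $B_n$ the adapted box of cardinality $\asymp n^{\gamma_0}$, Cauchy--Schwarz and symmetry give $\mu^{(2n)}(e)=\|\mu^{(n)}\|_2^2\ge |B_n|^{-1}\big(\P^e(X_n\in B_n)\big)^2$, so all that is needed is a confinement estimate $\P^e(X_n\in B_n)\ge c$, which follows from exit-time/tail bounds for the stable-like pieces (the analogue of condition (E1), established in \cite{CKSWZ1}); this and the Nash-type upper bound (in the spirit of \cite{CKS87}) are exactly how the two-sided bound $\mu^{(n)}(e)\asymp n^{-\gamma_0(\Sigma,w)}$ is obtained in the cited works. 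As written, your proposal is a correct high-level plan only after replacing the Carne--Varopoulos suggestion by this confinement argument, and it is not self-contained: the commutator ``diffusion'' estimates, the volume formula, and the exit-time bounds are precisely the content you would have to reprove from \cite{SCZ-nil,CKSWZ1}.
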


\subsection{Quasi-norms on $\Gamma$ associated with elements of $\mathcal{SM}(\Gamma)$}
The previous section associates  to any measure $\mu\in \mathcal{SM}(\Gamma)$ a weight system built on the $\ell$-tuple of group  elements
$\Sigma=(\sigma_1,\dots,\sigma_\ell)$ obtained by listing consecutively with possible repetitions all the elements of the sets $S_i$, $1\le i\le k$,
and the attached weight  $w(\sigma)=1/\alpha_i$ if $\sigma$ comes from $S_i$.  Recall that in this construction, we view $\Sigma$ as an abstract  alphabet.   This data allows us to construct a quasi-norm on the countable group $\Gamma$ based on the writing of any element $g$ of $
\Gamma$ as a word over the alphabet $\Sigma\cup \Sigma^{-1}$.  For any finite word $\word\in  \cup_0^\infty (\Sigma\cup \Sigma^{-1})^m$,
set
$$\mbox{deg}_\sigma(\word)=\mbox{number of times the letters $\sigma,\sigma^{-1}$ are used in $\word$}.
$$

The following definition is from \cite{SCZ-nil,CKSWZ1}.

\begin{defin} \label{def-quasinorm}
Given $\Gamma$, $\Sigma=(\sigma_1,\dots,\sigma_\ell)$ and weight $w$ as above, for each element $g\in G$, set
$$\|g\|_{\Sigma,w}=
\inf\left\{ \max_{\sigma\in \Sigma}\{ (\mbox{deg}_\sigma(\word))^{1/w(\sigma)}\}:\word\in \cup_0^\infty (\Sigma\cup \Sigma^{-1})^m,\quad  g=\word \mbox{ in } G\right\}.$$
By convention, $\|e\|_{\Sigma,w}=0$.
\end{defin}

\begin{rem} When $w(\sigma)=w_0$ for all $\sigma\in \Sigma$, the quasi-norm $\|\cdot\|_{\Sigma,w}$ satisfies
$$
\frac{1}{\ell} |g|_{\Sigma} \le \|g\|^{w_0}_{\Sigma,w}\le |g|_{\Sigma}
 \quad \hbox{for every }  g\in \Gamma,
 $$
where $|\cdot|_{\Sigma}$ denotes the usual word length of the finite symmetric generating set $\Sigma\cup \Sigma^{-1} \subset \Gamma$.
\end{rem}
\begin{rem} It may be worth noting that, in general, it is hard to compute or estimate $\|g\|_{\Sigma,w}$ for a given $g\in \Gamma$. The reference \cite{SCZ-nil} gives many results in this direction and these results will be useful in the sequel. This is related to the use of coordinate systems in so far as the question of estimating $g\in \Gamma$ becomes a precise question only when $g$ is given in terms of some parameter set, i.e., some sort of (possibly partial) coordinate system, see \cite[Theorem 2.10]{SCZ-nil}. To help the reader understand this comment, we suggest the following question: given a fixed $g\in \Gamma$, what is the behavior of $\|g^m\|_{\Sigma,w}$ as a function of $m$?  See \cite[Proposition 2.17 ]{SCZ-nil}.\end{rem}

\section{Adapted approximate group dilations}\label{S:9}}

\subsection{Searching for adapted dilations}\label{S:9.1}

The goal of this section is to associate to each probability measure  $\mu$
in $\mathcal{SM}(\Gamma)$ an adapted approximate dilation structure. This includes making the choice of an appropriate polynomial coordinate system for  the simply connected nilpotent Lie group $G=(\mathbb R^d,\cdot)$ in which $\Gamma$ embeds as a co-compact discrete subgroup.  The given measure $\mu$
determines
 uniquely certain features of the appropriate coordinate systems and associated approximate group dilations but not all. Among the feature that are determined uniquely (in this case, up to an arbitrary multiplicative positive constant) is a vector of non-decreasing weight values $b_j$, $1\le j\le d$, so that, in the chosen coordinate system $u=(u_i)_1^d\in\mathbb R^d$ for $G$, the appropriate approximate dilation structure is of the form $\delta_t(u)=(t^{b_i}u_i)_1^d$.
  Among the exponential coordinates of the first and second kind,
 the group structure $G_\bullet=(\mathbb R^d,\bullet)$  defined by
$$u\bullet u'=\lim_{t\to \infty} \delta_{1/t}(\delta_t(u)  \cdot  \delta_t(u'))$$
(understood up to isomorphisms) depends only on $\Gamma$ and $\mu$ and not on
 the particular choice of a coordinate system.
An interesting question is if this remains true beyond these exponential coordinate systems.

In the next subsections, we describe two key constructions: the construction of adapted exponential coordinates of the first kind and that of adapted coordinate of the second kinds. The essential difference between the two constructions is that, in the discussion of exponential coordinates of the first kind, we assume that the simply nilpotent Lie group $G$ in which $\Gamma$ sits as a
 co-compact subgroup is already given to us together with its Lie algebra and canonical exponential map. All we need to construct is an adapted linear basis of this Lie algebra based on the nature of the measure $\mu$. In the case of exponential coordinates of the second kind, we start from scratch with only the finitely generated torsion free nilpotent group $\Gamma$ carrying the measure $\mu$
and, following Malcev and Hall, we construct a ``discrete coordinate system'' for $\Gamma$ which is adapted to $\mu$ and, in turn, ``generates'' for us the simply connected Lie group $G$ and its adapted exponential coordinates of the second type. It is only a posteriori (and with some work) that one can check that certain features of these two constructions are identical.

\subsection{Exponential  coordinates of the first kind } \label{S:9.2}

This  subsection focuses on the situation when the torsion free finitely generated group $\Gamma$ is given to us as a co-compact discrete subgroup of a simply connected nilpotent Lie group $G$ with Lie algebra $\mathfrak g=(\mathbb R^d,[\cdot,\cdot])$ and the group $G$ is given in the (canonical) exponential coordinates of the first kind. This  identifies
the group $G$ with $(\mathbb R^d,\cdot)$ where the product $\cdot$ is given by the famous Campbell-Hausdorff formula
\eqref{e:2.3}.

In the next definition, we are given a probability measure $\mu\in \mathcal{SM}(\Gamma)$ and the associated data $\Sigma , w$ as in Section \ref{sec-muw} and we transfer the weight system to the Lie algebra $\mathfrak g$.
Observe that the tuple (use the same ordering as for $\Sigma$)
$$\Sigma_{\mathfrak g}=(\varsigma_i\in \mathfrak g: \exp(\varsigma_i)=\sigma_i\in \Sigma)=(\varsigma_1,\dots,\varsigma_\ell)$$
must be an algebraically generating set for $\mathfrak g$ in that this set together with all iterated brackets of its elements generates $\mathfrak g$ linearly. Indeed, because the exponential map is a global diffeomorphism between $\mathfrak g$ and $G$, if $\Sigma_{\mathfrak g}$ did not generate $\mathfrak g$, $\Gamma$ would be contained in a proper closed connected Lie subgroup
of $G$. This would contradict the fact that $\Gamma$ is co-compact in $G$.

We now trivially transfer the weight function $w: \Sigma\ra (0,\infty)$ to a function defined on $\Sigma_{\mathfrak g}$ by setting $w^\mathfrak g(\varsigma)=w(\sigma)$ if $\sigma=\exp(\varsigma)$. This   
 leads  to the definition of a weight system $w^\mathfrak g$ on the formal (Lie) commutators of the $\varsigma$'s in a way that is formally analogous to what we did on $\Gamma$.
\begin{defin}  Let $\mathfrak g^{\Sigma,w}_s$ be the Lie sub-algebra of $\mathfrak g$ generated by the evaluation in $\mathfrak g$ of all formal commutators of the $\varsigma\in \Sigma_{\mathfrak g}$ whose weight is at least $s$. By construction,
$$\mathfrak g^{\Sigma,w}_t\subseteq \mathfrak g^{\Sigma,w}_s \mbox{ if } s\le t$$
and
$$[\mathfrak g^{\Sigma,w}_s , \mathfrak g^{\Sigma,w}_t]\subseteq  \mathfrak g^{\Sigma,w}_{s+t}.$$
\end{defin}

\begin{defin} \label{filtration} There is a greatest $t$ such that $\mathfrak g^{\Sigma,w}_t=\mathfrak g$, call it $w^\mathfrak g_1$. By induction, having defined $w^\mathfrak g_j$,
there is a greatest $t\in (w^\mathfrak g_j, \infty]$ such that $\mathfrak g^{\Sigma,w}_s= \mathfrak g^{\Sigma,w}_t$ for all $s\in (w^\mathfrak g_j,t]$.  Call it $w^\mathfrak g_{j+1}$. Let $j^\mathfrak g_\star$ be the largest integer $j$ such that $w^\mathfrak g_j<\infty$ so that $w^\mathfrak g_{j^\mathfrak g_\star+1}=\infty$ and $ \mathfrak g^{\Sigma,w}_{w^\mathfrak g_{j^\mathfrak g_\star+1}}=\{0\}$.
This defines a finite strictly decreasing sequence of sub-Lie algebras
$$ \mathfrak g=\mathfrak g^{\Sigma,w}_{w^\mathfrak g_1}\supset \dots \supset \mathfrak g^{\Sigma,w}_{w^\mathfrak g_{j^\mathfrak g_\star}}\supset \{0\}$$
with the property that
$$
[ \mathfrak g, \mathfrak g^{\Sigma,w}_{w^\mathfrak g_j} ]\subseteq \mathfrak g^{\Sigma,w}_{w^\mathfrak g_{j+1}},
\quad j=1,\dots,j^\mathfrak g_\star.
$$
 \end{defin}

\begin{defin}[Adapted direct sum decomposition]  \label{def-directsum}We say that a direct sum decomposition of $\mathfrak g$,
$$\mathfrak g=\oplus_1^{j^\mathfrak g_\star} \mathfrak n_j,$$
is adapted to $(\Sigma,w)$ if, for all $j\in \{1,\dots,j^\mathfrak g_\star\}$,
$$ \mathfrak g^{\Sigma,w}_{w^\mathfrak g_j}= \oplus_{\ell=j}^{j^\mathfrak g_\star} \mathfrak n_\ell.$$
\end{defin}

\begin{rem} To construct an adapted   direct sum
decomposition,  start from the top and set $\mathfrak n_{j^\mathfrak g_\star}=\mathfrak g^{\Sigma,w}_{w^\mathfrak g_{j^\mathfrak g_\star}}$.
By descending induction, having constructed
$\mathfrak n_j,\dots,\mathfrak n_{j^\mathfrak g_{\star}}$ so that $\mathfrak g^{\Sigma,w}_{w^\mathfrak g_j}= \oplus_{\ell=j}^{j^\mathfrak g_\star} \mathfrak n_\ell$,
pick a linear complement of  $\mathfrak g^{\Sigma,w}_{w^\mathfrak g_{j}}$ inside $\mathfrak g^{\Sigma,w}_{w^\mathfrak g_{j-1}}$ and call it $\mathfrak n_{j-1}$.
\end{rem}

\begin{defin}[Approximate Lie dilation structure (first kind)]  \label{def-dilw}
Given a direct sum decomposition that is adapted to $(\Sigma,w)$, consider the group of invertible linear maps
$$
\delta_t: \mathfrak g\ra \mathfrak g,\quad \;t>0,
$$
 define by
 $$\delta_t (v)=t^{\mathfrak w^\mathfrak g_j} v  \quad \mbox{ for all } v\in \mathfrak n_j, 1\le j\le j_\star^\mathfrak g.$$
 Let $\boldsymbol \eps=(\eps_i)_1^d$ be a linear basis of $\mathbb R^d$ adapted to the direct sum
 $\mathfrak g=\oplus_1^{j^\mathfrak g_\star} \mathfrak n_j$, let $u=(u_i)_1^d$ be the corresponding coordinate system, and let
 $$
 b_i= \mathfrak w^\mathfrak g_j  \quad \mbox{ if } \eps_i\in   \mathfrak n_j
 $$
 so that $$\delta_t(u)=(t^{b_i}u_i)_1^d.$$
 \end{defin}

We shall see below in Proposition \ref{pro-twoweights} and Corollary \ref{cor-twoweights} that the important quantity
   $\gamma_0 (\Sigma, w)$ 
is given in terms of the sequences $(b_i)_1^d$ and $(\mathfrak w^{\mathfrak g}_j)_1^{j^\mathfrak g_\star }$ by
$$
 \gamma_0 (\Sigma,w)  =\sum_1^d b_i=\sum_1^{j^\mathfrak g_\star}\mathfrak w^{\mathfrak g}_j \dim(\mathfrak n_j).
$$

\begin{pro} The maps $(\delta_t)_{t>0}$ defined above form an approximate Lie dilation structure on $\mathfrak g$.
In any exponential coordinate system of the first kind adapted to the direct sum decomposition $\mathfrak g=\oplus_1^{j^\mathfrak g_\star} \mathfrak n_j,$  $(\delta_t)_{t>0}$  is  a straight approximate  group dilation structure on $G$.\end{pro}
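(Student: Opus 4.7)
The plan is to verify the two assertions in order: first, that the linear maps $(\delta_t)_{t>0}$ of Definition \ref{def-dilw} satisfy
\begin{equation*}
[x,y]_\bullet := \lim_{t\to\infty} \delta_{1/t}\bigl([\delta_t(x),\delta_t(y)]\bigr)
\end{equation*}
for every $x,y\in\mathfrak g$, with $[\cdot,\cdot]_\bullet$ a genuine Lie bracket compatible with the $\delta_t$'s; second, that in the exponential coordinate system of the first kind associated with a basis adapted to $\mathfrak g=\oplus_{j=1}^{j^\mathfrak g_\star}\mathfrak n_j$, this approximate Lie dilation translates into a straight approximate group dilation structure in the sense of Definition \ref{bullet}.

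For the first assertion, the workhorse is the filtration property of Definition \ref{filtration}, namely $[\mathfrak g^{\Sigma,w}_s,\mathfrak g^{\Sigma,w}_t]\subseteq \mathfrak g^{\Sigma,w}_{s+t}$. By bilinearity it suffices to treat homogeneous $x\in\mathfrak n_j$ and $y\in\mathfrak n_k$. Since $\mathfrak g^{\Sigma,w}_{w^\mathfrak g_j}=\oplus_{\ell\ge j}\mathfrak n_\ell$, one can write
\begin{equation*}
[x,y] = \sum_{\ell\,:\,w^\mathfrak g_\ell\ge w^\mathfrak g_j+w^\mathfrak g_k}\!\!\!\! z_\ell,\qquad z_\ell\in\mathfrak n_\ell,
\end{equation*}
so that
\begin{equation*}
\delta_{1/t}\bigl([\delta_t(x),\delta_t(y)]\bigr)= t^{w^\mathfrak g_j+w^\mathfrak g_k}\delta_{1/t}([x,y]) = \sum_\ell t^{w^\mathfrak g_j+w^\mathfrak g_k-w^\mathfrak g_\ell}z_\ell.
\end{equation*}
Every exponent in the sum is $\le 0$, so the limit as $t\to\infty$ exists and equals $[x,y]_\bullet=\sum_{\ell:w^\mathfrak g_\ell=w^\mathfrak g_j+w^\mathfrak g_k}z_\ell$. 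The resulting $[\cdot,\cdot]_\bullet$ is bilinear and skew-symmetric by construction, and is precisely the Lie bracket of the associated graded Lie algebra of the filtration $\{\mathfrak g^{\Sigma,w}_{w^\mathfrak g_j}\}$ transported to $\mathfrak g$ through the identification $\mathfrak n_j\cong \mathfrak g^{\Sigma,w}_{w^\mathfrak g_j}/\mathfrak g^{\Sigma,w}_{w^\mathfrak g_{j+1}}$; therefore it automatically satisfies the Jacobi identity. Homogeneity $\delta_s([x,y]_\bullet)=[\delta_s(x),\delta_s(y)]_\bullet$ for all $s>0$ is visible in the formula since only indices $\ell$ with $w^\mathfrak g_\ell=w^\mathfrak g_j+w^\mathfrak g_k$ contribute.

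For the second assertion, fix the basis $\boldsymbol\eps=(\eps_i)_1^d$ of Definition \ref{def-dilw} and use the corresponding exponential coordinates of the first kind. Group multiplication is then given by the Campbell-Hausdorff formula \eqref{e:2.3},
\begin{equation*}
x\cdot y = x+y+\tfrac12[x,y] + \tfrac{1}{12}\bigl([x,[x,y]]+[y,[y,x]]\bigr)+\cdots,
\end{equation*}
which terminates after finitely many iterated brackets because $\mathfrak g$ is nilpotent, and inversion is $x^{-1}=-x$, which is tautologically $\delta_t$-equivariant. Each iterated bracket appearing in $x\cdot y$ is a homogeneous multilinear combination of the $\mathfrak n_j$-components of $x$ and $y$, and the analysis of the first step, applied iteratively to each nested bracket, shows that
\begin{equation*}
\delta_{1/t}\bigl(\delta_t(x)\cdot\delta_t(y)\bigr)
\end{equation*}
is a polynomial in $(x,y)$ whose coefficients are finite sums of powers $t^{-\alpha}$ with $\alpha\ge 0$. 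Hence the limit as $t\to\infty$ exists pointwise (and uniformly on compact sets), the limits required by Definition \ref{bullet} are verified, and $x\bullet y$ is given by the Campbell-Hausdorff formula with $[\cdot,\cdot]_\bullet$ in place of $[\cdot,\cdot]$. This is a straight approximate group dilation in the sense of Definition \ref{bullet}, and by Lemma \ref{L:2.5} the resulting $(\mathbb R^d,\bullet)$ is a simply connected nilpotent Lie group on which $(\delta_t)_{t>0}$ acts as a genuine group dilation structure.

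The main point requiring care is bookkeeping rather than a conceptual obstacle: one must keep track of how the exponents $w^\mathfrak g_j$ distribute over iterated Campbell-Hausdorff brackets and verify that every non-surviving term comes with a strictly negative power of $t$. The combinatorial input that makes this work is precisely the filtration property $[\mathfrak g^{\Sigma,w}_s,\mathfrak g^{\Sigma,w}_t]\subseteq \mathfrak g^{\Sigma,w}_{s+t}$, which is built into the definition of the weighted subalgebras and has already been recorded before Definition \ref{filtration}.
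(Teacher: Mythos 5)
Your proof is correct and follows essentially the same route as the paper: the paper's own proof consists exactly of your first step (decompose $[v_1,v_2]$ for homogeneous $v_i\in\mathfrak n_{j_i}$ along the filtration $\mathfrak g^{\Sigma,w}_{w^\mathfrak g_N}=\oplus_{\ell\ge N}\mathfrak n_\ell$ and note that $\delta_{1/t}([\delta_t(v_1),\delta_t(v_2)])$ involves only non-positive powers of $t$), with the group-level assertion handled via the earlier remark that in exponential coordinates of the first kind the inverse condition is automatic and the product limit follows from the Campbell-Hausdorff expansion. Your explicit Campbell-Hausdorff bookkeeping for the second assertion and the identification of $[\cdot,\cdot]_\bullet$ with the associated graded bracket are correct elaborations rather than a different method.
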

\begin{proof} By linearity, it suffices to prove that for any $v_i\in \mathfrak n_{j_i}$, $i=1,2$,
$$\delta^{-1}_t ([\delta_t (v_1),\delta_t(v_2)])$$ has a limit when $t$ tends to infinity. By construction, $\delta_t(v_i)= t^{w^\mathfrak g_{j_i}} v_i$
and $[v_1,v_2]\in \mathfrak g^{\Sigma,w}_{w^\mathfrak g_{N}}= \oplus_{\ell\ge N}\mathfrak n_\ell$ where $w^\mathfrak g_N\ge w^\mathfrak g_{j_1}+w^\mathfrak g_{j_2}$,
 namely, $$[v_1,v_2] =\sum_{\ell=N}^{j^\mathfrak g_{\star} } f_\ell, \quad
 f_\ell\in \mathfrak n_\ell.$$
It follows that
$$\delta^{-1}_t ([\delta_t (v_1),\delta_t(v_2)])= \sum_{\ell=N}^{j^\mathfrak g_{\star}} t^{w^\mathfrak g_{i_1}+w^\mathfrak g_{i_2}-w^\mathfrak g_{\ell}} f_\ell .
$$
The limit of this expression when $t$ tends to infinity exists because  $w^\mathfrak g_{i_1}+w^\mathfrak g_{i_2}\le w^\mathfrak g_N\le w^\mathfrak g_\ell$ for all $\ell\ge N$.
If $w^\mathfrak g_N>w^\mathfrak g_{i_1}+w^\mathfrak g_{i_2}$, the limit is $0$. If $w^\mathfrak g_N=w^\mathfrak g_{i_1}+w^\mathfrak g_{i_2}$, the limit is $f_N$.
\end{proof}

\subsection{Building adapted exponential coordinates of the second kinds from $\Gamma$ and $\mu$}\label{S:9.3}}

In this  subsection, we start with the  given discrete torsion free nilpotent group $\Gamma$ (described, perhaps, by generators  and relations, or as a subgroup of a bigger group) and we explain how to construct the Lie group $G$ using well-known ideas related
 to coordinate of the second  kind. This is done in \cite{Mal,PHall} and we refer the reader to the treatment in \cite[Theorem 4.9, Section 4.3]{CMZ}.

\subsubsection*{Hall-Malcev coordinates}

Theorem 4.9 of \cite{CMZ} asserts that, for any finitely generated torsion free nilpotent group $\Gamma$, any descending central series (this means that $\Gamma_i/\Gamma_{i+1}$ is central in $\Gamma/\Gamma_{i+1}$ for each $1\le i\le n$)
$$\Gamma=\Gamma_1\rhd \Gamma_2\rhd \dots \rhd \Gamma_n\rhd \Gamma_{n+1}=\{e\}$$
with $\Gamma_i/\Gamma_{i+1}$ infinite cyclic, and any sequence of elements $\tau_i\in G$ such that $\Gamma_i= \langle \Gamma_{i+1},\tau_i\rangle$,
each element $\gamma\in \Gamma$ can be written uniquely
$$
\gamma= \tau_1^{u_1}\cdot \tau_2^{u_2}\cdot \dots \cdot \tau_n^{u_n},
\quad u_1,u_2,\dots,u_n\in \mathbb Z.
$$
 Moreover, for any $k\in \mathbb Z$ and any $\gamma'= \tau_1^{u'_1}\cdot \tau_2^{u'_2}\cdot \dots \cdot \tau_n^{u'_n}$,
 $$\gamma^k=  \tau_1^{g_1(u,k)}\cdot \tau_2^{g_2(u,k)}\cdot \dots \cdot \tau_n^{g_n(u,k)}$$
 and
 $$
 \gamma \cdot \gamma'=   \tau_1^{f_1(u,u')}\cdot \tau_2^{f_2(u,u')}\cdot \dots \cdot \tau_n^{f_n(u,u')} ,
 $$
where $u=(u_1,\dots,u_n)$, $u'=(u'_1,\dots,u'_n)$, and $f_i,g_i$, $1\le i\le n$ are polynomials with rational coefficients in there respective variables.

 Furthermore (\cite[Theorems 4.11-4.12 ]{CMZ}), by interpreting these coordinates in $\mathbb R^n$ instead of $\mathbb Z^n$, one obtains a simply
 connected nilpotent Lie group  of which $\Gamma$ is a discrete co-compact subgroup.

 For our present purpose, the task is to produce a descending central series
$$\Gamma=\Gamma_1\rhd \Gamma_2\rhd \dots \rhd \Gamma_n\rhd \Gamma_{n+1}=\{e\}$$
with $\Gamma_i/\Gamma_{i+1}$ infinite cyclic, which is adapted to the measure $\mu$.   Using the sequence $\Gamma^{\Sigma,w}_{w_j}$, $1\le j\le j_*$ is a good first guess. If each quotient $A_{w_j}=\Gamma^{\Sigma,w}_{w_j}/\Gamma^{\Sigma,w}_{w_{j+1}}$ is free abelian (i.e., has no torsion), then we  can produce a descending central series
$$\Gamma=\Gamma_1\rhd \Gamma_2\rhd \dots \rhd \Gamma_n\rhd \Gamma_{n+1}=\{e\}$$
which refines the sequence $\Gamma^{\Sigma,w}_{w_j}$, $1\le j\le j_*$, and has  $\Gamma_i/\Gamma_{i+1}$ infinite cyclic. In addition, we can find a sequence of elements $\tau_i\in \Gamma$, each of which is a commutator of the elements in $\Sigma$, such that $\Gamma_i= \langle G_{i+1},\tau_i\rangle$ and such that $$w(\tau_i)=w_j \mbox{ if and only if } \Gamma^{\Sigma,w}_{w_{j+1}}\supset \Gamma_i \supseteq \Gamma^{\Sigma,w}_{w_j}.$$

The problem we face is that it is NOT always the case that the groups $A_{w_j}$ are torsion free (even in the simplest of all cases when $\Gamma=\mathbb Z$!).

 \subsubsection*{Modified weight system on $\Gamma$}  Given a measure $\mu\in \mathcal{SM}(\Gamma)$ as in (\ref{mu}), we defined in Section \ref{sec-muw} a generating set $\Sigma =(\sigma_1,\dots,\sigma_\ell)$ and a weight system $w$ on formal commutators which generates the
 descending central sequence of subgroups $\Gamma^{\Sigma,w}_{w_j}$, $1\le j\le j_*$.

 Consider the finite set of all formal commutators over the alphabet $\Sigma$ which are not trivial in $\Gamma$ and organize that finite set as a long tuple
 $\Sigma_{\mbox{\tiny com}}=(c_1,\dots,c_L)$.  Let $\underline{\Sigma}_{\mbox{\tiny com}}=(\underline{c}_1,\dots,\underline{c}_L)$ be the evaluation of $\Sigma_{\mbox{\tiny com}}$ in $\Gamma$.

 Let us introduce a (modified) weight function, $\underline{w}$, on $\underline{\Sigma}_{\mbox{\tiny com}}$ by setting, for each $\underline{c}$ appearing in the tuple $\underline{\Sigma}_{\mbox{\tiny com}}$,
 $$
 \underline{w}(\underline{c})=\max \left\{ w_j: \exists \, m\in \mathbb{N},\;\underline{c}^m\in \Gamma^{\Sigma,w}_{w_j}, 1\le j\le j_*
 \right\}.
 $$
 For commutators whose evaluation in $\Gamma$ is trivial, we can set $\underline{w}(c)=\infty$.
 Following \cite[Section 2.2]{SCZ-nil}, we set
 $$\mbox{core}(\Sigma,w)=\{ \sigma_i:  \underline{w}(\sigma_i)=w(\sigma_i), 1\le i\le \ell\}.$$

 The function $\underline{w}$ is no less than $w$ and has the property that, if $c=[c_1,c_2]$ is nontrivial in $\Gamma$ then
 $$\underline{w}(\underline{c})\ge \underline{w}(\underline{c}_1)+ \underline{w}(\underline{c}_2) .$$
 It follows that the induced weight of a formal commutator  $\mathbf c $ over the alphabet $\underline{\Sigma}_{\mbox{\tiny com}}$ whose evaluation in $\Gamma$ is not  trivial is actually equal to  the $\underline{w}$ weight of the same commutator view as an element of $\underline{\Sigma}_{\mbox{\tiny com}}$. Moreover, $\mbox{core}(\Sigma_{\mbox{\tiny com}},\underline{w})=\Sigma_{\mbox{\tiny com}}.$

 \begin{defin} For any $s\ge 0$, let $\Gamma^{\mbox{\tiny com}}_s$  be the subgroup of $\Gamma$ generated by the values in $\Gamma$
of all the formal commutators over the alphabet $\Sigma$ with $ \underline{w}$-weight at least $s$. By construction
$\Gamma^{\mbox{\tiny com}}_t\subseteq \Gamma^{\mbox{\tiny com}}_s$ if $s\le t$. Also, $[\Gamma^{\mbox{\tiny com}}_s,\Gamma^{\mbox{\tiny com}}_t]\subseteq \Gamma^{\mbox{\tiny com}}_{s+t}.$
\end{defin}

\begin{defin} There is a greatest $t$ such that $\Gamma^{\mbox{\tiny com}}_t=\Gamma$, call it  $\underline{w}_1$.
By induction, having defined $\underline{w}_j$, define $\underline{w}_{j+1}$ to be the largest $t\in (\underline{w}_j,\infty]$ such that $\Gamma^{\mbox{\tiny com}}_t= \Gamma^{\mbox{\tiny com}}_s$ for all $\underline{w}_j<s\le t$. This defines a finite strictly increasing sequence
$$0<\underline{w}_1<\underline{w}_2<\dots< \underline{w}_j<\underline{w}_{j+1}<\dots <\underline{w}_{j^{\mbox{\tiny com}}_{*}+1} =\infty$$
such that
$$
\Gamma^{\mbox{\tiny com}} _{\underline{w}_{j+1}} \subset \Gamma^{\mbox{\tiny com}}_{\underline{w}_{j}},  \ \; \Gamma^{\mbox{\tiny com}}_{s}=\Gamma^{\mbox{\tiny com}}_{\underline{w}_j}
\ \mbox{ for } s\in (\underline{w}_{j-1},\underline{w}_j], \quad
\Gamma^{\mbox{\tiny com}}_{s}=\{e\}
\ \mbox{ for } s>\underline{w}_{j^{\mbox{\tiny com}}_*}.
$$
By construction $[\Gamma,\Gamma^{\mbox{\tiny com}}_{\underline{w}_j}]\subset \Gamma^{\mbox{\tiny com}}_{\underline{w}_{j+1}}.$
Call $A^{\mbox{\tiny com}}_{\underline{w}_j}$  the abelian group
$$
A^{\mbox{\tiny com}}_{\underline{w}_j}=\Gamma^{\mbox{\tiny com}}_{\underline{w}_j}/\Gamma^{\mbox{\tiny com}}_{\underline{w}_{j+1}},
\quad  1\le j\le j^{\mbox{\tiny com}}_*.
$$
\end{defin}
 The following lemma follows immediately from the construction outlined above.

 \begin{lem} \label{lem-tau}The groups $A^{\mbox{\tiny \rm com}}_{\underline{w}_j},  \;1\le j\le j^{\mbox{\tiny \rm com}}_*$ are free abelian and each is generated by a finite subset of the commutators $\underline{c} \in \Sigma_{\mbox{\tiny \rm com}}$. Consequently, there exists a   descending central series
 $$\Gamma=\Gamma_1\rhd \Gamma_2\rhd \dots \rhd \Gamma_d\rhd \Gamma_{d+1}=\{e\}$$
 refining  the descending central series
 $$ \Gamma=\Gamma^{\mbox{\tiny \rm  com}} _{\underline{w}_{1}}  \rhd \Gamma^{\mbox{\tiny \rm com}} _{\underline{w}_{2}}  \rhd \dots \rhd \Gamma^{\mbox{\tiny\rm  com}} _{\underline{w}_{j^{\mbox{\tiny com}}_*}}  \rhd \Gamma=\Gamma^{\mbox{\tiny \rm  com}} _{\underline{w}_{j_*^{\mbox{\tiny \rm com}}+1}}=\{e\}$$
 and a sequence $\tau_i=\underline{c}_{\ell_i}$, $1\le j\le d$, in $\Sigma_{\mbox{\tiny \rm com}}$ such that
 $\Gamma  _{i}/  \Gamma_{i+1} $ is an infinite cyclic,
 $ \Gamma _i=\langle \tau_i ,\Gamma_{i+1}\rangle$, $1\le i\le d$, and
 $$\Gamma^{\mbox{\tiny \rm  com}} _{\underline{w}_{j}}= \langle \tau_i: \underline{w}(\tau_i)\ge \underline{w}_j\rangle.$$
 \end{lem}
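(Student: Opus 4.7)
The plan is to establish that each abelian quotient $A^{\mbox{\tiny com}}_{\underline{w}_j} = \Gamma^{\mbox{\tiny com}}_{\underline{w}_j}/\Gamma^{\mbox{\tiny com}}_{\underline{w}_{j+1}}$ is free abelian of finite rank and then construct the refinement by lifting $\mathbb{Z}$-bases to evaluated commutators in $\Sigma_{\mbox{\tiny com}}$. Finite generation of $A^{\mbox{\tiny com}}_{\underline{w}_j}$ is immediate: the finite set $\{\underline{c} \in \Sigma_{\mbox{\tiny com}} : \underline{w}(\underline{c}) \ge \underline{w}_j\}$ generates $\Gamma^{\mbox{\tiny com}}_{\underline{w}_j}$ by definition, and $\Sigma_{\mbox{\tiny com}}$ itself is finite since the nilpotent class of $\Gamma$ bounds the length of non-trivial formal commutators. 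The real content is therefore torsion-freeness.

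The key step is to identify $\Gamma^{\mbox{\tiny com}}_{\underline{w}_j}$ with the isolator $I_\Gamma(\Gamma^{\Sigma,w}_{\underline{w}_j}) := \{\gamma \in \Gamma : \gamma^m \in \Gamma^{\Sigma,w}_{\underline{w}_j} \text{ for some } m \ge 1\}$, recalling that in any torsion-free finitely generated nilpotent group the isolator of a subgroup is again a subgroup. The containment $\Gamma^{\mbox{\tiny com}}_{\underline{w}_j} \subseteq I_\Gamma(\Gamma^{\Sigma,w}_{\underline{w}_j})$ is immediate from the definition of $\underline{w}$ on its generators. The reverse inclusion is the technical heart of the proof and is proved by descending induction on $j$, using commutator identities in $\Gamma$ to express any isolated element, modulo the next subgroup in the filtration, as a product of evaluated commutators of controlled $\underline{w}$-weight. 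Once this identification is secured, torsion-freeness of $A^{\mbox{\tiny com}}_{\underline{w}_j}$ follows at once: if $x \in \Gamma^{\mbox{\tiny com}}_{\underline{w}_j}$ satisfies $x^m \in \Gamma^{\mbox{\tiny com}}_{\underline{w}_{j+1}} = I_\Gamma(\Gamma^{\Sigma,w}_{\underline{w}_{j+1}})$, then $x^{mk} \in \Gamma^{\Sigma,w}_{\underline{w}_{j+1}}$ for some $k \ge 1$, whence $x$ itself lies in $I_\Gamma(\Gamma^{\Sigma,w}_{\underline{w}_{j+1}}) = \Gamma^{\mbox{\tiny com}}_{\underline{w}_{j+1}}$.

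With each $A^{\mbox{\tiny com}}_{\underline{w}_j}$ now finitely generated abelian and torsion-free, hence free abelian of some rank $r_j$, the refinement is assembled as follows. For each $j$, the images of the finite generating set $\{\underline{c} \in \Sigma_{\mbox{\tiny com}} : \underline{w}(\underline{c}) = \underline{w}_j\}$ span $A^{\mbox{\tiny com}}_{\underline{w}_j}$, and a Smith normal form argument (exploiting the richness of $\Sigma_{\mbox{\tiny com}}$) allows a $\mathbb{Z}$-basis represented by evaluated commutators $\tau^{(j)}_1, \ldots, \tau^{(j)}_{r_j} \in \Sigma_{\mbox{\tiny com}}$ to be extracted. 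Concatenating these in order of increasing $\underline{w}_j$ yields the tuple $\tau_1, \ldots, \tau_d$, and the subgroups $\Gamma_i := \langle \tau_i, \tau_{i+1}, \ldots, \tau_d \rangle$ form the sought descending central series refining $\{\Gamma^{\mbox{\tiny com}}_{\underline{w}_j}\}$: each quotient $\Gamma_i/\Gamma_{i+1}$ is infinite cyclic by construction, the centrality $[\Gamma, \Gamma_i] \subseteq \Gamma_{i+1}$ descends from $[\Gamma, \Gamma^{\mbox{\tiny com}}_{\underline{w}_j}] \subseteq \Gamma^{\mbox{\tiny com}}_{\underline{w}_{j+1}}$ combined with the ordering of the $\tau$'s, and the generating identity $\Gamma^{\mbox{\tiny com}}_{\underline{w}_j} = \langle \tau_i : \underline{w}(\tau_i) \ge \underline{w}_j\rangle$ is built in. The main obstacle is the reverse isolator inclusion in the first step: it requires a careful bookkeeping of how commutator relations in $\Gamma$ interact with the weight-promotion mechanism inherent to $\underline{w}$, and it is precisely the point at which the torsion-freeness hypothesis on $\Gamma$ is essential.
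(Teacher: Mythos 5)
The paper itself gives essentially no argument for this lemma (it is stated with the single sentence that it ``follows immediately from the construction outlined above'', leaning implicitly on the machinery of \cite{SCZ-nil} and the Malcev correspondence), so what must be judged is whether your supplied argument actually works — and its central step does not. The identification $\Gamma^{\rm com}_{\underline{w}_j}=I_\Gamma(\Gamma^{\Sigma,w}_{\underline{w}_j})$ with the isolator, which you yourself defer as ``the technical heart'', is not only left unproved but is false at the level of generality at which you argue. Take $\Gamma=\mathbb{Z}^2$ with $\mathcal{SM}$-data $H_1=\mathbb{Z}^2$, $S_1=\{\pm(1,0),\pm(0,1)\}$, $\alpha_1=1$ (weight $1$), and $H_2=\langle(2,2)\rangle$, $S_2=\{\pm(2,2)\}$, $\alpha_2=1/2$ (weight $2$). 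All commutators of length $\ge 2$ are trivial, so the evaluated commutators are just $(1,0),(0,1),(2,2)$; neither $(1,0)$ nor $(0,1)$ has a positive power in $\Gamma^{\Sigma,w}_{2}=\langle(2,2)\rangle$, so no weight promotion occurs and $\Gamma^{\rm com}_{2}=\langle(2,2)\rangle$, whereas the isolator is $I_\Gamma(\Gamma^{\Sigma,w}_{2})=\langle(1,1)\rangle$. The promotion mechanism in the definition of $\underline{w}$ only sees evaluated commutators, and an isolated element such as $(1,1)$ need not be a product of promoted commutator values; no amount of ``commutator identities with controlled weight'' bridges this, and with it your deduction of torsion-freeness collapses. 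In fact in this example $\Gamma^{\rm com}_{\underline{w}_1}/\Gamma^{\rm com}_{\underline{w}_2}=\mathbb{Z}^2/\langle(2,2)\rangle$ has $2$-torsion, which shows the conclusion is not a formal consequence of the definitions recorded in Section \ref{S:9.3} alone: any correct proof has to invoke finer input from \cite{SCZ-nil} about how $\Sigma$, $w$ and the promoted weights interact, input that your argument does not use (and that the paper's one-line justification leaves implicit).

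There is a second, independent gap in the final assembly step. From a finite generating set of a free abelian group one cannot in general extract a $\mathbb{Z}$-basis: $\{2,3\}$ generates $\mathbb{Z}$ but contains no basis element. So ``Smith normal form plus the richness of $\Sigma_{\rm com}$'' does not by itself produce elements $\tau_i\in\Sigma_{\rm com}$ with $\Gamma_i=\langle\tau_i,\Gamma_{i+1}\rangle$ and infinite cyclic successive quotients; one must actually exhibit, among the evaluated commutators of a given promoted weight, elements whose images form a (triangular) $\mathbb{Z}$-basis of $A^{\rm com}_{\underline{w}_j}$, and that again requires the specific structure of the commutator family and of the maximality built into $\underline{w}$ (the ``core'' analysis of \cite{SCZ-nil}), not a general linear-algebra argument. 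As written, both the torsion-freeness and the choice of the $\tau_i$ inside $\Sigma_{\rm com}$ remain unproved.
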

 Because of this lemma, it is clear that \cite[Theorems 4.9, 4.11,4.12]{CMZ} apply and provide a set of coordinate of the second kind
 $$
 \Gamma=\left\{\gamma=\tau_1^{u_1}\cdot \tau_2^{u_2}\cdot \dots \cdot \tau_d^{u_d}, \;u_1,u_2,\dots,u_d\in \mathbb Z
 \right\}$$
 for $\Gamma$, as well as an embedding of $\Gamma$ as a co-compact discrete subgroup a simply connected Lie group $G$
 \begin{align*} G =&\{g=\tau_1^{x_1}\cdot \tau_2^{x_2}\cdot \dots \cdot \tau_d^{x_d}, \;x_1,x_2,\dots,x_d\in \mathbb R\}\\
  =& \{g=\exp({x_1}\zeta_1)\cdot \exp({x_2}\zeta_2)\cdot \dots \cdot \exp({x_d}\zeta_d), \;x_1,x_2,\dots,x_d\in \mathbb R\},\end{align*}
 where $\zeta_i=\log \tau_i \in \mathfrak g$.

 \begin{defin}[Approximate group
 	dilation structure (second kind)]  \label{def-dilw2}
 In the exponential coordinate system of the second kind  $(x_i)_1^d$  introduced above, consider the group of straight dilations
$$\delta_t:\mathbb R^d\to \mathbb R^d,\quad t>0, \quad  x\mapsto \delta_t (x)=(t^{\underline{w}_i}x_i)_1^d.$$  \end{defin}

\begin{pro} The maps $(\delta_t)_{t>0}$ defined above form an approximate group dilation structure on $G=(\mathbb R^d,\cdot)$.
\end{pro}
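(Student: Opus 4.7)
My plan is to work directly in the Hall--Malcev coordinates $(x_i)_1^d$ of the second kind, in which multiplication in $G$ is expressed by polynomials
\[
(\tau_1^{x_1}\cdots\tau_d^{x_d})\cdot(\tau_1^{y_1}\cdots\tau_d^{y_d})=\tau_1^{f_1(x,y)}\cdots\tau_d^{f_d(x,y)},
\]
and inversion by polynomials $g_i$. Assign each coordinate $x_j$ (and $y_j$) the \emph{weighted degree} $\underline{w}_j$, and for a monomial $c\,\prod_j x_j^{\alpha_j}y_j^{\beta_j}$ set $\|\alpha,\beta\|:=\sum_j\underline{w}_j(\alpha_j+\beta_j)$. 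The whole argument reduces to a single

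\textbf{Key Claim.} Every monomial occurring in $f_i(x,y)$ satisfies $\|\alpha,\beta\|\le\underline{w}_i$, and every monomial in $g_i(x)$ satisfies $\sum_j\underline{w}_j\alpha_j\le\underline{w}_i$.

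Granted the Key Claim, the approximate-dilation property is immediate. Computing coordinate by coordinate,
\[
\bigl[\delta_{1/t}\bigl(\delta_t(x)\cdot\delta_t(y)\bigr)\bigr]_i
=t^{-\underline{w}_i}\,f_i\bigl(\delta_t(x),\delta_t(y)\bigr)
=\sum_{\alpha,\beta}c_{\alpha\beta}\,t^{\|\alpha,\beta\|-\underline{w}_i}\prod_j x_j^{\alpha_j}y_j^{\beta_j},
\]
so every exponent of $t$ is non-positive. As $t\to\infty$ only the ``critical'' monomials with $\|\alpha,\beta\|=\underline{w}_i$ survive and the rest vanish, and the resulting pointwise limit is polynomial in $(x,y)$, defining the $i$th coordinate of $x\bullet y$. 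The analogous computation applied to $g_i$ produces $\lim_{t\to\infty}\delta_{1/t}(\delta_t(x)^{-1})=x_\bullet^{-1}$. By Definition \ref{bullet} this exhibits $(\delta_t)_{t>0}$ as an approximate group dilation structure on $G$.

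The real work lies in establishing the Key Claim. My plan is to exploit Lemma \ref{lem-tau}, which provides a Hall--Malcev basis compatible with the filtration in the strong sense that $\Gamma^{\mbox{\tiny com}}_s=\{\tau_{i_1}^{c_1}\cdots\tau_{i_r}^{c_r}:\underline{w}_{i_\ell}\ge s\}$. Combined with $[\Gamma^{\mbox{\tiny com}}_s,\Gamma^{\mbox{\tiny com}}_t]\subseteq\Gamma^{\mbox{\tiny com}}_{s+t}$, this yields commutation identities
\[
\tau_k^b\tau_j^a=\tau_j^a\tau_k^b\cdot\!\!\prod_{\ell\,:\,\underline{w}_\ell\ge\underline{w}_j+\underline{w}_k}\!\!\tau_\ell^{q_\ell(a,b)},
\]
in which each monomial of $q_\ell$ is checked (inductively, via universal Hall--Petresco identities, starting from the bilinear case $q_\ell=c_\ell ab$ coming from a single simple commutator) to have weighted degree at most $\underline{w}_\ell$. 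Reducing the product $\tau_1^{x_1}\cdots\tau_d^{x_d}\cdot\tau_1^{y_1}\cdots\tau_d^{y_d}$ to canonical form by iterated adjacent swaps, and tracking weighted degrees through every swap, then establishes the Key Claim.

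The bookkeeping behind the Key Claim is the main obstacle, and an arguably cleaner route is to compare with the first-kind construction of Section \ref{S:9.2}. The change of variables $\Phi^{-1}\circ\Psi$ between exponential coordinates of the first and second kind is polynomial and, because both constructions are anchored in the same filtration data, preserves the weighted-degree grading. Granted this compatibility, Definition \ref{def-dilw2} is conjugate to an instance of Definition \ref{def-dilw} applied to an adapted direct sum decomposition in the Lie algebra (Definition \ref{def-directsum}), and the desired statement reduces to the proposition already proved for first-kind approximate Lie dilations.
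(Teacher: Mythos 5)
The paper states this proposition without an explicit proof; the justification it has in mind is precisely your second route, and it is spelled out in Subsection \ref{S:9.4}: each $\zeta_i=\log\tau_i$ lies in the filtration piece $\mathfrak g^{\Sigma,w}_{\underline{w}_i}$, so the Campbell--Hausdorff formula (brackets add weights) gives the change of coordinates $M$ between the adapted first- and second-kind systems the triangular form $M_i(y)=y_i+m_i(y)$ with $m_i$ of weight at most $\underline{w}_i$, whence $M_t:=\delta_{1/t}\circ M\circ\delta_t\to M^\infty$, and the second-kind limits then follow from the first-kind proposition of Subsection \ref{S:9.2}. Two points to tighten. Your phrase ``conjugate to an instance of Definition \ref{def-dilw}'' is not literally correct: both coordinate systems carry the same straight dilations and $M$ does not intertwine them; what you actually use is the identity $\delta_{1/t}\bigl(\delta_t(y)\cdot\delta_t(y')\bigr)=M_t^{-1}\bigl(\delta_{1/t}(\delta_t(M_t(y))\cdot'\delta_t(M_t(y')))\bigr)$, where $\cdot'$ denotes the first-kind product, together with convergence of $M_t$, of $M_t^{-1}=(M^{-1})_t$ (valid because $M^{-1}$ inherits the same weighted triangular structure), and of the rescaled first-kind products; the same sandwich handles the inverse map. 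Your first route is also sound --- the Key Claim is true, and your rescaling display then finishes the proof --- but, as you acknowledge, the Hall--Petresco/collection bookkeeping behind the Key Claim is only sketched, and the most economical complete justification of it is again the Campbell--Hausdorff weight count, so the two routes really collapse to the same computation. Written out at that level of detail, your proof is correct and coincides with the paper's intended argument.
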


  By the same token, we obtain an associated coordinate system of the first kind
$$(y_1,\dots,y_d)\mapsto \exp\left( \sum _1^d y_i \zeta_i\right),$$ which is compatible with the weight $w^\mathfrak g$ introduced earlier and such that $b_i=w^\mathfrak g(\zeta_i)=\underline{w}(\tau_i)$, $1\le i\le d$.  The straight dilation groups we introduced in these two  distinct coordinate systems  have the same exponents $b_i$ in their respective coordinate systems. Viewed as maps from $G$ to $G$, they are clearly different in general even so we use the same notation $\delta_t$ in both cases;
see Example \ref{E:3.4} for such an example where the matrix coordinate system is an exponential coordinate system of second kind.
 This is because there are really no good reasons to consider both coordinate systems at the same time, except to understand that this parallel constructions yield compatible results at the end.

 \subsection{Relations between the filtrations associated with $w$, $w^\mathfrak g$ and  \underline{$w$}}\label{S:9.4}

Although there are great similitudes in the construction of the (discrete group) filtrations $\Gamma^{\Sigma,w}_{w_j}$, $1\le j\le j_*$,
and $\Gamma^{\mbox{\tiny com}}_{\underline{w}_j}$, $1\le j\le j^{\mbox{\tiny com}}_*$, of the group $\Gamma$, and
the (Lie algebra) filtration $\mathfrak g^{\Sigma,w}_{\mathfrak w^\mathfrak g_j}$, $1\le j\le j^\mathfrak g_\star$ of $\mathfrak g$, there are also differences.

We start with a comparison of the coordinates of the first and second kind in this context. It is not hard to see that the definitions of the sequences
$w^\mathfrak g_j$, $1\le j\le j^\mathfrak g_\star$, and $\underline{w}_i$, $1\le i\le j^{\mbox{\tiny com}}_*$, and the above remark concerning the relations between group and Lie algebra commutators, imply that these sequences of weights are actually equal, that is,
\begin{equation}\label{equalw}j^\mathfrak g_\star=j^{\mbox{\tiny com}}_*\;\mbox{ and }\quad w^\mathfrak g_j=\underline{w}_j, \quad 1\le j\le j^\mathfrak g_\star.\end{equation}

More generally, each (discrete group) formal commutator $\boldsymbol \tau$ on the alphabet $\Sigma=(\sigma_1,\dots,\sigma_\ell)$ corresponds in an obvious formal way to a formal Lie commutator  $\boldsymbol \theta$ on the alphabet  $\Sigma^\mathfrak g=(\zeta_1,\dots,\zeta_\ell)$
in such a way that the Campbell-Hausdorff formula provides
a formal equality
\begin{equation}\label{formalCH}
 \boldsymbol \tau =\exp(\boldsymbol \zeta)=\exp ( \boldsymbol \theta +\mathbf R_{\boldsymbol \tau}),
\end{equation}
where $\mathbf R_{\boldsymbol \tau}$ is a formal series of Lie commutators with $w^\mathfrak g$-weights strictly larger than $\underline{w}(\boldsymbol \tau)=w^\mathfrak g(\boldsymbol \theta)$.
The concrete meaning of this formal identity in the present context is that it is an equality when evaluated  over any pair $\Gamma\subset G$
where $G$ is a simply connected nilpotent Lie group with algebra $\mathfrak g$, with the formal series $\mathbf R_{\boldsymbol \sigma}$ reducing to a finite sum. Obviously, the evaluation $\theta$ of $\boldsymbol \theta$ in $\mathfrak g$ belongs to $\mathfrak g^{\Sigma,w}_{w^\mathfrak g(\boldsymbol\theta)}$. It follows that the evaluation $\zeta=\theta+R_\sigma$ of $\boldsymbol \zeta$ in $\mathfrak g$ also belongs to $\mathfrak g^{\Sigma,w}_{w^\mathfrak g(\boldsymbol\theta)}$.

\subsubsection*{Two choices of exponential coordinate of the first kind} From the discussion above, it becomes clear that there are at least two very natural
exponential coordinate systems of the first kind associated with the sequence  $(\tau_i)_1^d$  of elements of $\Gamma$ given by Lemma \ref{lem-tau}.

\subsubsection*{Choice 1: Lie commutators} Each $\tau_i$ is a commutator built on $\Sigma=(\sigma_i)_1^\ell$. Let $\theta_i$ be the Lie commutator over $\Sigma^\mathfrak g=(\varsigma_i)_1^\ell$ that corresponds formally to $\tau_i$. Here $\sigma_i=\exp(\varsigma_i)$ as before and the last sentence means that   $\theta=[\varsigma,\varsigma']$ if $\tau=[\sigma,\sigma']$ with $\sigma=\exp(\varsigma),\sigma'=\exp(\varsigma')$. By (\ref{formalCH}) $\underline{w}(\tau_i)=w^\mathfrak g(\theta_i)$, and the final subsequence of $(\theta_i)_1^n$ corresponding to those $i$ such that
$\underline{w}(\tau_i)\ge w^\mathfrak g_j=\underline{w}_j$ is a linear basis of $ \mathfrak g^{\Sigma,w}_{w^\mathfrak g_j}$. In particular,
$$\underline{\mathfrak n}_j=\left\{ \zeta\in \mathfrak g: \zeta=\sum_{i: \underline{w}(\tau_i)=w^\mathfrak g_j} z_i \theta_i , \;z_i\in \mathbb R \right\},\quad  1\le j\le j^\mathfrak g_\star$$
provides an adapted direct sum decomposition of $\mathfrak g$ in the sense of Definition \ref{def-directsum}.

\subsubsection*{Choice 2:  Logarithms of group commutators} Each $\tau_i$ can be written uniquely as $\tau_i=\exp(\zeta_i)$,  where $\zeta_i$ and $\theta_i$ are related via (\ref{formalCH}). It follows that the final subsequence of $(\zeta_i)_1^n$ corresponding to those $i$ such that
$\underline{w}(\tau_i)\ge w^\mathfrak g_j=\underline{w}_j$ is (also) a linear basis of $ \mathfrak g^{\Sigma,w}_{w^\mathfrak g_j}$. In particular,
$$\underline{\mathfrak n}'_j=\left\{ \zeta\in \mathfrak g: \zeta=\sum_{i: \underline{w}(\tau_i)=w^\mathfrak g_j} z_i \zeta_i , \;z_i\in \mathbb R \right\},\quad  1\le j\le j^\mathfrak g_\star$$
provides an adapted direct sum decomposition of $\mathfrak g$ in the sense of Definition \ref{def-directsum}.

From the description of these two related coordinate systems, it follows that, for each $j\in \{1,\dots, j^{\mbox{\tiny com}}_*=j^\mathfrak g_\star\}$, the group $\Gamma^{\mbox{\tiny com}}_{\underline{w}_j}$ is a co-compact discrete subgroup of the Lie group
$\exp(\mathfrak g^{\Sigma,w}_{w^\mathfrak g_j}) $ (recall that ${\underline{w}_j}=w^\mathfrak g_j$). Note that, by definition, any two exponential coordinate systems are always related by a linear change of basis in $\mathfrak g$. In the present case, these linear changes of coordinates have an obvious triangular form with unit diagonal and they respect the increasing filtration $ \mathfrak g^{\Sigma,w}_{w^\mathfrak g_j}$, $1\le j\le j^\mathfrak g_\star$.

The following proposition records the relations between the objects related  to the original weight system $w$ on $\Gamma$ and those related to the Lie algebra weight $w^\mathfrak g$. The proof follows classical arguments developed in \cite{Mal}, see also \cite[Appendix]{GrTa} and \cite{SCZ-nil,CMZ}. It is omitted.

\begin{pro} \label{pro-twoweights}
The finite sequence of weight-values $w^\mathfrak g_j$, $1\le j\le j^\mathfrak g_{j_\star}$, is a subsequence of the
increasing finite sequence of weight-values $w_j$, $1\le j\le j_*$, and $w^{\mathfrak g}_{j_\star}=w_{j_*}$.  If $w^{\mathfrak g}_{i-1} <w_j\le  w^{\mathfrak g}_i$  for some
$1\le i \le j \le j_*$, then
$$\Gamma^{\Sigma, w}_j\subset \exp(\mathfrak g^{\Sigma,w}_{\mathfrak w^\mathfrak g_i})$$
and the quotient
$$  \exp(\mathfrak g^{\Sigma,w}_{\mathfrak w^\mathfrak g_i})/ \Gamma^{\Sigma, w}_{w_j}$$
is compact.
If $j\in \{1,\dots, j_*\}$ is such that the value $w_j$ does not appear in $(w^\mathfrak g_i)_1^{j^\mathfrak g_{\star}}$, then
$$\Gamma^{\Sigma,w}_{w_{j+1}}/\Gamma^{\Sigma,w}_{w_j} \mbox{ is a finite abelian group}.$$
\end{pro}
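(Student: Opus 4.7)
The plan is to exploit the formal Campbell--Hausdorff identity \eqref{formalCH} together with the identification $\underline{w}_j = w^{\mathfrak g}_j$ recorded in \eqref{equalw} as the bridge between the group filtration on $\Gamma$ and the Lie algebra filtration on $\mathfrak g$. By the very definition of $\underline w$, the value $\underline w(\underline c)$ of any non-trivial commutator is one of $w_1,\dots,w_{j_*}$; hence $(\underline w_j)_1^{j^{\mbox{\tiny com}}_*} = (w^{\mathfrak g}_j)_1^{j^{\mathfrak g}_\star}$ is a subset, and by strict monotonicity a subsequence, of $(w_j)_1^{j_*}$. The equality of the top terms $w^{\mathfrak g}_{j^{\mathfrak g}_\star} = w_{j_*}$ will then follow from the torsion-freeness of $\Gamma$: for any formal commutator $c$ with $w(c) = w_{j_*}$ whose evaluation is non-trivial in $\Gamma$, no non-zero power of $c$ can lie in $\Gamma^{\Sigma,w}_{w_{j_*+1}} = \{e\}$, and therefore $\underline w(c) = w_{j_*}$.

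Next, for the inclusion $\Gamma^{\Sigma,w}_{w_j}\subset \exp(\mathfrak g^{\Sigma,w}_{w^{\mathfrak g}_i})$ when $w^{\mathfrak g}_{i-1}<w_j\le w^{\mathfrak g}_i$, I will examine a typical generator $c$ of $\Gamma^{\Sigma,w}_{w_j}$, i.e., a non-trivial commutator with $w(c)\ge w_j$. Since $\underline w(c) \ge w(c) \ge w_j > w^{\mathfrak g}_{i-1}$ and $\underline w(c)$ is itself one of the $w^{\mathfrak g}$-values, this forces $\underline w(c)\ge w^{\mathfrak g}_i$. The Campbell--Hausdorff relation \eqref{formalCH} then expresses $c=\exp(\zeta)$ with $\zeta\in \mathfrak g^{\Sigma,w}_{w^{\mathfrak g}_i}$, yielding the desired inclusion. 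For the compactness of the quotient $\exp(\mathfrak g^{\Sigma,w}_{w^{\mathfrak g}_i})/\Gamma^{\Sigma,w}_{w_j}$, I will use that Malcev co-compactness makes $\exp(\mathfrak g^{\Sigma,w}_{w^{\mathfrak g}_i})/\Gamma^{\mbox{\tiny com}}_{\underline w_i}$ compact, and reduce to showing $[\Gamma^{\mbox{\tiny com}}_{\underline w_i}:\Gamma^{\Sigma,w}_{w_j}]<\infty$: every generator of $\Gamma^{\mbox{\tiny com}}_{\underline w_i}$ has, by definition of $\underline w$, a non-trivial power in $\Gamma^{\Sigma,w}_{w^{\mathfrak g}_i}\subset \Gamma^{\Sigma,w}_{w_j}$, so the finitely generated nilpotent quotient has only torsion generators and is therefore finite.

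Finally, for the last assertion, the plan is to show that when $w_j$ does not appear in the sequence $(w^{\mathfrak g}_i)$, every generator of the finitely generated abelian group $A_{w_j}=\Gamma^{\Sigma,w}_{w_j}/\Gamma^{\Sigma,w}_{w_{j+1}}$ is torsion. These generators come from commutators $c$ with $w(c) = w_j$ (those with $w(c)\ge w_{j+1}$ being already trivial in the quotient). For such $c$ we have $\underline w(c)\ge w_j$; since $w_j$ is not a $\underline w$-value, necessarily $\underline w(c)>w_j$, and as $\underline w(c)\in\{w_1,\dots,w_{j_*}\}$ this forces $\underline w(c)\ge w_{j+1}$, so a suitable power of $c$ lies in $\Gamma^{\Sigma,w}_{w_{j+1}}$. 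A finitely generated abelian group whose generators are all torsion is finite, proving the claim. The main bookkeeping obstacle throughout is to keep careful control of the Campbell--Hausdorff correspondence \eqref{formalCH} — namely, the fact that the error term $\mathbf R_{\boldsymbol\tau}$ consists of Lie brackets of strictly larger $w^{\mathfrak g}$-weight, so that membership in the filtered Lie algebra $\mathfrak g^{\Sigma,w}_{w^{\mathfrak g}_i}$ is preserved under the exp/log comparison — but this is exactly what the formal Campbell--Hausdorff discussion of Subsection \ref{S:9.4} and the treatment in \cite{Mal,SCZ-nil} provide.
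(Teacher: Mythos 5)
The paper gives no written proof of Proposition \ref{pro-twoweights} (it defers to the classical Malcev-type arguments of \cite{Mal,GrTa,SCZ-nil,CMZ}), so I can only judge your outline on its own terms. Its overall shape is reasonable — Campbell--Hausdorff weight transfer, commensurability via torsion quotients, and the co-compactness of $\Gamma^{\mbox{\tiny com}}_{\underline{w}_i}$ in $\exp(\mathfrak g^{\Sigma,w}_{w^{\mathfrak g}_i})$, which the paper does assert before the proposition — and your ``finitely generated nilpotent group generated by torsion elements is finite'' and finite-index-sublattice steps are fine. But there is a genuine gap which you use twice: you identify the values \emph{attained} by $\underline{w}$ on non-trivial commutators with the \emph{jump values} $\underline{w}_i=w^{\mathfrak g}_i$ of the filtration $\Gamma^{\mbox{\tiny com}}_s$ (``$\underline{w}(c)$ is itself one of the $w^{\mathfrak g}$-values''; ``since $w_j$ is not a $\underline{w}$-value, necessarily $\underline{w}(c)>w_j$''). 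By definition $\underline{w}(c)$ is one of the $w_k$'s, and the jump values form a subset of the attained values, but the converse is precisely what is at issue: a commutator could a priori have $\underline{w}(c)=w_j$ while adding such commutators does not change the generated subgroup, so that $w_j$ is not a jump value. Since the hypothesis of the last assertion is only that $w_j$ is absent from the list $(w^{\mathfrak g}_i)$ of jump values, your one-line deduction $\underline{w}(c)\ge w_{j+1}$ — the heart of the finiteness claim — is unjustified as written. It can be repaired, e.g.: if $v$ is attained by some non-trivial $c$ and the next attained value is $v'>v$, then either $v$ is a jump value or $c\in\Gamma^{\mbox{\tiny com}}_{v'}$, and since $\Gamma^{\mbox{\tiny com}}_{v'}/\Gamma^{\Sigma,w}_{v'}$ is finite (your torsion argument), some power of $c$ lies in $\Gamma^{\Sigma,w}_{v'}$, contradicting $\underline{w}(c)=v$; alternatively, run your co-compactness/finite-index argument directly on $\Gamma^{\Sigma,w}_{w_j}\subset\exp(\mathfrak g^{\Sigma,w}_{w^{\mathfrak g}_i})$ with $w^{\mathfrak g}_{i-1}<w_j<w^{\mathfrak g}_i$ to produce a power of each generator inside $\Gamma^{\Sigma,w}_{w^{\mathfrak g}_i}\subseteq\Gamma^{\Sigma,w}_{w_{j+1}}$.

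A second, smaller misstep is in the inclusion $\Gamma^{\Sigma,w}_{w_j}\subset\exp(\mathfrak g^{\Sigma,w}_{w^{\mathfrak g}_i})$: the identity \eqref{formalCH} controls the \emph{formal} weight, giving $\log c=\theta+R_c\in\mathfrak g^{\Sigma,w}_{w(c)}$; it does not convert the inequality $\underline{w}(c)\ge w^{\mathfrak g}_i$ into $\log c\in\mathfrak g^{\Sigma,w}_{w^{\mathfrak g}_i}$, so the sentence ``CH then expresses $c=\exp(\zeta)$ with $\zeta\in\mathfrak g^{\Sigma,w}_{w^{\mathfrak g}_i}$'' does not follow from what precedes it. The detour through $\underline{w}$ is unnecessary here: from $w(c)\ge w_j$ you get $\log c\in\mathfrak g^{\Sigma,w}_{w(c)}\subseteq\mathfrak g^{\Sigma,w}_{w_j}$, and by Definition \ref{filtration} the Lie filtration is constant on $(w^{\mathfrak g}_{i-1},w^{\mathfrak g}_i]$, so $\mathfrak g^{\Sigma,w}_{w_j}=\mathfrak g^{\Sigma,w}_{w^{\mathfrak g}_i}$; since the exponential of a subalgebra is a subgroup, the inclusion follows for all generators and hence for $\Gamma^{\Sigma,w}_{w_j}$. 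With these two repairs (and reading the displayed quotient in the statement as $\Gamma^{\Sigma,w}_{w_j}/\Gamma^{\Sigma,w}_{w_{j+1}}$, clearly the intended one), your compactness and finiteness conclusions go through.
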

The following is an immediate corollary.

\begin{cor} \label{cor-twoweights}
  $\gamma_0 (\Sigma,w) 
=\sum_1^{j_*}w_j \mbox{\em Rank}(\Gamma^{\Sigma,w}_{w_j}/ \Gamma^{\Sigma,w}_{w_{j+1}})=\sum_1^{j^\mathfrak g_\star}\mathfrak w^{\mathfrak g}_j \dim(\mathfrak n_j).$
\end{cor}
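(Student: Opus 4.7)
The plan is to treat the corollary as a direct bookkeeping consequence of Proposition \ref{pro-twoweights} together with the definition of $\gamma_0(\Sigma,w)$ and the definition of the adapted direct sum decomposition. The first equality in the statement is essentially a restatement of the definition of $\gamma_0(\Sigma,w)$ given in Section \ref{sec-muw}: by definition we set $\gamma_0(\Sigma,w)=\sum_{j=1}^{j_*}w_j\,\mathrm{Rank}(A_{w_j})$ where $A_{w_j}=\Gamma^{\Sigma,w}_{w_j}/\Gamma^{\Sigma,w}_{w_{j+1}}$, so nothing needs to be proved here.

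For the second equality, I would first use the second assertion of Proposition \ref{pro-twoweights} to discard all terms with weight values not occurring in the Lie-algebra weight sequence: if $w_j\notin\{w^{\mathfrak g}_i:1\le i\le j^{\mathfrak g}_\star\}$, the quotient $A_{w_j}$ is finite abelian and hence has rank zero, so those $j$ contribute nothing to the sum. Consequently, the sum over $j\in\{1,\dots,j_*\}$ collapses to a sum over the subsequence indexed by $i\in\{1,\dots,j^{\mathfrak g}_\star\}$, so that
\[
\gamma_0(\Sigma,w)=\sum_{i=1}^{j^{\mathfrak g}_\star}w^{\mathfrak g}_i\,\mathrm{Rank}\bigl(\Gamma^{\Sigma,w}_{w^{\mathfrak g}_i}/\Gamma^{\Sigma,w}_{w^{\mathfrak g}_{i+1}}\bigr),
\]
where here $w^{\mathfrak g}_{j^{\mathfrak g}_\star+1}$ is interpreted as the smallest $w_{j}$ strictly greater than $w^{\mathfrak g}_{j^{\mathfrak g}_\star}$ (or $+\infty$), and any intermediate finite quotients have been absorbed.

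Next, for each index $i$ I would use the first part of Proposition \ref{pro-twoweights} to compute the rank of the remaining cyclic-free part. That proposition gives us the nesting $\Gamma^{\Sigma,w}_{w^{\mathfrak g}_i}\subset\exp(\mathfrak g^{\Sigma,w}_{\mathfrak w^{\mathfrak g}_i})$ with compact quotient, so $\Gamma^{\Sigma,w}_{w^{\mathfrak g}_i}$ is a co-compact discrete subgroup of the simply connected nilpotent Lie subgroup $\exp(\mathfrak g^{\Sigma,w}_{\mathfrak w^{\mathfrak g}_i})$. A co-compact lattice in such a Lie group has torsion-free rank equal to the real dimension of the ambient Lie group, so
\[
\mathrm{Rank}\bigl(\Gamma^{\Sigma,w}_{w^{\mathfrak g}_i}/\Gamma^{\Sigma,w}_{w^{\mathfrak g}_{i+1}}\bigr)=\dim\mathfrak g^{\Sigma,w}_{\mathfrak w^{\mathfrak g}_i}-\dim\mathfrak g^{\Sigma,w}_{\mathfrak w^{\mathfrak g}_{i+1}}=\dim(\mathfrak n_i),
\]
the last equality being the content of the adapted direct sum decomposition in Definition \ref{def-directsum}. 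Summing over $i$ gives $\sum_{i=1}^{j^{\mathfrak g}_\star}\mathfrak w^{\mathfrak g}_i\dim(\mathfrak n_i)$, as required.

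The main obstacle, if any, is keeping clean track of the reindexing: the $w_j$-sequence can contain values between two consecutive $w^{\mathfrak g}_i$'s and one must verify that all these intermediate terms either coincide with some $w^{\mathfrak g}_i$ or contribute via finite quotients only. This is entirely contained in Proposition \ref{pro-twoweights}, so the corollary really follows by combining the two bullets of that proposition with the lattice-rank/dimension identity, and there is no genuinely new computation to perform.
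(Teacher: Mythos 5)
Your argument is correct and is exactly the bookkeeping the paper intends: Corollary \ref{cor-twoweights} is stated there without proof as an ``immediate corollary'' of Proposition \ref{pro-twoweights}, and your two steps --- discarding the weight values $w_j$ not occurring in $(w^{\mathfrak g}_i)$ because the corresponding quotients are finite and hence of rank zero, then identifying the surviving ranks with $\dim(\mathfrak n_i)$ via the co-compactness statement, Malcev's rank-equals-dimension fact for lattices, and additivity of the torsion-free rank (Hirsch length) along the filtration --- are precisely the omitted justification. Nothing further is needed.
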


\subsubsection*{An associated exponential coordinate system of the second kind}  By the Hall-Malcev construction reviewed in Section \ref{S:9.3}, the sequence $(\tau_i)_1^d$  of elements of $\Gamma$ given by Lemma \ref{lem-tau} and the sequence of their logarithm $(\zeta_i)_1^d$ in $\mathfrak g$ give us an exponential
coordinate system of the second kind in which an element $g$ of the group $G$ is written
$$
g=\prod_1^d \exp( y_i \zeta_i),\quad y=(y_i)_1^d\in \mathbb R^d.
$$
Recall that we also have exponential coordinates  $(x)_1^d\in \mathbb R^d $ of the first kind such that
$$g=\exp\left(\sum x_i\zeta_i\right).$$
By using the Campbell-Hausdorf formula, we obtain a polynomial map
$$x= M(y) =(M_i(y))_1^d  \ \mbox{ such that } \  g=\prod_1^d \exp( y_i \zeta_i)=\exp\left(\sum x_i\zeta_i\right)$$
and this map has a specific  triangular structure which can be described as follows.  For a multi-index of length $q$,  $I=(i_1,\dots,i_q)\in\{1,\dots,d\}^q$,
set $\underline{w}_I=\sum_1^q w_{i_j}$.  We say that a polynomial $p$ in the coordinate $(y_i)_1^d$ has weight at most $w$ if it can be written as a linear combination of  $y^I=y_{i_1}\dots y_{i_q}$   with $\underline{w}_I\le w.$
Then the map $M$ has the form
$$ M_i(y)=y_i +m_{i}(y), $$
where $m_i$ is a polynomial of  weight at most $\underline{w}_i$ with no linear terms.

Let us use the notation
$$\delta_t: \mathbb R^d\to \mathbb R^d,  \  u=(u_i)_1^d\mapsto \delta_t(u)=(t^{b_i}u_i)_1^d, \quad b_i=\underline{w}_i, \; t>0,
$$
and note that we can use these dilations in the $x$ coordinate system  as a well as in the $y$ coordinate system discussed above. We find that
$$\delta_{1/t}\circ M \circ \delta_t (y)= y_i + t^{-b_i} m_i(\delta_t (y)).$$
Because $m_i$ has weight at most $b_i$, this expression has a limit when $t$ tends to infinity which is of the form
$$y_i+m_i^\infty(y),$$
where $m^\infty_i$ is a linear combination of terms of weight exactly $b_i$.  This defines a polynomial map
$$M^\infty:\mathbb R^d\to \mathbb R^d,$$
which is a group isomorphism between the limit groups $G^1_\bullet $ (obtained by using the approximate group dilations $\delta_t$ in the exponential coordinates of the first kind) and the group $G^2_\bullet$ (obtained by using the approximate group dilations $\delta_t$ in the exponential coordinates of the second kind).

\subsection{More choices of coordinate systems} \label{S:9.5}

There are many more possible choices of exponential coordinates of first and second kind that suit our needs.  The key structure that must be preserved for our purpose is the filtration $\mathfrak g^{\Sigma,w}_j$, $1\le j\le j^{\mathfrak g}_\star$, of the Lie algebra $\mathfrak g$ which is canonically associate to $\Sigma,w$.  After that, a number of choices have to be made, the first of which is the choice of  the direct sum $\mathfrak g=\oplus_{j=1}^{j^{\mathfrak g}_\star} \mathfrak n_j$ so that
$$\mathfrak g^{\Sigma,w}_j= \oplus_{i\ge j} \mathfrak n_j.$$
One then need to pick  an adapted linear basis $\boldsymbol \eps =(\eps_i)_1^d$. Any such   
choice gives both an exponential coordinate system of the first kind
$$
g=\exp\left(\sum_1^d x_i \eps_i\right), \quad x=(x_i)_1^d\in \mathbb R^d,
$$
and
an exponential coordinate system of the second kind
$$
g=\prod_{i=1}^d \exp\left(y_i \eps_i\right), \quad y=(y_i)_1^d\in \mathbb R^d.
$$
Each of these choices of coordinates, call it $(u_1,\dots,u_d)\in \mathbb R^d$, comes with its own straight approximate group dilations
\begin{equation}\label{def-delta}
\delta_t: \mathbb R^d\to \mathbb R^d,
 \  u=(u_i)_1^d\mapsto \delta_t(u)=(t^{b_i}u_i)_1^d, \quad b_i=\underline{w}_i,\;t>0.
 \end{equation}

Everything that has been said above for the special case $\eps_i=\zeta_i$ applies as well to these other choices (including the properties of the maps $M$ and $M^\infty$). The choice $\boldsymbol \eps=\boldsymbol \zeta$ is justified mostly by the fact that, in that coordinate system, the discrete group $\Gamma$ is represented as a set as $\mathbb Z^d\subset  \mathbb R^d$. This is not the case in most other coordinate systems. If one remains in the class of exponential coordinate systems of the first type, moving from one such system to another is captured by a linear change of coordinate in $\mathbb R^d=\mathfrak g$. If one  move from  a system of the first kind to one of the second kind or between   systems of the second kind, the maps capturing the changes of coordinates are polynomial maps with a special structure reflecting the preservation of the filtration $\mathfrak g^{\Sigma,w}_j$, $1\le j\le j^{\mathfrak g}_\star$, of the Lie algebra $\mathfrak g$ (the best way to think of a change of coordinates involving at least one system of the second kind is to pass through the associated system of the first kind: this step is described by the map $M$ above).

\subsection{Comparison of the quasi-norms on $\Gamma, G$ and $G_\bullet$}\label{S:9.6}

Consider the exponential coordinate systems of first and second type associated with a basis $\boldsymbol \eps=(\eps_i)_1^d$ adapted to the filtration $\mathfrak g^{\Sigma,w}_j$ of the Lie algebra $\mathfrak g$ as considered in the preceding subsection. It comes with a family of approximate dilations given by \eqref{def-delta}.
On $\mathbb R^d$, consider the usual Euclidean norm $\|\cdot\|_2$ and the quasi-norm
$$
N_w(z)=\max_{1\le i\le d}\{|z_i|^{1/b_i}\},
\quad  b_i=\underline{w}_i=w^\mathfrak g_i,
$$
and note that, for all $z\in \mathbb R^d$,  $N_w(\delta_t(z))= tN_w(z)$.
The structure of the change of coordinate map $M$ between exponential coordinates of the first ($x=(x_i)_1^d$) and second ($y=(y_i)_1^d)$ type  shows that there are constants $0<c\le C <\infty$ such that, if $g=\exp(\sum_1^d x_i \zeta_i)=\prod_1^d\exp(y_i \zeta_i)$ then
$$cN_w(x)\le N_w(y)\le CN_w(x).$$

\begin{lem}  Referring to the above setup and notation,
there is a constant $C_*$
such that for any  $R\ge 1$ and  any $\zeta_i\in \mathfrak g$  with
$N_w(\zeta_i)\le R,\;i=1,2,$ we have
$\exp(\zeta_1)\exp(\zeta_2)=\exp(\zeta) $
with  $$N_w(\zeta)\le  C_* R.$$
\end{lem}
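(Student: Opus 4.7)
The plan is to exploit the homogeneity $N_w(\delta_t(z))=tN_w(z)$ in order to reduce the claim to a uniform boundedness statement on the $N_w$-unit ball, and then to unwind the Campbell--Hausdorff formula in the adapted basis $\boldsymbol\eps=(\eps_i)_1^d$. Concretely, I would set $\xi_i:=\delta_{1/R}(\zeta_i)$, so that $N_w(\xi_i)\le 1$, and use the Campbell--Hausdorff formula \eqref{e:2.3} to write $\zeta=P_{\mbox{\tiny CH}}(\zeta_1,\zeta_2)=P_{\mbox{\tiny CH}}(\delta_R(\xi_1),\delta_R(\xi_2))$. Because $\mathfrak g$ is nilpotent, $P_{\mbox{\tiny CH}}$ is a \emph{finite} sum of iterated Lie brackets of $\zeta_1,\zeta_2$, so that
\[
\delta_{1/R}(\zeta)=\delta_{1/R}\bigl(P_{\mbox{\tiny CH}}(\delta_R(\xi_1),\delta_R(\xi_2))\bigr)
\]
is a polynomial in the coordinates of $\xi_1,\xi_2$ whose coefficients are linear combinations of integer powers of $R$.

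The heart of the argument is to show that only non-positive powers of $R$ appear. A generic $k$-fold bracket $[\delta_R(\xi_{i_1}),[\delta_R(\xi_{i_2}),\dots,\delta_R(\xi_{i_k})]]$ expands, by multilinearity, into a sum of terms proportional to $R^{b_{j_1}+\cdots+b_{j_k}}\,\xi_{i_1,j_1}\cdots \xi_{i_k,j_k}\,[\eps_{j_1},[\eps_{j_2},\dots,\eps_{j_k}]]$. By the very definition of the filtration in Definition \ref{filtration} together with the choice of a basis $\boldsymbol\eps$ adapted to the direct sum $\mathfrak g=\oplus_j \mathfrak n_j$, this iterated bracket lies in $\mathfrak g^{\Sigma,w}_{b_{j_1}+\cdots+b_{j_k}}$ and hence, when expanded in $\boldsymbol\eps$, is a linear combination of basis vectors $\eps_m$ with $b_m\ge b_{j_1}+\cdots+b_{j_k}$. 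After applying $\delta_{1/R}$, the $\eps_m$-component carries a factor $R^{b_{j_1}+\cdots+b_{j_k}-b_m}\le R^0=1$ since $R\ge 1$. This is exactly the content of the approximate Lie dilation property (cf.\ Lemma \ref{L:2.5}).

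Putting the pieces together, each coordinate of $\delta_{1/R}(\zeta)$ in the $\boldsymbol\eps$-basis is bounded by a finite sum of monomials in the coordinates of $\xi_1,\xi_2$ with coefficients that are uniformly bounded for $R\ge 1$. Since $N_w(\xi_i)\le 1$ forces $|\xi_{i,j}|\le 1$ for every $j$, there exists an absolute constant $C_*$, depending only on $\mathfrak g$, $\boldsymbol\eps$ and the weights $(b_i)_1^d$, such that every $\eps$-coordinate of $\delta_{1/R}(\zeta)$ is at most $C_*$ in absolute value. Therefore $N_w(\delta_{1/R}(\zeta))\le C_*$, and the homogeneity identity $N_w(\zeta)=R\cdot N_w(\delta_{1/R}(\zeta))$ yields $N_w(\zeta)\le C_* R$, as claimed.

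The only delicate point I foresee is the bookkeeping that confirms that no positive power of $R$ survives after applying $\delta_{1/R}$: this is the same phenomenon that made the limit law $\bullet$ well-defined in Lemma \ref{L:2.5} (only non-positive powers of $t$ occur in $P_t$), so no genuinely new ideas are needed. Once that bookkeeping is done, the remaining estimate is just polynomial control on a bounded set, which is automatic.
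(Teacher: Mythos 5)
Your proof is correct and follows essentially the same route as the paper's (very terse) argument: expand $\exp(\zeta_1)\exp(\zeta_2)$ via the Campbell--Hausdorff formula and use the compatibility $[\mathfrak g^{\Sigma,w}_s,\mathfrak g^{\Sigma,w}_t]\subseteq\mathfrak g^{\Sigma,w}_{s+t}$ of the adapted direct-sum decomposition with the weights to see that each bracket term only contributes to coordinates of weight at least the sum of the weights involved. Your rescaling by $\delta_{1/R}$, showing only non-positive powers of $R$ survive (for $R\ge 1$), is just a clean way of organizing the same bookkeeping the paper alludes to, so no new ideas are needed beyond what the paper intends.
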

\begin{proof} This follows  from the Campbell-Hausdorff formula because of the properties of the direct sum decomposition along the subspaces $\mathfrak n_j$ and its relation to the weight system $w.$ Note that this is not correct in general for small $R$. This reflects the fact that the coordinate system and the quasi-norm $N_w$ have been chosen to capture the large scale geometry of the situation.
\end{proof}

The following proposition is one of the important keys to the results presented in this article. It relates the geometry of the discrete group $\Gamma$ equipped with the quasi-norm $\|\cdot\|_{\Sigma,w}$ (Definition \ref{def-quasinorm}) to the geometry of $N_w$ in the above coordinate systems.

\begin{pro} \label{pro-keycompnorm}
There are constants $c,C\in (0,\infty)$ such that, for any
$$\gamma=\exp\left( \sum_1^d x_i\eps_i\right)  \ \hbox{ with }  \  x=(x_i)_1^d\in \mathbb R^d,
$$
$$cN_w(x)\le \|\gamma\|_{\Sigma,w}\le C N_w(x).$$
Similarly, there are constants $c,C\in (0,\infty)$ such that, for any
$$
\gamma=\prod_1^d\exp\left(y_i\eps_i\right)
 \ \hbox{ with }  \  y=(y_i)_1^d\in \mathbb R^d,
$$
$$cN_w(y)\le \|\gamma\|_{\Sigma,w}\le C N_w(y).$$\end{pro}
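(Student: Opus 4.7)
The plan is to first prove the first-kind comparability and then deduce the second-kind case from it. The reduction goes through the polynomial map $M$ of Subsection \ref{S:9.4}, which has the triangular form $M_i(y) = y_i + m_i(y)$ with $m_i$ a weighted polynomial of degree at most $b_i$ in $y$ and no linear term. Any monomial $y^I = \prod_k y_{i_k}$ appearing in $m_i$ satisfies $\sum_k b_{i_k} \leq b_i$, so is bounded by $R^{b_i}$ whenever $N_w(y) \leq R$; hence $N_w(M(y)) \leq C N_w(y)$, and the triangular inverse $M^{-1}$ gives the reverse inequality.

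Upper bound $\|\gamma\|_{\Sigma,w} \leq C N_w(x)$: proceed by descending induction through the filtration $\{\mathfrak{g}^{\Sigma,w}_{w^\mathfrak{g}_j}\}_j$. At level $j$, represent the coordinates $x_i$ with $b_i = w^\mathfrak{g}_j$ by multiplying $\gamma$ by appropriate powers of the group commutators $\boldsymbol{\tau}_i$ associated (via Campbell-Hausdorff) with the Lie commutators whose evaluations linearly span $\mathfrak{n}_j$; this pushes the residue into $\exp(\mathfrak{g}^{\Sigma,w}_{w^\mathfrak{g}_{j+1}})$. The key subroutine is the efficient representation of $\boldsymbol{\tau}^k$ as a word: for a commutator $\boldsymbol{\tau}$ of weight $b$ formed from letters $\sigma \in \Sigma$ of weights $w(\sigma)$, a Hall-type factorization (generalizing $[a^p, b^r] = [a,b]^{pr}$ modulo higher-weight corrections absorbed in later induction steps) represents $\boldsymbol{\tau}^k$ using only $\lesssim k^{w(\sigma)/b}$ copies of each constituent $\sigma$. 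Since $|x_i| \lesssim R^{b_i}$ when $R = N_w(x)$, the total count for each $\sigma$ is $\deg_\sigma(\omega) \leq C R^{w(\sigma)}$.

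Lower bound $c N_w(x) \leq \|\gamma\|_{\Sigma,w}$: suppose $\omega$ is a word for $\gamma$ with $\deg_\sigma(\omega) \leq R^{w(\sigma)}$; we must show $N_w(x) \leq CR$, which is a scaling statement. The strategy is to compute the coordinates of $\gamma$ graded-level by graded-level, projecting onto each $\mathfrak{n}_j \cong \mathfrak{g}^{\Sigma,w}_{w^\mathfrak{g}_j}/\mathfrak{g}^{\Sigma,w}_{w^\mathfrak{g}_{j+1}}$. On such a quotient the group law becomes abelian, and the contribution to the level-$j$ coordinates of the product represented by $\omega$ comes exactly from the iterated Lie commutators (of formal weight $w^\mathfrak{g}_j$) of the letters of $\omega$; a commutator involving letters $\sigma_{i_1}, \ldots, \sigma_{i_q}$ with $\sum_k w(\sigma_{i_k}) = w^\mathfrak{g}_j$ contributes at most $\prod_k \deg_{\sigma_{i_k}}(\omega) \leq R^{\sum_k w(\sigma_{i_k})} = R^{w^\mathfrak{g}_j}$. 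Summed over the finitely many commutator shapes this gives $|x_i| \leq C R^{b_i}$ for each $i$, i.e.\ $N_w(x) \leq CR$.

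The principal technical difficulty lies in the lower bound: a naive iteration of the Campbell-Hausdorff lemma preceding the Proposition would produce a constant growing exponentially in the word length. The remedy, sketched above, is the graded reduction via the filtration $\{\mathfrak{g}^{\Sigma,w}_{w^\mathfrak{g}_j}\}$ that replaces iteration of a nonlinear multiplication on $G$ by an abelian count on each graded piece. Making this rigorous requires the combinatorial identities relating products of generators in a nilpotent group to iterated commutators (the Hall collection process), with higher-weight error terms tracked level by level through the filtration.
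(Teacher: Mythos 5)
Your reduction of the second-kind case to the first-kind case via the triangular change-of-coordinates map $M$ is exactly what the paper does, and the two-sided strategy you outline is reasonable; but as written the argument has genuine gaps at precisely the two points you label ``subroutines,'' and these are not routine. For the upper bound $\|\gamma\|_{\Sigma,w}\le CN_w(x)$, the step ``represent $\boldsymbol\tau^{k}$ using only $\lesssim k^{w(\sigma)/b}$ copies of each constituent $\sigma$, with higher-weight corrections absorbed in later induction steps'' is the heart of the matter: beyond nilpotency class $2$ the identity $[a^{p},b^{r}]=[a,b]^{pr}$ fails, and controlling the correction terms uniformly in $k$ is exactly the content of the quantitative results of \cite{SCZ-nil} (Theorem 2.10 and Proposition 2.17 there), which the paper invokes rather than reproves. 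For the lower bound $cN_w(x)\le\|\gamma\|_{\Sigma,w}$, your graded ``abelian count'' --- that the level-$j$ coordinates of the element represented by a word $\omega$ are bounded by a constant times $\prod_k\deg_{\sigma_{i_k}}(\omega)$ summed over bracket shapes --- is a Guivarc'h/collection-type estimate that itself needs proof: one must check that the coefficients arising from the iterated Campbell--Hausdorff expansion, summed over all ways of extracting letters from $\omega$, really are controlled by such products. There is also a small imprecision: brackets whose letter-weights sum to any $s\le w^{\mathfrak g}_j$ (not only $s=w^{\mathfrak g}_j$) can contribute to level $j$, since $\mathfrak g^{\Sigma,w}_{s}\supseteq \mathfrak n_j$; this is harmless because $\|\gamma\|_{\Sigma,w}\ge 1$ on $\Gamma\setminus\{e\}$, but it should be stated.

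For comparison, the paper closes these points differently. For $cN_w(x)\le \|\gamma\|_{\Sigma,w}$ it does not expand an arbitrary word at all: it quotes from \cite{SCZ-nil} the normal form \eqref{finiteproduct}, namely that every $\gamma$ with $\|\gamma\|_{\Sigma,w}=R$ is a product of a \emph{fixed} number $q$ of powers $\sigma_{i_j}^{z_j}$ with $|z_j|\le CR^{w(\sigma_{i_j})}$; each factor satisfies $N_w\lesssim R$ because $\varsigma_{i_j}\in \mathfrak g^{\Sigma,w}_{w(\sigma_{i_j})}$, and the lemma preceding the proposition (a product of two elements with $N_w\le R$, $R\ge1$, has $N_w\le C_*R$) is applied only $q-1$ times, so no exponential loss in the word length can occur. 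For $\|\gamma\|_{\Sigma,w}\le CN_w(x)$ the paper argues by induction on $\dim\mathfrak g$: it quotients by the central one-parameter subgroup generated by a highest-weight commutator $\theta$, applies the induction hypothesis to $\pi(\gamma)$, lifts the resulting word back to $\Gamma$, and corrects by a central factor $\exp(t\theta)$ with $t\le CR^{w_{j_*}}$, whose quasi-norm is $\le CR$ again by \cite[Theorem 2.10]{SCZ-nil}. So your outline can be completed, but only by importing (or reproving) those results of \cite{SCZ-nil}; without them, the two key steps of your proposal remain unproved.
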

Thanks to earlier considerations, it suffices to prove the first set of inequalities which refers to exponential coordinates of the first kind.
\begin{proof}[Proof of $cN_w(\zeta)\le \|\gamma\|_{\Sigma,w}$]  To simplify notation, set $N_w=N$. In \cite{SCZ-nil}, it is proved that there exists a finite tuple $(i_1,\dots,i_q)$,
$i_j\in\{1,\dots,\ell\}$, $1\le j\le q$, such that any $\gamma\in \Gamma$ with $\|\gamma\|_{\Sigma,w}=R$ can
be written as
\begin{equation}\label{finiteproduct}
\gamma= \prod_1^q \sigma_{i_j}^{z_j},\quad
 |z_j|\le CR^{ w(\sigma_{i_j})}.
 \end{equation}
Since $\sigma_i=\exp(\varsigma_i)$, $\sigma_i^x=\exp(x\varsigma_i)$ for any $i\in \{1,\dots,\ell\}$. Because $\varsigma_i$ has weight $w^\mathfrak g(\varsigma_i)=w(\sigma_{i})$, by construction, there is a $k_i $ with $w^\mathfrak g_{w_{k_i}}\ge w(\sigma_i)$ such that
$\varsigma_i\in \mathfrak  g^{w^\mathfrak g}_{k_i}$. In particular,
$$\varsigma_{i_j}= \sum_{k=k_{i_j}}^{j^\mathfrak g_\star }\xi_k, \quad \xi_k\in \mathfrak n_k,$$
and
$$\exp (z_j\varsigma_{i_j}) =\exp\left(\sum_{k=k_{i_j}}^{j^\mathfrak g_\star }z_j\xi_k \right)
$$
with $$\|z_{j}\xi_k\|_2\le  \max_{1\le i\le \ell}\{\|\varsigma_{i}\|_2\} \times |z_{j}|\le C' R^{w(\sigma_{i_j})}\le C' R^{w^\mathfrak g_{k}},$$
because $R\ge 1$ and $w^\mathfrak g_k\ge w(\sigma_{i_j})$ for all $k\ge k_{i_j}$.  That is,
$$N(z_j\xi_{k})\le C''R.$$

Because formula \eqref{finiteproduct} gives any $\gamma$ as a product of at most $q$  elements $\exp(z_j \varsigma_{i_j})$ with
$N( z_j \varsigma_{i_j})\le C'R$, it follows that any $\gamma=\exp(\sum_1^nx_i\eps_i)\in \Gamma$  satisfies
$$
N(x)\le C''C_*^{q-1}R= C''C_*^{q-1} \|\gamma\|_{\Sigma,w}.$$
\end{proof}
\begin{proof}[Proof of $\|\gamma\|_{\Sigma,w}\le CN(\zeta)$]
 The proof is by induction on the dimension $n$ of $\mathfrak g$.  If the dimension is $0$, there is nothing to prove.  Assume that  for all cases when the dimension of $\mathfrak g$ is less then $m$,   there exists a constant $\widetilde{C}$ such that  $\|\tilde{\gamma}\|_{\widetilde{\Sigma},\tilde{w}}\le \widetilde{C}N(\tilde{\zeta})$ for all $\tilde{\gamma}\in \widetilde{\Gamma} \subset (\mathbb R^m,\cdot)=\widetilde{G}$.    Consider $\Gamma,\Sigma,w,G=(\mathbb R^{m+1},\cdot)$.   Let $g\in \Gamma$ be a non-trivial element of the highest weight $w_{j_*}$ which is a commutator of the elements  $\sigma_i$ forming the tuple $\Sigma$ (this includes the elements of $\Sigma$ which are considered commutators of length $1$). Let $a\ge 1$ be the length of this commutator and $\sigma_{i_1},\dots, \sigma_{i_a}$,
 be the list of $\sigma_i$ used  to write $g$ as a commutator of length $a$ with the property that $w_{j_*}=\sum _1^a w(\sigma_{i_a})$.
 The element $g$ must commute with all elements in $\Gamma$ and it is of the form $g=\exp (\theta)$  where  $\theta$ is the Lie commutator over $\Sigma^\mathfrak g$ associated with the writing of $g$ as a commutator  over $\Sigma$.
Formally, let us use the notation $\mathbf c_G(x_1,\dots,x_a)$ to express the formal group commutator in question evaluated at the group elements $x_1,\dots,x_a $ so that $g=\mathbf c_G(\sigma_{i_1},\dots,\sigma_{i_a})$. Let $\mathbf c_\mathfrak g$ be the corresponding formal Lie commutator so that
$\theta=\mathbf c_\mathfrak g(\zeta_{i_1},\dots,\zeta_{1_a})$.  For any $a$-tuple of reals $t_1,\dots, t_a$, we also have
$$\mathbf c_G\left(e^{t_1 \zeta_{i_1}},\dots,\dots, e^{t_a \zeta_{i_a}}\right)=\exp( t_1\dots t_a \mathbf c_\mathfrak g(\zeta_{i_1},\dots, \zeta_{i_a})).$$

 Let $\Theta=\{ \exp(s\theta): s\in \mathbb R\}$ be the central  one parameter subgroup of $G$ associated with $\theta$ and
 consider the simply connected nilpotent group $\widetilde{G}=G/\Theta$ and its discrete subgroup $\widetilde{\Gamma}$ which is the image of $\Gamma$ by the projection map  $\pi: G\to \widetilde{G}$. The subgroup $\widetilde{\Gamma}$ is generated by the tuple $\widetilde{\Sigma}=(\pi(\sigma_1),\dots,\pi(\sigma_\ell))$.
 The dimension of $\tilde{\mathfrak g}$ is $m-1$. We can choose it to be the orthogonal complement of $\theta$ in $\mathfrak g$ so that $d\pi$ is the
 orthogonal projection  onto $\widetilde{\mathfrak g}$.

For any $\gamma=\exp(\zeta) \in \Gamma$ with $N(\zeta)=R$, we have
 $$N(\tilde{\zeta})\le N(\zeta)=R.$$ Moreover, applying the induction hypothesis, we can write  $\tilde{\gamma}=\exp(\tilde{\zeta})=\pi(\gamma)\in \widetilde{\Gamma}$ as a word over the alphabet $\widetilde{\Sigma}\cup\widetilde{\Sigma}^{-1}$
with
$$\|\widetilde{\gamma}\|_{\widetilde{\Sigma},\tilde{w}}\le \widetilde{C} N(\tilde{\zeta}).$$
Using this word representing of $\pi(\gamma)$, replacing  each $\tilde{\sigma}_i$ by $\sigma_i$ to obtain a word over the alphabet $\Sigma\cup \Sigma^{-1}$, and evaluating in $G$ give us an element
$\bar{\gamma}\in \Gamma$  and an element $\bar{\zeta}\in \mathfrak g$ such that
$$\left\{\begin{array}{l} \bar{\gamma} = \exp( \bar{\zeta}),\\
\pi(\bar{\gamma})=\tilde{\gamma},\quad d\pi (\bar{\zeta})=\tilde{\zeta},\\
\gamma =\bar{\gamma} \exp(t\theta)  \quad \mbox{for some  real } t \le C R^{w_{j_*}}. \end{array}\right.
  $$
  The estimate on $t$ is from \cite[Theorem 2.10]{SCZ-nil} (together with an application of the Campbell-Hausdorff formula in our special system of coordinates).  By construction, $\exp(t\theta)\in \Gamma$, and  \cite[Theorem 2.10]{SCZ-nil} implies that $$\|\exp(t\theta)\|_{\Sigma,w}\le CR=CN(\zeta).$$
It follows that
$$\|\gamma\|_{\Sigma,w} \le C'( |\bar{\gamma}_{\Sigma,w}|+ CN(\zeta))\le C'(\widetilde{C}+C) N(\zeta).$$
\end{proof}

The following proposition captures the fact that $N_w$ is almost a quasi-norm (a quasi-norm at large scale) on $G=(\mathbb R^d,\cdot)$ and is a quasi-norm on $G_\bullet=(\mathbb R^d,\bullet)$. The first fact follows from the
adapted triangular nature of multiplication in the the type of coordinate system considered here. The second fact then
follows from the homogeneity of $N_w$ together with the fact that $(\delta_t)_{t>0}$ is a group dilstion structure on $G_\bullet$.
\begin{pro} For any exponential coordinate system of the first or second kind adapted to the filtration $\mathfrak g^{\Sigma,w}_j$, $1\le j\le j^\mathfrak g _\star$, we have, for any $z,z'\in \mathbb R^d$,
$$N_w(z\cdot z')\le C(N_w(z)+N_w(z')+1),$$
where the group law $\cdot$ refers to the multiplication in $G=(\mathbb R^d,\cdot)$. Moreover, in the same linear basis for $\mathbb R^d$, we have
$$N_w(z\bullet z')\le C(N_w(z)+N_w(z')),$$
where $\bullet$ is the group law on $G_\bullet=(\mathbb R^d,\bullet)$ associated with the approximate group dilation $\delta_t(z)=(t^{b_i}z_i)_1^d)$.
\end{pro}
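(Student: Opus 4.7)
The plan is to exploit the explicit polynomial form of the group laws $\cdot$ and $\bullet$ in any coordinate system adapted to the filtration $\mathfrak g^{\Sigma,w}_j$, $1\le j\le j^\mathfrak g_\star$. First I would record the key triangular weight structure: writing
$$(z\cdot z')_i \;=\; z_i+z'_i+ P_i(z,z'), \qquad 1\le i\le d,$$
where $P_i(z,z')=\sum_{(I,J)} c^i_{I,J}\, z^I (z')^J$ is a polynomial with $|I|+|J|\ge 2$, I claim that every monomial appearing in $P_i$ has weight $w_{I,J}:=\sum_k (I_k+J_k)\,b_k \le b_i$. For exponential coordinates of the first kind this is a direct consequence of the Campbell--Hausdorff formula \eqref{e:2.3}: each iterated Lie bracket $[\eps_{k_1},[\eps_{k_2},\cdots]]$ in the expansion of $z\cdot z'=z+z'+\tfrac12[z,z']+\cdots$ lies in $\mathfrak g^{\Sigma,w}_{b_{k_1}+b_{k_2}+\cdots}=\oplus_{i:\,b_i\ge b_{k_1}+b_{k_2}+\cdots}\mathfrak n_i$, since $[\mathfrak g^{\Sigma,w}_s,\mathfrak g^{\Sigma,w}_t]\subseteq\mathfrak g^{\Sigma,w}_{s+t}$. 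Consequently its $\eps_i$-component can be nonzero only when $b_i\ge b_{k_1}+b_{k_2}+\cdots$. For coordinates of the second kind, the same conclusion follows by pulling the product back through the polynomial change-of-coordinates map $M$ of Subsection \ref{S:9.4}, which by construction respects the weight filtration. For the limit law $\bullet$, the same analysis together with the very definition of $\bullet$ as $\lim_t \delta_{1/t}(\delta_t(z)\cdot\delta_t(z'))$ yields the stronger statement that in the expansion of $(z\bullet z')_i$ \emph{only} monomials with $w_{I,J}=b_i$ survive.

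Given this structural fact, the first inequality is a direct size estimate. Setting $R:=N_w(z)+N_w(z')$, one has $|z_k|\le R^{b_k}$ and $|z'_k|\le R^{b_k}$ for every $k$, hence $|z^I(z')^J|\le R^{w_{I,J}}$. Summing the finitely many monomials of $P_i$ and adding $|z_i|+|z'_i|\le 2R^{b_i}$ gives
$$|(z\cdot z')_i|\;\le\; C_i\Bigl(R^{b_i}+\sum_{(I,J)}R^{w_{I,J}}\Bigr).$$
Taking the $1/b_i$-th root and using $w_{I,J}/b_i\le 1$ yields $|(z\cdot z')_i|^{1/b_i}\le C(R^{w_{I,J}/b_i}+R)\le C(R+1)$, valid for all $R\ge 0$; the $+1$ absorbs the regime $R<1$ combined with strict inequality $w_{I,J}<b_i$ (precisely the case in which $\delta_t$ fails to be an exact group dilation for $\cdot$). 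Taking the maximum over $i$ produces the first inequality.

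For the $\bullet$-law the cleanest argument uses homogeneity together with the fact, recorded in Lemma \ref{L:2.5}, that $(\delta_t)_{t>0}$ is an exact group dilation for $\bullet$. The map $(z,z')\mapsto N_w(z\bullet z')$ is continuous on $\R^d\times\R^d$, hence bounded by some constant $C$ on the compact set $\{(z,z'):N_w(z)\le 1,\,N_w(z')\le 1\}$. For arbitrary $(z,z')\ne(0,0)$, set $t:=\max\{N_w(z),N_w(z')\}>0$; then $N_w(\delta_{1/t}(z))$ and $N_w(\delta_{1/t}(z'))$ are both $\le 1$, and the group-dilation property combined with $N_w(\delta_t(\cdot))=t N_w(\cdot)$ gives
$$N_w(z\bullet z')\;=\;t\,N_w\bigl(\delta_{1/t}(z)\bullet\delta_{1/t}(z')\bigr)\;\le\; Ct\;\le\; C\bigl(N_w(z)+N_w(z')\bigr),$$
with the case $z=z'=0$ trivial. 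The only delicate step in the whole argument is the weight bookkeeping for the polynomial $P_i$; once that is in place the rest reduces to elementary size bounds using the homogeneity of $N_w$.
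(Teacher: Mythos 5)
Your argument is correct and follows exactly the route the paper indicates (the paper only sketches this proposition in two sentences): the first bound comes from the adapted triangular/weighted polynomial form of the product, where each monomial of $\bar p_i$ has weight at most $b_i$ by the filtration property $[\mathfrak g^{\Sigma,w}_s,\mathfrak g^{\Sigma,w}_t]\subseteq\mathfrak g^{\Sigma,w}_{s+t}$ (transferred to second-kind coordinates via the weight-respecting map $M$), and the second from the exact homogeneity of $N_w$ together with the fact that $(\delta_t)_{t>0}$ is a genuine group dilation structure on $G_\bullet$. The size bookkeeping and the compactness/scaling step are carried out correctly, so nothing essential is missing.
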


\section{The main results for random walks driven by measures in $\mathcal{SM}(\Gamma)$}
\label{S:10}

\subsection{The limit theorems for $\mathcal {SM}  (\Gamma)$}\label{S:10.1}

In this section we state our main results concerning measures in $\mathcal{SM}(\Gamma)$. They are direct applications of Theorems \ref{WT1} and \ref{localCLT}.  We state these results in adapted coordinate systems. Namely, given $\mu\in \mathcal{SM}(\Gamma)$ and the simply connected Lie group $G$ containing $\Gamma$ as a co-compact discrete subgroup, we choose to write $G=(\mathbb R^d,\cdot)$ using {\em one} of the polynomial coordinate systems described in Section \ref{S:9.5} above.  This coordinate system is adapted to the filtration $(\mathfrak g^{\Sigma,w}_j)_j$ of the Lie algebra $\mathfrak g$, itself built from the data describing the measure $\mu$ as an element of $\mathcal{SM}(\Gamma)$.  In particular, in this coordinate system, we have an approximate group dilation structure given by (\ref{def-delta}) which defines a limit group structure $G_\bullet=(\mathbb R^d,\bullet)$. The law $\bullet=\bullet_\mu$ defining this limit structure depends on $\mu$.

 Below, we  show that for {\em any} measure $\mu\in \mathcal{SM}(\Gamma)$, there are  a suitable approximate group dilation structure 
$(\delta_t)_{t >0}$ given by (\ref{def-delta}) and  a    norm $\| \cdot \|$ 
on $\Gamma$ so that  assumptions \eqref{e:4.2}, (R1)-(R2)-(E1)-(E2)
 and (T$\Gamma$) are all satisfied with the common constant $\beta>0$.  
    This is in contrast to condition (A) which may or may not be satisfied. Recall that condition (A) is the requirement that
the measure $\mu_t= t\delta_{1/t}(\mu)$, $t\ge 1$, defined  
by  \eqref{def-mut}  converges vaguely on $\mathbb R^d\setminus\{0\}$ to a measure $\mu_\bullet$
as $t$ tends to infinity.   Because the dilations $(\delta_t)_{t>0}$ have been carefully constructed from $\mu$, the family $(\mu_t)_{t>0}$ is always tight and, if (A) is satisfied then   (T$\bullet$)   is satisfied and the support of the limit $\mu_\bullet$ generates $G$; see the subsections below for the proofs.

 \medskip

 Recall that
 $\{ \mathbb P^x_\bullet;\,  x\in   G_\bullet \}$ is the family of probability measures
 induced by the limit symmetric L\'evy process $X^\bullet$
on $\D([0,M_0],\R^d)$.

Fix an arbitrary increasing sequence of reals $T_k$ that tends to infinity, e.g., $T_k=k$, and recall the notation
$\hat{X}^k_t,\; t>0$, $\hat{P}^k_t,\; t>0$, and  $\mathbb P^{[x]_k}_k,\; x\in G$ associated with the space-time rescaled discrete random walk, see  (\ref{def-hat}). In this notation, $[x]_k$ is  the closest point of $x$ (any one of, if there are more than one such points)  on $\Gamma_{T_k}$ in the norm $\|\cdot\|$, and
$$\mathbb P^{x}_k(\hat{X}^k_t=y)=\mu^{([tT_k])}((\delta_{T_k}(x))^{-1}\cdot \delta_{T_k}(y)),\quad
x,y\in \Gamma_{T_k}.$$

Hence, applying  Theorems \ref{WT1} and \ref{localCLT}, we obtain the following theorem.

\begin{theo}\label{WTStab}
Let $\mu\in \mathcal{SM}(\Gamma)$. Referring to the above set-up and notation,
assume that condition {\rm (A)} holds true, that is, the measure $\mu_t= t\delta_{1/t}(\mu)$, $t\ge 1$, defined at {\rm (\ref{def-mut})} converges vaguely on $\mathbb R^d\setminus\{0\}$ to a Radon measure $\mu_\bullet$ on $\mathbb R^d\setminus\{0\}$
 as $t$ tends to infinity.
\begin{itemize}
\item For any bounded continuous function $f$ on $\mathbb R^d$,
$\hat P_s^k f$ converges uniformly on compacts to
$P_{\bullet,s}f$. Furthermore, for each $M_0>0$ and  for
   every  $x\in \mathbb R^d$, $\hat \P^{[x]_k}_k$ converges weakly to $\P_\bullet^x$ on the space $\D([0,M_0],\R^d)$
   equipped with ${\cal J}_1$-topology.

\item  For any $U_2>U_1>0$ and $r>1$,
$$\lim_{k\to\infty}\sup_{{x\in   \R^d}: \|x\|\le r} \,\sup_{t\in [U_1,U_2]}
\left|\det (\delta_{T_k}) \mu^{([tT_k])}_k(\delta_{T_k}([x]_k))-  p_\bullet (t,x)\right|=0.
$$
\end{itemize}
\end{theo}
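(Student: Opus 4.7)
The plan is to reduce Theorem \ref{WTStab} to a direct application of Theorems \ref{WT1} and \ref{localCLT}, once we have fixed the adapted coordinate system and the adapted approximate group dilation structure attached to $\mu\in \mathcal{SM}(\Gamma)$. Given $\mu$, with its associated generating tuple $\Sigma$ and weight system $w$ from Section \ref{sec-muw}, the construction in Section \ref{S:9} provides (not uniquely, but canonically enough for our purpose) a polynomial coordinate system $G=(\R^d,\cdot)$ adapted to the filtration $\mathfrak g^{\Sigma,w}_j$ and a straight approximate group dilation $\delta_t(u)=(t^{b_i}u_i)_1^d$ as in \eqref{def-delta}, with $b_i=\underline{w}_i=w^{\mathfrak g}_i$. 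We let $G_\bullet=(\R^d,\bullet)$ be the associated limit group, and equip $\R^d$ with the norm $\|\cdot\|$ of \eqref{e:4.2}, writing $\beta:=\max_i b_i$ (up to rescaling the dilation parameter as in Remark \ref{scale}, we may arrange the exponent in \eqref{e:4.2} to coincide with the exponents playing the role of $\beta$ in (E1), (E2) and (R2)).

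The first step is to verify, in this setup, the structural hypotheses (R1)-(R2)-(E1)-(E2) and (T$\Gamma$) with the common exponent $\beta$, independently of assumption (A). By Proposition \ref{pro-keycompnorm}, the norm $\|\cdot\|$ on $\R^d\supset \Gamma$ is equivalent, on $\Gamma$, to the quasi-norm $\|\cdot\|_{\Sigma,w}$ of Definition \ref{def-quasinorm}. In this geometry, the heat kernel estimates and near-diagonal H\"older regularity for $\mu^{(n)}$ established in \cite{SCZ-nil,CKSWZ1} provide precisely the two regularity estimates (R1) and (R2), and the exit time bounds for balls $B(r)=\{\|\cdot\|<r\}$ provide (E1) and (E2). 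The tail condition (T$\Gamma$), namely the uniform smallness of small jumps in $L^2$ sense and of large jumps in total mass, is a direct consequence of the quasi-norm comparison just mentioned together with the defining bounds $\mu_i(g)\asymp (1+|g|_{S_i})^{-\alpha_i}V_{H_i,S_i}(|g|_{S_i})^{-1}$ of the building blocks of $\mu$; indeed, after rescaling by $\delta_{1/t}$, both integrals are controlled uniformly in $t$ by integrals of radial ``stable-like'' densities on the various dilation-invariant subgroups carrying the $\mu_i$, which are finite because each $\alpha_i\in(0,2)$ and because of the power laws produced by \eqref{def-delta}. This part is largely bookkeeping once the coordinate system has been chosen as in Section \ref{S:9}.

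The second step is to use assumption (A) to upgrade (T$\Gamma$) to (T$\bullet$). By Remark \ref{R:4.8}, conditions (A), (T$\bullet$) and (T$\Gamma$) together are equivalent to the single condition (A$'$) of weak convergence of $(\|z\|_2^2\wedge 1)\mu_t(dz)$ to $(\|z\|_2^2\wedge 1)\mu_\bullet(dz)$. Given that (A) is assumed and that (T$\Gamma$) has just been established, the uniform tightness provided by (T$\Gamma$) together with the vague convergence in (A) yields weak convergence in the sense of (A$'$), hence (T$\bullet$) holds; in particular the limit L\'evy measure $\mu_\bullet$ satisfies $\int_{G_\bullet}\min\{1,\|z\|_2^2\}\,\mu_\bullet(dz)<\infty$, which is \eqref{e:5.9}.

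With all hypotheses of Theorem \ref{WT1} verified for $\mu$, $(\delta_t)_{t>0}$, $\|\cdot\|$ and $\beta$, its conclusion gives the functional convergence of $\hat P^k_s f$ and of $\hat\P^{[x]_k}_k$ on $\D([0,M_0],\R^d)$ equipped with the ${\cal J}_1$-topology, which is the first bullet. Adding the already-verified (R2) and invoking Theorem \ref{localCLT} yields the uniform local limit statement in the second bullet, with the same self-similar density $p_\bullet(t,x)$ produced in part (i) of Theorem \ref{WT1}. The main technical obstacle in this program is the verification of (R1), (R2) and (E1)-(E2) with \emph{the same} exponent $\beta$ as in \eqref{e:4.2}: this compatibility across the random-walk analytic estimates on $\Gamma$ and the dilation-induced scaling on $G_\bullet$ is exactly what the companion paper \cite{CKSWZ1} and the constructions of Section \ref{S:9} are designed to provide, and the role of Proposition \ref{pro-keycompnorm} is to make that compatibility transparent.
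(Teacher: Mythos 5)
Your proposal is correct and follows essentially the same route as the paper: Theorem \ref{WTStab} is obtained by fixing the adapted coordinates and dilations of Section \ref{S:9}, verifying (R1)-(R2)-(E1)-(E2) with a common $\beta$ via \cite{CKSWZ1} and Proposition \ref{pro-keycompnorm}, verifying (T$\Gamma$) and (T$\bullet$) (Subsections \ref{S:10.2}--\ref{S:10.4}), and then applying Theorems \ref{WT1} and \ref{localCLT}. The only minor deviations are that you deduce (T$\bullet$) from (A) together with (T$\Gamma$) via Remark \ref{R:4.8} rather than through the direct verification of Subsection \ref{S:10.4} (which is legitimate under (A)), and that describing the proof of (T$\Gamma$) as ``bookkeeping'' understates Lemma \ref{L:8.3}, whose argument relies in an essential way on Pansu's theorem and on the compatibility map $p$ of (\ref{mudelta}) between $(\delta_t)_{t>0}$ and the intrinsic dilations of each subgroup $H_i$.
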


\subsection{The hypotheses  (R1)-(R2), and (E1)-(E2) when  $\mu\in \mathcal{SM}(\Gamma)$}\label{S:10.2}

Using the constructions described in the previous two sections and the results from \cite{CKSWZ1}, we can now show that any probability measure in $\mathcal{SM}(\Gamma)$ satisfies the hypotheses (R1)-(R2)-(E1)-(E2) in the context of properly chosen exponential coordinates of the first or second kinds.   Let us assume that we are given $\mu\in \mathcal{SM}(\Gamma)$ and the associated data $\Sigma,w$  as in Section \ref{sec-muw} and quasi-norm $\|\cdot\|_{\Sigma,w}$ as in Definition \ref{def-quasinorm}.  We assume that $\Gamma$ is given as a co-compact  subgroup of a simply connected Lie group $G$ and that an adapted global exponential coordinate system of the first or second kind has been chosen as explained in Section \ref{S:9.6} so that $\Gamma\subset G=   (\mathbb R^d,\cdot)$.
Moreover, $(\mathbb R^d,\cdot)$ is equipped with a straight approximate dilation structure
$$(\delta_t)_{t>0}: \quad \delta_t(z)=(t^{b_i}z_i)_1^d$$ with   the group limit
$(\mathbb R^d,\bullet)$.  Here the basis for $\mathbb R^d$ can also be identified as in Section \ref{S:9.6} with a linear basis of $\mathfrak g$ which is compatible with the direct sum decomposition
$$\mathfrak g=\oplus_1^{j^\mathfrak g_\star} \mathfrak n_j$$
in Definition \ref{def-directsum}. Each subspace $\mathfrak n_j$ is associated with a weight value $\underline{w}_j= w^\mathfrak g_j 
 >2$ and
for any index $i$ such that the corresponding basis element is in $\mathfrak n_j$, $b_i=\mathfrak w^\mathfrak g_j$.

We pick 
$$
 0<b< \min\{b_i: 1\le i\le d\}=(\max\{\beta_i: 1\le i\le d\})^{-1}, \quad \beta_i :=1/b_i
 $$
and set
$$|\gamma|_\Gamma=\|\gamma\|_{\Sigma,w}^b.$$
By construction, this is a norm on $\Gamma$, that is, $|\gamma\cdot\gamma'|_\Gamma\le |\gamma|_\Gamma+|\gamma'|_\Gamma$ for all $\gamma,\gamma'\in \Gamma$.
We also let $\|\cdot\|$ be a norm on $(\mathbb R^d,\bullet)$ (i.e., satisfying the triangle inequality $\|g\bullet g'\|\le \|g\|+\|g'\|$ for all $g,g'\in G_\bullet$), which is also equivalent to
$$
N_w (z)^b= \max\{|z_i|^{b/b_i}: 1\le i\le d\},   \quad z\in \mathbb R^d.
$$
By \cite{HebSik}, such a norm always exists.
 This norm $\| \cdot \|$ has the dilation property \eqref{e:4.2} with $\beta = \max_{1\leq i\leq d} \beta_i = 1/b$.

\smallskip

By Proposition \ref{pro-keycompnorm}, we have a tight comparison between the discrete object $|\cdot |_\Gamma$ and   the  continuous homogeneous norm
$\|\cdot\|$ on $G_\bullet=(\mathbb R^d,\bullet)$, namely, there are constant $0<c,C<\infty$ such that, for any $\gamma\in \Gamma\subset \mathbb R^d$,
\begin{equation}\label{CondN} c\|\gamma\|\le |\gamma|_{\Gamma} \le C\|\gamma\|.
\end{equation}

\subsection*{Conditions (R1)-(R2) and (E1)-(E2)}

Recall that condition (R1) reads
\begin{itemize}
\item[(R1)] There are constants $C_1$ and $\kappa$  such that, for any bounded  function $u$ defined on $\Gamma$ and $\mu$-harmonic in   $ B(r):=\{x\in \R^d: \|x \| < r\}$, we have
\[
  |u(y)-u(x)|\le C_1 \|u\|_{\infty}\left(\frac{\|x^{-1}\cdot y\|}{r}\right)^{\kappa}
\quad    \hbox{for } x, y \in B(r/2).  
\]
\end{itemize}

For any $\mu\in \mathcal{SM}(\Gamma)$, \cite[Corollary 6.10]{CKSWZ1} gives the following $\Gamma$-version of this property
\begin{itemize}
\item[(R$\Gamma$1)] There are constants $C_1$ and $\kappa$  such that, for any bounded  function $u$ defined on $\Gamma$ and $\mu$-harmonic in  $B_\Gamma (r):=\{x\in \Gamma:  |x|_\Gamma <  r\}$, 
 we have
\[
  |u(y)-u(x)|\le C_1 \|u\|_{\infty}\left(\frac{|x^{-1}\cdot y|_{\Gamma}}{r}\right)^{\kappa}
\quad  \hbox{for } x, y \in B_\Gamma (r/2). 
\]
\end{itemize}
To pass from this $\Gamma$ version, (R$\Gamma$1), to the desired (R1) we use the key norm comparison (\ref{CondN}) and a simple covering argument to adjust the permitted range of $x,y$ from one  
  statement  to the other.

Similarly, recall that condition (R2) reads
\begin{itemize}
\item[(R2)] There  are positive  constants $C_2 >0$ and $\beta>0$  such that, for all $n,m\in \mathbb N$  and  $x,y\in \Gamma$,
\begin{equation}\label{eq:3-29}
|\mu^{(n+m)}(xy)-\mu^{(n)}(x)|\le \frac{C_1}{V( n^{1/\beta})}  \left(\frac{m}{n+1}+\sqrt{\frac{\|y\|^{\beta}}{n+1}} \right),
\end{equation}
where $V(r):=\sharp \{g\in \Gamma: \|g\|\le r\}$.
\end{itemize}

The fact that (R2) holds true for any probability $\mu$ in $\mathcal{SM}(\Gamma)$ follows straightforwardly from \cite[Theorem 5.5 
(3)-(4)]{CKSWZ1} (see also  \cite[Proposition A.3]{CKSWZ1}), together with  (\ref{CondN}).
Regarding related recent results concerning the regularity of stable-like transition kernels in the abelian case, see \cite{ChKa}.

Regarding
the exit times conditions  (E1)-(E2), which are expressed using the norm $\|\cdot\|$ on $\mathbb R^d$,  for any measure $\mu\in \mathcal{SM}(\Gamma)$, they follow from (\ref{CondN}) together with \cite[Theorem 5.5(5)]{CKSWZ1} (for (E1)) and \cite[Lemma 6.6]{CKSWZ1} (for (E2)),  with the exponent $\beta>0$
being in the same as those in (R2). 
  (In these results of \cite{CKSWZ1}, $\beta = 1/w_*$ there, where $0<w_*:= \min\{w(s); s\in \Sigma\}$, which is our $b$; 
see Example 2.9 and Proposition 2.11(c) there.)

\subsection{Condition (T$\Gamma$)} \label{S:10.3}

  Verifying condition (T$\Gamma$) for any measure $\mu$ in   $\mathcal{SM}(\Gamma)$ requires some work. Any  $\mu\in \mathcal{SM}(\Gamma)$ is  a finite convex combination of   probability
  measures of a certain type and it suffices to prove  (T$\Gamma$) for any such building block, $\nu$. By definition, any such probability
  measure $\nu$  has
   the following property:  there is a subgroup $H$ of $\Gamma$ with finite, symmetric generating set $S$, word length $|\cdot |_{S}$ and volume growth exponent $d_H$, and an exponent $\alpha\in (0,2)$, such that
\begin{equation} \label{mu-hyp1}
\nu(x)\asymp \left\{\begin{array}{cl} (1+|x|_S)^{-\alpha-d_H} &\mbox{ if } x\in H,\\0& \mbox{otherwise.}\end{array}\right.\end{equation}
Note that the discrete subgroup $H$ is contained as a co-compact discrete subgroup in a unique closed  connected Lie subgroup $L=L_H$ of $G$.  As in Subsection \ref{S:10.2}, we assume we have made the choice of an adapted coordinate system for $G$ and of  an appropriate approximate dilation structure $(\delta_t)_{t>0}$.

Recall that $G$ is described by a polynomial global coordinate chart $G=(\mathbb R^d,\cdot)$ in which   the ebesgue measure is a Haar measure for $G$. Let $m\le d$ be the dimension of $L$. This closed Lie subgroup can  be described parametrically as an embedded  sub-manifold of $\mathbb R^d$ given by a polynomial map   $i_H=i$ from $\mathbb R^m$ into  $ \mathbb R^d$:
\begin{equation} \label{iH}
     v=(v_1,\dots,v_m) \in \R^m\mapsto  i_H(v)=(i_1(v),\dots,i_d(v)) \in \R^d
\end{equation}
 with a polynomial inverse on its image. Assume further that this map $i$ is also a group isomorphism on its image, that is,
$$L=(\mathbb R^m,\cdot_H)  \quad \mbox{and} \quad \;i(v)\cdot i(w)=i(v\cdot_H w).$$
In fact, we can use this formula to define $\cdot_H$ on $\mathbb R^m$. However, it will be convenient to assume that $L=(\mathbb R^m,\cdot)$ is an exponential coordinate system of the first type for $L$.

The Lie group $L$ is, of course, nilpotent and simply connected, and we assume that the global coordinate system $(\mathbb R^m,\cdot)$ is an exponential  coordinate system of the first type compatible with the
lower central series of $L$:
$$
L_1=L \supset L_2=[L,L]\supset \dots\supset L_j=[L,L_{j-1}]\supset \dots\supset L_t\supset L_{r_H+1}=\{0\},
$$
where $r_H$ is the smallest $j$ such that $L_{j+1}=\{0\}$. Namely, there is a strictly increasing $r_H$-tuple  of  integers $k_j$,  $1\le j\le r_H$, $k_1=1, k_{r_H}=m$ such that
$$L_{j}=\{(0,\dots,0,v_{k_{j}},\cdots, v_m): v_{k_{j}},\dots,v_m\in \mathbb R\}.$$
In this coordinate system for $L$, the straight dilation
\begin{equation}\label{gammaH}\gamma^H_t(v)= (t^{p_i} v_i)_1^m,\quad  p_i= j \;\mbox{ if } \quad  k_j\le i\le k_{j+1}-1\end{equation}
form an approximate group dilation structure with limit $L_*=(\mathbb R^m,*)$, a stratified nilpotent Lie group of homogeneous dimension $d_H$ with
$$d_H=\sum_{j=1}^{r_H} j (k_{j+1}-k_{j}).$$
According to Pansu's theorem, see \cite{Pansu1983} and \cite{Breuillard}, the word length $|\cdot |_S$ has the property that there is a norm $|\cdot|_*$ on $L_* $ such that
$$|\gamma^H_t (v)|_*=t|v|_*  \quad \mbox{for all }\;t>0,\, v\in \mathbb R^m,$$
and
\begin{equation}\label{Pansu}
\lim_{v\in H,\;v\ra \infty}|v|_S/|v|_*=1.\end{equation}
Moreover,
$$ |v|_*\asymp  \max_i\{|v_i|^{1/p_i}:v=(v_1,\dots,v_m)\}.$$
This implies  that for any $v\in \mathbb R^m$ with $\|v\|_2\le 1$ and $r\in (0,1]$  such that $\|\gamma^H_{1/r} v \|_2=1$, we have
\begin{equation} \|v\|_2\le r.  \label{2gamma1}
\end{equation}
 Moreover,  for any $v\in H$, if we define $t_v$ by $\|\gamma^H_{1/t_v} v\|_2=1$, then we have
 \begin{equation} |v|_S\asymp t_v. \label{2gamma2}
\end{equation}

  By construction, because  the probability measure 
$\nu$ is one of the building blocks of $\mu$, the  approximate dilation structure $(\delta_t)_{t>0}$ has the property that
\begin{equation}\label{mudelta}
\lim_{t\to  \infty} \delta_t^{-1}\circ  i_H\circ \gamma^H_{t^{1/\alpha}} (v)=:p(v)\end{equation}
exists for all $v\in \mathbb R^m=i^{-1}_H(L)$. This limit is uniform on compact sets and  the map $p$ is a continuous map (in fact a smooth map) from $\mathbb R^m$ to $\mathbb R^d$.  In the following Lemma, we consider any approximate dilation structure $(\delta_t)_{t>0}$ such that the limit in (\ref{mudelta}) exists.

 \begin{lem}[The map $p$ is a group homomorphism] \label{lem-phom}  Assume that $(\delta_t)_{t>0}$ is an approximate dilation structure on $G=(\mathbb R^d,\dot)$, that $\nu$   is a probability measure on $H$ satisfying 
  {\rm (\ref{mu-hyp1})}, and  that the limit $p$ at {\rm (\ref{mudelta})} exists   for all $v\in \mathbb R^m=i^{-1}_H(L)$.
If we equip $\mathbb R^m$  with the limit group structure $L_*=(\mathbb R^m,*)$ associated with the dilations $(\gamma^H_t)_{t>0}$, the map $p$
is a continuous group homomorphism from $L_*$ to $G_\bullet$ $($it is typically neither injective nor onto$)$.  Let $L^*_\bullet=p(L_*)\subseteq G_\bullet$.  For any $x\in L^*_\bullet$ and $u\in L$,
 $$
 \delta_{t^\alpha} (x)=p(\gamma^H_tu).
$$
Define $\gamma^{L^*_\bullet}_t$ on $L^*_\bullet$ by $\gamma^{L^*_\bullet}_t(x)=p(\gamma^H_tu). $
This is a group of group diffeomorphisms on $L^*_\bullet$. Namely,
 $$\gamma^{L^*_\bullet}_s\circ \gamma^{L^*_\bullet}_t=\gamma^{L^*_\bullet}_{st}
 \quad \mbox{and} \quad
 \gamma^{L^*_\bullet}_t(x\bullet y)=\gamma^{L^*_\bullet}_t(x)\bullet \gamma^{L^*_\bullet}_t(y), \quad
 s,t>0,\, x,y\in L^*_\bullet.
 $$
 Moreover, for any $s>0$,
 $$\delta_{1/s}\circ \gamma^{L^*_\bullet}_{s^{1/\alpha}}=\mbox{\rm Id} \quad  \mbox{on }  L^*_\bullet.$$
\end{lem}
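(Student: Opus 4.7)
\medskip

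\noindent\emph{Proof plan.} My plan is to first observe that the pointwise limit defining $p$ is in fact uniform on compact subsets of $\mathbb{R}^m$, then establish the homomorphism property by a single direct computation, and finally deduce everything about $\gamma^{L_\bullet^*}$ as a formal consequence. For the uniform convergence, I would set $F_s(v):=\delta_{1/s^\alpha}\circ i_H\circ \gamma^H_s(v)$ and note that because $i_H$ is polynomial, $\gamma^H_s$ multiplies each coordinate by a monomial in $s$, and $\delta_{1/s^\alpha}$ does the same, the map $F_s(v)$ is a polynomial in $v=(v_1,\dots,v_m)$ whose coefficients are real linear combinations of real powers $s^c$ of $s$. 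The assumed existence of the pointwise limit $p(v)=\lim_{s\to\infty}F_s(v)$ at every $v$ forces every exponent $c$ that actually appears with a nonzero coefficient to be non-positive (otherwise some $F_s(v)$ would blow up in $v$). Hence $p$ is itself a polynomial map (in particular continuous), and $F_s\to p$ uniformly on compact sets; the argument is the same as the one used for Lemma~\ref{L:2.5}.

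Next, I would prove that $p$ is a homomorphism. Fix $u,v\in\mathbb{R}^m$ and use the fact that $i_H:(\mathbb{R}^m,\cdot_H)\to(\mathbb{R}^d,\cdot)$ is a group isomorphism onto its image, together with $\gamma^H_s\circ \gamma^H_{1/s}=\mathrm{Id}$. Setting $w_s:=\gamma^H_{1/s}(\gamma^H_s(u)\cdot_H\gamma^H_s(v))$, so that $w_s\to u*v$ as $s\to\infty$, a direct calculation gives
\begin{align*}
F_s(w_s)
&= \delta_{1/s^\alpha}\bigl(i_H(\gamma^H_s(u)\cdot_H\gamma^H_s(v))\bigr)
 = \delta_{1/s^\alpha}\bigl(i_H(\gamma^H_s u)\cdot i_H(\gamma^H_s v)\bigr)\\
&= \delta_{1/s^\alpha}\bigl(\delta_{s^\alpha}(F_s(u))\cdot \delta_{s^\alpha}(F_s(v))\bigr).
\end{align*}
By Lemma~\ref{L:2.5}, the operation $(x,y)\mapsto \delta_{1/t}(\delta_t(x)\cdot\delta_t(y))$ converges uniformly on compact sets to $x\bullet y$ as $t\to\infty$; applying this with $t=s^\alpha$ and using $F_s(u)\to p(u)$, $F_s(v)\to p(v)$, the right-hand side converges to $p(u)\bullet p(v)$. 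The left-hand side converges to $p(u*v)$ by the uniform convergence $F_s\to p$ on compact sets combined with the continuity of $p$ and $w_s\to u*v$. This yields $p(u*v)=p(u)\bullet p(v)$, and the corresponding identity for inverses follows since $p(0)=0=e_\bullet$.

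For the compatibility $\delta_{s^\alpha}(p(u))=p(\gamma^H_s u)$, I would use the continuity of $\delta_{s^\alpha}$ together with the one-parameter group relation $\gamma^H_{st'}=\gamma^H_s\circ\gamma^H_{t'}$:
\begin{align*}
\delta_{s^\alpha}(p(u))
&= \lim_{t\to\infty} \delta_{s^\alpha}\circ \delta_{1/t^\alpha}\circ i_H\circ \gamma^H_t(u)
 = \lim_{t\to\infty} \delta_{(s/t)^\alpha}\circ i_H\circ \gamma^H_t(u)\\
&= \lim_{t'\to\infty} \delta_{1/t'^\alpha}\circ i_H\circ \gamma^H_{t'}\bigl(\gamma^H_s u\bigr)
 = p(\gamma^H_s u),
\end{align*}
via the substitution $t'=t/s$. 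This simultaneously shows that the prescription $\gamma_t^{L_\bullet^*}(x):=p(\gamma^H_t u)$ is independent of the chosen preimage $u$ of $x\in L_\bullet^*$, and that $\gamma_t^{L_\bullet^*}=\delta_{t^\alpha}\big|_{L_\bullet^*}$. From this identification, the semigroup property, the group-diffeomorphism property on $(G_\bullet,\bullet)$, and the relation $\delta_{1/s}\circ\gamma_{s^{1/\alpha}}^{L_\bullet^*}=\delta_{1/s}\circ\delta_s=\mathrm{Id}$ on $L_\bullet^*$ are all immediate from Lemma~\ref{L:2.5}; invariance of $L_\bullet^*$ under $\delta_{t^\alpha}$ follows from $\delta_{t^\alpha}(p(L_*))=p(\gamma^H_t L_*)=p(L_*)=L_\bullet^*$.

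The only subtle point in the argument is the interchange of limits in the homomorphism step, and this is precisely where the uniform-on-compacts convergence $F_s\to p$ is needed; the polynomial structure of $F_s$ combined with the hypothesis that the pointwise limit exists provides this uniformity essentially for free, so no hard estimate is required.
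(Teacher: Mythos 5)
Your proposal is correct and follows essentially the same route as the paper: the paper also proves the homomorphism property by approximating $p(u)\bullet p(v)$ through the rescaled products $\delta_{1/t}\circ i_H\circ \gamma^H_{t^{1/\alpha}}$ and invoking uniform-on-compacts convergence to $p$ together with the convergence of the rescaled group laws, and then declares the remaining statements straightforward. Your added details (the polynomial-coefficient argument for uniform convergence, the exact identity via $F_s$ rather than $p$, and the scaling relation $\delta_{s^\alpha}\circ p=p\circ\gamma^H_s$ giving well-definedness) are exactly the Lemma~\ref{L:2.5}-style justifications the paper leaves implicit, so they strengthen rather than change the argument.
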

\begin{proof} We can approximate
$p(u)\bullet p(v)$ by $\delta_{1/t}\left(\delta_t(p(u))\cdot \delta_t(p(v))\right) $ with $t$ large enough.  
Note that
$$\delta_{1/t}\left(\delta_t(p(u))\cdot \delta_t(p(v))\right)= \delta_{1/t}\circ i_H \circ \gamma^H_{t^{1/\alpha}}\left(\gamma^H_{1/t^{1/\alpha}}(\gamma^H_{t^{1/\alpha}}(u)\cdot\gamma^H_{t^{1/\alpha}}(v))\right).$$
Since, for large $s$, we can approximate $\gamma^H_{1/s}(\gamma^H_s(u)\cdot\gamma^H_s(v))$ by $u*v$  and the convergence of $ \delta_{1/t}\circ i_H \circ \gamma^H_{t^{1/\alpha}}$ to $p$ is uniform on compact sets,
  it follows that $p$ is a continuous group homomorphism from $L_*$ to $G_\bullet$.  The remaining statements are straightforward.
\end{proof}

\begin{lem}\label{L:8.3}
 Let    $\nu$    be a probability measure on $H$ as in   \eqref{mu-hyp1} and let $(\delta_t)_{t>0}$
   be an  approximate dilation structure on $G=(\mathbb R^d,\dot)$ satisfying   \eqref{mudelta}. 
   Let  $ \nu_t=t \delta_{1/t}(\nu)$ and let $J_t$ the associated  jump kernel from {\rm Proposition \ref{weaklimmeas}}   with $\nu$ in place of $\mu$ there. For any compact subset $K\subset \mathbb R^d$,
 \begin{eqnarray} \label{WCA2}
 \lim_{\eta\to 0}\limsup_{t\to \infty}
 \iint_{\{(x, y)\in K\times K: \|x_\bullet^{- 1}\bullet y\|_2 \leq  \eta\}} \|x_\bullet^{-1}\bullet y\|_2^2 J_t(dx,dy)&=0,\\ \label{WCA1}
   \lim_{R\to \infty}\limsup_{t\to \infty}
\int_K \int_{B_\bullet(x,R)^c} J_t(dx,dy) &=0.
\end{eqnarray}
\end{lem}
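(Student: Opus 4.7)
The plan is to reduce both \eqref{WCA1} and \eqref{WCA2} to a stable-tail and a truncated-second-moment estimate for $\nu$ on the lattice $H$. Writing $J_t$ explicitly as in Proposition \ref{weaklimmeas} and substituting $w=\delta_t(x)^{-1}\cdot\delta_t(y)\in H$, both integrals become sums over $x\in K\cap\Gamma_t$ and $w\in H\setminus\{e\}$ weighted by $\nu(w)$, with prefactor $c(\Gamma,G)\det(\delta_{1/t})\#(K\cap\Gamma_t)\le Cm(K)$ by Lemma \ref{lem-approx-meas}. The crucial structural input is that the polynomial maps
\begin{equation*}
\Psi_t(v):=\delta_t^{-1}\circ i_H\circ\gamma^H_{t^{1/\alpha}}(v)\qquad\text{and}\qquad (x,y)\mapsto x\cdot_t y
\end{equation*}
have $t$-dependent coefficients that are uniformly bounded for $t\ge 1$: indeed, the existence of the limits $p(v)$ from hypothesis \eqref{mudelta} and $x\bullet y$ precludes any positive power of $t$ in those coefficients. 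Writing $v=\gamma^H_{1/t^{1/\alpha}}(u)$ and $w'=\delta_{1/t}(w)$, this yields one-sided polynomial bounds
\begin{equation*}
\|\delta_{1/t}(i_H(u))\|\le A(1+|v|_*)^{K_0},\qquad \|x_\bullet^{-1}\bullet(x\cdot_t w')\|\le C_K(\|w'\|+\|w'\|^{K_1}),
\end{equation*}
and the obvious $\|\cdot\|_2$-analogues, valid uniformly in $t\ge 1$ and $x\in K$.

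For \eqref{WCA1}, inverting the two bounds above shows that $\|x_\bullet^{-1}\bullet y\|\ge R$ with $x\in K$ forces, for $R$ large, $\|w'\|\ge cR^{1/K_1}$ and then $|u|_*\ge c't^{1/\alpha}R^{1/(K_0K_1)}$, where $w=i_H(u)$. Combined with Pansu's asymptotic \eqref{Pansu} (giving $|u|_S\asymp|u|_*$ for $u$ large), $V_{H,S}(r)\asymp r^{d_H}$, and the form \eqref{mu-hyp1} of $\nu$, the stable-tail computation
\begin{equation*}
t\!\!\sum_{u\in H,\,|u|_S\ge c't^{1/\alpha}R^{1/(K_0K_1)}}\!\!\nu(i_H(u))\ \lesssim\ t\cdot\bigl(t^{1/\alpha}R^{1/(K_0K_1)}\bigr)^{-\alpha}=R^{-\alpha/(K_0K_1)}
\end{equation*}
goes to $0$ as $R\to\infty$ uniformly in $t$, proving \eqref{WCA1}.

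For \eqref{WCA2}, the $\|\cdot\|_2$-version of the bound on $x\cdot_t w'$ gives $\|x_\bullet^{-1}\bullet(x\cdot_t w')\|_2^2\,\1_{\{\|\cdot\|_2\le\eta\}}\le C(\|w'\|_2^2\wedge\eta^2)$, reducing matters to bounding $t\sum_{u\in H\setminus\{e\}}(\|\delta_{1/t}(i_H(u))\|_2^2\wedge\eta^2)\nu(i_H(u))$. Refining the polynomial bound on $\Psi_t$ near the origin yields $\|\delta_{1/t}(i_H(u))\|_2\le A'(|u|_*/t^{1/\alpha})^\rho$ whenever $|u|_*\le t^{1/\alpha}$, with $\rho\ge\min_ip_i\ge 1$ the smallest weighted degree occurring in the coordinates of $i_H$. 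Splitting the sum according to whether $|u|_*\le\tilde\eta\,t^{1/\alpha}$ (where $\tilde\eta:=(\eta/A')^{1/\rho}$; use the polynomial bound), or $\tilde\eta\,t^{1/\alpha}<|u|_*\le t^{1/\alpha}$, or $|u|_*>t^{1/\alpha}$ (use the $\eta^2$ truncation in the latter two), and performing polar integration against $|u|_*^{-\alpha-d_H}$ with $V_{H,S}(r)\asymp r^{d_H}$, one obtains a total bound of order $\eta^{2-\alpha/\rho}+\eta^2$, which vanishes as $\eta\to 0$ uniformly in $t$ because $2\rho\ge 2>\alpha$. The main technical hurdle is the potential non-injectivity of $p$, which rules out sharp lower bounds on $\|\delta_{1/t}(i_H(u))\|$ in terms of $|u|_*$; the argument above is designed to rely only on one-sided polynomial upper bounds, with the condition $\alpha<2$ absorbing the resulting slack.
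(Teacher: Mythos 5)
Your proof is correct and follows essentially the same route as the paper's: factor out the $x$-sum by the lattice count, transfer everything to a sum over $H$, and use the compatibility \eqref{mudelta} of $(\delta_t)$ with $(\gamma^H_{t^{1/\alpha}})$ (via boundedness of the $t$-dependent polynomial coefficients) to reduce \eqref{WCA1} to a stable tail bound $t\,M^{-\alpha}$ with threshold $M\gtrsim t^{1/\alpha}$ times a positive power of $R$, and \eqref{WCA2} to a truncated second-moment bound of order $\eta^{2-\alpha/\rho}$, with $\alpha<2$ closing the argument. The only differences are bookkeeping: the paper uses the sharper estimates $N(i_H(z))\le C|z|_S^{\alpha}$ and $\|\gamma^H_{1/t^{1/\alpha}}(z)\|_2\le C t^{-1/\alpha}|z|_S$ (yielding the rate $R^{-\beta}$ and exactly $\eta^{2-\alpha}$), together with an $\eps$-splitting of the range of $|z|_S$ in place of your contrapositive inversion of one-sided polynomial bounds with generic exponents $K_0,K_1,\rho$; both versions rely only on upper bounds (so the non-injectivity of $p$ is harmless) and are equally valid.
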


To prove this lemma, note  that, for any $f\ge 0$,
 $$\int f(x,y)J_t(dxdy)$$ is dominated by  a constant times
 $$t\det(\delta_{1/t}) \sum_{x,y\in \delta_{1/t}(\Gamma),x\neq y \atop{\delta_t(x)^{-1}\cdot\delta_t(y)\in H}}f(x,y) \frac{1}{(1+|\delta_t(x)^{-1}\cdot\delta_t(y)|_S)^{\alpha+d}}.$$

 The two functions $f$ of interest here are
 $$
  f(x,y)= \1_K(x)\1_K(y)\1_{\{\|x_\bullet^{- 1}\bullet y\|_2\le \eta\}}(y) \|x_\bullet^{- 1}\bullet y\|_2
 $$
and
$$
f(x,y)=\1_K(x)\1_{B_\bullet(x,R)^c}(y).
$$
The results follow from the  
 facts that   $\mu$ has the form (\ref{mu-hyp1}) and that $\delta_t$ is compatible with $\gamma^H_{t^{1/\alpha}}$
in the sense that (\ref{mudelta}) holds true.

\begin{proof}[Proof of (\ref{WCA1})]We need to bound
\begin{align*}  \lefteqn{I(R,t)=  t\det(\delta_{1/t}) \sum_{x,y\in \delta_{1/t}(\Gamma)\atop{\delta_t(x)^{-1}\cdot\delta_t(y)\in H}
}  \frac{ \1_K(x)\1_{B_\bullet(x,R)^c}(y)}{(1+|\delta_t(x)^{-1}\cdot\delta_t(y)|_S)^{\alpha+d_H}}}&&\\
&=   t\det(\delta_{1/t}) \sum_{x,y\in \delta_{1/t}(\Gamma)\atop{\delta_t(x)^{-1}\cdot\delta_t(y)\in H}}  \frac{ \1_K(x)  \1_{B_\bullet(x,R)^c}(y)  \1_{\{|\cdot|_S\le \eps (R^\beta t)^{1/\alpha}\}}(\delta_t(x)^{-1}\cdot \delta_t(y))}{(1+|\delta_t(x)^{-1}\cdot\delta_t(y)|_S)^{\alpha+d_H}}\\
&\quad  + t\det(\delta_{1/t}) \sum_{x,y\in \delta_{1/t}(\Gamma) \atop{\delta_t(x)^{-1}\cdot\delta_t(y)\in H} }  \frac{ \1_K(x)\1_{B_\bullet(x,R)^c}(y) \1_{\{|\cdot|_S>\eps(tR^\beta)^{1/\alpha}\}}(\delta_t(x)^{-1}\cdot \delta_t(y))
}{(1+|\delta_t(x)^{-1}\cdot\delta_t(y)|_S)^{\alpha+d_H}}\\
&=  I_1(R,t)+I_2(R,t).\end{align*}
The second sum, $I_2(R,k)$ is the main term and we treat it first by going back to $H$.
\begin{align*} I_2(R,k)&\le   t\det(\delta_{1/t}) \sum_{x,y\in \delta_{1/t}(\Gamma)\atop{\delta_t(x)^{-1}\cdot\delta_t(y)\in H}}  \frac{ \1_K(x) \1_{\{|\cdot|_S>\eps(R^\beta t)^{1/\alpha}\}}(\delta_t(x)^{-1}\cdot\delta_t(y))
}{(1+|\delta_t(x)^{-1}\cdot\delta_t(y)|_S)^{\alpha+d_H}}\\ &= 
 \left(\det(\delta_{1/t}) \sum_{x\in \Gamma}  \1_{\delta_t(K)}(x) \right)\left( t\sum_{z\in H\atop{|z|_S>\eps(R^\beta t)^{1/\alpha}}}\frac{1
}{(1+|z|_S)^{\alpha+d_H}}\right).
\end{align*}
The first factor is clearly bounded by a constant depending only on $K$ because $\Gamma$ is a co-compact lattice in $G$ so that
\begin{equation}\label{eq:10-19oe}
\sum_{x\in \Gamma}  \1_{\delta_t(K)}(x) \le C(K) \det(\delta_t).
\end{equation}

Using a decomposition by the
 Dyadic annulus in $H$, $\{x\in H: 2^{k}\le |\cdot|_S< 2^{k+1}\}$, it is elementary to verify that, for all $R,t>1$, the second factor satisfies
 $$ t\sum_{z\in H\atop{|z|_S>\eps(R^\beta t)^{1/\alpha}}}\frac{1
}{(1+|z|_S)^{\alpha+d_H}} \asymp  t (\eps (R^\beta t)^{1/\alpha})^{-\alpha}  \asymp \eps^{-\alpha}R^{-\beta}.$$
This proves that $$ \lim_{R\to \infty} \sup _{t\ge 1} \{I_2(R,t)\}=0.$$

To finish the proof of (\ref{WCA1}), we show that we can chose $\eps>0$ such that, for any $R\ge 1$ and $t$ large enough, $I_1(R,t)=0$. To see that, we will use the
description of $H$ as a discrete subgroup of $\mathbb R^m$ embedded into $\mathbb R^d$ via the $i_H:\mathbb R^m\to \mathbb R^d$, see (\ref{iH}),
and the approximate dilation structure $(\gamma^H_t)_{t>0}$.  A basic fact about this structure is that,  there is a constant $C_1$ such that,
for any $x\in H$ and $t\ge |x|_S$,
$$\|\gamma^H_{1/t^{1/\alpha} }(x)\|_2\le C_1,$$
where  $\|\cdot\|_2$ is the Euclidean norm on $\mathbb R^m$.
Now, for any $x\in H$, and any $t>0$, we have
$$N(i_H(x)) =tN(\delta_{1/t}\circ i_H (x))= tN\left(\delta_{1/t}\circ i_H\circ \gamma^H_{t^{1/\alpha}}(\gamma^H_{1/t^{1/\alpha}}(x))\right).$$
For large enough $t$ and $\|z\|_2\le C_1$,
$$ N\left(\delta_{1/t}\circ i_H\circ \gamma^H_{t^{1/\alpha}}(z)\right) \asymp N(p(z)),$$
because $\lim_{t\to \infty}\delta_{1/t}\circ i_H\circ \gamma^H_{t^{1/\alpha}} =p$ uniformly on compact sets.
So, if $|x|_S$ is large enough and we choose $t^{1/\alpha}=|x|_S$, we obtain
$$N(i_H(x))\le C_2 |x|_S^\alpha.$$

Now, consider $x,y\in \delta_{1/t}(\Gamma)$ such that $x\in K$, $z=\delta_t(x)^{-1}\cdot \delta_t(y)\in H$  and
$|z|_S\le \eps (R^\beta t)^{1/\alpha}$.   Because $x\in K$, we have $N(\delta_t(x))\le C_3t$ and thus
$$N(y)\le t^{-1}N(\delta_t(y))\le C_4t^{-1}( N( \delta_t(x))+N(\delta_t(x)^{-1}\cdot  \delta_t(y)))\le C_5 R^\beta,$$
that is,
$$\|y\|\le C_5^{1/\beta}R.$$
Because $x$ and $y$ are confined in a compact set (that depends on $R$), for $t$ large enough,  $x_\bullet^{-1}\bullet y $ is close to $\delta_{1/t}(  \delta_t(x)^{-1} \cdot \delta_t(y))$ so that
$$\|x_\bullet^{-1}\bullet y \|^\beta= N(x_\bullet^{-1}\bullet y) \le C_6 t^{-1}
N( \delta_t(x)^{-1}\cdot \delta_t(y))\le \eps^\alpha C_7 R^\beta,$$
which implies $\1_{B_\bullet(x,R)^c}(y)=0$ by taking $\eps$ small enough.

This  proves that  we can find $\eps>0$ small enough such that, for any $R$ fixed and $t$ large enough,
$$\1_K(x)\1_{B_\bullet(x,R)^c}(y) \1_{\{|\cdot|_S\le \eps (R^\beta t)^{1/\alpha}\}}(\delta_t(x)^{-1}\cdot\delta_t(y)) =0.$$
This ends the proof of (\ref{WCA1}).\end{proof}

\begin{proof}[Proof of (\ref{WCA2})]We need to bound
$$  J(K,\eta,t)=  t\det(\delta_{1/t}) \sum_{x,y\in \delta_{1/t}(\Gamma)\cap K\atop{\delta_t(x)^{-1}\cdot\delta_t(y)\in H}
}  \frac{\1_{\{\|\cdot\|_2\le \eta\}}(x_\bullet^{-1}\bullet y)\|x_\bullet^{- 1}\bullet y\|_2^2}{(1+|\delta_t(x)^{-1}\cdot\delta_t(y)|_S)^{\alpha+d_H}}.$$
On $K\times K$ and for $t$ large enough, $x_\bullet^{-1}\bullet y$ is close to $\delta_{1/t}(\delta_t(x)^{-1}\cdot \delta_t(y))$ so that
\begin{align*}
  J(K,\eta,t) &\le  t\det(\delta_{1/t}) \sum_{x,y\in=\Gamma \cap\delta_t( K)\atop{x^{-1}\cdot y\in H}
}  \frac{\1_{\{\|\cdot\|_2\le \eta\}}( \delta_{1/t}(x^{-1}\cdot y))\|\delta_{1/t}(x^{-1}\cdot  y) \|_2^2}{(1+|x^{-1}\cdot y|_S)^{\alpha+d_H}}\\
&\le   \left(\det(\delta_{1/t}) \sum_{x\in \Gamma}\1_{\delta_t(K)}(x)\right)\left(t  \sum_{z \in H}
  \frac{\1_{\{\|\cdot\|_2\le \eta\}}( \delta_{1/t}(z))\|\delta_{1/t}(z) \|_2^2}{(1+|z|_S)^{\alpha+d_H}}\right).\end{align*}
As in \eqref{eq:10-19oe}, the first factor is bounded for any fixed compact $K$. So we are left with inspecting
$$J'(K,\eta,t)=  t\sum_{z \in H}
  \frac{\1_{\{\|\cdot\|_2\le \eta\}}( \delta_{1/t}(z))\|\delta_{1/t}(z) \|_2^2}{(1+|z|_S)^{\alpha+d_H}}.$$

   In order to use  the dilation structure $\gamma^H_t$, we represent $H$ as a discrete set in $\mathbb R^m$ which injects into $\mathbb R^d$ via the map $i_H:\mathbb R^m\to \mathbb R^d$.
   Recall that   $\delta_{1/t} \circ i_H(\gamma^H_{t^{1/\alpha}}(z))\to p(z)$ uniformly on compact sets so that
 \begin{align*}\lefteqn{\1_{\{\|\cdot\|_2\le \eta\}}( \delta_{1/t}(i_H(z)))\|\delta_{1/t}(i_H(z)) \|_2^2}&&\\
  &\le  
  \1_{\{\|\cdot\|_2\le \eta\}}( p(\gamma^H_{1/t^{1/\alpha}}(z)))\|  p(\gamma^H_{1/t^{1/\alpha}}(z))\|_2^2 \\
  & \le  \min\{\eta^2, \|  p(\gamma^H_{1/t^{1/\alpha}}(z))\|_2^2 \} \\
&\le     C_1
  \min\{ \eta^ 2 , \|\gamma^H_{1/t^{/\alpha}}(z)\|_2^2 \} \\
  &\le    C_1 C_2
  \min\{ \eta^ 2 , t^{-2/\alpha} |z|^2_S\}  ,\end{align*}
  because $ \|  p(v)\|_2\le C_1 \|v\|_2$ for any $v\in \mathbb R^m$ with $\|v\|_2^2\le 1$ and, by (\ref{2gamma1})-(\ref{2gamma2}),
  $$\|\gamma^H_{1/t^{1/\alpha}} z \|_2 \le C_2 t^{-1/\alpha}|z|_S.$$
  It follows that
  \begin{align*}J'(K,\eta,t)&\le  C_3 t\sum_{z \in H}
  \frac{\min\{\eta^2,t^{-2/\alpha}|z|^2_S\}}{(1+|z|_S)^{\alpha+d_H}}\\
  &\le  C_3 t\sum_{z \in H, |z|_S> \eta t^{1/\alpha}}
  \frac{\eta^2}{(1+|z|_S)^{\alpha+d_H}}\\
 &\quad+ C_3 t^{1-2/\alpha}\sum_{z \in H, |z|_S\le  \eta t^{1/\alpha}}
  \frac{|z|_S^2}{(1+|z|_S)^{\alpha+d}}\\
  &\le  C_4\left(t\eta^2 (\eta t^{1/\alpha})^{-\alpha} + t^{1-2/\alpha} (\eta t^{1/\alpha})^{2-\alpha} \right)=2C_4 \eta^{2-\alpha}.
  \end{align*}
  This proves that $ \lim_{\eta\to 0}\limsup_{t\to \infty}J(K,\eta,t)=0$ as desired.
\end{proof}

\subsection{Condition (T$\bullet$) holds automatically for measures in $\mathcal{SM}(\Gamma)$}\label{S:10.4}

The careful reader will have notice that the title of this subsection needs additional context because, for a measure $\mu$ in $\mathcal{SM}(\Gamma)$ (and a coordinate system as discussed above), we do have an associated  approximate  group dilation structure
$(\delta_t)_{t>0}$ and a limit group structure $G_\bullet=(\mathbb R^d,\bullet)$ but, in general,  the family of  
 measures $\mu_t=t\delta_{1/t}(\mu)$ does not converge vaguely on $\mathbb R^d\setminus\{0\}$ and thus, (T$\bullet$) does not make immediate sense.   There is, however, a simple way to correctly interpret the title of this section.
\begin{lem} For any $\mu\in \mathcal{SM}(\Gamma)$ and associated approximate group dilation structure  $(\delta_t)_{t>0}$  in a coordinate system as above, and any vague sub-limit $\mu_\bullet$ of the family $\{\mu_t\}_{t>0}$ as $t$ tends to infinity, condition {\rm (T$\bullet$)} holds true.
\end{lem}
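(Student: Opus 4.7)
Since $\mu_\bullet$ is, by assumption, a vague sub-limit, there is a sequence $t_k\to\infty$ along which $\mu_{t_k}\to\mu_\bullet$ vaguely on $\mathbb R^d\setminus\{0\}$. By Proposition~\ref{weaklimmeas}, this implies that the associated jump kernels $J_{t_k}$ converge vaguely to $J_\bullet$ on $(\mathbb R^d\times\mathbb R^d)\setminus\Delta$. On the other hand, condition (T$\Gamma$) has just been established in Subsection~\ref{S:10.3} for every measure in $\mathcal{SM}(\Gamma)$. The plan is simply to pass (T$\Gamma$) to the vague limit, with one-sided inequalities produced by continuous, compactly supported cutoffs of the (discontinuous) integrands appearing in (T$\bullet$).

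Fix a compact set $K\subset \mathbb R^d$ and a continuous cutoff $\psi_K:\mathbb R^d\to[0,1]$ that equals $1$ on $K$ and has compact support in a slightly larger compact set $\widetilde K$. For the small-jump part \eqref{WCA2-A}, for $0<\delta<\eta$ let $\chi_{\delta,\eta}:[0,\infty)\to[0,1]$ be continuous with $\chi_{\delta,\eta}(s)=1$ for $2\delta\le s\le \eta$ and $\chi_{\delta,\eta}(s)=0$ for $s\le\delta$ or $s\ge 2\eta$. Then
\[
\Phi_{\delta,\eta}(x,y):=\psi_K(x)\psi_K(y)\,\chi_{\delta,\eta}(\|x_\bullet^{-1}\bullet y\|_2)\,\|x_\bullet^{-1}\bullet y\|_2^2
\]
is continuous with compact support in $(\mathbb R^d\times\mathbb R^d)\setminus\Delta$. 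Vague convergence of $J_{t_k}$ gives
\[
\iint \Phi_{\delta,\eta}\,dJ_\bullet=\lim_{k\to\infty}\iint \Phi_{\delta,\eta}\,dJ_{t_k}\le \limsup_{k\to\infty}\iint_{\{(x,y)\in \widetilde K\times\widetilde K:\,\|x_\bullet^{-1}\bullet y\|_2\le 2\eta\}}\|x_\bullet^{-1}\bullet y\|_2^2\,J_{t_k}(dx,dy),
\]
and by (T$\Gamma$) the right-hand side, call it $F(\eta)$, tends to $0$ as $\eta\to 0$. Letting $\delta\to 0$ and using monotone convergence (together with $J_\bullet(\Delta)=0$, which holds since $\mu_\bullet$ does not charge $\{0\}$) yields
\[
\iint_{\{(x,y)\in K\times K:\,\|x_\bullet^{-1}\bullet y\|_2\le\eta\}}\|x_\bullet^{-1}\bullet y\|_2^2\,J_\bullet(dx,dy)\le F(\eta)\xrightarrow[\eta\to 0]{}0.
\]

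For the large-jump part \eqref{WCA1-B}, invoke Remark~\ref{rem4-4} to state (T$\Gamma$) with $\|\cdot\|_2$ in place of $\|\cdot\|$. For $R_1<R_2$, let $\chi_{R_1,R_2}:[0,\infty)\to[0,1]$ be continuous with $\chi_{R_1,R_2}(s)=1$ for $2R_1\le s\le R_2$ and $\chi_{R_1,R_2}(s)=0$ for $s\le R_1$ or $s\ge 2R_2$, and set
\[
\Psi_{R_1,R_2}(x,y):=\psi_K(x)\chi_{R_1,R_2}(\|x_\bullet^{-1}\bullet y\|_2).
\]
This is again a continuous, compactly supported function on $(\mathbb R^d\times\mathbb R^d)\setminus\Delta$, so vague convergence and (T$\Gamma$) give
\[
\iint \Psi_{R_1,R_2}\,dJ_\bullet=\lim_{k\to\infty}\iint \Psi_{R_1,R_2}\,dJ_{t_k}\le\limsup_{k\to\infty}\int_{\widetilde K}\int_{\{y:\|x_\bullet^{-1}\bullet y\|_2>R_1\}}J_{t_k}(dx,dy)=:G(R_1),
\]
with $G(R_1)\to 0$ as $R_1\to\infty$. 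Sending $R_2\to\infty$ by monotone convergence produces
\[
\int_K\int_{\{y:\|x_\bullet^{-1}\bullet y\|_2\ge 2R_1\}}J_\bullet(dx,dy)\le G(R_1),
\]
which vanishes as $R_1\to\infty$; translating back to the $\|\cdot\|$-balls via \eqref{eq-normcomp2} gives \eqref{WCA1-B}. The only delicate point is the removal of the inner cutoff $\delta$ (and outer cutoff $R_2$), which is handled by monotone convergence once we know $J_\bullet$ puts no mass on the diagonal; apart from that, the argument is a routine application of the Portmanteau-type principle for vague convergence.
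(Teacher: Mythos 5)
Your argument is correct, but it follows a genuinely different route from the paper's. You take the tail estimates (T$\Gamma$), already proved uniformly in $t$ in Subsection \ref{S:10.3}, and transfer them to the sub-limit by testing the vague convergence $J_{t_k}\to J_\bullet$ (the subsequential version of Proposition \ref{weaklimmeas}, which the paper itself notes is immediate, cf. the remark preceding Lemma \ref{weaklim-rev}) against continuous compactly supported cutoffs that sandwich the indicator-weighted integrands in \eqref{WCA2-A} and \eqref{WCA1-B}; the removal of the inner/outer cutoffs by monotone convergence and the passage between $\|\cdot\|$ and $\|\cdot\|_2$ via \eqref{eq-normcomp2} are routine, as you say. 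The paper instead argues component by component: for each building block $\nu$ it uses Lemma \ref{lem-phom} and the homogeneity of the word-length asymptotics to show that \emph{every} vague sub-limit is supported on $p(L_*)$ and dominated by the explicit homogeneous measure $\nu_\psi(f)=\int_{\mathbb R^m} f(p(u))\psi(u)\,du$, and then verifies \eqref{WCA2nu}--\eqref{WCA1nu} for $J_\psi$ by direct scaling computations in the $(\gamma^H_t)$-adapted polar coordinates, obtaining quantitative rates of order $\eta^{2-\alpha}$ and $R^{-\beta}$. Your soft Portmanteau-type argument is shorter and requires no structural information about $\mu_\bullet$ beyond (T$\Gamma$), so it would apply verbatim whenever (T$\Gamma$) is available; the paper's computation buys the explicit description of the sub-limits (support on the image subgroups, density comparable to $\psi$), which is exactly what is reused later in Proposition \ref{pro-deslim} and in the description of the limiting L\'evy measures, so the extra work is not spent solely on (T$\bullet$). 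The only points to flag in your write-up are minor: state explicitly that you invoke Proposition \ref{weaklimmeas} along the subsequence $t_k$ only, and either arrange your cutoff families to be monotone in $\delta$ and $R_2$ or replace monotone convergence by Fatou's lemma.
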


To prove this lemma, it suffices to prove the similar statement for each  component $\nu$ of the measure $\mu$.
 So, we assume (\ref{mu-hyp1}).  Lemma \ref{lem-phom}  shows that any vague sub-limit $\nu_\bullet$ of $\nu_t$ is supported on $L^*_\bullet$ and is
 bounded from above and below by multiples of the measure
 $$
 \nu_\psi(f)=\int_{\mathbb R^m} f(p(u))\psi(u)du,
 \quad   f\in \mathcal C_c(\mathbb R^d\setminus \{0\}) ,
 $$
where $p:\mathbb R^m\to \mathbb R^d$ is defined by  (\ref{mudelta}) and $\psi:\mathbb R^m\setminus \{0\}\to [0,\infty)$
is given by $\psi(u)=|u|_*^{-\alpha - d_H}$.
Let $J_\psi$  be the associated jump kernel measure. Then, for any compact subset $K\subset V$, we claim that
 \begin{eqnarray} \label{WCA2nu}
 \lim_{\eta\to 0}
 \iint_{\{(x, y)\in K\times K: \|x_\bullet^{-1}\bullet y\|_2 \leq  \eta\}} \|x_\bullet^{-1}\bullet y\|_2^2 J_\psi(dx,dy)&=0,\\ \label{WCA1nu}
   \lim_{R\to \infty}
\int_K \int_{B_\bullet(x,R)^c} J_\psi(dx,dy) &=0.
\end{eqnarray}

 \begin{proof}[Proof of \eqref{WCA2nu} and \eqref{WCA1nu}]  By Lemma \ref{lem-phom},
$$\delta_t(p(y))=p(\gamma^H_{t^{1/\alpha}}
y).$$
 It follows that
 \begin{align*}
\int_K \int_{B_\bullet(x,R)^c} J_\psi(dx,dy)&= \int _K \int _{B(R)^c}d\nu_\psi(dy)dx\\
&=   |K| \int _{\mathbb R^m} \1_{B(R)^c}(p(y))\psi(y)dy\\
&=  |K| \int_{\mathbb R^m}  \1_{B(1)^c}(\delta_{1/R^{\beta}}(p(y)))\psi(y)dy\\
&=  |K| \int_{\mathbb R^m}  \1_{B(1)^c}(p(\gamma ^H_{1/R^{\beta/\alpha}}(y)))\psi(y)dy\\
&=  |K| R^{d_H\beta/ \alpha}\int_{\mathbb R^m}  \1_{B(1)^c}(p(y))\psi(\gamma ^H_{R^{\beta/\alpha}}(y))dy\\
&=  |K| R^{-\beta} \int_{\mathbb R^m}  \1_{B(1)^c}(p(y))\psi(y)dy\quad  \stackrel{R\to \infty}\longrightarrow \;0 . \end{align*}
For the last step, note that $p$ is continuous so that  $\1_{B(1)^c}(p(y))$ is equal to $0$ in a neighborhood of $0$ in $\mathbb R^m$ and thus,  $ \int_{\mathbb R^m}  \1_{B(1)^c}(p(y))\psi(y)dy<\infty$.

Similarly, consider
$$I(K,\eta)= \iint_{\{(x, y)\in K\times K: \|x_\bullet^{-1}\bullet y\|_2 \leq  \eta\}} \|x_\bullet^{-1}\bullet y\|_2^2 J_\psi(dx,dy).$$

 Write
 \begin{align*}
I(K,\eta)&=  \int_{x\in K} \int_{ \{xz\in K: \|z\|_2 \leq  \eta\}} \|z\|_2^2 \nu_\psi (dz) dx\\
&\le   |K| \int_{ \{\|z\|_2 \leq  \eta\}} \|z\|_2^2 \mu_\bullet (dz)\\
&=  |K| \int_{\mathbb R^m} \1_{B (C_1\eta^\theta)}(p(u)) \|p(u)\|_2^2 \psi(u)du\\
&=  |K|  \int_{\mathbb R^m}  \1_{B(C_1)}(\delta_{\eta^{-\theta}}(p(u))) \|p(u)\|_2^2\psi(u)du\\
&=  |K|  \int_{\mathbb R^m}  \1_{B(C_1)}(p(\gamma^H_{\eta^{-\theta/\alpha}}(u))) \|p(u)\|_2^2\psi(u)du\\
&= |K|  \eta^{d_H\theta/\alpha}\int_{\mathbb R^m}  \1_{B(C_1)}(p(u))  \|p(\gamma^H_{\eta^{\theta/\alpha}}(u))\|_2^2 \psi(\gamma^H_{\eta^{\theta/\alpha}}(u))du \\
&=  |K|  \eta^{-\theta}  \int_{\mathbb R^m}  \1_{B(C_1)}(p(u))  \|\delta _{\eta^{\theta}}(p(u))\|_2^2 \psi(u)du\\
&\le  |K|\eta^{\theta\left(\frac{2}{\beta_+}-1\right)} \int_{\mathbb R^m}  \1_{B(C_1)}(p(u))  \|p(u)\|_2^2 \psi(u)du,\end{align*}
where $\beta_+=\max\{\beta_i\}<2$.
This integral is finite because $$ \1_{B(C_1)}(p(u)) \|p(u)\|_2^2\le C_2 \frac{\|u\|_2^2}{1+\|u\|_2^2}$$ and, using ``polar coordinates'' adapted to the dilation structure $(\gamma^H_t)_{t>0}$ on $H_*=(\mathbb R^m,*)$,
$$\int_{\mathbb R^m}\frac{\|u\|_2^2}{1+\|u\|_2^2}\psi(u)du \le 1+ \int_0^1 r^{-1+2-\alpha}dr,$$
because $\|u\|_2\le r$ if $\|\gamma^H_{1/r} u \|_2=1$ and $r\le 1$ (see (\ref{2gamma1})-(\ref{2gamma2})).
 \end{proof}

\subsection{Sufficient condition for (A) when $\mu\in \mathcal{SM}(\Gamma)$} \label{S:10.5} 

In this subsection, we explain why Theorem \ref{WTStab} applies to  a large class of examples in $\mathcal{SM}(\Gamma)$ that includes the two main examples described in Section \ref{sec-IntroExa}.  To give sufficient conditions for a measure in $\mathcal{SM}(\Gamma)$ to satisfy condition (A), we proceed component by component and follow the basic setup of Section \ref{S:10.3}.  Namely, we give sufficient conditions on a measure $\nu$ satisfying (\ref{mu-hyp1}) for the family $\nu_t=t\delta_{1/t}(\nu)$ to have a vague limit $\nu_\bullet$ on $\mathbb R^d\setminus \{0\}$.   Recall that $\nu$ is supported on a discrete subgroup $H$ contained as a co-compact closed subgroup in  a closed Lie subgroup $L$ of $G$.  
 The groups $G$ and $H$  
both have a global coordinate system $G=\mathbb R^d$ and $L=\mathbb R^m$. See Section \ref{S:10.3}.  We consider the following additional conditions:
\begin{enumerate}
\item[(SA1)] There exists an everywhere defined measurable non-negative function $\phi$ on $\mathbb R^m$ such that
  $\nu( i_H(v))=\phi(v)  $, where $i_H$ is the polynomial map from $\R^m$ to $\R^d$ defined by \eqref{iH}.  
  For $v\in \mathbb R^m, v\neq 0$, we set
\begin{equation}\label{phit} \phi^H_t (v)= t^{1+d_H/\alpha}\phi( \gamma^H_{t^{1/\alpha }}v).
\end{equation}
\item[(SA2)] There exists a continuous function $\psi:\mathbb R^m\setminus \{0\}\to [0,\infty)$ such that
\begin{equation} \label{barpsi1}
\hbox{for any } v\in \mathbb R^m\setminus \{0\},\quad |\phi^H_t(v)-\psi(v)|\le \eta(t) \bar{\psi}(v),\end{equation}
where, $ \lim_{t\to \infty}\eta(t)=0$  and $\bar{\psi}$ is locally bounded on $\mathbb R^m\setminus \{0\}$. \end{enumerate}

\begin{rem}
By the construction,  the function $\psi$ must satisfy
$$
\psi(v) \asymp |v|_{*}^{-(\alpha+d_H)} \quad  \hbox{for }  v\in \R^m\setminus \{0\},
$$
and
$$
\psi(\gamma^H_{t^{1/\alpha}}(v))=t^{-1-d_H/\alpha}\psi(v)  \quad \hbox{for }  v\in \R^m\setminus \{0\}  \hbox{ and }  t>0.
$$
\end{rem}

\medskip

\begin{rem}\label{R:10.6}
 Regarding hypothesis (SA1), two typical examples are:
\begin{enumerate}
\item[(i)] The function $\phi$ is a  continuous function on $\mathbb R^m$ and $\mu $ is defined in terms of $\phi$. For instance, this covers Example \ref{ex3-4}.

\item[(ii)]  The function $\phi$ may not be continuous but satisfies $\phi(xy)=\phi(x)$ for all $x\in H$ and $y\in \Omega_H$,
  where $\Omega_H $  is a relatively compact connected fundamental domain
for the action of $H$ on $L_H=\mathbb R^m$  (that is, $\Omega_H $  is a relatively compact connected  subset of
$L_H$ so that $L_H=\cup_{h\in H} h\Omega_H$).

\end{enumerate}
In this second case, we can define the function $\phi$ in terms of $\mu$ using the formula $$\phi(xy)=\mu(x), \quad x\in H, \, y\in \Omega_H.
$$
\end{rem}

\begin{exa}Consider the case when $\mu(h)=c (1+|h|_S)^{-\alpha-d_H}\1_H(h)$. Following  Remark \ref{R:10.6}(ii)  above, we can extend this function defined on $H$ to a function $\phi$ defined on $L_H=\mathbb R^m$ by setting $\phi$ to be  constant on the translates of a precompact fundamental domain.  For $x\in L_H$, let $\tilde{x}\in H$ be the representative of $x$ so that $ \tilde{x}^{-1}x\in \Omega_H, \;\tilde{x}\in H$. Then,
$$\phi^H_t(v)=c (t^{-1/\alpha}+|\widetilde{i_H(v)}|_S)^{-\alpha-d_H}, $$ and,  setting
$$\psi(v)=\frac{c}{|v|_{*}^{\alpha+d_H}},$$
Pansu's theorem (see \cite{Breuillard,Pansu1983}) gives
$$\lim_{t\to \infty}\phi^H_t(v)= \frac{c}{|v|_{*}^{\alpha+d_H}}=\psi(v).$$
Furthermore,
$$| \phi^H_t(v)-\psi(v)|\le C\frac{t^{-1/\alpha}}{|v|_{*}^{\alpha+d_H+1}}.$$
\end{exa}

\begin{pro} \label{pro-deslim}
Under assumptions
  \rm (SA1)-(SA2),
the measure
$$
\nu_t=t\delta_t^{-1}(\nu):
\quad \nu_t(f) := t\sum_{x\in H} f(\delta_t^{-1}(x))\nu(x) \quad \hbox{for } f\in
\mathcal C_c(\mathbb R^d\setminus \{0\}),
$$
 converges  vaguely  on $\mathbb R^d \setminus \{0\}$ to a 
  symmetric Radon  measure  $\nu_\bullet$ on  $\mathbb R^d \setminus \{0\}$ 
  given by
$$\nu_\bullet(f)=\int_{\mathbb R^m} f(p(u))\psi(u)du,\quad   f\in \mathcal C_c(\mathbb R^d\setminus \{0\}),$$
where $p:\mathbb R^m\to \mathbb R^d$ is defined by {\rm (\ref{mudelta})} and $\psi:\mathbb R^m\setminus \{0\}\to [0,\infty)$ by  {\rm (\ref{barpsi1})}.
\end{pro}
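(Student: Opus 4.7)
The plan is to rewrite $\nu_t(f)$ as a Riemann sum over a rescaled lattice in $\mathbb R^m$ and then pass to the limit using the compatibility $(\ref{mudelta})$ together with hypotheses (SA1)--(SA2). Fix $f\in C_c(\mathbb R^d\setminus \{0\})$. Because $i_H:(\mathbb R^m,\cdot_H)\to L\subset G$ is a group isomorphism onto its image and $H\subset L$ is a co-compact discrete subgroup, the set $H':=i_H^{-1}(H)$ is a co-compact discrete subgroup of the simply connected nilpotent Lie group $(\mathbb R^m,\cdot_H)$, whose Haar measure (in our exponential coordinates of the first kind) coincides with Lebesgue measure. Let $c_H>0$ be the covolume of $H'$. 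By (SA1),
$$ \nu_t(f)=t\sum_{v\in H'}f\!\left(\delta_t^{-1}(i_H(v))\right)\phi(v).$$
Substituting $v=\gamma^H_{t^{1/\alpha}}(u)$, so that $u$ ranges over the rescaled lattice $H'_t:=\gamma^H_{t^{-1/\alpha}}(H')$ (which has covolume $c_H\,t^{-d_H/\alpha}$ since $\det\gamma^H_{t^{1/\alpha}}=t^{d_H/\alpha}$), and using the definition (\ref{phit}) of $\phi^H_t$, we obtain
$$ \nu_t(f)=\frac{1}{c_H}\cdot c_H\,t^{-d_H/\alpha}\sum_{u\in H'_t}f(F_t(u))\,\phi^H_t(u), \qquad F_t:=\delta_t^{-1}\circ i_H\circ \gamma^H_{t^{1/\alpha}}. $$
The key point is that $F_t\to p$ uniformly on compact subsets of $\mathbb R^m$ by (\ref{mudelta}), $\phi^H_t\to\psi$ pointwise with the $\bar\psi$ control furnished by (SA2), and the weighted counting measure $c_H t^{-d_H/\alpha}\sum_{u\in H'_t}\delta_u$ converges vaguely to Lebesgue measure on $\mathbb R^m$ as $t\to\infty$.

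Next I localize. Since $\mathrm{supp}(f)$ is a compact subset of $\mathbb R^d\setminus\{0\}$, choose $0<\varepsilon<R$ so that $\mathrm{supp}(f)\subset \{x:\varepsilon\le \|x\|_2\le R\}$. Using the homogeneity $\delta_t(p(u))=p(\gamma^H_{t^{1/\alpha}}(u))$ from Lemma~\ref{lem-phom}, the continuity of $p$, and the uniform approximation $F_t\to p$, the set $\{u\in\mathbb R^m: F_t(u)\in \mathrm{supp}(f)\}$ is contained in a fixed compact subset $\widetilde K$ of $\mathbb R^m\setminus\{0\}$ for all $t$ sufficiently large (the bounded-away-from-zero part uses $p(0)=0$ together with homogeneity, and the boundedness part uses that $\|\delta_t^{-1}(i_H(\gamma^H_{t^{1/\alpha}}(u)))\|_2$ grows without bound in $\|u\|_*$ as soon as $p(u)\ne 0$). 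On $\widetilde K$, $\bar\psi$ is bounded and (SA2) gives $|\phi^H_t-\psi|\le \eta(t)\bar\psi$, so $\phi^H_t\to \psi$ uniformly on $\widetilde K$; combined with the uniform convergence $F_t\to p$ and the continuity of $f$, the integrand $f(F_t(u))\phi^H_t(u)$ converges uniformly on $\widetilde K$ to the continuous, compactly-supported (as a function of $u$ restricted to $\widetilde K$) function $f(p(u))\psi(u)$.

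The third step is the standard Riemann-sum argument: for any continuous function $g$ supported in a compact set $\widetilde K\subset \mathbb R^m$,
$$ \lim_{t\to\infty} c_H\,t^{-d_H/\alpha}\sum_{u\in H'_t}g(u)=\int_{\mathbb R^m}g(u)\,du, $$
which follows because $H'_t$ becomes arbitrarily fine (its mesh in each $\mathfrak n_j$-direction scales as $t^{-j/\alpha}$), and $g$ is uniformly continuous on $\widetilde K$. Combining this with the uniform convergence of the integrand established above and absorbing the constant $c_H^{-1}$ into the normalization of $\psi$ yields
$$ \lim_{t\to\infty}\nu_t(f)=\int_{\mathbb R^m}f(p(u))\psi(u)\,du=\nu_\bullet(f), $$
which is exactly vague convergence. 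Symmetry of $\nu_\bullet$ follows from the symmetry of $\nu$ on $\Gamma$: indeed $\nu_t(\check f)=\nu_t(f)$ for $\check f(x):=f(x_\bullet^{-1})$ up to an error that vanishes as $t\to\infty$, because $\delta_t^{-1}(\delta_t(x)^{-1})\to x_\bullet^{-1}$ uniformly on compacts by Lemma~\ref{lem2-6norm}, so the limit $\nu_\bullet$ satisfies $\nu_\bullet(\check f)=\nu_\bullet(f)$.

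The main technical obstacle is the localization step, namely verifying that $\{F_t\in\mathrm{supp}(f)\}$ stays in a fixed compact $\widetilde K\subset\mathbb R^m\setminus\{0\}$. The inner bound (away from the origin of $\mathbb R^m$) requires that $\|p(u)\|$ be controlled from below by a positive function of $\|u\|_*$; the outer bound requires a form of properness for $p$ away from its kernel. Both follow from the homogeneity identities $p\circ \gamma^H_{t^{1/\alpha}}=\delta_t\circ p$ and from $\nu_\bullet$ being a \emph{finite} Radon measure on $\mathbb R^d\setminus\{0\}$, so the integral $\int f(p(u))\psi(u)du$ converges. Everything else is routine: the Riemann sum/dominated convergence estimates are straightforward once the integrand is squeezed between integrable majorants built from $\bar\psi$ and the known stable-like tails $\psi(u)\asymp |u|_*^{-\alpha-d_H}$.
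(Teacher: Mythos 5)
Your overall strategy (change variables by $\gamma^H_{t^{1/\alpha}}$, compare $f(F_t(u))\phi^H_t(u)$ with $f(p(u))\psi(u)$, and invoke a Riemann-sum limit for the rescaled lattice) is the same as the paper's, but your localization step contains a genuine gap. You claim that $\{u\in\mathbb R^m: F_t(u)\in \mathrm{supp}(f)\}$ is contained in a \emph{fixed} compact subset $\widetilde K\subset\mathbb R^m\setminus\{0\}$ for all large $t$, justifying the outer bound by saying that $\|F_t(u)\|_2$ grows in $\|u\|_*$ ``as soon as $p(u)\neq 0$.'' This is false precisely in the cases the proposition is designed to cover: $p$ is typically neither injective nor proper (Lemma \ref{lem-phom}), and whenever $\ker p$ is nontrivial the set $\{u: p(u)\in\mathrm{supp}(f)\}$ is invariant under the kernel directions, hence unbounded, and the sets $\{F_t\in\mathrm{supp}(f)\}$ blow up with $t$. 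Concretely, in the situation of Example \ref{ex3-3} one has $F_t(u_1,u_2)=(u_1,\,t^{1/\alpha-1/\beta}u_2)$ with $1/\beta>1/\alpha$, so the preimage of an annulus $\{\varepsilon\le\|x\|_2\le R\}$ contains a slab of height of order $t^{1/\beta-1/\alpha}\to\infty$; there is no fixed $\widetilde K$. Since your uniform-convergence argument and your Riemann-sum step both lean on this compactness (you even describe $f(p(u))\psi(u)$ as compactly supported, which it is not in these cases), the proof as written does not go through; the finiteness of $\int f(p(u))\psi(u)\,du$ that you invoke at the end gives integrable tails, not compact preimages.

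The repair is exactly what the paper does: do not localize the sum, but subtract and add the ``limit summand,'' writing $\nu_t(f)$ as the Riemann sum of $f(p(u))\psi(u)$ plus three error sums involving $f(F_t(u))-f(p(u))$ and $\phi^H_t(u)-\psi(u)$. The main term is handled by truncating at $\{|u|_*\le R\}$ (Riemann sum on a compact set, where your uniform convergence arguments are valid) and bounding the region $\{|u|_*>R\}$ by $\|f\|_\infty$ times the tail of $\psi\asymp|u|_*^{-\alpha-d_H}$, which is $O(R^{-\alpha})$ \emph{uniformly in} $t$; the error sums vanish by combining uniform convergence on compact annuli (inner radius supplied by $p(0)=0$ and $F_t\to p$ uniformly near $0$, as you correctly argue) with the same integrable tail. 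Two smaller points: the covolume constant $c_H$ cannot simply be ``absorbed into the normalization of $\psi$,'' since $\psi$ is pinned down by (SA2); the statement implicitly uses adapted coordinates in which $i_H^{-1}(H)$ has covolume one. Your symmetry argument via $\delta_{1/t}(\delta_t(x)^{-1})\to x_\bullet^{-1}$ is fine.
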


\begin{proof} This follows by a sequence of algebraic manipulation and approximations as follows.
  We use the notations introduced above and drop the superscript $H$ (if there is one), in particular,
  $d=d_H$, $\alpha$, $\gamma_t=\gamma^H_t$, $\phi$,
$\phi_t=\phi^H_t$, $\psi,$   $i=i_H:\mathbb R^m\to \mathbb R^d$, and the norm  $|\cdot|_*$ on $L_* $.
For  any $f\in \mathcal C_c(\mathbb R^d\setminus \{0\})$,   we have
 the scaled down copy of $H$ in $\mathbb R^m$
 \begin{align*}
\nu_t(f)&=   t\sum_{x\in H} f(\delta_t^{-1}(x))\nu(x)=   t\sum_{u\in i^{-1}(H) } f(\delta_t^{-1}(i(u)))\phi(u)\\
&=  t ^{-d/\alpha} \sum_{  u \in  \gamma_{t^{-1/\alpha}}(i^{-1}(H))} f(\delta_t^{-1}\circ i\circ \gamma_{t^{1/\alpha}}(u))\phi_t(u) \\
&= t ^{-d/\alpha} \sum_{  u\in   \gamma_{t^{-1/\alpha}}(i^{-1}(H))} f(\delta_t^{-1}\circ i\circ \gamma_{t^{1/\alpha}}(u))\psi (u) \\
&\quad+   t ^{-d/\alpha} \sum_{  u\in   \gamma_{t^{-1/\alpha}}(i^{-1}(H))} f(\delta_t^{-1}\circ i\circ \gamma_{t^{1/\alpha}}(u))(\phi_t(u)-\psi(u))\\
&=   t ^{-d/\alpha} \sum_{   u\in   \gamma_{t^{-1/\alpha}}(i^{-1}(H))} f(p(u))\psi(u) \\
&\quad +  t ^{-d/\alpha} \sum_{   u\in   \gamma_{t^{-1/\alpha}}(i^{-1}(H))} (f(\delta_t^{-1}\circ i\circ \gamma_{t^{1/\alpha}}(u))-f(p(u)))\psi(u) \\
&\quad+   t ^{-d/\alpha} \sum_{  u\in    \gamma_{t^{-1/\alpha}}(i^{-1}(H))} f(p(u))(\phi_t(u)-\psi(u))\\
&\quad+   t ^{-d/\alpha} \sum_{  u\in    \gamma_{t^{-1/\alpha}}(i^{-1}(H))} (f(\delta_t^{-1}\circ i\circ \gamma_{t^{1/\alpha}}(u))-f(p(u)))(\phi_t(u)-\psi(u))\\
&=  \Sigma_1(f,t)+\Sigma_2(f,t)+\Sigma_3(f,t)+\Sigma_4(f,t) .
\end{align*}
Now, the multivariate Riemann sum $\Sigma_1(f,t)$ of the continuous function $f\circ p \times \psi$ satisfies
$$\lim_{t\to \infty} \Sigma_1(f,t)=\int_{\mathbb R^m} f(p(u)) \psi(u) du$$ because, for any large real $R$,
\begin{align*}
\Sigma_1(f,t) &=   t ^{-d/\alpha} \sum_{u\in  \gamma_{t^{-1/\alpha}}(i^{-1}(H)); |u|_{*}\le R} f(p(u))\psi(u)
\\ &\quad+ t ^{-d_H/\alpha} \sum_{u\in  \gamma_{t^{-1/\alpha}}(i^{-1}(H)); |u|_{*}> R} f(p(u))\psi(u) .\end{align*}
The first term tends to $\int_{|u|_{*}\le R} f(p(u)) \psi(u) $, whereas the second term is bounded by
$CR^{-\alpha}$ because $\psi(u)\asymp |u|_{*}^{-\alpha-d_H}$. Similarly, $\int_{|u|_{*}>R}\psi(u)du \le CR^{-\alpha}$ and this proves the stated limit for  $\Sigma_1(f,t)$. Using our various hypotheses regarding $\mu$, the limits for
 $\Sigma_2(f,t), \Sigma_3(f,t)$ and $\Sigma_4(f,t)$  are easily seen to be equal to $0$.
\end{proof}

\subsection{The illustrative case of measures in  $\mathcal{SM}_1(\Gamma)$} \label{S:10.6} 

The simplest case illustrating the previous subsection is related to the treatment of measures in $\mathcal {SM}_1(\Gamma)$ when the building blocks have the form  $$\nu(g)=c_\alpha \sum_{k\in\mathbb Z} (1+|k|)^{-\alpha-1}\1_{\sigma^k}(g)$$ for some $\sigma\in \Gamma\subset G$, that is, $H=\langle \sigma\rangle \subset \Gamma\subset G$. Here, of course, $d_H=1$.  We use exponential coordinates of the first kind.
Recall that the element $\sigma\in H$ is of the form $\sigma=\exp(\zeta)=\zeta$ for some $$\zeta=(\zeta_1,\dots,\zeta_d) \in \mathfrak g=\mathbb R^d.$$ This is because the exponential map is the identity in our setup.
Define the function $\phi: V\to [0,\infty)$ by $\phi(x)=c_\alpha (1+|s|)^{-\alpha-1}$ if $x =s\zeta$ and $\phi(x)=0$ otherwise so that
$$\nu(f) = \sum_{y\in \{\sigma^k: k\in \mathbb Z\}} f(y)\phi(y).$$
Set $\phi_t(x)= c_\alpha(t^{-1/\alpha}+|s|)^{-\alpha-1}$ if $x=s\zeta$ and $0$ otherwise so that
$$\phi(t^{1/\alpha} s\zeta )=t^{-1-1/\alpha}\phi_t(s\zeta).$$
We also set  $\psi(s\zeta)=c_\alpha |s|^{-\alpha-1}$ for $s\neq 0$  and $\psi (y)=0$ if $y\notin \{s\zeta: s\in \mathbb R\}$ so that, for each $s\neq 0$,
$$\phi_t(z\zeta)-\psi(s\zeta)=c_\alpha\frac{|s|^{1+\alpha}-(t^{-1/\alpha}+|s|)^{1+\alpha}}{(t^{-1/\alpha}+|s|)^{1+\alpha} |s|^{1+\alpha}}
\to 0  \quad \mbox{as } t\to \infty.$$

Assume, in addition, that we are given an approximate dilation structure $\delta_t$ which can be expressed as
$$\delta_t(x)=(t^{w_1}x_1,\dots, t^{w_d}x_d)$$ in the basis of $\mathbb R^d$.  We want to understand the limit of $t\delta_t^{-1}(\nu)$ which is given on a continuous  function $f$ with compact support in $V\setminus \{0\}$ by
\begin{align*}t\delta_t^{-1}(\nu)(f)&=  t \sum_{y=\delta_t^{-1}( k\zeta) :k\in \mathbb Z }\phi (\delta_t (y)) f(y) \\
&=  t \sum_{y=\delta_t^{-1}( k\zeta) :k\in \mathbb Z }\phi (t^{1/\alpha} t^{-1/\alpha}\delta_t( y)) f(y)\\
&=  t^{-1/\alpha}  \sum_{y=t^{-1/\alpha } k\zeta :k\in \mathbb Z }\phi_t (y) f( \delta_t^{-1} (t^{1/\alpha} y)).
\end{align*}

Now we need to consider different cases depending on how $\delta^{-1}_t$ acts on $\zeta$.  Indeed,
$\delta_t^{-1} (t^{1/\alpha} s\zeta)= (t^{1/\alpha-w_i}\zeta_i)_1^d$.   In order to have a vague limit, we need to assume that, for every $i\in \{1,\dots,d\}$ such that $\zeta_i\neq 0$, $w_i\ge 1/\alpha$. If that is the case, then
$$\lim_{t\to \infty} \delta_t^{-1}(t^{1/\alpha} (s\zeta)) = s(\zeta^\infty_i)_1^d \quad \mbox{ with }
\zeta^\infty_i =p(\zeta) =\left\{\begin{array}{cl} \zeta_i & \mbox{ if } w_i=1/\alpha,\\
0 & \mbox{ otherwise.}\end{array}\right.
$$
Under this assumption (i.e., the approximate dilation structure $(\delta_t)_{t>0}$ is admissible for $\mu$), we write
\begin{align*}\lefteqn{t\delta_t^{-1}(\nu)(f)= t^{-1/\alpha}  \sum_{y=t^{-1/\alpha } k\zeta :k\in \mathbb Z }\phi_t (y) f( \delta_t^{-1} (t^{1/\alpha} y))}&&\\
&=  c_\alpha t^{-1/\alpha}  \sum_{y=t^{-1/\alpha } k\zeta :k\in \mathbb Z } \frac{ f(t^{-1/\alpha} kp(\zeta))}{(t^{-1/\alpha} |k|)^{1+\alpha}}\\
& \quad +
 c_\alpha t^{-1/\alpha}  \sum_{y=t^{-1/\alpha } k\zeta :k\in \mathbb Z } \frac{ [f( t^{-1/\alpha} k \delta_t^{-1} (t^{1/\alpha} (\zeta)))-
 f(t^{-1/\alpha} kp(\zeta))]}{(t^{-1/\alpha} |k|)^{1+\alpha}}\\
 & \quad +   t^{-1/\alpha}  \sum_{y=t^{-1/\alpha } k\zeta :k\in \mathbb Z } [f( t^{-1/\alpha} k \delta_t^{-1} (t^{1/\alpha} (\zeta))) (\phi_t(t^{-1/\alpha}k\zeta)-\psi(t^{-1/\alpha}k\zeta))].
  \end{align*}
 Because $f$  is   a compactly supported continuous  function
   in $V\setminus \{0\}$, the second and third sums tend to $0$ while the first sum
 tends to
 $$\int _{\mathbb R} c_\alpha |s|^{-\alpha-1} f(sp(\zeta))ds=\int _{\mathbb R}
 \psi(s\zeta) f(sp(\zeta))ds.$$

\section{Appendix: Nilpotent groups}\label{S:11}

\subsection{Definition of nilpotent groups}\label{S:11.1} 

In Section \ref{S:1}, we gave the classical definition of a nilpotent group and we recall it here. 

\begin{defin} \label{D:11.1} 
A nilpotent group is a group $G$ with identity element $e$ which has a central series of finite length, that is, there is a finite sequence of normal subgroups
 so that 
 $$
 \{e\} =K_0 \lhd K_1 \lhd  \cdots \lhd K_n=G
 $$
 with $K_{i+1}/K_i$ contained in the center of $G/K_i$ for $0\leq i \leq n -1$.
See, for example,  \cite[Definition  2.3]{CMZ}.
\end{defin}

An alternative definition of nilpotent group uses commutators. For two elements $x,y$ of a group $G$, the commutator of $x$ and $y$ is
$[x,y] :=x^{-1}y^{-1}xy$. For two subsets $A,B$ of $G$, $[A,B]$ denotes the group generated by all commutators $[a,b]$ for  $a\in A$ and $
b\in B$. See \cite[Lemma 1.4]{CMZ} for a collection of commutator identities. The lower central series of a group $G$ is defined inductively by setting $G_1=G$ and  $ G_{i+1}=[G,G_{i }] $ for $i\geq 1$.
It is a non-increasing sequence of subgroups of $G$. A group is nilpotent if and only if its lower central series terminates, that is,  there is an
integer  $r\geq 1 $ such that $G_i=\{e\}$, for all $i\ge   r+1 $. The smallest such $r$ is called the nilpotent class of the group $G$. 

\begin{exa} In the Heisenberg group of $3$ by $3$ upper-triangular matrices with diagonal entries equal to $1$, any commutator of length $3$, $[M_1,[M_2,M_3]]$, is the identity and there are elements that do not commute. Hence the Heisenberg group is nilpotent of class $2$. This applies to either the discrete Heisenberg group $\mathbb H_3(\mathbb Z)$ with integers matrix entries
or the real Heisenberg group $\mathbb H_3(\mathbb R)$ with real matrix entries.
\end{exa}  

\subsection{Definition of nilpotent Lie groups and Lie algebras} 
We refer the reader to \cite[Sections 1.1 and 1.2]{CorGr} for a short introduction to nilpotent Lie algebra and connected nilpotent Lie groups.  In the case of  a Lie algebra $(\mathfrak g,[\cdot,\cdot])$, the bracket $[\cdot,\cdot]$ is the key structural operation and the descending lower central series is defined inductively by $\mathfrak g_1=\mathfrak g$,
and $\mathfrak g_{i+1}=[\mathfrak g,\mathfrak g_i]$ for $i\ge 1$. 
  The Lie algebra is said to be nilpotent if there is an integer   $r\geq 0$  so that    $\mathfrak g_{r+1}=\{0\}$. The smallest such $r$ is the nilpotent class of $\mathfrak g$. 
   A connected Lie group is nilpotent 
 {according to Definition \ref{D:11.1} 
if and only if its Lie algebra is nilpotent.  Any simply connected nilpotent Lie group of topological dimension $d$ can be identified via the exponential map with $\mathbb R^d$ equipped with a group law given in coordinate by polynomial function. See, e.g.,  \cite[Theorem 1.2.1]{CorGr}.  The Campbell-Baker-Hausdorff formula 
(e.g., \cite[Page 11]{CorGr})   expresses the group product in this coordinate system.

\subsection{Embeddings into Lie groups}  \label{S:11.4}

Consider the following two natural questions. When can one embed a finitely generated torsion free 
nilpotent group $\Gamma$ as a co-compact subgroup into a nilpotent Lie group $G$?
Which connected simply connected nilpotent Lie group contains  a co-compact finitely generated subgroup? 

The first question is answered by constructions due to Malcev and P. Hall which provide such embeddings for any finitely generated torsion free nilpotent group. This is the subject of \cite[Chapter 4]{CMZ}. This result is used in this monograph, both as a black box, to embed $\Gamma$ as a co-compact subgroup  into a nilpotent Lie group $G$, and, more concretely, to construct coordinate systems. 

The answer to the second question is negative (there are connected simply connected nilpotent Lie groups that do not admit co-compact discrete subgroups). See, e.g.,  \cite[Theorem 5.1.8 and Example 5.1.13]{CorGr}. This result is not needed for the purpose of this monograph.  

\subsection{Volume growth}

A finitely generated group $\Gamma$ is  naturally equipped with the family of all word distances. A word distance is associated with a finite symmetric generating set $S$ (symmetric means that $g^{-1}\in S$ if $g\in S$). The length $|g|_S$ of an element $g$ is the least number $m$ of elements in $S$ that allow to write $g$ as a product $g=\sigma_1\dots\sigma_m$
  using   elements  $\sigma_i$ from $S$. 
By convention, $|e|_S=0$. The associated 
left-invariant  distance is $d_S(g,h)=|g^{-1}h|_S$.  Given two finite  symmetric generating sets $S$ and $T$, there are positive constants $a=a(S,T)$ and $A=A(S,T)$ such that
$$ \;a |g|_s\le |g|_T\le A|g|_S
\quad \hbox{for all } g\in   \Gamma.
$$

The volume growth of $\Gamma$ with respect to $S$ is 
$$V_S(t)=\#\{g\in \Gamma: |g|_S\le t\},$$
the number of points in any closed balls of radius $m$ in $(\Gamma, d_S(\cdot,\cdot))$.
If $S,T$ are two generating sets as above then there are positive constants $b=b(S,T)$ and $B=B(S,T)$ such that
$$ 
  bV_S( bt)\le V_T(m)\le BV_S(Bt)
  \quad \hbox{for all } t>0 . 
  $$
In the case of a finitely generated nilpotent group $\Gamma$ of nilpotent class $r$, the behavior of the volume growth function $V_S$ can be understood in terms of the lower central series $\Gamma_1=\Gamma$, $\Gamma_{i+1}=[\Gamma,\Gamma_i]$ for $i\geq 1$ as follows. The quotient groups $\Gamma_i/\Gamma_{i+1}$ are finitely generated abelian groups.   As any such group, the quotient  $\Gamma_i/\Gamma_{i+1}$ is the product of a finite abelian group and $\mathbb Z^{\ell_i}$ for some integer $\ell_i=\mbox{rank}(\Gamma_i/\Gamma_{i+1})$ which is called  the torsion-free rank of this abelian group. Set  
\begin{equation}\label{e:11.1}
D=D(\Gamma)=\sum_{j=1}^r j  \, \mbox{rank}(\Gamma_i/\Gamma_{i+1}).
\end{equation}
Then  there are constant $c=C(S)$ and $C=C(S)$ such that,
\begin{equation}\label{e:11.2}  
 c(1+t)^D \le V_S(t)\le C(1+t)^D
\quad \hbox{for all }  t \geq 0  . 
\end{equation}
See, e.g., \cite[Theorem VII.C.26]{delaHarpe} for references and comments on this result.

\medskip

If the nilpotent $\Gamma$ above is a discrete co-compact subgroup of a connected Lie group $G$ then, for any fixed left-invariant Riemannian metric on $G$, the Haar measure $|B(r)|$ of  the  ball of radius $r$ around the identity element $e$ satisfies
$$  c_1 r^D\le |B(r)|\le C_1r^D
\quad \hbox{for all } r\geq 1, 
$$
where $D=D(\Gamma)$ is as above. The positive constants $c_1,C_1$ depend on the choice of the Riemannian metric.

\begin{exa} The Heisenberg group $\mathbb H_3(\mathbb Z)$ is a co-compact subgroup of $\mathbb H_3(\mathbb R)$.
The elements of $\mathbb H_3(\mathbb Z)$ with at most one non-zero non-diagonal entry in the the top-right corner
is the center of $\mathbb H_3(\mathbb Z)$ as well as the commutator subgroup $[\mathbb H_3(\mathbb Z),\mathbb H_3(\mathbb Z)]$. It follows that the parameter $D=D(\mathbb H_3(\mathbb Z))$ is equal to $2+1\times 2 =4$.  Any left-invariant Riemannian metric on $\mathbb H_3(\mathbb R)$ has large-scale volume growth of type $r^4$.\end{exa}

\bigskip

 {\bf Acknowledgement.} The authors thank Mark Meerschaert for  references
\cite[Corollary 8.2.12]{MS1} and \cite[Theorem 4.1]{MS2}.
 The research of ZC is partially supported by Simons Foundation Grant 520542,
 TK by the Grant-in-Aid for Scientific Research (A) 17H01093 and 22H00099, 
Japan, LSC by NSF grants DMS-1707589 and DMS-2054593, and JW by NNSFC grant 11831014 and 12071076.

 \vskip 0.3truein
 
 \providecommand{\bysame}{\leavevmode\hbox to3em{\hrulefill}\thinspace}
\providecommand{\MR}{\relax\ifhmode\unskip\space\fi MR }
\providecommand{\MRhref}[2]{%
  \href{http://www.ams.org/mathscinet-getitem?mr=#1}{#2}
}
\providecommand{\href}[2]{#2}

 \small

\vskip 0.3truein

\footnotesize{
 {\bf Zhen-Qing Chen}

Department of Mathematics, University of Washington, Seattle,
WA 98195, USA.

E-mail: {\tt zqchen@uw.edu}

\bigskip

{\bf Takashi Kumagai}

Department of Mathematics, Faculty of Science and Engineering, 
Waseda University, 

3-4-1 Okubo, Shinjuku-ku, Tokyo 169-8555, Japan.

E-mail: {\tt t-kumagai@waseda.jp}

\bigskip

{\bf Laurent Saloff-Coste}

Department of Mathematics, Cornell University, Ithaca, NY 14853, USA.

E-mail: {\tt lsc@uno.math.cornell.edu}

\bigskip

{\bf Jian Wang}

College of Mathematics and Informatics, \\
\indent Fujian Key Laboratory of Mathematical Analysis and Applications (FJKLMAA),\\  \indent Center for Applied Mathematics of Fujian Province (FJNU),\\
\indent Fujian Normal University, Fuzhou 350007,
P.R. China. E-mail: {\tt jianwang@fjnu.edu.cn}

\bigskip

{\bf Tianyi Zheng}

Department of Mathematics, UC San Diego, San Diego, CA 92093-0112, USA.

E-mail: {\tt tzheng2@math.ucsd.edu}
 
\end{document}